\newtheorem*{rep@theorem}{\rep@title}
\newcommand{\newreptheorem}[2]{%
\newenvironment{rep#1}[1]{%
 \def\rep@title{#2 \ref{##1}}%
 \begin{rep@theorem}}%
 {\end{rep@theorem}}}
\newtheorem{thm}{Theorem}[section]
\newtheorem{conget}[thm]{Conjecture}
\newtheorem{cor}[thm]{Corollary}
\newtheorem{lem}[thm]{Lemma}
\newtheorem{prop}[thm]{Proposition}
\newtheorem{defn}[thm]{Definition}
\newtheorem{rem}[thm]{Remark}
\newtheorem*{cor*}{Corollary}
\newtheorem{theorem}{Theorem}
\title{Multipoint Okounkov bodies}
\author{Antonio Trusiani\footnote{\href{mailto:antonio.trusiani91@gmail.com}{antonio.trusiani91@gmail.com}.}}
\date{}
\begin{document}

\maketitle

\begin{abstract}
Starting from the data of a big line bundle $L$ on a projective manifold $X$ with a choice of $N\geq 1$ different points on $X$ we provide a new construction of $N$ Okounkov bodies which encodes important geometric features of $(L\to X; p_{1},\dots,p_{N})$ such as the volume of $L$, the (moving) multipoint Seshadri constant of $L$ at $p_{1},\dots,p_{N}$, and the possibility to construct Kähler packings centered at $p_{1},\dots,p_{N}$. Toric manifolds and surfaces are examined in detail.
\end{abstract}
\vspace{5pt}
{\small \textbf{Keywords:} Okounkov body, Seshadri constant, packings problem, projective manifold, ample line bundle.\\
\textbf{2010 Mathematics subject classification:} 14C20 (primary); 32Q15, 57R17 (secondary).}
\section{Introduction}
Okounkov in \cite{Oko96} and \cite{Oko03} found a way to associate a convex body $\Delta(L)\subset \mathbbm{R}^{n}$ to a polarized manifold $(X,L)$ where $n=\dim_{\mathbbm{C}}X$. Namely,
$$
\Delta(L):=\overline{\bigcup_{k\geq 1}\Big\{\frac{\nu^{p}(s)}{k}\, : \, s\in H^{0}(X,kL)\setminus\{0\}\Big\}}
$$
where $\nu^{p}(s)$ is the \emph{leading term exponent} at $p$ with respect to a total additive order on $\mathbbm{Z}^{n}$ and holomorphic coordinates centered at $p\in X$ (see subsection \ref{paragraph:ParticularValuations}). This convex body is now called \emph{Okounkov body}.\\
Okounkov's construction was inspired by toric geometry, indeed in the toric case, if $L_{P}$ is a torus-invariant ample line bundle, $\Delta(L_{P})$ is essentially equal to the polytope $P$.\\
The same construction works even if $L$ is a big line bundle, i.e. a line bundle such that $\mathrm{Vol}_{X}(L):=\limsup_{k\to\infty}\frac{n!}{k^{n}}\dim_{\mathbbm{C}} H^{0}(X,kL)>0$, as proved in \cite{LM09}, \cite{KKh12} (see also \cite{Bou14}) and the Okounkov body captures the volume of $L$ since
$$
\mathrm{Vol}_{X}(L)=n!\mathrm{Vol}_{\mathbbm{R}^{n}}\big(\Delta(L)\big).
$$
Moreover if $>$ is the lexicographical order then the $(n-1)-$volume of any not trivial \emph{slice} of the Okounkov body given by $\Delta(L)\cap \{x_{1}=t\}$ is related to the \emph{restricted volume} of $L-tY$ along $Y$ where $Y$ is a smooth irreducible divisor such that $Y_{\vert U_{p}}=\{z_{1}=0\}$.

Another invariant which can be encoded by the Okounkov body is the \emph{(moving) Seshadri constant} $\epsilon_{S}( ||L||;p)$ (see \cite{Dem90} in the ample case, or \cite{Nak03} for the extension to the big case). In fact, as K\"uronya-Lozovanu showed in \cite{KL15a}, \cite{KL17}, if the Okounkov body is defined using the \emph{deglex order}\footnote{$\alpha <_{deglex}\beta$ iff $\vert \alpha\vert:=\sum_{j=1}^{n}\alpha_{j} <\vert \beta \vert $ or $\vert \alpha\vert=\vert \beta \vert$ and $\alpha <_{lex} \beta$, where $<_{lex}$ is the lexicographical order}, then
$$
\epsilon_{S}(||L||; p)=\max\big\{0,\sup\{t\geq 0 \, : \, t\Sigma_{n}\subset \Delta(L)\}\big\}
$$
where $\Sigma_{n}$ is the unit $n-$simplex.\\
As observed by Witt Nyström in \cite{WN15}, one can restrict ourselves to considering the \emph{essential} Okounkov body $\Delta(L)^{\mathrm{ess}}$ to get the same characterization of the moving Seshadri constant. This last object is defined as $\Delta(L)^{\mathrm{ess}}:=\bigcup_{k\geq 1}\Delta^{k}(L)^{\mathrm{ess}}$, where $\Delta^{k}(L)=\mathrm{Conv}(\{\frac{\nu(s)}{k}\, : \, s\in H^{0}(X,kL)\setminus\{0\}\})$ and the \emph{essential} part of $\Delta^{k}(L)$ consists of its interior as subset of $\mathbbm{R}_{\geq 0}^{n}$ with its natural induced topology.\\

Seshadri constants are also defined for a collection of different points. For a nef line bundle $L$, the \emph{multipoint Seshadri constant of} $L$ \emph{at} $p_{1},\dots,p_{N}$ is given as
$$
\epsilon_{S}(L;p_{1},\dots,p_{N}):=\inf_{C}\frac{L\cdot C}{\sum_{j=1}^{N}\mathrm{mult}_{p_{j}}C}.
$$

In this paper we introduce a multipoint version of the Okounkov body. More precisely, for a fixed big line bundle $L$ on a projective manifold $X$ of dimension $n$ and $p_{1},\dots,p_{N}\in X$ different points, we construct $N$ Okounkov bodies $\Delta_{j}(L)\subset \mathbbm{R}^{n}$ for $j=1,\dots,N$.
\begin{defn}
Let $L$ be a big line bundle and let $>$ be a fixed total additive order on $\mathbbm{Z}^{n}$.
$$
\Delta_{j}(L):=\overline{\bigcup_{k\geq 1}\Big\{\frac{\nu^{p_{j}}(s)}{k}\, : \, s\in V_{k,j}\Big\}}\subset \mathbbm{R}^{n}
$$
is called \textbf{multipoint Okounkov body} of $L$ at $p_{j}$, where $V_{k,j}:=\{{s\in H^{0}(X,kL)\setminus\{0\}}\, : \, \nu^{p_{j}}(s)<\nu^{p_{i}}(s) \, \mathrm{for}\, \mathrm{any} \, i\neq j \}$ for any $k\geq 1$.
\end{defn}
We observe that the multipoint Okounkov body of $L$ at $p_{j}$ is obtained by considering all sections whose leading term in $p_{j}$ is strictly smaller than those at the other points.\\
They are convex compact sets in $\mathbbm{R}^{n}$ but, unlike the one-point case, for $N\geq 2$ it can happen that some $\Delta_{j}(L)$ are empty (Remark \ref{rem:Ex}). The definition does not depend on the order of the points.\\

Our first theorem concerns the relationship between the multipoint Okounkov bodies and the volume of the line bundle:
\begin{theorem}
\label{ThmA}\footnote{The theorem holds in the more general setting of a family of \emph{faithful valuations} $\nu^{p_{j}}:\mathcal{O}_{X,p_{j}}\setminus\{0\}\to (\mathbbm{Z}^{n},>)$ respect to a fixed total additive order $>$ on $\mathbbm{Z}^{n}$.}
Let $L$ be a big line bundle. Then
$$
n!\sum_{j=1}^{N}\mathrm{Vol}_{\mathbbm{R}^{n}}\big(\Delta_{j}(L)\big)=\mathrm{Vol}_{X}(L).
$$
\end{theorem}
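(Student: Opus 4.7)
The plan is to realize each multipoint Okounkov body $\Delta_{j}(L)$ as the Okounkov body of a graded semigroup, apply the Kaveh--Khovanskii--Lazarsfeld--Mustaţă volume formula to each, and verify that the individual semigroup counts add up to $h^{0}(X,kL)$ to top order in $k$.

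\emph{Step 1 (semigroup structure).} I would first check that
\[
\Gamma_{j}(L) := \bigsqcup_{k\geq 1}\{k\}\times \nu^{p_{j}}(V_{k,j}) \subset \mathbbm{N}\times\mathbbm{Z}^{n}
\]
is a sub-semigroup. The key point is that the \emph{strict} inequality defining $V_{k,j}$ is preserved under multiplication: if $s\in V_{k,j}$ and $s'\in V_{k',j}$, then by additivity of each valuation $\nu^{p_{j}}(ss') = \nu^{p_{j}}(s)+\nu^{p_{j}}(s') < \nu^{p_{i}}(s)+\nu^{p_{i}}(s') = \nu^{p_{i}}(ss')$ for every $i\neq j$, so $ss'\in V_{k+k',j}$. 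By construction $\Delta_{j}(L)$ is the slice at level $1$ of the closed convex cone generated by $\Gamma_{j}(L)$. Checking the usual technical hypotheses (full-rank subgroup, strictly convex cone, boundedness of the slice) reduces to standard input from the fact that $L$ is big, and the Kaveh--Khovanskii formula then gives
\[
\lim_{k\to\infty}\frac{|\nu^{p_{j}}(V_{k,j})|}{k^{n}/n!}=n!\,\mathrm{Vol}_{\mathbbm{R}^{n}}\big(\Delta_{j}(L)\big).
\]

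\emph{Step 2 (upper bound via linear independence).} For each $j$ and each $\alpha\in \nu^{p_{j}}(V_{k,j})$ pick $s_{j,\alpha}\in V_{k,j}$ with $\nu^{p_{j}}(s_{j,\alpha})=\alpha$. I claim the family $\{s_{j,\alpha}\}_{j,\alpha}$ is linearly independent in $H^{0}(X,kL)$. Indeed, suppose $\sum c_{j,\alpha}s_{j,\alpha}=0$ nontrivially; let $\alpha^{*}$ be the $<$-minimal $\alpha$ appearing with a nonzero coefficient and $j^{*}$ any index with $c_{j^{*},\alpha^{*}}\neq 0$. Evaluating $\nu^{p_{j^{*}}}$ on the sum, the term $s_{j^{*},\alpha^{*}}$ contributes value exactly $\alpha^{*}$, while every other nonzero term has $\nu^{p_{j^{*}}}$ strictly greater than $\alpha^{*}$: if $j=j^{*}$ but $\alpha>\alpha^{*}$ this is immediate; if $j\neq j^{*}$ it follows from $\nu^{p_{j^{*}}}(s_{j,\alpha})>\nu^{p_{j}}(s_{j,\alpha})=\alpha\geq \alpha^{*}$, where the strict inequality uses $s_{j,\alpha}\in V_{k,j}$. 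The leading term at $p_{j^{*}}$ therefore cannot cancel, contradicting $\sum=0$. This gives
\[
\sum_{j=1}^{N}|\nu^{p_{j}}(V_{k,j})| \leq h^{0}(X,kL),
\]
and, dividing by $k^{n}/n!$ and letting $k\to\infty$, the inequality $n!\sum_{j}\mathrm{Vol}(\Delta_{j}(L))\leq \mathrm{Vol}_{X}(L)$.

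\emph{Step 3 (lower bound, the main obstacle).} The hard half is to show $\sum_{j}|\nu^{p_{j}}(V_{k,j})|\geq h^{0}(X,kL)-o(k^{n})$, i.e.\ that the sections \emph{not} captured by any $V_{k,j}$ contribute negligibly. A section is uncaptured precisely when the minimum of $\nu^{p_{i}}(s)$ over $i$ is attained at two or more indices (a ``tie''). The plan is to consider the graded linear series $W_{\bullet,j}:=\mathrm{span}(V_{k,j})$ (which is multiplicatively closed by the argument in Step 1), identify its Okounkov body with $\Delta_{j}(L)$, and then argue that the locus of tied sections, organised by tie-pattern $J\subseteq\{1,\dots,N\}$ with $|J|\geq 2$, is asymptotically lower-dimensional: each tie condition $\nu^{p_{j}}(s)=\nu^{p_{j'}}(s)$ imposes an ``equality of leading coordinates'' which, after rescaling by $1/k$, cuts out a set of codimension $\geq 1$ inside the standard one-point Okounkov body $\Delta(L,p_{j})\supseteq \Delta_{j}(L)$, whose total volume equals $\mathrm{Vol}_{X}(L)/n!$. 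The main subtlety I expect is that the tied locus is not itself a linear subspace, so a direct dimension count requires either a perturbation of the valuations breaking ties combined with a continuity statement for the semigroup volumes, or an inclusion--exclusion applied to the subspaces $W_{\bullet,j}$ whose pairwise intersections consist precisely of tied sections. Either way, once the tied contribution is shown to be $o(k^{n})$, combining with Step 2 yields equality and hence the theorem.
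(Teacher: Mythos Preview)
Your Steps 1 and 2 are sound; the linear-independence argument in Step 2 is essentially the easy half of the paper's counting lemma. Step 3, however, has a genuine gap. The claim that the tie locus $\{\nu^{p_j}(s)=\nu^{p_{j'}}(s)\}$ cuts out something of ``codimension $\geq 1$'' in the one-point Okounkov body is not justified: the tie condition lives on sections, not on valuation values, and there is no evident reason why for a full-measure set of $\alpha$ one can find an \emph{untied} section with $\nu^{p_j}(s)=\alpha$. Neither of your proposed fixes (perturbing the valuations, inclusion--exclusion on spans) is developed far enough to close this, and the first is delicate because $\Delta_j(L)$ is defined with the valuations already fixed.

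The paper avoids the issue by an explicit tie-breaking. It introduces the non-strict variants
\[
W_{k,j}:=\{s\neq 0:\ \nu^{p_j}(s)\leq\nu^{p_i}(s)\text{ for }i\leq j,\quad \nu^{p_j}(s)<\nu^{p_i}(s)\text{ for }i>j\},
\]
so that $\bigsqcup_j W_{k,j}=H^0(X,kL)\setminus\{0\}$, and proves (via the product valuation $(\nu^{p_1},\dots,\nu^{p_N})$ with lexicographic order on $(\mathbbm{Z}^n)^N$) that suitably chosen representatives form a \emph{basis} of $H^0(X,kL)$, giving $\sum_j\#\nu^{p_j}(W_{k,j})=h^0(X,kL)$ exactly for every $k$. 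Hence $\sum_j\mathrm{Vol}(\Delta_j^W(L))=\mathrm{Vol}_X(L)/n!$ for the associated bodies $\Delta_j^W(L)$ (after checking the index conditions). It then remains to show $\Delta_j^W(L)^\circ=\Delta_j(L)^\circ$, and this is done by perturbing the \emph{line bundle} rather than the valuation: choosing an ample $A$ with sections $s_j\in V_{1,j}(A)$ satisfying $\nu^{p_j}(s_j)=0$, tensoring by $s_j^k$ maps $W_{k,j}(mL-A)$ into $V_{k,j}(mL)$, so $\Delta_j^W(L-\tfrac{1}{m}A)\subset\Delta_j(L)$; continuity of the volume as $m\to\infty$ then forces equality of interiors. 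This is the step your Step 3 is groping toward, but the tie-breaking $W_{k,j}$ is the missing idea that makes it work.
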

Furthermore, similar to section \S $4$ in \cite{LM09}, we show that $\Delta_{j}(\cdot)$ is a numerical invariant and that there exists an open subset of the big cone containing $B_{+}(p_{j})^{C}=\{{\alpha\in N^{1}(X)_{\mathbbm{R}}}\, : \, p_{j}\notin\mathbbm{B}_{+}(\alpha)\}$ over which $\Delta_{j}(\cdot)$ can be extended continuously (see section \S $\ref{subsection:Variation}$). Recall that the points, and more in general the valuations $\nu^{p_{j}}$, are fixed.\\
Moreover when $>$ is the lexicographical order and $Y_{1},\dots,Y_{N}$ are smooth irreducible divisors such that $Y_{j\vert U_{p_{j}}}=\{z_{j,1}=0\}$, the fibers of $\Delta_{j}(L)$ are related to the restricted volumes of $L-t\sum_{i=1}^{N}Y_{i}$ along $Y_{j}$ (see section\nopagebreak \S \ref{subsection:Consequence}).\\

The multipoint Okounkov bodies can be finer invariants than the \emph{moving} multipoint Seshadri constant (a natural generalization of the multipoint Seshadri constant to big line bundles, see section $\S$ \ref{section:SeshadriConstant}) as our next Theorem shows.
\begin{theorem}
\label{ThmB}
Let $L$ be a big line bundle and let $>$ be the deglex order. Then
$$
\epsilon_{S}(||L||; p_{1},\dots,p_{N})=\max\big\{0,\xi(L;p_{1},\dots,p_{N})\big\}
$$
where $\xi(L;p_{1},\dots,p_{N}):=\sup\{t\geq 0 \, : \, t\Sigma_{n}\subset \Delta_{j}(L)^{\mathrm{ess}}\,\mathrm{for}\,\mathrm{any}\, {j=1,\dots,N}\}$.
\end{theorem}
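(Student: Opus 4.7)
The plan is to adapt the single-point characterization of the moving Seshadri constant due to Küronya--Lozovanu, together with its refinement to the essential Okounkov body by Witt Nyström. The crucial feature of the deglex order is that $|\nu^{p}(s)|=\mathrm{ord}_{p}(s)$, so the total degree of the leading exponent equals the multiplicity at $p$. Since deglex compares total degrees first, the defining condition of $V_{k,j}$ is transparent: any $s\in V_{k,j}$ automatically satisfies $\mathrm{ord}_{p_{i}}(s)\geq \mathrm{ord}_{p_{j}}(s)$ for every $i\neq j$, and conversely any $s$ with $\mathrm{ord}_{p_{i}}(s)>\mathrm{ord}_{p_{j}}(s)$ for every $i\neq j$ lies in $V_{k,j}$.

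For the direction $\epsilon_{S}\geq \xi$, fix $t<\xi$. Since $t\Sigma_{n}\subset \Delta_{1}(L)^{\mathrm{ess}}$, one finds, for arbitrarily large $k$, a section $s\in V_{k,1}$ with $\nu^{p_{1}}(s)/k$ close to the apex $(t,0,\dots,0)$; hence $\mathrm{ord}_{p_{1}}(s)\approx kt$ and the $V_{k,1}$ condition forces $\mathrm{ord}_{p_{i}}(s)\geq \mathrm{ord}_{p_{1}}(s)\approx kt$ for every $i$. Producing such sections for arbitrarily large $k$ yields $\epsilon_{S}(\|L\|;p_{1},\dots,p_{N})\geq t$ via the asymptotic characterization of the multipoint moving Seshadri constant recalled in \S\ref{section:SeshadriConstant}.

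For the converse $\epsilon_{S}\leq \xi$, fix $t<t''<\epsilon_{S}$ and $j\in\{1,\dots,N\}$. Let $\mu\colon \tilde X\to X$ be the blow-up at the points $\{p_{i}\}_{i\neq j}$, with exceptional divisors $E_{i}$. A curve computation from the definition of the multipoint Seshadri constant shows that the big line bundle $\tilde L_{j}:=\mu^{*}L-t''\sum_{i\neq j}E_{i}$ has moving Seshadri constant at $p_{j}\in\tilde X$ at least $\epsilon_{S}(\|L\|;p_{1},\dots,p_{N})>t$. Applying the single-point Küronya--Lozovanu--Witt Nyström theorem to $\tilde L_{j}$ at $p_{j}$ yields $t\Sigma_{n}\subset \Delta(\tilde L_{j};p_{j})^{\mathrm{ess}}$. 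Sections of $k\tilde L_{j}$ are precisely sections of $kL$ vanishing to order $\geq kt''$ at every $p_{i}$ with $i\neq j$; any such $s$ with $|\nu^{p_{j}}(s)|/k<t''$ automatically lies in $V_{k,j}$, so finite-level essential-body points of $\Delta(\tilde L_{j};p_{j})$ with $|\alpha|<t''$ transfer to points of $\Delta_{j}(L)^{\mathrm{ess}}$. Since $t\Sigma_{n}\subset\{|\alpha|\leq t\}\subset\{|\alpha|<t''\}$, this gives $t\Sigma_{n}\subset \Delta_{j}(L)^{\mathrm{ess}}$ for every $j$; letting $t\uparrow\epsilon_{S}$ concludes the proof.

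The main obstacle lies in the converse direction. Two delicate points have to be handled: first, establishing the asserted lower bound on the moving Seshadri constant of $\tilde L_{j}$ at $p_{j}$ in the big (not necessarily nef) setting, which requires the positive-part or restricted-volume machinery underlying the definition of the moving multipoint Seshadri constant; and second, cleanly matching the essential part of the blow-up Okounkov body at $p_{j}\in\tilde X$ with that of the multipoint body $\Delta_{j}(L)$ on $X$. The deglex order is precisely what makes this matching work, since total-degree comparison alone implements the $V_{k,j}$ condition without recourse to any lex tiebreak.
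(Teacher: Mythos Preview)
Your argument for the inequality $\epsilon_{S}(\|L\|;p_{1},\dots,p_{N})\geq \xi(L;p_{1},\dots,p_{N})$ has a genuine gap. Producing, for each large $k$, a \emph{single} section $s\in V_{k,1}$ with $\mathrm{ord}_{p_{i}}(s)\gtrsim kt$ for all $i$ does not give any lower bound on the moving multipoint Seshadri constant. None of the characterizations in \S\ref{section:SeshadriConstant} (jet separation, Proposition~\ref{prop:EqSeshadri}; moving parts, Proposition~\ref{prop:EquiivalenceNak}; restricted volumes, Proposition~\ref{prop:PerConSC}; isolated logarithmic poles, Proposition~\ref{prop:Poles}) is of this form. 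A single counterexample already at $N=1$: on $\mathbbm{P}^{1}\times\mathbbm{P}^{1}$ with $L=\mathcal{O}(1,1)$ and local coordinates $(z_{1},z_{2})$ at $p$, the section $z_{1}^{k}z_{2}^{k}\in H^{0}(kL)$ has $\mathrm{ord}_{p}=2k$, yet $\epsilon_{S}(L;p)=1$. What is needed is not one section but a whole family. The paper's proof uses, for each $j$, \emph{all} valuative points of $\Delta_{j}^{k}(L)^{\mathrm{ess}}$ lying on or beyond the hyperplane $\{|\alpha|=\lambda\}$, takes sections $s_{j,\alpha}$ realizing them, and forms the singular metric $\varphi=\frac{1}{k}\ln\big(\sum_{j,\alpha}|s_{j,\alpha}|^{2}\big)$. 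Each $s_{j,\alpha}$ has order $\geq k\lambda$ at every $p_{l}$ (because deglex compares total degree first), so $\nu(\varphi,p_{l})\geq\lambda$; and the presence, for each $l$, of sections with leading terms $z_{l,1}^{k\lambda},\dots,z_{l,n}^{k\lambda}$ forces the pole to be \emph{isolated} at $p_{l}$. One then concludes via $\epsilon_{S}=\gamma$ (Proposition~\ref{prop:Poles}).

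Your converse direction $\xi\geq\epsilon_{S}$ is correct in outline but unnecessarily indirect, and you yourself flag its weak points in the big case. The paper bypasses the auxiliary blow-up and the single-point K\"uronya--Lozovanu theorem entirely: if $kL$ separates $s$-jets at $p_{1},\dots,p_{N}$ (Proposition~\ref{prop:EqSeshadri}), then for every $j$ and every $\alpha$ with $|\alpha|\leq s$ one can write down a section whose $s$-jet is $z_{j}^{\alpha}$ at $p_{j}$ and $0$ at each $p_{i}$, $i\neq j$; such a section lies in $V_{k,j}$ by the deglex comparison, giving $\frac{s}{k}\Sigma_{n}\subset\Delta_{j}^{k}(L)^{\mathrm{ess}}$ directly. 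This works uniformly for big $L$ and requires no comparison of essential bodies across a blow-up.
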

Next we recall another interpretation of the one point Seshadri constant: $\epsilon_{S}(L;p)$ is equal to the supremum of $r$ such that there exists an holomorphic embedding $f:(B_{r}(0),\omega_{st})\to (X,L)$ with the property that $f_{*}\omega_{st}$ extends to a K\"ahler form $\omega$ with cohomology class $c_{1}(L)$ (see Theorem 5.1.22 and Proposition 5.3.17. in \cite{Laz04}). This result is a consequence of a deep analysis in symplectic geometry by McDuff-Polterovich (\cite{MP94}), where they dealt with the \emph{symplectic packings problem} (in the same spirit, Biran in \cite{Bir97} proved the symplectic analog of the Nagata's conjecture).\\
Subsequently Kaveh in \cite{Kav16} showed how the one-point Okounkov body can be used to construct a sympletic packing. Along the same lines Witt Nyström in \cite{WN15} introduced the torus-invariant domain $\Omega(L):=\mu^{-1}\big(\Delta(L)^{\mathrm{ess}}\big)$ (called \emph{Okounkov domain}) for $\mu:\mathbbm{C}^{n}\to \mathbbm{R}^{n}$, $\mu(z_{1},\dots,z_{n}):=(\lvert z_{1}\rvert^{2},\dots,\lvert z_{n}\rvert^{2})$, and showed how it approximates the polarized manifold.\\

To get a similar characterization of the multipoint Seshadri constant, we give the following definition of \emph{Kähler packing}.
\begin{defn}
We say that a finite family of $n-$dimensional K\"ahler manifolds $\{(M_{j},\eta_{j})\}_{j=1}^{N}$ packs into $(X,L)$ for $L$ ample line bundle on a $n-$dimensional projective manifold $X$ if for any family of relatively compact open set $U_{j}\Subset M_{j}$ there are a holomorphic embedding $f:\bigsqcup_{j=1}^{N}U_{j}\to X$ and a K\"ahler form $\omega$ lying in $c_{1}(L)$ such that $f_{*}\eta_{j}=\omega_{\vert f(U_{j})}$. If, in addition,
$$
\sum_{j=1}^{N}\int_{M_{j}}\eta_{j}^{n}=\int_{X}c_{1}(L)^{n}
$$
then we say that $\{(M_{j},\eta_{j})\}_{j=1}^{N}$ packs perfectly into $(X,L)$.
\end{defn}
Following \cite{WN15} we define the \emph{multipoint Okounkov domains} as the torus-invariant domains of $\mathbbm{C}^{n}$ given by $\Omega_{j}(L):=\mu^{-1}\big(\Delta_{j}(L)^{\mathrm{ess}}\big)$.
\begin{theorem}\footnote{the theorem holds even if $\nu^{p_{j}}$ is a family of faithful \emph{quasi-monomial} valuations respect to the same linearly independent vectors $\vec{\lambda}_{1},\dots,\vec{\lambda}_{n}\in \mathbbm{N}^{n}$.}
\label{ThmC}
Let $L$ be an ample line bundle. Then $\{(\Omega_{j}(L),\omega_{st})\}_{j=1,\dots,N}$ packs perfectly into $(X,L)$.
\end{theorem}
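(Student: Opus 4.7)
The perfect volume identity is immediate from Theorem~\ref{ThmA}. Under $\mu(z)=(|z_{1}|^{2},\dots,|z_{n}|^{2})$ one has $\int_{\mu^{-1}(A)}\omega_{st}^{n}=n!(2\pi)^{n}\mathrm{Vol}_{\mathbbm{R}^{n}}(A)$ for every Borel $A\subset\mathbbm{R}_{\geq 0}^{n}$ (constants depending on the normalization of $\omega_{st}$); since $\partial\Delta_{j}(L)$ has Lebesgue measure zero, summing over $j$ and invoking Theorem~\ref{ThmA} together with $\mathrm{Vol}_{X}(L)=\int_{X}c_{1}(L)^{n}$ yields $\sum_{j=1}^{N}\int_{D_{j}(L)}\omega_{st}^{n}=\int_{X}c_{1}(L)^{n}$.

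The heart of the proof is the construction of the embedding. Fix $U_{j}\Subset D_{j}(L)$; since $\Delta_{j}(L)^{\mathrm{ess}}=\bigcup_{k\geq 1}\Delta_{j}^{k}(L)^{\mathrm{ess}}$, there exists $k_{0}$ such that $\mu(\overline{U_{j}})$ sits in the interior of $\Delta_{j}^{k_{0}}(L)$ for every $j$. For every large multiple $k$ of $k_{0}$ I would pick a basis of $H^{0}(X,kL)$ adapted simultaneously to all valuations $\nu^{p_{1}},\dots,\nu^{p_{N}}$, the basis vectors contributed by $V_{k,j}$ having leading-term exponents forming a dense subset of $k\Delta_{j}^{k}(L)\cap\mathbbm{Z}^{n}$. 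Following Witt Nyström's one-point construction in \cite{WN15}, the Kodaira map $\Phi_{k}:X\to\mathbbm{P}(H^{0}(X,kL)^{*})$ (an embedding for $k\gg 0$) admits near each $p_{j}$ a holomorphic partial inverse $f_{k,j}$ defined on an open neighborhood of $U_{j}$, with $f_{k,j}^{*}\omega_{k}\to\omega_{st}$ smoothly on $\overline{U_{j}}$, where $\omega_{k}:=\tfrac{1}{k}\Phi_{k}^{*}\omega_{FS}\in c_{1}(L)$. The new input needed in the multipoint setting is that the images $f_{k,j}(\overline{U_{j}})$ are pairwise disjoint for different $j$: this is forced by the strict inequalities $\nu^{p_{j}}(s)<\nu^{p_{i}}(s)$ built into the definition of $V_{k,j}$, which localize the relevant sections near $p_{j}$ and away from the other $p_{i}$'s once $k$ is large. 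A Montel extraction then produces holomorphic embeddings $f_{j}:U_{j}\hookrightarrow X$ with pairwise disjoint images.

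To assemble a single global Kähler form $\omega\in c_{1}(L)$ satisfying $\omega|_{f_{j}(U_{j})}=(f_{j})_{*}\omega_{st}$ simultaneously for all $j$, I would fix any reference $\omega_{0}\in c_{1}(L)$ and apply the local $\partial\bar{\partial}$-lemma to write $(f_{j})_{*}\omega_{st}=\omega_{0}+dd^{c}\psi_{j}$ on a neighborhood of $\overline{f_{j}(U_{j})}$. Choose cutoff functions $\chi_{j}$ with pairwise disjoint supports in these neighborhoods, each equal to $1$ on $\overline{f_{j}(U_{j})}$, and set $\omega:=\omega_{0}+dd^{c}\sum_{j}\chi_{j}\psi_{j}$; then $[\omega]=c_{1}(L)$ and $\omega$ restricts to the required form on each $f_{j}(U_{j})$. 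For $k$ sufficiently large the convergence $f_{k,j}^{*}\omega_{k}\to\omega_{st}$ makes the $\psi_{j}$ arbitrarily small in $C^{2}$-norm, so $\omega$ remains Kähler.

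The main obstacle will be the precise construction of the local partial inverse $f_{k,j}$ together with the smoothness of the convergence $f_{k,j}^{*}\omega_{k}\to\omega_{st}$: this is a valuation-adapted Bergman-kernel / normal-family argument that extends the one-point case of \cite{WN15}, and the delicacy lies in keeping the approximation uniform down to the boundary of $U_{j}$ in $D_{j}(L)$. Once this analytic input is in place, the genuinely multipoint aspect is handled almost automatically by the strict separation encoded in the definition of $V_{k,j}$, which guarantees the pieces fit together disjointly without any further bookkeeping.
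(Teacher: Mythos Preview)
Your outline diverges from the paper and has a real gap. You describe the analytic input as ``a valuation-adapted Bergman-kernel / normal-family argument that extends the one-point case of \cite{WN15}'', but this mischaracterizes \cite{WN15}: neither there nor here is $\omega_{st}$ produced via asymptotics of $\tfrac{1}{k}\Phi_{k}^{*}\omega_{FS}$ or a partial inverse of the Kodaira map. The paper's embedding is an explicit coordinate rescaling $z_{j}\mapsto\tau^{\gamma}z_{j}:=(\tau^{\gamma_{1}}z_{j,1},\dots,\tau^{\gamma_{n}}z_{j,n})$ in the fixed charts, with $\gamma\in(\mathbbm{N}_{>0})^{n}$ chosen by Anderson's lemma (Lemma~\ref{lem:AndersonOrdering}) to linearize the order on the finite set of exponents appearing, so that each $s_{\alpha_{j}}\in V_{k,j}$ satisfies $s_{\alpha_{j}}(\tau^{\gamma}z_{j})=\tau^{\gamma\cdot\alpha_{j}}\bigl(z_{j}^{\alpha_{j}}+O(\tau)\bigr)$. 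The global metric on $kL$ is the explicit weighted sum $\ln\bigl(\sum_{i}\sum_{\alpha_{i}}\lvert s_{\alpha_{i}}/\tau^{\gamma\cdot\alpha_{i}}\rvert^{2}\bigr)$; the strict inequalities defining $V_{k,j}$ force the cross terms ($i\neq j$) to be dominated near $p_{j}$ as $\tau\to 0$, and a regularized-maximum gluing produces a genuine K\"ahler form in $c_{1}(kL)$ equal to the toric model $\omega_{k,j}=dd^{c}\ln\bigl(\sum_{\alpha_{j}}\lvert z_{j}^{\alpha_{j}}\rvert^{2}\bigr)$ on the image of each $U_{j}$ (Theorem~\ref{thm:EmbeddingsGrandi}). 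The passage from $\omega_{k,j}$ to $\omega_{st}$ is a separate, purely local toric step via the Legendre transform (Lemma~\ref{lem:Legendre}), after which a $\sqrt{k}$-rescaling brings everything down to $c_{1}(L)$. No Montel extraction and no Bergman asymptotics enter.

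Your gluing paragraph is also internally inconsistent: you fix $f_{j}$ via Montel and a reference $\omega_{0}$ once and for all, write $(f_{j})_{*}\omega_{st}=\omega_{0}+dd^{c}\psi_{j}$, and then claim the $\psi_{j}$ become small in $C^{2}$ for $k$ large---but $\psi_{j}$ does not depend on $k$. Even granting smallness, $dd^{c}(\chi_{j}\psi_{j})$ contains terms with derivatives of $\chi_{j}$ that are not controlled by $\lVert\psi_{j}\rVert_{C^{2}}$ alone. The paper avoids this entirely: the $\max_{\mathrm{reg}}$ of two strictly plurisubharmonic functions is strictly plurisubharmonic by convexity, so K\"ahler-ness of the glued form requires no smallness hypothesis.
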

Note that for big line bundles a similar theorem holds, given a slightly different definition of \emph{packings} (see section \ref{subsection:BigCase}).\\
As a consequence of Theorems \ref{ThmB}, \ref{ThmC} (see Corollary \ref{cor:EcklGen}),
$$
\sqrt{\epsilon_{S}(||L||; p_{1},\dots,p_{N})}=\max\Big\{0,\sup\big\{r>0\, : \, \{(B_{r}(0),\omega_{std}\big)\}_{j=1}^{N}\, \mbox{packs into} \, (X,L)\,\big\}\Big\}.
$$
This result was known in dimension $2$ by the work of Eckl (\cite{Eckl17}), and for $N=1$ by \cite{WN15}.\\

Moving to particular cases, for toric manifolds we prove that, chosen torus-fixed points and the deglex order, the multipoint Okounkov bodies can be obtained \emph{subdiving} the polytope (Theorem \ref{thm:ToricSubdivision}). If we consider all torus-invariant points the subdivision is \emph{barycentric} (Corollary \ref{cor:Barycentric}). As a consequence we get that the multipoint Seshadri constant of $N$ torus-fixed points is in $\frac{1}{2}\mathbbm{N}$ (Corollary \ref{cor:HalfSesh}).

Finally in the surface case, we extend the result in \cite{KLM12} showing, for the lexicographical order, the polyhedrality of $\Delta_{j}(L)$ (Theorem \ref{thm:Surfaces}). Moreover for $\mathcal{O}_{\mathbbm{P}^{2}}(1)$ over $\mathbbm{P}^{2}$ we completely characterize $\Delta_{j}(\mathcal{O}_{\mathbbm{P}^{2}}(1))$ in function of $\epsilon_{S}(\mathcal{O}_{\mathbbm{P}^{2}}(1);N)$ obtaining an explicit formula for the restricted volume of $\mu^{*}\mathcal{O}_{\mathbbm{P}^{2}}(1)-t\mathbbm{E}$ for $t\in\mathbbm{Q}$ where $\mu:\tilde{X}\to X$ is the blow-up at $N$ very general points and $\mathbbm{E}:=\sum_{j=1}^{N}E_{j}$ is the sum of the exceptional divisors (Theorem \ref{thm:ProjPlane}). As a consequence we independently get a result present in \cite{DKMS15}: the ray $\mu^{*}\mathcal{O}_{\mathbbm{P}^{2}}(1)-t\mathbbm{E}$ meets at most two \emph{Zariski chambers}.
\subsection{Organization}
Section \ref{sec:Preliminaries} contains some preliminary facts on singular metrics, base loci of divisors and Okounkov bodies.\\
In section \ref{sec:MOB} we develop the theory of multipoint Okounkov bodies: the goal is to generalize some results in \cite{LM09} for $N\geq1$. We prove here Theorem \ref{ThmA}.\\
Section \ref{section:Packings} is dedicated to show Theorem \ref{ThmC}.\\
In section \ref{section:SeshadriConstant} we introduce the notion of \emph{moving} multipoint Seshadri constants. Moreover we prove Theorem \ref{ThmB}, connecting the moving multipoint Seshadri constant in a more analytical language in the spirit of \cite{Dem90}, and deduce the connection between the moving multipoint Seshadri constant and K\"ahler packings.\\
The last section \ref{section:ParticularCases} deals with the two aforementioned particular cases: toric manifolds and surfaces.
\subsection{Related works}
In addition to the already mentioned papers of Witt Nyström (\cite{WN15}), Eckl (\cite{Eckl17}), and K\"urona-Lozovanu (\cite{KL15a}, \cite{KL17}), during the final revision of this paper the work of Shin \cite{Sh17} appeared as a preprint. Starting from the same data of a big divisor over a projective manifold of dimension $n$ and the choice of $r$ different points, he gave a construction of an \emph{extended Okounkov Body} $\Delta_{Y_{\cdot}^{1},\dots,Y_{\cdot}^{r}}(D)\subset\mathbbm{R}^{rn}$ from a valuation associated to a family of admissible or infinitesimal flags $Y_{\cdot}^{1},\dots,Y_{\cdot}^{r}$. In the ample case thanks to the Serre's vanishing Theorem, the multipoint Okounkov bodies can be recovered from the extended Okounkov body as projections after suitable subdivisions. Precisely, with the notation given in \cite{Sh17}, we get
$$
F(\Delta_{j}(D))=\pi_{j}\Big(\Delta_{Y_{\cdot}^{1},\dots,Y_{\cdot}^{r}}(D)\cap H_{1,j}\cap\cdots\cap H_{j-1,j}\cap H_{j+1,j}\cap\cdots \cap H_{r,j}\Big)
$$
where $\pi_{j}:\mathbbm{R}^{rn}\to \mathbbm{R}^{n}, \pi_{j}(\vec{x}_{1},\dots,\vec{x}_{r}):=\vec{x}_{j}$, $H_{i,j}:=\{(\vec{x}_{1},\dots,\vec{x}_{r}){\in\mathbbm{R}^{rn}}\, : \, x_{i,1}\geq x_{j,1}\}$ and $F:\mathbbm{R}^{n}\to\mathbbm{R}^{n}, F(y_{1},\dots,y_{n}):=(\lvert y\rvert,y_{1},\dots,y_{n-1})$. Note that $x_{i,1}$ means the first component of the vector $\vec{x_{i}}$ while $|y|=y_{1}+\dots+y_{n}$. The same equality holds if $L:=\mathcal{O}_{X}(D)$ is big and $c_{1}(L)\in\mathrm{Supp}(\Gamma_{j}(X))^{\circ}$ (see section \ref{subsection:Variation}).
\subsection{Acknowledgements} I want to thank David Witt Nyström and Stefano Trapani for proposing the project to me and for their suggestions and comments. It is also a pleasure to thank Bo Berndtsson for reviewing this article, Valentino Tosatti for his interesting comments and Christian Schultes for pointing out a mistake in a previous version.
\section{Preliminaries}
\label{sec:Preliminaries}
\subsection{Singular metrics and (currents of) curvature}
\label{ssec:SingMetr}
Let $L$ be a holomorphic line bundle over a projective manifold $X$. A smooth (hermitian) metric $\varphi$ is the collection of an open cover $\{U_{j}\}_{j\in J}$ of $X$ and of smooth functions $\varphi_{j}\in\mathcal{C}^{\infty}(U_{j})$ such that on each not-empty intersection $U_{i}\cap U_{j}$ one has $\varphi_{i}=\varphi_{j}+\ln|g_{i,j}|^{2}$ where $g_{i,j}$ are the transition function defining the line bundle $L$. Note that if $s_{j}$ are nowhere zero local sections with respect to which the transition function are calculated then $|s_{j}|=e^{-\varphi_{j}}$. The curvature of a smooth metric $\varphi$ is given on each open $U_{j}$ by $dd^{c}\varphi_{j}$ where $d^{c}=\frac{i}{4\pi}(\partial-\bar{\partial})$ so that $dd^{c}=\frac{i}{2\pi}\partial\bar{\partial}$. It is a global $(1,1)-$form on $X$, so for convenience we use the notation $dd^{c}\varphi$. The metric is called \emph{positive} if the $(1,1)-$form $dd^{c}\varphi$ is a Kähler form, i.e. if the functions $\varphi_{j}$ are strictly plurisubharmonic. By the well-known Kodaira Embedding Theorem, a line bundle admits a positive metric iff it is ample.\\
Demailly in \cite{Dem90} introduced a weaker notion of metric: a (hermitian) singular metric $\varphi$ is given by a collection of data as before but with the weaker condition that $\varphi_{j}\in L^{1}_{loc}(U_{j})$. If the functions $\varphi_{j}$ are also plurisubharmonic, then we say that $\varphi$ is a singular positive metric. Note that $dd^{c}\varphi$ exists in the weak sense, indeed it is a closed positive $(1,1)-$current (we will call it the current of curvature of the metric $\varphi$). We say that $dd^{c}\varphi$ is a Kähler current if it dominates some Kähler form $\omega$. By Proposition $4.2.$ in \cite{Dem90} a line bundle is big iff it admits a singular positive metric whose current of curvature is a Kähler current.\\
In this paper we will often work with $\mathbbm{R}-$line bundles, i.e. formal linear combinations of line bundles. Moreover since we will only consider projective manifolds, we will often identify an $\mathbbm{R}-$line bundle as a class of $\mathbbm{R}-$divisors modulo linear equivalence and its first Chern class as a class of $\mathbbm{R}-$divisors modulo numerical equivalence.
\subsection{Base loci}
\label{ssec:BaseLoci}
We recall here the construction of the \emph{base loci} (see \cite{ELMNP06}).\\
Given a $\mathbbm{Q}-$divisor $D$, let $\mathbbm{B}(D):=\bigcap_{k\geq 1}\mathrm{Bs}(kD)$ be the \emph{stable base locus} of $D$ where $\mathrm{Bs}(kD)$ is the base locus of the linear system $|kD|$. The base loci $\mathbbm{B}_{+}(D):=\bigcap_{A}\mathbbm{B}(D-A)$ and $\mathbbm{B}_{-}(D):=\bigcup_{A}\mathbbm{B}(D+A)$, where $A$ varies among all ample $\mathbbm{Q}-$divisors, are called respectively \emph{augmented} and \emph{restricted base locus} of $D$. They are invariant under rescaling and $\mathbbm{B}_{-}(D)\subset \mathbbm{B}(D)\subset \mathbbm{B}_{+}(D)$. Moreover as described in a work of Nakamaye, \cite{Nak03}, the restricted and the augmented base loci are numerical invariants and can be extended to the Neron-Severi space (for a real class it is enough to consider only ample $\mathbbm{R}-$divisors $A$ such that $D\pm A$ is a $\mathbbm{Q}-$divisor). The stable base loci do not, see Example $1.1.$ in \cite{ELMNP06}, although by Proposition $1.2.6.$ in \cite{ELMNP06} the subset where the augmented and restricted base loci are equal is open and dense in the Neron-Severi space $\mathrm{N}^{1}(X)_{\mathbbm{R}}$.\\
Thanks to the numerical invariance of the restricted and augmented base loci, we will often talk of restricted and/or augmented base loci of a $\mathbbm{R}-$line bundle $L$. Moreover the restricted base locus can be thought as a measure of the \emph{nefness} since $D$ is nef iff $\mathbbm{B}_{-}(D)=\emptyset$, while the augmented base locus can be thought as a measure of the \emph{ampleness} since $D$ is ample iff $\mathbbm{B}_{+}(D)=\emptyset$. Moreover $\mathbbm{B}_{-}(D)=X$ iff $D$ is not pseudoeffective while $\mathbbm{B}_{+}(D)=X$ iff $D$ is not big.
\subsection{Additive Semigroups and their Okounkov bodies}
\label{ssec:Semigroups}
We briefly recall some notions about the theory of the Okounkov bodies constructed from additive semigroups (the main references are \cite{KKh12} and \cite{Bou14}, see also \cite{Kho93}).\\
Let $S\subset \mathbbm{Z}^{n+1}$ be an additive subsemigroup not necessarily finitely generated. We denote by $C(S)$ the closed cone in $\mathbbm{R}^{n+1}$ generated by $S$, i.e. the closure of the set of all linear combinations $\sum_{i}\lambda_{i}s_{i}$ with $\lambda_{i}\in\mathbbm{R}_{\geq 0}$ and $s_{i}\in S$. In this paper we will exclusively work with semigroups $S$ such that the pair $(S,\mathbbm{R}^{n}\times\mathbbm{R}_{\geq 0})$ is \emph{admissible}, i.e. $S\subset \mathbbm{R}^{n}\times \mathbbm{R}_{\geq 0}$, or \emph{strongly admissible}, i.e. it is admissible and $C(S)$ intersects the hyperplane $\mathbbm{R}^{n}\times \{0\}$ only in the origin (see section $\S 1.2$ in \cite{KKh12}). We recall that a closed convex cone $C$ with \emph{apex} the origin is called \emph{strictly convex} iff the biggest linear subspace contained in $C$ is the origin, so if $(S, \mathbbm{R}^{n}\times \mathbbm{R}_{\geq 0})$ is strongly admissible then $C(S)$ is strictly convex.
\begin{defn}
\label{defn:Okou}
Let $(S,\mathbbm{R}^{n}\times \mathbbm{R}_{\geq 0})$ be an admissible pair. Then
$$
\Delta(S):=\pi\big(C(S)\cap \{\mathbbm{R}^{n}\times \{1\}\}\big)
$$
is called \textbf{Okounkov convex set of} $(\mathbf{S},\mathbbm{R}^{\mathbf{n}}\times\mathbbm{R}_{\mathbf{\geq 0}})$, where $\pi:\mathbbm{R}^{n+1}\to \mathbbm{R}^{n}$ is the projection to the first $n$ coordinates. If $(S,\mathbbm{R}^{n}\times \mathbbm{R}_{\geq 0})$ is strongly admissible, $\Delta(S)$ is also called \textbf{Okounkov body} of $(\mathbf{S},\mathbbm{R}^{\mathbf{n}}\times\mathbbm{R}_{\mathbf{\geq 0}})$.
\end{defn}
\begin{rem}
\label{rem:Okou}
\emph{The convexity of $\Delta(S)$ is immediate, while it is not hard to check that it is compact iff the pair is strongly admissible. Moreover $S$ generates a subgroup of $\mathbbm{Z}^{n+1}$ of maximal rank iff $\Delta(S)$ has interior not-empty.}
\end{rem}
The following result is well-known and it has many interesting consequences.
\begin{thm}[\cite{KKh12}, Theorem 1.4]
\label{thm:KKh12F}
Let $S\subset \mathbbm{Z}^{n+1}$ be a finitely generated subsemigroup. Then there exists an element $\alpha_{0}\in S$ such that 
$$
\alpha_{0}+C(S)\cap G(S)\subset S
$$
where $G(S)\subset \mathbbm{Z}^{n+1}$ is the group generated by $S$.
\end{thm}
Defining $S^{k}:=\{\alpha\, :\, (k\alpha,k)\in S\}\subset \mathbbm{R}^{n}$ for $k\in \mathbbm{N}$, we get
\begin{prop}[\cite{WN15}]
\label{prop:First}
Let $(S,\mathbbm{R}^{n}\times \mathbbm{R}_{\geq 0})$ be an admissible pair. Then
$$
\Delta(S)=\overline{\bigcup_{k\geq 1}S^{k}}.
$$
Moreover if $K\subset \Delta(S)^{\circ}\subset \mathbbm{R}^{n}$ compact subset then $K\subset \mathrm{Conv} (S^{k})$ for $k\geq 1$ divisible enough, where $\mathrm{Conv}$ denotes the closed convex hull. In particular
$$
\Delta(S)^{\circ}=\bigcup_{k\geq 1}\mathrm{Conv}(S^{k})^{\circ}=\bigcup_{k\geq 1} \mathrm{Conv}(S^{k!})^{\circ}
$$
with $\mathrm{Conv}(S^{k!})$ non-decreasing in $k$.
\end{prop}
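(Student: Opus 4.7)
The plan is to separate the two statements. For the identity $\Delta(S)=\overline{\bigcup_{k\geq 1}S^{k}}$, one inclusion is immediate from homogeneity of $C(S)$: if $\alpha\in S^{k}$ then $(k\alpha,k)\in S\subset C(S)$ gives $(\alpha,1)\in C(S)$ by scaling, so $\alpha\in \Delta(S)$, and the right-hand side is contained in the closed set $\Delta(S)$. For the reverse inclusion, I would exploit that $C(S)$ is already the closure of the set of non-negative \emph{rational} combinations of $S$, by density of $\mathbbm{Q}_{\geq 0}$ in $\mathbbm{R}_{\geq 0}$. Any rational combination $\sum_{i}(p_{i}/q)s_{i}$ can be rewritten as $(1/q)t$ with $t=\sum_{i}p_{i}s_{i}\in S$ by the semigroup property. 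Writing $t=(a,b)\in \mathbbm{Z}^{n}\times\mathbbm{Z}_{\geq 0}$, if $(a/q,b/q)\to (\alpha,1)$ then $b/q\to 1$ and $a/q\to \alpha$, whence $a/b\to \alpha$; since $a/b\in S^{b}$, this gives a sequence in $\bigcup_{k}S^{k}$ converging to $\alpha$.

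For the second assertion, the core lemma to establish is: given compact $K\subset \Delta(S)^{\circ}$, there exist finitely many $u_{1},\ldots,u_{N}\in\bigcup_{k}S^{k}$ with $K\subset \mathrm{Conv}(u_{1},\ldots,u_{N})$. I would prove this by a simplex covering argument. For each $x\in K$, pick a closed ball $\overline{B_{\epsilon_{x}}(x)}\subset \Delta(S)^{\circ}$, take $n+1$ vertices of a non-degenerate simplex inside $B_{\epsilon_{x}/2}(x)$ containing $x$ in its interior, and perturb each vertex to a nearby element of $\bigcup_{k}S^{k}$ (using the density provided by the first part). The perturbation can be made small enough to keep $x$ strictly inside the perturbed simplex $\sigma_{x}$. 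The open simplices $\sigma_{x}^{\circ}$ cover $K$; extract a finite subcover and collect all the vertices as $\{u_{1},\ldots,u_{N}\}$. Each $u_{i}$ lies in some $S^{k_{i}}$, and the semigroup property shows $S^{k_{i}}\subset S^{mk_{i}}$ for every $m\in \mathbbm{N}$: indeed $(k_{i}u_{i},k_{i})\in S$ implies $(mk_{i}u_{i},mk_{i})\in S$. Taking $k$ a common multiple of $k_{1},\ldots,k_{N}$ yields $\{u_{1},\ldots,u_{N}\}\subset S^{k}$ and hence $K\subset \mathrm{Conv}(S^{k})$.

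The final equalities then drop out. The inclusion $\mathrm{Conv}(S^{k})^{\circ}\subset \Delta(S)^{\circ}$ follows from $S^{k}\subset \Delta(S)$ together with the convexity of $\Delta(S)$. Conversely, for $\alpha\in \Delta(S)^{\circ}$, apply the lemma to $K=\overline{B_{\eta}(\alpha)}$ with $\eta$ small enough that $K\subset \Delta(S)^{\circ}$, obtaining $\alpha\in \mathrm{Conv}(S^{k})^{\circ}$. The equivalence with the $k!$ version uses $k\mid k!$, which gives $S^{k}\subset S^{k!}$ by the same semigroup trick, and the monotonicity $\mathrm{Conv}(S^{k!})\subset \mathrm{Conv}(S^{(k+1)!})$ follows from $k!\mid (k+1)!$.

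The main obstacle is the quantitative stability of the simplex argument: ensuring that the perturbed vertices still contain $x$ in the interior of the simplex they span. This is routine once one notes that $x$ is a strict convex combination (all barycentric coordinates positive) of the original vertices, and such strict representations are stable under sufficiently small perturbations of the vertices. Everything else is bookkeeping with the semigroup property and density.
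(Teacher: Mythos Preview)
Your proof is correct and genuinely more elementary than the paper's. The paper argues both parts by reduction to the finitely generated case: for the identity $\Delta(S)=\overline{\bigcup_{k}S^{k}}$ it invokes Theorem~1.4 of \cite{KKh12} for finitely generated $S$ and then approximates $\Delta(S)$ from inside by Okounkov bodies of finitely generated subsemigroups; for the compact-set statement it cites Lemma~2.3 of \cite{WN14} in the finitely generated case and passes to the limit via the monotonicity of $\mathrm{Conv}(S^{k!})$. Your route avoids both reductions: for the first identity you observe directly that rational cone combinations of $S$ are dense in $C(S)$ and that any such combination at height close to $1$ produces a point $a/b\in S^{b}$ close to the target; for the second you run a self-contained simplex-perturbation/compactness argument, using only the density from the first part and the divisibility trick $S^{k}\subset S^{mk}$. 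The payoff of your approach is that it is fully self-contained and makes explicit the mechanism (rational approximation plus the stability of strict barycentric coordinates) that the cited lemmas encapsulate; the paper's approach is terser and situates the result within the existing literature, but at the cost of deferring the actual work to \cite{KKh12} and \cite{WN14}.
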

\begin{proof}
The inclusion $\Delta(S)\supset \overline{\bigcup_{k\geq 1}S^{k}}$ is immediate.\newline
To prove the reverse inequality and the rest of the statement we can assume $S$ finitely generated. Indeed for any $S$ not finitely generated, $\Delta(S)$ can be approximated by Okounkov convex sets $\Delta(S_{m})$ of an increasing sequence $\{S_{m}\}_{m\geq 1}$ of finitely generated subsemigroups of $S$. Clearly $\Delta(S_{m})\subset \Delta(S_{m+1})\subset \cdots\subset \Delta(S)$ and
$$
\overline{\bigcup_{m\geq 1}\Delta(S_{m})}=\Delta(S).
$$
Thus, assuming $S$ finitely generated and letting $\alpha_{0}\in S$ given by Theorem \ref{thm:KKh12F}, $(k\alpha,k)-\alpha_{0}\in C(S)\cap G(S)$ for any $\alpha\in\Delta(S)\cap \big(\frac{1}{k}\mathbbm{Z}\big)^{n}$ such that $(\alpha,1)$ has distance bigger than $|\alpha_{0}|/k$ from the boundary of $C(S)$. Therefore by Theorem \ref{thm:KKh12F} $(k\alpha,k)\in S$, i.e. $\alpha\in S^{k}$, which yields $\Delta(S)\subset \overline{\bigcup_{k\geq 1}S^{k}}$ varying $\alpha$ and $k\in \mathbbm{N}$. Moreover, for $K\subset \Delta(S)^{\circ}$ compact subset, letting $k_{0}\in\mathbbm{N}$ such that $|\alpha_{0}|/k_{0}< d\big(K,\partial \Delta(S)\big)/2$, we get $K\subset \mathrm{Conv}(S^{k})^{\circ}$ for any $k\geq k_{0}$. The Proposition follows.
\end{proof}
When a strong admissible pair $(S,\mathbbm{R}^{n}\times \mathbbm{R}_{\geq 0})$ satisfies the further hypothesis $\Delta(S)\subset\mathbb{R}^{n}_{\geq 0} $ then we denote with
$$
\Delta(S)^{\mathrm{ess}}:=\bigcup_{k\geq 1} \mathrm{Conv}(S^{k})^{\mathrm{ess}}
$$
the \emph{essential} Okounkov body where $\mathrm{Conv}(S^{k})^{ess}$ represents the interior of $\mathrm{Conv}(S^{k})$ as subset of $\mathbbm{R}^{n}_{\geq 0}$ with its induced topology (\cite{WN15}). Note that if $S$ is finitely generated then $\Delta(S)^{\mathrm{ess}}$ coincides with the interior of $\Delta(S)$ as subset of $\mathbbm{R}^{n}_{\geq 0}$, but in general they may be different since points in the hyperplanes $\{x_{i}=0\}$ may belong to $\Delta(S)$, and hence in its interior as subset of $\mathbbm{R}^{n}_{\geq 0}$, but not in $\Delta(S)^{ess}$.
\begin{prop}
\label{prop:Essential}
Let $(S,\mathbbm{R}^{n}\times \mathbbm{R}_{\geq 0})$ be a strongly admissible pair such that $\Delta(S)\subset \mathbbm{R}^{n}_{\geq 0}$, and let $K\subset \Delta(S)^{\mathrm{ess}}$ be a compact set. Then there exists $k\gg 1$ divisible enough such that $K\subset \mathrm{Conv}(S^{k})^{\mathrm{ess}}$. In particular
$$
\Delta(S)^{\mathrm{ess}}=\bigcup_{k\geq 1}\mathrm{Conv}(S^{k!})^{\mathrm{ess}}
$$
with $\mathrm{Conv}(S^{k!})^{\mathrm{ess}}$ non-decreasing in $k$, and $\Delta(S)^{\mathrm{ess}}$ is an open convex set of $\mathbbm{R}^{n}_{\geq 0}$.
\end{prop}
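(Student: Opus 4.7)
The plan is to mirror Proposition~\ref{prop:First}, but working in the subspace topology of $\mathbb{R}^{n}_{\geq 0}$ rather than in $\mathbb{R}^{n}$. Two observations should do most of the work for free. First, by definition $\mathrm{Conv}(S^{k})^{\mathrm{ess}}$ is the interior of $\mathrm{Conv}(S^{k})$ in $\mathbb{R}^{n}_{\geq 0}$, so it is automatically open in $\mathbb{R}^{n}_{\geq 0}$ and, as the interior of a convex set, it is convex. Second, the nesting $\mathrm{Conv}(S^{k!}) \subseteq \mathrm{Conv}(S^{(k+1)!})$ granted by Proposition~\ref{prop:First} passes to essential interiors: indeed any relative interior point of the smaller convex set remains a relative interior point of the larger one, so $\mathrm{Conv}(S^{k!})^{\mathrm{ess}} \subseteq \mathrm{Conv}(S^{(k+1)!})^{\mathrm{ess}}$.

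For the main statement, given a compact $K \subset \Delta(S)^{\mathrm{ess}} \subset \mathbb{R}^{n}_{\geq 0}$, I would use that by the very definition $\Delta(S)^{\mathrm{ess}} = \bigcup_{k\geq 1}\mathrm{Conv}(S^{k})^{\mathrm{ess}}$, which (after replacing $k$ by $k!$) is a nested union of subsets open in $\mathbb{R}^{n}_{\geq 0}$. Since $K$ is compact in the subspace topology, a finite subcover suffices, and by monotonicity a single index $k$ yields $K \subset \mathrm{Conv}(S^{k!})^{\mathrm{ess}}$. This simultaneously gives the identity $\Delta(S)^{\mathrm{ess}} = \bigcup_{k\geq 1}\mathrm{Conv}(S^{k!})^{\mathrm{ess}}$ with a non-decreasing chain, since applying the above to singletons $K=\{x\}$ covers every point via the factorial subsequence.

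Openness of $\Delta(S)^{\mathrm{ess}}$ in $\mathbb{R}^{n}_{\geq 0}$ is then immediate, as a union of sets open in $\mathbb{R}^{n}_{\geq 0}$. For convexity, given $x,y \in \Delta(S)^{\mathrm{ess}}$ I would apply the compactness statement to the line segment $K=[x,y]$ (which is compact and contained in $\Delta(S)^{\mathrm{ess}}$ provided the latter is already known to be convex — so one should instead first apply the compactness statement to $K=\{x,y\}$, obtain a single $k$ with $\{x,y\} \subset \mathrm{Conv}(S^{k!})^{\mathrm{ess}}$, and then use convexity of this essential interior to conclude $[x,y] \subset \mathrm{Conv}(S^{k!})^{\mathrm{ess}} \subset \Delta(S)^{\mathrm{ess}}$).

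The only mildly delicate point is the monotonicity $\mathrm{Conv}(S^{k!})^{\mathrm{ess}} \subseteq \mathrm{Conv}(S^{(k+1)!})^{\mathrm{ess}}$: one has to check that a point lying in the relative interior of a convex subset of $\mathbb{R}^{n}_{\geq 0}$ still lies in the relative interior of any larger convex set containing it in $\mathbb{R}^{n}_{\geq 0}$, which follows from a direct neighbourhood-in-$\mathbb{R}^{n}_{\geq 0}$ argument. Once this is in place, everything else is a compactness-plus-nested-cover argument identical in spirit to the proof of Proposition~\ref{prop:First}.
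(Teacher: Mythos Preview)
Your argument is correct and in fact more elementary than the paper's. Since $\Delta(S)^{\mathrm{ess}}$ is \emph{defined} as $\bigcup_{k\geq 1}\mathrm{Conv}(S^{k})^{\mathrm{ess}}$, and since $S^{k}\subset S^{mk}$ for all $m\geq 1$ (so $\mathrm{Conv}(S^{k})^{\mathrm{ess}}\subset \mathrm{Conv}(S^{k!})^{\mathrm{ess}}$ by monotonicity of interiors in a fixed ambient space), the factorial subsequence already gives a nested open cover of $\Delta(S)^{\mathrm{ess}}$ in $\mathbbm{R}^{n}_{\geq 0}$; compactness then does all the work. Your convexity argument via $K=\{x,y\}$ is also clean, and the point you flag as ``mildly delicate'' is actually trivial: if $A\subset B$ then the interior of $A$ in $\mathbbm{R}^{n}_{\geq 0}$ is contained in that of $B$ simply because any relative neighbourhood witnessing $x\in A^{\mathrm{ess}}$ is already contained in $B$. (The only step you glide over is that the interior in $\mathbbm{R}^{n}_{\geq 0}$ of a convex set is convex; this is true and follows from a short coordinate-wise check, but is not literally the standard ``interior of convex is convex'' statement in $\mathbbm{R}^{n}$.)

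The paper takes a different path: it observes that $\Delta(S)^{\mathrm{ess}}\neq\emptyset$ forces the group generated by $S$ to have full rank, reduces to finitely generated $S$ as in Proposition~\ref{prop:First}, and then invokes Lemma~2.3 of \cite{WN14} together with Theorem~1.4 of \cite{KKh12}. That route mirrors the proof of Proposition~\ref{prop:First} and keeps the argument parallel to the treatment of $\Delta(S)^{\circ}$, at the cost of citing external structural results about lattice semigroups. Your approach exploits the definition directly and avoids those citations entirely; it is shorter and self-contained, and it does not even use the strong admissibility hypothesis beyond $\Delta(S)\subset\mathbbm{R}^{n}_{\geq 0}$.
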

\begin{proof}
We may assume that $\Delta(S)^{\mathrm{ess}}\neq 0$ otherwise it is trivial. Therefore the subgroup of $\mathbbm{Z}^{n+1}$ generated by $S$ has maximal rank. Then the proof coincides with that of Proposition \ref{prop:First} exploiting again the strength of Theorem \ref{thm:KKh12F}. Indeed the unique difference is that $K$ may intersect the boundary of $\Delta(S)$ on some hyperplanes $\{x_{i}=0\}$ where with obvious notations $(x_{1},\dots,x_{n})$ denotes coordinates on $\mathbbm{R}^{n}_{\geq 0}$. But by definition this can only happen if such intersection points belong to $\mathrm{Conv}(S^{k})^{ess}$ for some $k$.
\end{proof}
We also recall the following key Theorem:
\begin{thm}[\cite{Bou14}, Théorème 1.12.; \cite{KKh12}, Theorem 1.14.]
\label{thm:AboutVolume}
Let $(S,\mathbbm{R}^{n}\times \mathbbm{R}_{\geq 0})$ be a strongly admissible pair, let $G(S)\subset \mathbbm{Z}^{n+1}$ be the group generated by $S$ and let $\mathrm{ind}_{1}$ and $\mathrm{ind}_{2}$ be respectively the index of the subgroups $\pi_{1}\big(G(S)\big)$ and $\pi_{2}\big(G(S)\big)$ in $\mathbbm{Z}^{n}$ and in $\mathbbm{Z}$ where $\pi_{1}$ and $\pi_{2}$ are respectively the projection to the first $n$-coordinates and to the last coordinate. Then
$$
\frac{\mathrm{Vol}_{\mathbbm{R}^{n}}\big(\Delta(S)\big)}{\mathrm{ind}_{1}\mathrm{ind}_{2}^{n}}=\lim_{m\to\infty, m\in \mathbbm{N}(S)}\frac{\# S^{m}}{m^{n}}
$$
where $\mathbbm{N}(S):=\{m\in \mathbbm{N}\,:\, S^{m}\neq \emptyset\}$ and the volume is respect to the Lebesgue measure.
\end{thm}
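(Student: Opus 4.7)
My plan is a two-step reduction: first approximating $S$ by finitely generated subsemigroups, and then handling the finitely generated case via Khovanskii's structure theorem for integral semigroups combined with the classical asymptotic count of lattice points in a convex body. Before either step, I would normalize to the case $G(S)=\mathbbm{Z}^{n+1}$: if $G(S)$ does not have maximal rank $n+1$ then $\Delta(S)$ has empty interior (Remark \ref{rem:Okou}) and the slices $S^{m}$ lie in an affine subspace of dimension at most $n-1$, so both sides of the asserted identity vanish. Otherwise, choosing a $\mathbbm{Z}$-basis of $G(S)$ adapted to the filtration $\ker(\pi_{2}|_{G(S)})\subset G(S)$ yields a linear automorphism $A$ of $\mathbbm{R}^{n+1}$ sending $G(S)$ onto $\mathbbm{Z}^{n+1}$; under $A$ the volumes of Okounkov bodies pick up precisely the factor $\mathrm{ind}_{1}\mathrm{ind}_{2}^{n}$, reducing the task to proving the identity for $\mathrm{ind}_{1}=\mathrm{ind}_{2}=1$.

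In the finitely generated case, the cone $C(S)$ is rational polyhedral and, by strict admissibility, strictly convex. Khovanskii's theorem on semigroups then produces an element $a_{0}\in S$ such that $a_{0}+\bigl(\mathbbm{Z}^{n+1}\cap C(S)\bigr)\subset S$. Consequently, for $m\in\mathbbm{N}(S)$ large enough, $S^{m}$ and $m\Delta(S)\cap\mathbbm{Z}^{n}$ differ only in a boundary strip of bounded thickness around $\partial\bigl(m\Delta(S)\bigr)$, so $\#S^{m}=\#\bigl(m\Delta(S)\cap\mathbbm{Z}^{n}\bigr)+O(m^{n-1})$. The classical asymptotic $\#\bigl(m\Delta(S)\cap\mathbbm{Z}^{n}\bigr)/m^{n}\to\mathrm{Vol}_{\mathbbm{R}^{n}}(\Delta(S))$ for a compact convex body with nonempty interior then yields the identity.

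For a general $S$ I would exhaust it by finitely generated subsemigroups $S_{(k)}\subset S$ chosen so that $G(S_{(k)})=G(S)$ (adding once and for all a finite set of generators of $G(S)$ suffices) and such that $\Delta(S_{(k)})$ fills $\Delta(S)^{\circ}$ as $k\to\infty$, which is possible by Proposition \ref{prop:First}. The inclusion $S_{(k)}^{m}\subset S^{m}$, combined with the finitely generated case applied to each $S_{(k)}$, gives a $\liminf$ bound matching $\mathrm{Vol}_{\mathbbm{R}^{n}}(\Delta(S))$ after passing to $k\to\infty$. For the matching $\limsup$ I would combine $\frac{1}{m}S^{m}\subset\Delta(S)$ with $S^{m}\subset G(S)\cap\{z=m\}$ and use a direct lattice-point count on a slight thickening $\Delta(S)+B(0,\varepsilon)$, letting $\varepsilon\to 0$ at the end.

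The hard part is the approximation argument: keeping the indices stable along the sequence $S_{(k)}$ so that the factors $\mathrm{ind}_{1}\mathrm{ind}_{2}^{n}$ do not change is essential, and controlling the boundary term in the upper bound without any finite generation hypothesis requires some care. Once these technicalities are arranged, the remaining work reduces to monotone convergence of volumes of nested convex bodies and a standard lattice-point estimate.
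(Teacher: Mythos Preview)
This theorem is not proved in the paper: it appears in the preliminaries with attribution to \cite{Bou14} and \cite{KKh12}, and is used thereafter as a black box. Your outline --- normalize the lattice to reduce to $G(S)=\mathbbm{Z}^{n+1}$, handle the finitely generated case via Khovanskii's structure theorem together with a lattice-point count in $m\Delta(S)$, then exhaust a general $S$ by finitely generated subsemigroups with the same generated group --- is precisely the argument given in those references, so there is nothing in the paper itself to compare against.
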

Finally we need to introduce the \emph{valuations}:
\begin{defn}
Let $V$ be an algebra over $\mathbbm{C}$. A \textbf{valuation} from $V$ to $\mathbbm{Z}^{n}$ equipped with a total additive order $>$ is a map $\nu: V\setminus \{0\}\to (\mathbbm{Z}^{n}, >)$ such that
\begin{itemize}
\item[i)] $\nu(f+g)\geq \min \{\nu(f),\nu(g)\}$ for any $f,g\in V\setminus \{0\}$ such that $f+g\neq 0$;
\item[ii)] $\nu(\lambda f)=\nu (f)$ for any $f\in V\setminus \{0\}$ and any $\mathbbm{C}\ni \lambda \neq 0$;
\item[iii)] $\nu(fg)=\nu(f)+\nu(g)$ for any $f,g\in V\setminus\{0\}$.
\end{itemize}
\end{defn}
Often $\nu$ is defined on the whole $V$ adding $+\infty$ to the group $\mathbbm{Z}^{n}$ and imposing $\nu(0):=+\infty$.\\
For any $\alpha\in \mathbbm{Z}^{n}$ the $\alpha-$leaf of the valuation is defined as the quotient of vector spaces
$$
\hat{V}_{\alpha}:=\frac{\{f\in V\setminus \{0\}\, : \, \nu(f)\geq \alpha\}\cup\{0\}}{\{f\in V\setminus \{0\}\, : \, \nu(f)> \alpha\}\cup\{0\}}.
$$
A valuation is said to have \emph{one-dimensional leaves} if the dimension of any leaf is at most $1$.
\begin{prop}[\cite{KKh12}, Proposition 2.6.]
Let $V$ be an algebra over $\mathbbm{C}$, and let $\nu:V\setminus\{0\}\to (\mathbbm{Z}^{n},>)$ be a valuation with one-dimensional leaves. Then for any no trivial subspaces $W\subset V$,
$$
\#\nu(W\setminus\{0\})=\dim_{\mathbbm{C}}W.
$$
\end{prop}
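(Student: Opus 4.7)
The plan is to establish a two-way inequality between $\dim_{\mathbb{C}} W$ and $\#\nu(W \setminus \{0\})$, after which the result follows. Throughout, I will use the notation $W_{\geq \alpha} := \{f \in W \setminus \{0\} : \nu(f) \geq \alpha\} \cup \{0\}$ and similarly $W_{> \alpha}$, which are $\mathbb{C}$-vector subspaces of $W$ by axioms (i) and (ii). The quotient $\hat{W}_{\alpha} := W_{\geq \alpha}/W_{> \alpha}$ maps injectively into $\hat{V}_{\alpha}$, so the one-dimensional leaves hypothesis gives $\dim \hat{W}_{\alpha} \leq 1$, with equality precisely when $\alpha \in \nu(W \setminus \{0\})$.

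First I would prove the inequality $\#\nu(W\setminus\{0\}) \leq \dim_{\mathbb{C}} W$. Given finitely many distinct values $\alpha_{1} < \alpha_{2} < \cdots < \alpha_{k}$ in $\nu(W \setminus \{0\})$ I would choose representatives $f_{i} \in W$ with $\nu(f_{i}) = \alpha_{i}$ and show the $f_{i}$ are linearly independent over $\mathbb{C}$. Given a putative relation $\sum c_{i} f_{i} = 0$ with not all $c_{i}$ zero, letting $i_{0}$ be the smallest index with $c_{i_{0}} \neq 0$, axioms (i) and (ii) force $\nu\bigl(\sum_{i > i_{0}} c_{i} f_{i}\bigr) \geq \min_{i > i_{0},\, c_{i} \neq 0} \alpha_{i} > \alpha_{i_{0}} = \nu(c_{i_{0}} f_{i_{0}})$, contradicting the relation $c_{i_{0}} f_{i_{0}} = -\sum_{i > i_{0}} c_{i} f_{i}$. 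This gives $k \leq \dim_{\mathbb{C}} W$ for every finite subset, hence $\#\nu(W \setminus \{0\}) \leq \dim_{\mathbb{C}} W$.

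For the reverse inequality I would assume $\nu(W \setminus \{0\})$ is finite (otherwise the bound is vacuous if $\dim W$ is finite, and for $\dim W = \infty$ the two cardinalities both infinite and the stated equality is understood accordingly in the application where $W = H^{0}(X,kL)$ is finite-dimensional). Enumerate $\nu(W \setminus \{0\}) = \{\alpha_{1} < \cdots < \alpha_{k}\}$ and pick $f_{i}$ with $\nu(f_{i}) = \alpha_{i}$. For any nonzero $g \in W$, setting $\alpha_{i_{1}} := \nu(g)$, the one-dimensional leaf $\hat{V}_{\alpha_{i_{1}}}$ forces the existence of a unique $c_{1} \in \mathbb{C}$ with either $g - c_{1} f_{i_{1}} = 0$ or $\nu(g - c_{1} f_{i_{1}}) > \alpha_{i_{1}}$. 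Iterating this leading-term elimination and using the finiteness of $\{\alpha_{1},\ldots,\alpha_{k}\}$ together with the strict increase of valuations, the procedure terminates after at most $k$ steps, expressing $g$ as a $\mathbb{C}$-linear combination of $f_{1}, \ldots, f_{k}$. Hence $W$ is spanned by $k$ elements and $\dim_{\mathbb{C}} W \leq k = \#\nu(W \setminus \{0\})$.

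The only delicate point is the termination of the elimination procedure in the lower bound; it rests on the well-ordering underlying the strict total order $>$ on the finite subset of values already attained, together with the one-dimensionality of the leaves providing unique scalars at each step. Both inequalities combined yield $\#\nu(W \setminus \{0\}) = \dim_{\mathbb{C}} W$.
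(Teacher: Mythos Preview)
Your argument is correct and is essentially the standard proof of this fact. Note, however, that the paper does not supply its own proof of this proposition: it is quoted verbatim as Proposition~2.6 from \cite{KKh12} and used as a black box. Your two-step approach---linear independence of elements with pairwise distinct valuations, followed by a leading-term elimination showing they span---is exactly the argument one finds in \cite{KKh12} (and in \cite{Bou14}), so there is nothing to compare against in the present paper. One small stylistic remark: your parenthetical handling of the infinite-dimensional case is a bit tangled; it is cleaner to observe that the first inequality already forces $\#\nu(W\setminus\{0\})$ finite whenever $\dim_{\mathbb{C}}W$ is finite, so for the reverse inequality you may simply assume both are finite without further discussion.
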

We will say that a valuation $\nu:V\setminus\{0\}\to (\mathbbm{Z}^{n},>)$ is \emph{faithful} if the field of fractions $K$ of $V$ has transcendental degree $n$ and  the extension $\nu: K\setminus \{0\}\to (\mathbbm{Z}^{n},>)$ defined as $\nu(f/g):=\nu(f)-\nu(g)$ (see Lemme $2.3$ in \cite{Bou14}) has the whole $\mathbbm{Z}^{n}$ as image. Note that any faithful valuation has one-dimensional leaves (see Remark $2.26.$ in \cite{Bou14}).
\subsection{The Okounkov body associated to a line bundle}
\label{ssec:Okounkov}
In this section we recall the construction and some known results of the Okounkov body associated to a line bundle $L$ and a point $p\in X$ (see \cite{LM09},\cite{KKh12} and \cite{Bou14}).\\
Consider the abelian group $\mathbbm{Z}^{n}$ equipped with a total additive order $>$, let $\nu:\mathbbm{C}(X)\setminus \{0\}\to (\mathbbm{Z}^{n},>)$ be a faithful valuation with \emph{center} $p\in X$. We recall that $p\in X$ is the (unique) center of $\nu$ if $\mathcal{O}_{X,p}\subset \{f\in \mathbbm{C}(X)\, :\, \nu(f)\geq 0\}$ and $\mathfrak{m}_{X,p}\subset \{f\in\mathbbm{C}(X)\,:\, \nu(f)>0\}$, and that the semigroup $\nu(\mathcal{O}_{X,p}\setminus \{0\})$ is well-ordered by the induced order (see $\S 2$ in \cite{Bou14}).\\
Assume that $L_{|U}$ is trivialized by a non--zero local section $t$. Then any section $s\in H^{0}(X,kL)$ can be written locally as $s=ft^{k}$ with $f\in\mathcal{O}_{X}(U)$. Thus we define $\nu(s):=\nu(f)$, where we identify $\mathbbm{C}(X)$ with the meromorphic function field and $\mathcal{O}_{X,p}$ with the stalk of $\mathcal{O}_{X}$ at $p$. We observe that $\nu(s)$ does not depend on the trivialization $t$ chosen since any other trivialization $t'$ of $L_{|V}$ differs from $t$ on $U\cap V$ by an unit $u\in \mathcal{O}_{X}(U\cap V)$. We define an additive semigroup associated to the valuation by
$$
\Gamma:=\{(\nu(s),k)\,:\, s\in H^{0}(X,kL)\setminus\{0\},k\geq 1\}\subset \mathbbm{Z}^{n}\times\mathbbm{Z}.
$$
The \textbf{Okounkov body} $\Delta(L)$ is the Okounkov convex set of $(\Gamma, \mathbbm{R}^{n}\times\mathbbm{R}_{\geq 0})$ (see Definition \ref{defn:Okou}), i.e.
$$
\Delta(L):=\pi\big(C(\Gamma)\cap\{\mathbbm{R}^{n}\times\{1\}\}\big)
$$
where $\pi:\mathbbm{R}^{n}\times \mathbbm{R}\to \mathbbm{R}^{n}$ is the projection to the first $n$ coordinated. By Proposition \ref{prop:First} it follows that
$$
\Delta(L)=\overline{\bigcup_{k\geq 1}\Big\{\frac{\nu(s)}{k}\, :\, s\in H^{0}(X,kL)\setminus\{0\}\Big\}}=\mathrm{Conv}\Big(\Big\{\frac{\nu(s)}{k}\, :\, s\in H^{0}(X,kL)\setminus\{0\},k\geq 1\Big\}\Big),
$$
and by construction $\Delta(L)$ is a convex set of $\mathbbm{R}^{n}$ with interior not-empty iff $\Gamma$ generates a subgroup of $\mathbbm{Z}^{n+1}$ of maximal rank (Remark \ref{rem:Okou}).\\
For a prime divisor $D\in \mathrm{Div}(X)$ we set $\nu(D)=\nu(f)$ for $f$ any local equation for $D$ near $p$, noting that the map $\nu: \mathrm{Div}(X)\to \mathbbm{Z}^{n}$ extends to a $\mathbbm{R}-$linear map from $\mathrm{Div}(X)_{\mathbbm{R}}$.
\begin{thm}[\cite{LM09},\cite{KKh12}]
\label{thm:OneOko}
The following statements hold:
\begin{itemize}
\item[i)] $\Delta(L)$ is a compact convex set lying in $\mathbbm{R}^{n}$;
\item[ii)] $n!\mathrm{Vol}_{\mathbbm{R}^{n}}\big(\Delta(L)\big)=\mathrm{Vol}_{X}(L)$, and in particular $L$ is big iff $\Delta(L)^{\circ}\neq \emptyset$, i.e $\Delta(L)$ is a convex body;
\item[iii)] if $L$ is big then $\Delta(L)=\overline{\{D\in\mathrm{Div}_{\geq 0}(X)_{\mathbbm{R}}\,:\, D\equiv_{num} L\}}$ and, in particular, the Okounkov body only depends on the numerical class of the big line bundle.
\end{itemize}
\end{thm}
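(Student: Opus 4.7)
The plan is to reduce all three statements to the general semigroup machinery of Section \ref{ssec:Semigroups} applied to the semigroup $\Gamma$. For part (i), admissibility of the pair $(\Gamma,\mathbbm{R}^{n}\times\mathbbm{R}_{\geq 0})$ is immediate since the second coordinate is $k\geq 0$. Strong admissibility, which by Remark \ref{rem:Okou} yields compactness and convexity of $\Delta(L)$, reduces to establishing a uniform linear bound $|\nu(s)|\leq Ck$ for every $s\in H^{0}(X,kL)\setminus\{0\}$. I would obtain this by choosing an ample line bundle $A$ and an effective divisor $E$ with $L+E\equiv_{num} mA$ for some $m$, reducing to the ample case; for $A$ very ample, sections of $kA$ inject into a truncated jet space $\mathcal{O}_{X,p}/\mathfrak{m}_{p}^{Ck+1}$ by Serre vanishing, and this gives the linear bound on $\nu(s)$ (in any total order refining the degree order, hence in any total additive order).

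For part (ii), I would apply Theorem \ref{thm:AboutVolume}. Since $\nu$ is faithful it has one-dimensional leaves, so by Proposition 2.6 we have $\#\Gamma^{m}=\dim_{\mathbbm{C}}H^{0}(X,mL)$ for every $m\in\mathbbm{N}(\Gamma)$, and hence
$$
\lim_{m\to\infty,\,m\in\mathbbm{N}(\Gamma)}\frac{\#\Gamma^{m}}{m^{n}}=\frac{\mathrm{Vol}_{X}(L)}{n!}
$$
by the definition of the volume. The equality $n!\mathrm{Vol}_{\mathbbm{R}^{n}}(\Delta(L))=\mathrm{Vol}_{X}(L)$ will then follow provided $\mathrm{ind}_{1}=\mathrm{ind}_{2}=1$. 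For $\mathrm{ind}_{2}=1$, I would observe that if $L$ is big then $H^{0}(X,kL)\neq 0$ for all $k$ sufficiently large, so two coprime values of $k$ belong to $\pi_{2}(G(\Gamma))$ (if $L$ is not big the formula is trivial since both sides vanish). For $\mathrm{ind}_{1}=1$ I would use faithfulness of $\nu$ to pick, for any $\alpha\in\mathbbm{Z}^{n}$, rational functions $f,g\in\mathcal{O}_{X,p}$ with $\nu(f)-\nu(g)=\alpha$, and then clear the poles away from $p$ by multiplying with a fixed section $\sigma\in H^{0}(X,kL)$ nowhere vanishing at $p$ (such $\sigma$ exists for $L$ big). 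The equivalence ``$L$ big $\iff \Delta(L)^{\circ}\neq\emptyset$'' is then immediate.

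Part (iii) is where I expect the main obstacle. The inclusion $\Delta(L)\subset\overline{\{\nu(D):D\in\mathrm{Div}_{\geq 0}(X)_{\mathbbm{R}},\,D\equiv_{num}L\}}$ is easy, since each $\nu(s)/k$ equals $\nu(\tfrac{1}{k}\mathrm{div}(s))$ with $\tfrac{1}{k}\mathrm{div}(s)$ effective and even linearly equivalent to $L$, and these points are dense in $\Delta(L)$ by Proposition \ref{prop:First}. The reverse inclusion is the delicate part: given an effective $\mathbbm{R}$-divisor $D\equiv_{num}L$, one must approximate $\nu(D)$ by valuations of sections of $mL$. My strategy is a perturbation argument: fix an ample $\mathbbm{Q}$-divisor $A$ and note that $D+\epsilon A\equiv_{num}L+\epsilon A$ with $L+\epsilon A$ big; by Fujita approximation applied to $L+\epsilon A$ together with the divisibility of $\mathrm{Pic}^{0}(X)$, one produces for large $m$ sections $s_{m}\in H^{0}(X,m(L+\epsilon A))$ whose divisors converge (in the coefficient-wise sense on a fixed finite set of prime divisors supporting $D$) to $m(D+\epsilon A')$ for some effective $A'\equiv_{num}A$; applying the $\mathbbm{R}$-linear map $\nu$ on $\mathrm{Div}(X)_{\mathbbm{R}}$ and letting first $m\to\infty$ and then $\epsilon\to 0$ yields $\nu(D)\in\Delta(L)$, provided one also shows that $\Delta(L+\epsilon A)\to\Delta(L)$ as $\epsilon\to 0$ (via a volume-continuity argument on the big cone). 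The numerical independence stated at the end of (iii) is then a free consequence of the identification. The hard part is precisely the simultaneous handling of numerical (rather than linear) equivalence with the passage from $\mathbbm{R}$- to $\mathbbm{Q}$-coefficients; Fujita approximation combined with divisibility of $\mathrm{Pic}^{0}$ is the tool that bridges this gap.
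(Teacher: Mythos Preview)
The paper does not give its own proof of this theorem: it is stated in the preliminaries (Section~\ref{ssec:Okounkov}) as a result quoted from \cite{LM09} and \cite{KKh12}, with no argument supplied. So there is nothing in the paper to compare your proposal against directly.

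That said, your outline for (i) and (ii) is essentially the standard argument in \cite{LM09} and \cite{Bou14}: the linear bound $|\nu(s)|\leq Ck$ giving strong admissibility is Lemma~2.19 in \cite{Bou14} (or Lemma~1.10 in \cite{LM09}), and the index computation is Lemma~2.2 in \cite{LM09}. One small caveat on your $\mathrm{ind}_{1}=1$ argument: multiplying $f,g$ by a single section $\sigma$ nonvanishing at $p$ does not by itself produce global sections of powers of $L$ unless you also clear denominators; the cleanest way is the one in \cite{LM09}, writing $L=A-B$ with $A,B$ very ample and exhibiting explicit sections realizing a basis of $\mathbbm{Z}^{n}$.

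For (iii), the route taken in \cite{LM09} (Theorem~4.5 and Proposition~4.1 there; see also \cite{Bou14}, Proposition~4.1) is rather different from yours. Instead of approximating a single effective $\mathbbm{R}$-divisor $D\equiv_{num}L$ by sections via Fujita approximation, they build the \emph{global} Okounkov body $\Delta(X)\subset\mathbbm{R}^{n}\times N^{1}(X)_{\mathbbm{R}}$ as the cone over a semigroup indexed by all of $N^{1}(X)$, and show that its fiber over any big rational class equals $\Delta(L)$. Numerical invariance and the description of (iii) then fall out for free from the fact that the fiber depends only on the point in $N^{1}(X)_{\mathbbm{R}}$; the passage to $\mathbbm{R}$-classes is by continuity of the fibers of a closed convex cone. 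Your perturbation-by-$\epsilon A$ and Fujita approximation idea could in principle be made to work, but as written it is vague at the key step (producing sections whose divisors ``converge coefficient-wise'' to $m(D+\epsilon A')$ is not what Fujita approximation gives you), and it re-proves by hand the continuity $\Delta(L+\epsilon A)\to\Delta(L)$ that the global construction yields automatically. The global-cone approach is both shorter and conceptually cleaner, and it is the one this paper itself later mimics in Section~\ref{subsection:Variation} for the multipoint case.
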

\paragraph{Quasi-monomial valuation} \label{paragraph:ParticularValuations} Equip $\mathbbm{Z}^{n}$ of a total additive order $>$, fix $\vec{\lambda}_{1},\dots,\vec{\lambda}_{n}\in\mathbbm{Z}^{n}$ linearly independent and fix local holomorphic coordinates $\{z_{1},\dots,z_{n}\}$ around a fixed point $p$. Then we can define the \emph{quasi-monomial} valuation 
$\nu:\mathcal{O}_{X,p}\setminus\{0\}\to \mathbbm{Z}^{n}$ by
$$
\nu(f):=\min\Big\{\sum_{i=1}^{n}\alpha_{i}\vec{\lambda}_{i} \, : \, a_{\alpha}\neq 0 \, \mathrm{where}\, \mathrm{locally} \, \mathrm{around}\, p, f=_{U}\sum_{\alpha\in\mathbbm{N}^{n}}a_{\alpha}z^{\alpha}\Big\}
$$
where the minimum is taken respect to the order $>$ fixed on $\mathbbm{Z}^{n}$. Note that such valuation is faithful iff $\det (\vec{\lambda}_{1},\dots,\vec{\lambda}_{n})=\pm 1$.\\
For instance if we equip $\mathbbm{Z}^{n}$ of the lexicographical order, for $\vec{\lambda}_{j}=\vec{e}_{j}$ ($j-$th vector of the canonical base of $\mathbbm{R}^{n}$) we get
$$
\nu(f):=\min_{lex}\Big\{\alpha \,:\, a_{\alpha}\neq 0 \,\mathrm{where}\,\mathrm{locally}\,\mathrm{around}\, p, f=_{U}\sum_{\alpha\in\mathbbm{N}^{n}}a_{\alpha}z^{\alpha}\Big\}.
$$
This is the valuation associated to an admissible flag $X=Y_{0}\supset Y_{1}\supset\cdots\supset Y_{n}=\{p\}$, in the sense of \cite{LM09}\footnote{$Y_{i}$ smooth irreducible subvariety of $X$ of codimension $i$ such that $Y_{i}$ is a Cartier divisor in $Y_{i-1}$ for any $i=1,\dots,n$.}, such that locally $Y_{i}:=\{z_{1}=\cdots=z_{i}=0\}$ (see also \cite{WN15}).\\
A change of coordinates with the same local flag produces the same valuation, i.e. the valuation depends uniquely on the local flag.\\
\textbf{Note:} \emph{In the paper a valuation associated to an admissible flag $Y_{\cdot}$ will be the valuation constructed by the local procedure starting from local holomorphic coordinates as just described.}\\
On the other hand if we equip $\mathbbm{Z}^{n}$ of the deglex order and we take $\vec{\lambda}_{i}=\vec{e}_{i}$, we get the valuation $\nu: \mathcal{O}_{X,p}\setminus\{0\}\to \mathbbm{Z}^{n}$,
$$
\nu(f):=\min_{deglex}\Big\{\alpha \,:\, a_{\alpha}\neq 0 \,\mathrm{where}\,\mathrm{locally}\,\mathrm{around}\, p, f=_{U}\sum_{\alpha\in\mathbbm{N}^{n}}a_{\alpha}z^{\alpha}\Big\}.
$$
This is the valuation associated to an \emph{infinitesimal} flag $Y_{\cdot}$ in $p$: given a flag of subspaces $T_{p}X=:V_{0}\supset V_{1}\supset \cdots\supset V_{n}=\{0\}$ such that $\dim_{\mathbbm{C}}V_{i}=n-i$, consider on $\tilde{X}:=\mathrm{Bl}_{p}X$ the flag
$$
\tilde{X}=:Y_{0}\supset\mathbbm{P}(T_{p}X)=\mathbbm{P}(V_{0})=:Y_{1}\supset \cdots\supset \mathbbm{P}(V_{n-1})=:Y_{n}=:\{\tilde{p}\}.
$$
Note that $Y_{\cdot}$ is an admissible flag around $\tilde{p}$ on the blow-up $\tilde{X}$. Indeed we recover the valuation on $\tilde{X}$ associated to this admissible flag considering $F\circ \nu$ where $F:(\mathbbm{Z}^{n},>_{deglex})\to(\mathbbm{Z}^{n},>_{lex})$ is the order-preserving isomorphism $F(\alpha):=(|\alpha|,\alpha_{1},\dots,\alpha_{n-1})$, i.e. considering the quasi-monomial valuation given by the lexicographical order and $\vec{\lambda}_{i}:=\vec{e}_{1}+\vec{e}_{i}$.\\
\textbf{Note:} \emph{In the paper a valuation associated to an infinitesimal flag $Y_{\cdot}$ will be the valuation $\nu$ constructed by the local procedure starting from local holomorphic coordinates as just described, and in particular the total additive order on $\mathbbm{Z}^{n}$ will be the deglex order in this case.}
\subsection{A moment map associated to an $(S^{1})^{n}-$action on a particular manifold}
\label{ssec:Moment}
In this brief subsection we recall some results regarding a moment map for an $(S^{1})^{n}-$action on a symplectic manifold $(M,\omega) $ constructed from a convex hull of a finite set $\mathcal{A}\subset\mathbbm{N}^{n}$ (see section $\S 3$ in \cite{WN15}).

Let $\mathcal{A}\subset \mathbbm{N}^{n}$ be a finite set, let $\mu:\mathbbm{C}^{n}\to\mathbbm{R}^{n}$ be the map $\mu(z_{1},\dots,z_{n}):=(|z_{1}|^{2},\dots,|z_{n}|^{2})$.\\
Then if $\mathrm{Conv}(\mathcal{A})^{\mathrm{ess}}\neq \emptyset$, we define
$$
\mathcal{D}_{\mathcal{A}}:=\mu^{-1}\big(\mathrm{Conv}(\mathcal{A})^{\mathrm{ess}}\big)=\mu^{-1}\big(\mathrm{Conv}(\mathcal{A})\big)^{\circ}
$$
where we have denoted by $\mathrm{Conv}(\mathcal{A})^{\mathrm{ess}}$ the interior of $\mathrm{Conv}(\mathcal{A})$ respect to the induced topology on $\mathbbm{R}^{n}_{\geq 0}$. Next we define $M_{\mathcal{A}}$ as the manifold we get removing from $\mathbbm{C}^{n}$ all submanifolds given by $\{z_{i_{1}}=\cdots=z_{i_{r}}=0\}$ which do not intersect $\mathcal{D}_{\mathcal{A}}$. We equip such manifold with the form $\omega_{\mathcal{A}}:=dd^{c}\phi_{\mathcal{A}}$ where
$$
\phi_{\mathcal{A}}(z):=\ln\Big(\sum_{\alpha\in\mathcal{A}}|\mathbf{z}^{\alpha}|^{2}\Big).
$$
Here $\mathbf{z}=\{z_{1},\dots,z_{n}\}$ and $\mathbf{z}^{\alpha}=z_{1}^{\alpha_{1}} \cdots z_{n}^{\alpha_{n}}$. Clearly, by construction, $\omega_{\mathcal{A}}$ is an $(S^{1})^{n}-$invariant Kähler form on $M_{\mathcal{A}}$, so in particular $(M_{\mathcal{A}},\omega_{\mathcal{A}})$ can be thought as a symplectic manifold. Moreover defining $f(w_{1},\dots,w_{n}):=(e^{w_{1}/2},\dots,e^{w_{n}/2})$, the function $u_{\mathcal{A}}(w):=\phi_{\mathcal{A}}\circ f(w)$ is plurisubharmonic and independent of the imaginary part $y_{i}$, and $f^{*}\omega_{\mathcal{A}}=dd^{c}u_{\mathcal{A}}$. Thus an easy calculation shows that
$$
dd^{c}u_{\mathcal{A}}=\frac{1}{4\pi}\sum_{j,k=1}^{n}\frac{\partial^{2}u_{\mathcal{A}}}{\partial x_{k}\partial x_{j}}dy_{k}\wedge dx_{j}
$$
which implies
$$
d\frac{\partial}{\partial x_{k}}u_{\mathcal{A}}=dd^{c}u_{\mathcal{A}}\Big(4\pi\frac{\partial}{\partial y_{k}}, \cdot\Big).
$$
Therefore, setting $H_{k}:=\frac{\partial u_{\mathcal{A}}}{\partial x_{k}}\circ f^{-1}$, since $(f^{-1})_{*}\big(2\pi\frac{\partial}{\partial \theta_{k}}\big)=4\pi \frac{\partial}{\partial y_{k}}$, we get
$$
dH_{k}=\omega_{\mathcal{A}}\Big(2\pi\frac{\partial}{\partial\theta_{k}},\cdot\Big).
$$
Hence $\mu_{\mathcal{A}}=(H_{1},\dots,H_{n})=\nabla u_{\mathcal{A}}\circ f^{-1}$ is a moment map for the $(S^{1})^{n}-$action on the symplectic manifold $(M_{\mathcal{A}},\omega_{\mathcal{A}})$. Furthermore it is not hard to check that $\mu_{\mathcal{A}}\big((\mathbbm{C}^{*})^{n}\big)=\mathrm{Conv}(\mathcal{A})^{\circ}$, that $\mu_{\mathcal{A}}(M_{\mathcal{A}})=\mathrm{Conv}(\mathcal{A})^{\mathrm{ess}}$ and that for any $U\subset M_{\mathcal{A}}$, setting $f^{-1}(U)=V\times (i\mathbbm{R}^{n})$,
$$
\int_{U}\omega_{\mathcal{A}}^{n}=\int_{V\times (i[0,4\pi])^{n}}(dd^{c}u_{\mathcal{A}})^{n}=n!\int_{V}\det (\mathrm{Hess}(u_{\mathcal{A}}))=n!\int_{\nabla u_{\mathcal{A}}(V)}dx=n! \mathrm{Vol}(\mu_{\mathcal{A}}(U)).
$$
Finally we quote here an useful result:
\begin{lem}[\cite{WN15}, Lemma $3.1.$]
\label{lem:Legendre}
Let $U$ be a relatively compact open subset of $\mathcal{D}_{\mathcal{A}}$. Then there exists a smooth function $g:M_{A}\to \mathbbm{R}$ with compact support such that $\omega=\omega_{\mathcal{A}}+dd^{c}g$ is Kähler and $\omega=\omega_{st}$ over $U$.
\end{lem}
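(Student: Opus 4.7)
The plan is to pass to the Legendre-dual (symplectic-potential) picture and interpolate between the symplectic potentials of $\omega_{\mathcal{A}}$ and $\omega_{st}$ on the moment polytope $\mathrm{Conv}(\mathcal{A})^{\mathrm{ess}}$. Since $\omega_{\mathcal{A}}$, $\omega_{st}$ and (after averaging) $U$ are $(S^{1})^{n}$-invariant, I look for an invariant $g$; in the logarithmic coordinates $x_{k}=\log|z_{k}|^{2}$ on $(\mathbbm{C}^{*})^{n}\cap X_{\mathcal{A}}$, this reduces to constructing a smooth strictly convex $\tilde{u}:\mathbbm{R}^{n}\to\mathbbm{R}$ that agrees with $u_{st}(x)=\sum_{k}e^{x_{k}}$ (up to an affine term) on a neighborhood of the log-image of $U\cap(\mathbbm{C}^{*})^{n}$ and with $u_{\mathcal{A}}(x)=\log\sum_{\alpha}e^{\langle\alpha,x\rangle}$ outside a set whose associated subset of $X_{\mathcal{A}}$ is relatively compact.

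Take the Legendre duals $v_{\mathcal{A}}=u_{\mathcal{A}}^{*}$ on $\mathrm{Conv}(\mathcal{A})^{\circ}$ and $v_{st}(y)=\sum_{k}(y_{k}\log y_{k}-y_{k})$ on $(0,\infty)^{n}$, both smooth and strictly convex; along the coordinate-hyperplane portion of $\mathrm{Conv}(\mathcal{A})^{\mathrm{ess}}$ they share the same Guillemin-type asymptotics (each encodes smoothness of the associated Kähler form across the corresponding toric stratum of $X_{\mathcal{A}}$), so their difference extends smoothly there. Set $K:=\mu_{\mathcal{A}}(\overline{U})\Subset\mathrm{Conv}(\mathcal{A})^{\mathrm{ess}}$, pick a slightly larger compact $K'$ with $K\subset(K')^{\circ}$ and $K'\Subset\mathrm{Conv}(\mathcal{A})^{\mathrm{ess}}$, a smooth cutoff $\rho$ equal to $1$ near $K$ and supported in $K'$, an affine $\ell$ so that $v_{st}+\ell$ matches $v_{\mathcal{A}}$ at a reference point in the transition zone, and a strictly convex bump $\psi\geq 0$ supported in $K'\setminus K^{\circ}$. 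Define
$$
\tilde{v}:=(1-\rho)\,v_{\mathcal{A}}+\rho\bigl(v_{st}+\ell\bigr)+C\,\psi,
$$
with $C\gg 1$ large enough that $C\,\mathrm{Hess}(\psi)$ dominates the indefinite cross terms introduced by $\nabla\rho$ (bounded uniformly on the compact $K'$). Then $\tilde{v}$ is smooth and strictly convex throughout $\mathrm{Conv}(\mathcal{A})^{\mathrm{ess}}$, equals $v_{st}+\ell$ near $K$, equals $v_{\mathcal{A}}$ outside $K'$, and the shared Guillemin asymptotics are preserved provided $\rho,\psi$ vanish near any coordinate face of $\mathrm{Conv}(\mathcal{A})$ meeting $\mathrm{Conv}(\mathcal{A})^{\mathrm{ess}}$.

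Let $\tilde{u}:=\tilde{v}^{*}$: smooth and strictly convex on $\mathbbm{R}^{n}$, equal to $u_{\mathcal{A}}$ outside a region whose associated subset of $X_{\mathcal{A}}$ is $\mu_{\mathcal{A}}^{-1}(K')\Subset X_{\mathcal{A}}$ (compactness from properness of $\mu_{\mathcal{A}}$ onto $\mathrm{Conv}(\mathcal{A})^{\mathrm{ess}}$). Extending $\tilde{u}(\log|z_{1}|^{2},\dots,\log|z_{n}|^{2})$ smoothly across the coordinate strata meeting $\mathcal{D}_{\mathcal{A}}$ produces a torus-invariant potential $\tilde{\phi}$ on $X_{\mathcal{A}}$, and setting $g:=\tilde{\phi}-\phi_{\mathcal{A}}$ yields the required function: smooth, compactly supported in $\mu_{\mathcal{A}}^{-1}(K')$, making $\omega:=\omega_{\mathcal{A}}+dd^{c}g=dd^{c}\tilde{\phi}$ Kähler (from strict convexity of $\tilde{u}$) and equal to $\omega_{st}$ on $U$ (since $\tilde{v}=v_{st}+\ell$ near $K$ and the affine $\ell$ drops out under $dd^{c}$).

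The main obstacle is maintaining strict convexity of $\tilde{v}$ in the transition zone $K'\setminus K$: the product-rule cross terms $D\,\mathrm{Hess}(\rho)+\mathrm{sym}(2\nabla\rho\otimes\nabla D)$ (with $D=v_{st}+\ell-v_{\mathcal{A}}$) are indefinite and could destroy positivity of the Hessian, so the strictly convex penalty $C\psi$ with $C$ sufficiently large is essential to control them. A subsidiary matter—smooth extension of $\tilde{\phi}$ across the coordinate strata of $X_{\mathcal{A}}$—is automatic from the shared Guillemin asymptotics of $v_{\mathcal{A}}$ and $v_{st}$ together with the localization of the perturbation away from those strata.
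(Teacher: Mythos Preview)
The paper itself does not prove this lemma; it is quoted from \cite{WN15} (the sentence ``we quote here an useful result'' precedes the statement), so there is no in-paper argument to compare against. The label of the lemma and the surrounding moment-map discussion make clear that the intended proof is via Legendre duality of the toric potentials, and your overall strategy matches that.

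There is, however, a concrete slip. You claim that ``the affine $\ell$ drops out under $dd^{c}$,'' but this holds only for the constant part of $\ell$. If $\ell(y)=\langle a,y\rangle+b$ with $a\neq 0$, the Legendre dual of $v_{st}+\ell$ is $x\mapsto u_{st}(x-a)-b=\sum_{k}e^{-a_{k}}e^{x_{k}}-b$, and passing back to $z$-coordinates gives $dd^{c}\bigl(\sum_{k}e^{-a_{k}}|z_{k}|^{2}\bigr)=\sum_{k}e^{-a_{k}}\tfrac{i}{2\pi}\,dz_{k}\wedge d\bar{z}_{k}$, a \emph{rescaled} flat form rather than $\omega_{st}$. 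The repair is easy---take $\ell$ to be a constant (the transition-zone cross terms are still uniformly bounded on the compact $K'$, so the bump $C\psi$ still does its job)---but the sentence as written is false. A second looseness: you set $K:=\mu_{\mathcal{A}}(\overline{U})$, yet the set on which you actually need $dd^{c}\tilde{\phi}=\omega_{st}$ is $U$, which is described via the \emph{standard} moment map $\mu$ (recall $\mathcal{D}_{\mathcal{A}}=\mu^{-1}(\mathrm{Conv}(\mathcal{A})^{\mathrm{ess}})$); after Legendre-transforming back, the region where $\tilde{u}$ has the standard form corresponds to a neighborhood of $K$ under $\nabla v_{st}$, and you have not checked that this contains the $x$-image of $U$. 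Again the fix is immediate---enlarge $K$ so that it also contains $\mu(\overline{U})$---but it should be said.
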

\section{Multipoint Okounkov bodies}
\label{sec:MOB}
We fix an additive total order $>$ on $\mathbbm{Z}^{n}$ and a family of faithful valuations $\nu^{p_{j}}:\mathbbm{C}(X)\setminus\{0\}\to (\mathbbm{Z}^{n},>)$ centered at $p_{j}$, where recall that $p_{1},\dots,p_{N}$ are different points chosen on the $n-$dimensional projective manifold $X$ and $L$ is a line bundle on $X$.
\begin{defn}
We define $V_{\cdot,j}\subset R(X,L)$ as
$$
V_{k,j}=\{s\in H^{0}(X,kL)\setminus\{0\}\, :\, \nu^{p_{j}}(s)<\nu^{p_{i}}(s)\,\mathrm{for}\, \mathrm{any} \, i\neq j\}.
$$
\end{defn}
\begin{rem}
\emph{They are disjoint graded subsemigroups with respect to the multiplicative action since $\nu^{p_{j}}(s_{1}\otimes s_{2})=\nu^{p_{j}}(s_{1})+\nu^{p_{j}}(s_{2})$, but they are not necessarily closed under addition and $\cup_{j=1}^{N}V_{k,j}$ is typically strictly contained in $H^{0}(X,kL)\setminus\{0\}$ for some $k\geq 1$. Note that $V_{k,j}$ contains sections whose leading term at $p_{j}$ with respect to $\nu^{p_{j}}$ is strictly smaller than the leading term at $p_{i}$ with respect to $\nu^{p_{i}}$ for any $i\neq j$.}
\end{rem}
Clearly the properties of the valuations $\nu^{p_{j}}$ assure that
\begin{itemize}
\item[i)] $\nu^{p_{j}}(s)=+\infty$ iff $s=0$ (by extension $\nu^{p_{j}}(0):=+\infty$);
\item[ii)] for any $s\in V_{\cdot,j}$ and for any $0\neq a\in \mathbbm{C}$, $\nu^{p_{j}}(as)=\nu^{p_{j}}(s)$.
\end{itemize}
Thus we can define
$$
\Gamma_{j}:=\{(\nu^{p_{j}}(s),k)\,:\, s\in V_{k,j},k\geq 1\}\subset \mathbbm{Z}^{n}\times \mathbbm{Z}.
$$
\begin{lem}
\label{lem:Additive}
$\Gamma_{j}$ is an additive subsemigroup of $\mathbbm{Z}^{n+1}$ and $(\Gamma_{j},\mathbbm{R}^{n}\times \mathbbm{R})$ is a strongly admissible pair.
\end{lem}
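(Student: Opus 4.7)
The plan has three distinct parts, matching the two assertions of the lemma.

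First I would verify closure of $\Gamma_{j}$ under addition. Pick $(v_{1},k_{1}),(v_{2},k_{2})\in \Gamma_{j}$ coming from $s_{l}\in V_{k_{l},j}$ with $v_{l}=\nu^{p_{j}}(s_{l})$, and consider $s:=s_{1}\otimes s_{2}\in H^{0}(X,(k_{1}+k_{2})L)\setminus\{0\}$. Since every $\nu^{p_{i}}$ is a valuation, property (iii) gives $\nu^{p_{i}}(s)=\nu^{p_{i}}(s_{1})+\nu^{p_{i}}(s_{2})$ for each $i$. For any $i\neq j$ we have $\nu^{p_{j}}(s_{l})<\nu^{p_{i}}(s_{l})$ by the definition of $V_{k_{l},j}$, and summing these two strict inequalities (valid because $>$ is additive on $\mathbbm{Z}^{n}$) yields
\[
\nu^{p_{j}}(s)=\nu^{p_{j}}(s_{1})+\nu^{p_{j}}(s_{2})<\nu^{p_{i}}(s_{1})+\nu^{p_{i}}(s_{2})=\nu^{p_{i}}(s).
\]
Hence $s\in V_{k_{1}+k_{2},j}$ and $(v_{1}+v_{2},k_{1}+k_{2})\in \Gamma_{j}$. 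This is exactly the remark preceding the lemma, now promoted to a proof.

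Second, admissibility of $(\Gamma_{j},\mathbbm{R}^{n}\times\mathbbm{R}_{\geq 0})$ is immediate from the construction, since the last coordinate $k$ ranges over $\mathbbm{N}$ by definition.

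Finally I would address strong admissibility, which is the only point that is not purely formal. The observation is that $\Gamma_{j}$ is a subsemigroup of the one-point Okounkov semigroup
\[
\Gamma:=\{(\nu^{p_{j}}(s),k)\, :\, s\in H^{0}(X,kL)\setminus\{0\},\, k\geq 0\}
\]
associated to the faithful valuation $\nu^{p_{j}}$ at $p_{j}$. Therefore $C(\Gamma_{j})\subset C(\Gamma)$, and it suffices to show that $(\Gamma,\mathbbm{R}^{n}\times\mathbbm{R}_{\geq 0})$ is strongly admissible, i.e.\ that $C(\Gamma)\cap(\mathbbm{R}^{n}\times\{0\})=\{0\}$. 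This is a standard fact quoted in subsection \ref{ssec:Okounkov} from \cite{LM09} and \cite{KKh12}: since $\nu^{p_{j}}$ is faithful (hence has one-dimensional leaves) and centered at $p_{j}$, the induced Okounkov body $\Delta(\Gamma)$ is compact, which by Remark \ref{rem:Okou} is equivalent to strong admissibility of $(\Gamma,\mathbbm{R}^{n}\times\mathbbm{R}_{\geq 0})$. The inclusion $C(\Gamma_{j})\subset C(\Gamma)$ then transfers strict convexity to the cone over $\Gamma_{j}$.

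The whole argument is essentially bookkeeping; the one potential pitfall is to make sure the strict inequality on $\nu^{p_{j}}$ versus $\nu^{p_{i}}$ is preserved under addition, which is precisely where the additivity of the order $>$ on $\mathbbm{Z}^{n}$ is used, and the reduction of strong admissibility to the already-known one-point case, which avoids any independent estimate on the growth of $\nu^{p_{j}}$ on $H^{0}(X,kL)$.
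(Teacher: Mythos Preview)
Your argument is correct and matches the paper's proof exactly: the paper dismisses the semigroup property as ``an immediate consequence of the definition'' (which you spell out in detail) and obtains strong admissibility from the inclusion $\Gamma_{j}\subset\Gamma_{p_{j}}$ into the one-point semigroup, precisely as you do.
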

\begin{proof}
The first part is an immediate consequence of the definition, while the second assertion follows from the inclusion $\Gamma_{j}\subset \Gamma_{p_{j}}:=\{(\nu^{p_{j}}(s),k)\,:\,s\in H^{0}(X,kL)\setminus\{0\},k\geq 1\}$ (see subsection \ref{ssec:Okounkov}).
\end{proof}
\begin{defn}
We call $\Delta_{j}(L):=\Delta(\Gamma_{j})$ the \textbf{multipoint Okounkov body} of $L$ at $p_{j}$.
\end{defn}
Observe that by Proposition \ref{prop:First} $\Delta_{j}(L)=\overline{\bigcup_{k\geq 1}\frac{\nu^{p_{j}}(V_{k,j})}{k}}$ and that these multipoint Okounkov bodies depend on the choice of the faithful valuations $\nu^{p_{1}},\dots,\nu^{p_{N}}$, but we omit the dependence to simplify the notations.
\begin{rem}
\label{rem:ParticularValuations}
\emph{If we fix local holomorphic coordinates $\{z_{j,1},\dots,z_{j,n}\}$ around $p_{j}$, we can consider any family of faithful quasi-monomial valuations $\nu^{p_{j}}$ with center $p_{1},\dots,p_{N}$ (see paragraph $\S \ref{paragraph:ParticularValuations}$) with respect to the same choice of a total additive order on $\mathbbm{Z}^{n}$ and to the choice of families of $\mathbbm{Z}-$linearly independent vectors $\vec{\lambda}_{1,j},\dots,\vec{\lambda}_{n,j}\in\mathbbm{Z}^{n}$ (these families of vectors may be different). For instance we can choose those associated to the family of admissible flags $Y_{j,i}:=\{z_{j,1}=\cdots=z_{j,i}=0\}$ (with $\mathbbm{Z}^{n}$ equipped of the lexicographical order) or those associated to the family of infinitesimal flags $Y_{j,\cdot}$ (with in this case $\mathbbm{Z}^{n}$ equipped pf the deglex order).}
\end{rem}
\begin{lem}
\label{lem:IntOk}
The following statements hold:
\begin{itemize}
\item[i)] $\Delta_{j}(L)$ is a compact convex set contained in $\mathbbm{R}^{n}$;
\item[ii)] if $p_{j}\notin \mathbbm{B}_{+}(L)$ then $\Gamma_{j}(L)$ generates $\mathbbm{Z}^{n+1}$ as a group. In particular $\Delta_{j}(L)^{\circ}\neq \emptyset$;
\item[iii)] if $\Gamma_{j}(L)$ is not empty then it generates $\mathbbm{Z}^{n+1}$ as a group. In particular $\Delta_{j}(L)^{\circ}\neq \emptyset$ iff $\Delta_{j}(L)\neq \emptyset$.
\end{itemize}
\end{lem}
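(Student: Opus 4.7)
I will prove (i), (iii), (ii) in that order, since (ii) will follow from (iii) once a single element of $V_{k_{0},j}$ is exhibited.

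Part (i) is immediate: by Lemma \ref{lem:Additive} the pair $(\Gamma_{j},\mathbbm{R}^{n}\times\mathbbm{R}_{\geq 0})$ is strongly admissible, and Remark \ref{rem:Okou} then gives that $\Delta_{j}(L)=\Delta(\Gamma_{j})$ is compact and convex.

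For (iii), fix $s_{0}\in V_{k_{0},j}$ and set $\gamma_{0}:=\nu^{p_{j}}(s_{0})$. The plan is to show that $\mathrm{Group}(\Gamma_{j})\supset\Gamma_{p_{j}}$, where $\Gamma_{p_{j}}=\{(\nu^{p_{j}}(s),k)\,:\,s\in H^{0}(X,kL)\setminus\{0\}\}$ is the one-point semigroup of \S\ref{ssec:Okounkov}; the classical one-point theory then finishes the job, since for a big line bundle and a faithful valuation $\Gamma_{p_{j}}$ generates $\mathbbm{Z}^{n+1}$ as a group (via the surjection $\nu^{p_{j}}\colon\mathbbm{C}(X)^{*}\twoheadrightarrow\mathbbm{Z}^{n}$ combined with the identification of $\mathbbm{C}(X)$ as the degree-zero fraction field of the section ring, plus the existence of sections at consecutive levels). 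To prove the inclusion, let $(\beta,m)\in\Gamma_{p_{j}}$ with $s\in H^{0}(X,mL)$ achieving $\nu^{p_{j}}(s)=\beta$, and verify that for $N$ sufficiently large $s_{0}^{N}\otimes s\in V_{Nk_{0}+m,j}$, which reduces to checking
\[
\nu^{p_{i}}(s_{0}^{N}\otimes s)-\nu^{p_{j}}(s_{0}^{N}\otimes s)=N\bigl(\nu^{p_{i}}(s_{0})-\nu^{p_{j}}(s_{0})\bigr)+\bigl(\nu^{p_{i}}(s)-\nu^{p_{j}}(s)\bigr)>0.
\]
Hence $(N\gamma_{0}+\beta,Nk_{0}+m)\in\Gamma_{j}$, and subtracting $N(\gamma_{0},k_{0})\in N\Gamma_{j}\subset\mathrm{Group}(\Gamma_{j})$ places $(\beta,m)$ inside $\mathrm{Group}(\Gamma_{j})$.

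For (ii), I reduce to (iii) by producing one element of $V_{k_{0},j}$. From $p_{j}\notin\mathbbm{B}_{+}(L)$ I pick an ample $\mathbbm{Q}$-divisor $A$ with $p_{j}\notin\mathbbm{B}(L-A)$, and $k_{0}$ divisible enough that $k_{0}(L-A)$ is integral with a section $s_{0}$ not vanishing at $p_{j}$ while $k_{0}A$ is very ample. Using very ampleness of $k_{0}A$, possibly after enlarging $k_{0}$, I pick $t\in H^{0}(X,k_{0}A)$ with $t(p_{j})\neq 0$ and $t\in\mathfrak{m}_{p_{i}}^{M}$ for every $i\neq j$, taking $M$ so large that $\nu^{p_{i}}(t)>0=\nu^{p_{j}}(s_{0}\otimes t)$ in the chosen order. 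Then $s_{0}\otimes t\in H^{0}(X,k_{0}L)\cap V_{k_{0},j}$, so $\Gamma_{j}\neq\emptyset$ and (iii) applies.

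The principal obstacle lies in the amplification step of (iii): the positivity $Nx+y>0$ for $N$ large, with $x>0$ fixed and $y\in\mathbbm{Z}^{n}$ arbitrary, does not hold for non-Archimedean total additive orders such as the lexicographic one. This will be handled either by first replacing $s_{0}$ with a product $s_{0}\otimes s_{0}'$ for an auxiliary $s_{0}'\in V_{\cdot,j}$ chosen so that the leading non-zero coordinate of $\nu^{p_{i}}(s_{0}\otimes s_{0}')-\nu^{p_{j}}(s_{0}\otimes s_{0}')$ sits early enough in the order to dominate the fixed perturbation $\nu^{p_{i}}(s)-\nu^{p_{j}}(s)$, or by relaxing the test class from $V_{\cdot,j}$ to the weaker $\{t\,:\,\nu^{p_{j}}(t)\leq\nu^{p_{i}}(t)\text{ for all }i\neq j\}$, which is already enough to generate $\mathrm{Group}(\Gamma_{p_{j}})$ once tensored with $s_{0}$.
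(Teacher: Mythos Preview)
Your argument follows the paper's closely. Part (i) is identical. In (iii) you use the same amplification $s_{0}^{N}\otimes t$, packaged as the containment $\mathrm{Group}(\Gamma_{j})\supset\Gamma_{p_{j}}$ rather than the paper's extraction of the finite generating set $(\vec v,m),(\vec v+\vec\lambda_{l},m),(\vec v,m+1)$ from Lemma~2.2 of \cite{LM09}. For (ii) you reduce to (iii) by exhibiting one element of $V_{k_{0},j}$, whereas the paper proves (ii) directly by building the generators already inside $V_{\cdot,j}$ via sections $s_{l}\in V_{1,j}(B)$ of an auxiliary very ample $B$ tensored with a section of $kL-B$ nonvanishing at $p_{j}$; that route needs no amplification at all and is a little more robust.

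You are right that the amplification step is the crux and that $N\bigl(\nu^{p_{i}}(s_{0})-\nu^{p_{j}}(s_{0})\bigr)+\bigl(\nu^{p_{i}}(t)-\nu^{p_{j}}(t)\bigr)>0$ can fail for every $N$ when the order on $\mathbbm{Z}^{n}$ is non-Archimedean: in lex, a gap $(0,1)$ never dominates a deficit $(1,0)$. The paper's proof of (iii) contains the identical step and disposes of it with the phrase ``by definition of $V_{\cdot,j}(L)$'', so you have in fact located a point the paper glosses over rather than a divergence from it. That said, neither of your proposed patches closes the gap. Fix~2 does yield the useful inclusion $s_{0}\otimes\{t:\nu^{p_{j}}(t)\le\nu^{p_{i}}(t)\ \forall i\}\subset V_{\cdot,j}$ (strict plus weak is strict), but the assertion that the $\le$-class ``already generates $\mathrm{Group}(\Gamma_{p_{j}})$'' meets the same Archimedean wall: you still have to manufacture sections with prescribed $\nu^{p_{j}}$ satisfying $\nu^{p_{i}}(t)\ge\nu^{p_{j}}(t)$, and nothing forces this. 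Fix~1 presupposes an auxiliary $s_{0}'\in V_{\cdot,j}$ whose gap $\nu^{p_{i}}(s_{0}')-\nu^{p_{j}}(s_{0}')$ escapes every proper convex subgroup of $(\mathbbm{Z}^{n},>)$, and the bare hypothesis $\Gamma_{j}\neq\emptyset$ does not supply such an element. A way to repair the argument, at least for the quasi-monomial valuations the paper actually uses, is to exploit the freedom in the choice of the finitely many $t_{l}$'s: twist by high powers of a very-ample section nonzero at $p_{j}$ but vanishing along the first flag divisor $Y_{1,i}$ for each $i\neq j$, forcing $\nu^{p_{i}}(t_{l})$ to have large \emph{first} coordinate and hence to be Archimedean-dominant; then a single product with $s_{0}$ lands in $V_{\cdot,j}$. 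Neither you nor the paper carries this out.
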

\begin{proof}
The first point follows by construction (see Definition \ref{defn:Okou} and Remark \ref{rem:Okou}).\\
\emph{\textbf{Proof of (ii).}} Proceeding similarly to Lemma $2.2$ in \cite{LM09}, let $D$ be a big divisor such that $L=\mathcal{O}_{X}(D)$ and let $A,B$ be two fixed ample divisors such that $D=A-B$. Since $D$ is big there exists $\mathbbm{N}\ni k\gg 1$ such that $kD-B$ is linearly equivalent to an effective divisor $F$.\\
Moreover, since by hypothesis $p_{j}\notin\mathbbm{B}_{+}(L)$, by taking $k\gg 1$ big enough, we may assume that $p_{j}\notin \mathrm{Supp}(F)$ (see Corollary $1.6.$ in \cite{ELMNP06}), thus $F$ is described by a global section $f$ that is an unity in $\mathcal{O}_{X,p_{j}}$. Then, possibly adding a very ample divisor to $A$ and $B$ we may suppose that there exist sections $s_{0},\dots,s_{n}\in V_{1,j}(B)$ such that $\nu^{p_{j}}(s_{0})=\vec{0}$ and $\nu^{p_{j}}(s_{l})=\vec{\lambda}_{l}$ for any $l=1,\dots,n$ where $\vec{\lambda}_{1},\dots,\vec{\lambda}_{n}$ are linearly independent vectors in $\mathbbm{Z}^{n}$ which generate all $\mathbbm{Z}^{n}$ as a group (remember that the valuations $\nu^{p_{j}}$ are faithful). Thus, since $s_{i}\otimes f\in V_{1,j}(kL)$ for any $i=0,\dots,n$ and $\nu^{p_{j}}(f)=\vec{0}$, we get
$$
(\vec{0},k),(\vec{\lambda}_{1},k),\dots,(\vec{\lambda}_{n},k)\in\Gamma_{j}(L).
$$
And, since $(k+1)D-F$ is linearly equivalent to $A$ we may also assume that $(\vec{0},k+1)\in \Gamma_{j}(L)$, which concludes the proof of $(ii)$.\\
\emph{\textbf{Proof of (iii).}} Let $s\in V_{k,j}(L)$ such that $(\nu^{p_{j}}(s),k)\in \Gamma_{j}(L)$ and set $\vec{w}:=\nu^{p_{j}}(s)$. Then by Lemma $2.2$ in \cite{LM09} there exists $m\in \mathbbm{N}$ big enough and a vector $\vec{v}\in\mathbbm{Z}^{n}$ such that
\begin{equation}
\label{eqn:ValPoin}
(\vec{v},m),(\vec{v}+\vec{\lambda}_{1},m),\dots,(\vec{v}+\vec{\lambda}_{n},m), (\vec{v},m+1)\in \Gamma(L)
\end{equation}
where with $\Gamma(L)$ we denote the semigroup associated to $\nu^{p_{j}}$ for the one-point Okounkov body (see subsection \ref{ssec:Okounkov}) and where $\vec{\lambda}_{1},\dots,\vec{\lambda}_{n}$ are linearly independent vectors in $\mathbbm{Z}^{n}$ as in $(ii)$. The points in (\ref{eqn:ValPoin}) correspond to sections $t_{0},\dots,t_{n}\in H^{0}(X,mL)\setminus \{0\}, t_{n+1}\in H^{0}(X,(m+1)L)\setminus\{0\}$. Next by definition of $V_{\cdot,j}(L)$ there exists $N\gg 1$ big enough such that $s^{N}\otimes t_{j}\in V_{Nk+m,j}(L)$ for any $j=0,\dots,n$ and $s^{N}\otimes t_{n+1}\in V_{Nk+m+1}(L)$. Therefore
$$
(N\vec{w}+\vec{v},m),(N\vec{w}+\vec{v}+\vec{\lambda}_{1},m),\dots,(N\vec{w}+\vec{v}+\vec{\lambda}_{n},k), (N\vec{w}+\vec{v},m+1)\in \Gamma_{j}(L),
$$
which concludes the proof.
\end{proof}
\begin{rem}
\emph{Let $X$ be a curve, $L$ be a line bundle of degree $\deg L=c$, and $p_{1},\dots,p_{N}$ be different points on $X$. Then by the proof of Lemma \ref{lem:IntOk}, $\Delta_{j}(L)$ are intervals in $\mathbbm{R}$ containing the origin. Moreover if the points are very general and the faithful valuations $\nu^{p_{j}}$ are associated to admissible or to infinitesimal flags, then $\Delta_{j}(L)=[0,c/N]$ for any $j=1,\dots,N$ as a consequence of Theorem \ref{ThmA}.}
\end{rem}
\begin{rem}
\label{rem:Ex}
\emph{In higher dimension, however, the situation is more complicated. Indeed it may happen that $\Delta_{j}(L)=\emptyset$ for some $j$ as the following simple example shows.\\
Consider on $X=\mathrm{Bl}_{q}\mathbbm{P}^{2}$ two points $p_{1}\notin \mathrm{Supp}(E)$ and $p_{2}\in \mathrm{Supp}(E)$ ($E$ exceptional divisor), and consider the big line bundle $L:=H+aE$ for $a>1$. Clearly, if we consider the family of admissible flags given by any fixed holomorphic coordinates centered at $p_{1}$ and holomorphic coordinates $\{z_{1,2},z_{2,2}\}$ centered at $p_{2}$ where locally $E=\{z_{1,2}=0\}$, then $\Delta_{2}(L)= \emptyset$. Indeed by the theory of one-point Okounkov bodies for surfaces (see section $6.2$ in \cite{LM09}) $\Delta_{1}(L)\subset \Delta^{p_{1}}(L)=\Sigma$ (where $\Sigma$ is the standard $2-$simplex and $\Delta^{p_{1}}(L)$ the one-point Okounkov body) while $\Delta_{2}(L)\subset \Delta^{p_{2}}(L)=(a,0)+\Sigma^{-1}$ ($\Sigma^{-1}=Conv(\vec{0},\vec{e}_{1},\vec{e}_{1}+\vec{e}_{2})$ inverted simplex), and the conclusion follows by construction. Actually, from Theorem \ref{ThmA} we get $\Delta_{1}(L)=\Sigma$.\\
We refer to subsection \ref{subsection:Surfaces} for a detailed analysis on the multipoint Okounkov bodies on surfaces, and to subsection \ref{subsection:Toric} for the toric case.}
\end{rem}
\subsection{Proof of Theorem \ref{ThmA}}
\label{subsection:TheoremA}
The goal of this section is to prove Theorem \ref{ThmA}.
\begin{reptheorem}{ThmA}
Let $L$ be a big line bundle. Then
$$
n!\sum_{j=1}^{N}\mathrm{Vol}_{\mathbbm{R}^{n}}(\Delta_{j}(L))=\mathrm{Vol}_{X}(L)
$$
\end{reptheorem}
We first introduce $W_{\cdot,j}\subset R(X,L)$ as
\begin{multline*}
W_{k,j}:=\{s\in H^{0}(X,kL)\setminus\{0\}\, : \, \nu^{p_{j}}(s)\leq \nu^{p_{i}}(s)\, \mathrm{if}\, 1\leq i\leq j\, \mathrm{and} \,\nu^{p_{j}}(s)< \nu^{p_{i}}(s)\, \mathrm{if} \, j<i\leq N\}
\end{multline*}
and we set $\Gamma_{W,j}:=\{(\nu^{p_{j}}(s),k)\, : \, s\in W_{k,j}, k\geq 1\}$. It is clear $W_{\cdot,j}$ are graded subsemigroups of $R(X,L)$ and that Lemma \ref{lem:Additive} holds for $\Gamma_{W,j}$. Moreover they are closely related to $V_{\cdot,j}$ and $\bigsqcup_{j=1}^{N} W_{k,j}=H^{0}(X,kL)\setminus \{0\}$ for any $k\geq 1$, but they depend on the order chosen on the points.
\begin{lem}
\label{prop:Vol}
For every $k\geq 1$ we have that
$$
\sum_{j=1}^{N}\# \Gamma_{W,j}^{k}= h^{0}(X,kL),
$$
where we recall that $\Gamma_{W,j}^{k}:=\{\alpha\in\mathbbm{R}^{n} : (k\alpha,k)\in \Gamma_{W,j}\}$.
\end{lem}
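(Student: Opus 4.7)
The plan is to construct, for each $k\geq 1$ and each $\alpha \in \mathbbm{Z}^n$, a filtration of $H^{0}(X,kL)$ whose graded pieces exactly detect the elements of $\nu^{p_j}(W_{k,j})$ for each $j$. The careful asymmetry in the definition of $W_{k,j}$ (strict inequality for $i>j$, non-strict for $i<j$) is tailored precisely so that such a filtration exists.

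Concretely, I would define for each $\alpha\in\mathbbm{Z}^{n}$ and $j\in\{0,1,\dots,N\}$ the subset
$$
U_\alpha^j := \{s\in H^0(X,kL)\setminus\{0\} : \nu^{p_i}(s)\geq \alpha \text{ for } i\leq j,\ \nu^{p_i}(s)>\alpha \text{ for } i>j\}\cup\{0\},
$$
with the conventions $U_\alpha^0 = \{s : \nu^{p_i}(s)>\alpha\ \forall i\}\cup\{0\}=:F_{>\alpha}$ and $U_\alpha^N = \{s:\nu^{p_i}(s)\geq \alpha\ \forall i\}\cup\{0\}=:F_\alpha$. The ultrametric property $\nu^{p_i}(s+t)\geq\min\{\nu^{p_i}(s),\nu^{p_i}(t)\}$ together with the $\mathbbm{C}^*$-invariance of each $\nu^{p_i}$ makes each $U_\alpha^j$ a linear subspace, and the nesting $U_\alpha^0\subset U_\alpha^1\subset\cdots\subset U_\alpha^N$ is automatic.

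The key step, and the heart of the argument, is to show that each graded piece $U_\alpha^j/U_\alpha^{j-1}$ has dimension at most $1$, and is exactly $1$-dimensional precisely when $\alpha\in\nu^{p_j}(W_{k,j})$. Matching definitions shows that $s\in U_\alpha^j\setminus U_\alpha^{j-1}$ is equivalent to $s\in W_{k,j}$ with $\nu^{p_j}(s)=\alpha$, which gives the ``non-empty iff'' direction. For the upper bound on dimension I would invoke the one-dimensional leaf property of the faithful valuation $\nu^{p_j}$: if $s_1,s_2\in U_\alpha^j\setminus U_\alpha^{j-1}$, then both have $\nu^{p_j}$-value equal to $\alpha$, so there exists $c\in\mathbbm{C}^*$ with $\nu^{p_j}(s_1-cs_2)>\alpha$; since $U_\alpha^j$ is a subspace, $s_1-cs_2$ still satisfies the other inequalities defining $U_\alpha^j$ and hence lies in $U_\alpha^{j-1}$, making $[s_1]$ and $[s_2]$ proportional in the quotient.

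Once this is established, the result follows by two telescoping sums. First, telescoping over $j$ gives
$$
\dim\bigl(F_\alpha/F_{>\alpha}\bigr)=\sum_{j=1}^{N}\dim\bigl(U_\alpha^{j}/U_\alpha^{j-1}\bigr)=\#\{j:\alpha\in\nu^{p_j}(W_{k,j})\}.
$$
Since $H^{0}(X,kL)$ is finite-dimensional and each $\nu^{p_j}$ takes only finitely many values on it, the filtration $\{F_\alpha\}$ has only finitely many jumps, and a standard telescoping over $\alpha$ (using that the $\nu^{p_j}$ are centered so all values are $\geq \vec 0$ and bounded) yields $\dim H^{0}(X,kL)=\sum_{\alpha}\dim(F_\alpha/F_{>\alpha})$. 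Swapping the order of summation and recalling $\#\Gamma_{W,j}^{k}=\#\nu^{p_j}(W_{k,j})$ finishes the proof. The main obstacle is purely bookkeeping: verifying cleanly that the combined conditions defining $U_\alpha^j$ yield vector subspaces with the correct successive quotients, which is ultimately a direct consequence of how the strict/non-strict inequalities in $W_{k,j}$ have been arranged.
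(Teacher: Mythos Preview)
Your proof is correct and takes a genuinely different route from the paper's. The paper defines a single product valuation $\nu=(\nu^{p_1},\dots,\nu^{p_N})$ into $\mathbbm{Z}^{Nn}$ with the lexicographic order, picks for each element of $\Gamma_{W,j}^{k}$ a representative section, and then shows by hand that this collection is a basis of $H^{0}(X,kL)$: linear independence is immediate since the $\nu$-values are distinct, while spanning is obtained by an iterative reduction argument that repeatedly subtracts off a chosen basis vector to strictly increase $\min_{j}\nu^{p_{j}}$. Your argument instead organizes the same information into a two-parameter filtration $\{U_{\alpha}^{j}\}$ and reads off the count from the one-dimensionality of the successive quotients, then telescopes twice. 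This is cleaner and more structural: it isolates exactly where the asymmetric strict/non-strict inequalities in the definition of $W_{k,j}$ are used (namely, to make the $U_{\alpha}^{j}$ genuinely nested), and it replaces the paper's termination argument by a straightforward dimension count. The paper's approach, on the other hand, has the virtue of producing an explicit basis of $H^{0}(X,kL)$ adapted to the multipoint data, which may be useful elsewhere. One minor remark: your appeal to ``centered so all values are $\geq\vec{0}$ and bounded'' is not really what makes the $\alpha$-telescoping work; the actual input is that $H^{0}(X,kL)$ is finite-dimensional and each $\nu^{p_{j}}$ has one-dimensional leaves, so the union $\bigcup_{j}\nu^{p_{j}}(H^{0}(X,kL)\setminus\{0\})$ is finite and $F_{>\alpha}=F_{\alpha'}$ for $\alpha'$ the next element of that finite set.
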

\begin{proof}
We define a new valuation $\nu: \mathbbm{C}(X)\setminus\{0\}\to 
{\mathbbm{Z}^{n}\times\dots\times\mathbbm{Z}^{n}\simeq \mathbbm{Z}^{Nn}}$ given by $\nu(f):=(\nu^{p_{1}}(f),\dots,\nu^{p_{N}}(f))$, where we put on $\mathbbm{Z}^{Nn}$ the lexicographical order on the product of $N$ total ordered abelian groups $\mathbbm{Z}^{n}$, i.e.
$$
(\lambda_{1},\dots,\lambda_{N})<(\mu_{1},\dots,\mu_{N}) \, \mbox{if there exists} \,  j\in\{1,\dots,N\} \, \mathrm{s.t.} \, \lambda_{i}=\mu_{i} \, \forall i < j \,\mathrm{and} \, \lambda_{j}<\mu_{j}.
$$
Fix $k\in\mathbbm{N}$. For every $j=1,\dots, N$, let $ \Gamma_{W,j}^{k}=\{\alpha_{j,1},\dots, \alpha_{j,r_{j}}\} $ and $ s_{j,1},\dots, s_{j,r_{j}}\in W_{k,j} $ be a set of sections such that $ \nu^{p_{j}}(s_{j,l})=\alpha_{j,l} $ for any $l=1,\dots, r_{j}$.\\
We next prove that $\{s_{1,1},\dots,s_{N,r_{N}}\}$ is a basis of $H^{0}(X,kL)$.\\
Let $\sum_{i=1}^{r}\mu_{i}s_{i}=0$ be a linear relation in which $\mu_{i}\neq 0$, $s_{i}\in\{s_{1,1},\dots,s_{N,r_{N}}\}$ for all $i=1,\dots, r$ and $s_{i}\neq s_{j}$ if $i\neq j$.
By construction we know that $ \nu(s_{1}),\dots,\nu(s_{r}) $ are different points in $\mathbbm{Z}^{Nn}$. Thus without loss of generality we can assume that $\nu(s_{1})<\dots<\nu(s_{r})$, but the relation
$$
s_{1}=-\frac{1}{\mu_{1}}\sum_{i=2}^{N}\mu_{i}s_{i}
$$
implies that $\nu(s_{1})\geq \min \{\nu(s_{j}) \, : \, j=2,\dots,r\}$ which is the contradiction. Hence $\{s_{1,1},\dots,s_{N,r_{N}}\}$ is a system of linearly independent vectors, thus to conclude the proof it is enough to show that it generates all $H^{0}(X,kL)$.\\
Let $t_{0}\in H^{0}(X,kL)\setminus\{0\}$ be a section and set ${\lambda_{0}:=(\lambda_{0,1},\dots,\lambda_{0,N}):=\nu(t_{0})}$. By definition of $W_{\cdot,j}$ there exists an unique $j_{0}\in 1,\dots, N$ such that $t_{0}\in W_{k,j_{0}}$, which means that $\lambda_{0,i}\geq \lambda_{0,j_{0}} $ if $1\leq i\leq j_{0}$, and that $ \lambda_{0,i}> \lambda_{0,j_{0}} $ if $ j_{0}< i\leq N$. Therefore by construction there exists $l\in\{1,\dots,r_{j_{0}}\}$ such that $ \lambda_{0,j_{0}}=\nu^{p_{j_{0}}}(s_{j_{0},l})$, and we set $s_{0}:=s_{j_{0},l}$. But 
$$
\mathrm{dim}\Bigg(\frac{\{s\in H^{0}(X,kL)\setminus\{0\}\, :\, \nu^{p_{j_{0}}}(s)\geq \lambda_{0,j_{0}}\}\cup\{0\}}{\{s\in H^{0}(X,kL)\setminus\{0\}\, :\, \nu^{p_{j_{0}}}(s)> \lambda_{0,j_{0}}\}\cup\{0\}}\Bigg)\leq 1,
$$
since $\nu^{p_{j_{0}}}$ has one-dimensional leaves, so there exists a coefficient $a_{0}\in\mathbbm{C}$ such that $\nu^{p_{j_{0}}}(t_{0}-a_{0}s_{0})>\lambda_{0,j_{0}}$. Thus if $t_{0}=a_{0}s_{0}$ we can conclude the proof, otherwise we set $t_{1}:=t_{0}-a_{0}s_{0}$ and we iterate the process setting ${\lambda_{1}:=(\lambda_{1,1},\dots,\lambda_{1,N}):=\nu(t_{1})}$. Observe that ${\min_{j}\lambda_{1,j}\geq\min_{j}\lambda_{0,j}=\lambda_{0,j_{0}}}$ and that the inequality is strict if $t_{1}\in W_{k,j_{0}}$.\\
Summarizing we obtain $t_{0},t_{1},\dots, t_{l}\in H^{0}(X,kL)\setminus\{0\}$ such that $t_{l}:=t_{l-1}-a_{l-1}s_{l-1}\in W_{k,j_{l}}$ for an unique $j_{l}\in \{1,\dots,N\}$ where $s_{l-1}\in\{s_{j_{l-1},1},\dots,s_{j_{l-1},r_{l-1}}\}$ satisfies $\nu^{p_{j_{l-1}}}(t_{l-1})=\nu^{p_{j_{l-1}}}(s_{l-1})$, and $\min_{j}\lambda_{l,j}\geq\min_{j}\lambda_{l-1,j}$ for $\nu(t_{l})=:\lambda_{l}$. Therefore we get a sequence of valuative points $\lambda_{l}$ such that $\min_{j}\lambda_{l,j}\geq\min_{j}\lambda_{l-1,j}\geq\dots \geq \min_{j}\lambda_{0,j} $ where by construction there is at least one strict inequality if $l>N$. Hence we deduce that the iterative process must conclude since that the set of all valuative points of $\nu$ is finite as easy consequence of the finite cardinality of $\Gamma_{W,j}^{k}$ for each $j=1,\dots,N$.
\end{proof}
\begin{prop}
\label{prop:Homo}
Let $L$ be a big line bundle. Then ${\Delta_{j}(mL)=m\Delta_{j}(L)}$ and $\Delta_{j}^{W}(mL)=m\Delta_{j}^{W}(L)$ for any $m\in\mathbbm{N}$ and for any $j=1,\dots,N$ where $\Delta_{j}^{W}(L)$ is the Okounkov body associated to the additive semigroup $\Gamma_{W,j}(L)$.
\end{prop}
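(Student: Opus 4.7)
The plan is to exploit the natural equality of defining sets $V_{k,j}(mL)=V_{km,j}(L)$ (and likewise $W_{k,j}(mL)=W_{km,j}(L)$) together with the section-power map $s\mapsto s^{m}$, which by the multiplicativity property $\nu^{p_{j}}(s^{m})=m\nu^{p_{j}}(s)$ sends $V_{k,j}(L)$ into $V_{km,j}(L)$ (and $W_{k,j}(L)$ into $W_{km,j}(L)$), since strict and non-strict inequalities between the $\nu^{p_{i}}(s)$ are preserved after scaling by a positive integer. These two ingredients give the two inclusions.

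For the inclusion $\Delta_{j}(mL)\subseteq m\Delta_{j}(L)$, I would take any representative point $\nu^{p_{j}}(s)/k$ with $s\in V_{k,j}(mL)$; because $s$ is equally well an element of $V_{km,j}(L)$, one may rewrite this as $m\cdot\nu^{p_{j}}(s)/(km)$, which exhibits it as $m$ times a generating point of $\Delta_{j}(L)$ in the sense of Proposition \ref{prop:First}. Passing to the closed convex hull/union on both sides yields the inclusion. The reverse inclusion $m\Delta_{j}(L)\subseteq \Delta_{j}(mL)$ is where the power map is used: given $\nu^{p_{j}}(s)/k$ with $s\in V_{k,j}(L)$, the section $s^{m}\in H^{0}(X,kmL)$ still satisfies the strict valuative inequalities at $p_{j}$, so $s^{m}\in V_{km,j}(L)=V_{k,j}(mL)$, and $\nu^{p_{j}}(s^{m})/k = m\nu^{p_{j}}(s)/k$ lies in $\Delta_{j}(mL)$. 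Taking closures concludes the proof of the first equality.

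The argument for $\Delta_{j}^{W}(mL)=m\Delta_{j}^{W}(L)$ is identical: the defining condition of $W_{k,j}$ mixes non-strict inequalities for $i\leq j$ and strict ones for $i>j$, but both types are preserved under $s\mapsto s^{m}$ (multiplying both sides by a positive integer), and under the identification $W_{k,j}(mL)=W_{km,j}(L)$ the same two inclusions go through verbatim.

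I do not anticipate a genuine obstacle; the only thing to verify carefully is the preservation of the defining inequalities by the section-power map, which reduces to property (iii) in the definition of a valuation. Bigness of $L$ plays no role in this homogeneity argument beyond being the standing hypothesis under which $\Delta_{j}(L)$ and $\Delta_{j}^{W}(L)$ are introduced.
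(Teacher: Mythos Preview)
Your argument is correct and is in fact more direct than the paper's. The paper follows the template of Proposition~4.1(ii) in \cite{LM09}: it picks auxiliary sections $e\in V_{r,j}(L)$ and $f\in V_{tm-r,j}(L)$, tensors with them to produce the chain of inclusions
\[
k\Gamma_{j}(mL)^{k}+\nu^{p_{j}}(e)+\nu^{p_{j}}(f)\subset (km+r)\Gamma_{j}(L)^{km+r}+\nu^{p_{j}}(f)\subset (k+t)\Gamma_{j}(mL)^{k+t},
\]
and then divides by $k$ and lets $k\to\infty$ so that the error terms vanish. You instead use the tautological identification $V_{k,j}(mL)=V_{km,j}(L)$ for one inclusion and the power map $s\mapsto s^{m}$ for the other, which avoids any limiting argument altogether. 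Both routes rest on the same underlying facts (additivity of $\nu^{p_{j}}$ under tensor products and compatibility of the total order with positive scaling), but the paper's ``tensor with an auxiliary section and pass to the limit'' technique is the one that generalizes to graded linear series $V_{\bullet}\subset R(X,L)$ where one does not have $V_{k}(mL)=V_{km}(L)$ on the nose; in the present full-linear-series setting your shortcut is entirely legitimate and cleaner.
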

\begin{proof}
The proof proceeds similarly to the proof of Proposition $4.1.ii$ in \cite{LM09}, exploiting again the property of the total order on $\mathbbm{Z}^{n}$.\\
We may assume $\Delta_{j}(L)\neq\emptyset $, otherwise it would be trivial, and we can choose $r,t\in\mathbbm{N}$ such that $V_{r,j},V_{tm-r,j}\neq \emptyset$, i.e. there exist sections $e\in V_{r,j}$ and $f\in V_{tm-r,j}$. Thus we get the inclusions
$$
k\Gamma_{j}(mL)^{k}+\nu^{p_{j}}(e)+\nu^{p_{j}}(f)\subset (km+r)\Gamma_{j}(L)^{km+r}+\nu^{p_{j}}(f)\subset (k+t)\Gamma_{j}(mL)^{k+t}.
$$
Letting $k\to\infty$, we find $ \Delta_{j}(mL)\subset m\Delta_{j}(L)\subset \Delta_{j}(mL) $.\\
The same proof works for $\Delta_{j}^{W}(L)$.
\end{proof}
Proposition \ref{prop:Homo} naturally extends the definition of the multipoint Okounkov bodies to $\mathbbm{Q}$-line bundles.\\
We are now ready to prove Theorem \ref{ThmA}.
\begin{proof}[Proof of Theorem \ref{ThmA}]
By Lemma \ref{prop:Vol} and Theorem \ref{thm:AboutVolume} we get
\begin{equation}
\label{eqn:Vol}
n!\sum_{j=1}^{N}\frac{\mathrm{Vol}_{\mathbbm{R}^{n}}(\Delta_{j}^{W}(L))}{\mathrm{ind}_{1,j}(L)\mathrm{ind}_{2,j}(L)^{n}}=\lim_{k\in\mathbbm{N}(L),k\to\infty}\frac{n!\sum_{j=1}^{N}\#\Gamma_{W,j}^{k}}{k^{n}}=\lim_{k\in \mathbbm{N}(L), k\to \infty}\frac{h^{0}(X,kL)}{k^{n}/n!}=\mathrm{Vol}_{X}(L).
\end{equation}
where we keep the same notations of Theorem \ref{thm:AboutVolume} for the indexes $\mathrm{ind}_{1,j}(L), \mathrm{ind}_{2,j}(L)$ adding the $j$ subscript to keep track of the points and the dependence on the line bundle since we want to perturb it.\\
\emph{\textbf{Key point:}} We claim that
\begin{equation}
\label{eqn:Inter}
\Delta_{j}^{W}(L)^{\circ}=\Delta_{j}(L)^{\circ},
\end{equation}
for any ${j=1,\dots,N}$. Note that since $\Gamma_{V,j}\subset \Gamma_{W,j}$ we only need to prove that $\Delta_{j}^{W}(L)^{\circ}\subset \Delta_{j}(L)^{\circ}$.\\
Let $A$ be a fixed ample line bundle $A$ such that there exist ${s_{1},\dots,s_{N}\in H^{0}(X,A)}$ with $s_{i}\in V_{1,i}(A)$ and $\nu^{p_{i}}(s_{i})=0$. Thus we get ${\Delta_{j}^{W}(mL-A)\subset \Delta_{j}(mL)}$ for each $m\in\mathbbm{N}$ and for any $j=1,\dots,N$ since $s\otimes s_{j}^{k}\in V_{k,j}(mL)$ for any $s\in W_{k,j}(mL-A)$. Hence
\begin{equation}
\label{eqn:FirstEqn}
\Delta_{j}^{W}\Big(L-\frac{1}{m}A\Big)\subset \Delta_{j}(L)\subset \Delta_{j}^{W}(L)
\end{equation}
by Proposition \ref{prop:Homo}.\\
Moreover since $m\to \mathrm{ind}_{1,j}(L-\frac{1}{m}A)$ and $m\to \mathrm{ind}_{2,j}(L-\frac{1}{m}A)$ are decreasing functions, (\ref{eqn:Vol}) implies
\begin{multline}
\label{eqn:FinallyA}
\limsup_{m\to \infty} n! \sum_{j=1}^{N}\frac{\mathrm{Vol}_{\mathbbm{R}^{n}}\big(\Delta_{j}^{W}(L-\frac{1}{m}A)\big)}{\mathrm{ind}_{1,j}(L)\mathrm{ind}_{2,j}(L)^{n}}\geq \limsup_{m\to \infty}n! \sum_{j=1}^{N}\frac{\mathrm{Vol}_{\mathbbm{R}^{n}}\big(\Delta_{j}^{W}(L-\frac{1}{m}A)\big)}{\mathrm{ind}_{1,j}(L-\frac{1}{m}A)\mathrm{ind}_{2,j}(L-\frac{1}{m}A)^{n}}=\\
=\limsup_{m\to \infty}\mathrm{Vol}_{X}\Big(L-\frac{1}{m}A\Big)=\mathrm{Vol}_{X}(L)=n! \sum_{j=1}^{N}\frac{\mathrm{Vol}_{\mathbbm{R}^{n}}\big(\Delta_{j}^{W}(L)\big)}{\mathrm{ind}_{1,j}(L)\mathrm{ind}_{2,j}(L)^{n}}
\end{multline}
where we used the continuity of the volume function on line bundles. Thus since $\Delta_{j}^{W}(L-\frac{1}{m}A)\subset\Delta_{j}^{W}(L-\frac{1}{l}A)$ if $l>m$ for any $j=1,\dots,N$, from (\ref{eqn:FinallyA}) we deduce that $m\to \mathrm{Vol}_{\mathbbm{R}^{n}}(\Delta_{j}^{W}(L-\frac{1}{m}A))$ is a continuous increasing function converging to $\mathrm{Vol}_{\mathbbm{R}^{n}}(\Delta_{j}^{W}(L))$ for any $j=1,\dots,N$. Hence (\ref{eqn:Inter}) follows from (\ref{eqn:FirstEqn}).\\
\emph{\textbf{Conclusion.}} Finally combining (\ref{eqn:Inter}) and Lemma \ref{lem:IntOk}$.(ii)$ we find out that $\mathrm{ind}_{1,j}(L)=\mathrm{ind}_{2,j}(L)=1$ if $\mathrm{Vol}_{\mathbbm{R}^{n}}\big(\Delta_{j}^{W}(L)\big)\neq 0$. Finally (\ref{eqn:Vol}) yields
$$
n!\sum_{j=1}^{N}\mathrm{Vol}_{\mathbbm{R}^{n}}\big(\Delta_{j}(L)\big)=n!\sum_{j=1}^{N}\frac{\mathrm{Vol}_{\mathbbm{R}^{n}}\big(\Delta_{j}^{W}(L)\big)}{\mathrm{ind}_{1,j}(L)\mathrm{ind}_{2,j}(L)^{n}}=\mathrm{Vol}_{X}(L),
$$
which concludes the proof.
\end{proof}
\subsection{Variation of multipoint Okounkov bodies}
\label{subsection:Variation}
Similarly to the section $\S 4$ in \cite{LM09}, we prove that for fixed faithful valuations $\nu^{p_{j}}$ centered a $N$ different points the construction of the multipoint Okounkov Bodies is a numerical invariant, i.e. $\Delta_{j}(L)$ depends only from the first Chern class $c_{1}(L)\in\mathrm{N}^{1}(X)$ of the big line bundle $L$, where we have denoted by $\mathrm{N}^{1}(X)$ the Neron-Severi group. Recall that $\rho(X):=\dim \mathrm{N}^{1}(X)_{\mathbbm{R}}<\infty$ where $\mathrm{N}^{1}(X)_{\mathbbm{R}}:=\mathrm{N}^{1}(X)\otimes_{\mathbbm{Z}} \mathbbm{R}$.
\begin{prop}
\label{prop:cohomologicalconstruction}
Let $L$ be a big line bundle. Then $\Delta_{j}(L)$ is a numerical invariant.
\end{prop}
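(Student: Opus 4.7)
The plan is to adapt the argument of \cite{LM09} §4 from the one-point to the multipoint setting, using Theorem \ref{ThmA} as the key new input: it makes the sum $\sum_j \mathrm{Vol}_{\mathbbm{R}^n}(\Delta_j(L))$ a numerical invariant through the equality with $\mathrm{Vol}_X(L)/n!$.

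I would first establish a Minkowski-type inclusion: for big line bundles $L_1, L_2$ and any $j$,
$$\Delta_j(L_1) + \Delta_j(L_2) \subset \Delta_j(L_1 + L_2).$$
This follows from the additivity of the valuations $\nu^{p_j}$ together with the observation that the strict inequalities $\nu^{p_j}(\cdot) < \nu^{p_i}(\cdot)$ defining $V_{\cdot, j}$ are preserved under tensor product in the totally ordered group $(\mathbbm{Z}^n, >)$: if $s_i \in V_{k, j}(L_i)$ for $i=1,2$, then $s_1 \otimes s_2 \in V_{k, j}(L_1 + L_2)$. The inclusion extends to $\mathbbm{Q}$-line bundles via Proposition \ref{prop:Homo}.

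Now let $P$ be numerically trivial. Any ample line bundle $A$ has the property that $A \pm P$ are ample as well (ampleness being a numerical property by Kleiman). For every $m \in \mathbbm{N}$, the $\mathbbm{Q}$-line bundles $A/m$ and $A/m - P$ are ample, so by Lemma \ref{lem:IntOk}(ii) both $\Delta_j(A/m)$ and $\Delta_j(A/m - P)$ contain $\vec 0$. Combined with the Minkowski inclusion this yields, for every $m$,
$$\Delta_j(L) \subset \Delta_j(L + A/m) \quad\text{and}\quad \Delta_j(L + P) \subset \Delta_j(L + A/m).$$
Moreover $m \mapsto \Delta_j(L + A/m)$ is a decreasing family of compact convex sets: writing $A/m = A/(m+1) + A/(m(m+1))$, Minkowski with $\vec 0 \in \Delta_j(A/(m(m+1)))$ gives $\Delta_j(L + A/(m+1)) \subset \Delta_j(L + A/m)$.

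By Theorem \ref{ThmA} and the continuity of $\mathrm{Vol}_X$ on the big cone of $N^1(X)_{\mathbbm{R}}$,
$$\sum_j \mathrm{Vol}_{\mathbbm{R}^n}(\Delta_j(L + A/m)) = \mathrm{Vol}_X(L + A/m)/n! \longrightarrow \mathrm{Vol}_X(L)/n! = \sum_j \mathrm{Vol}_{\mathbbm{R}^n}(\Delta_j(L))$$
as $m \to \infty$. Since $\mathrm{Vol}_{\mathbbm{R}^n}(\Delta_j(L + A/m)) \geq \mathrm{Vol}_{\mathbbm{R}^n}(\Delta_j(L))$ for every $j$, summing the pointwise inequalities and comparing with the limiting sum forces $\lim_m \mathrm{Vol}_{\mathbbm{R}^n}(\Delta_j(L + A/m)) = \mathrm{Vol}_{\mathbbm{R}^n}(\Delta_j(L))$ for each $j$. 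Hence the nested intersection $\bigcap_m \Delta_j(L + A/m)$ is a compact convex set containing $\Delta_j(L)$ with the same Lebesgue volume, so it equals $\Delta_j(L)$. Since $\Delta_j(L + P) \subset \bigcap_m \Delta_j(L + A/m)$, we conclude $\Delta_j(L + P) \subset \Delta_j(L)$; the reverse inclusion follows by applying the whole argument with $L$ replaced by $L + P$ and $P$ by $-P$. The main technical obstacle is verifying the decreasing monotonicity of $m \mapsto \Delta_j(L + A/m)$ together with careful handling of the extension of the multipoint Okounkov body construction to $\mathbbm{Q}$-line bundles, so that the sum-of-volumes identity from Theorem \ref{ThmA} can be pinned down to componentwise identities.
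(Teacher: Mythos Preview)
Your argument is correct and follows essentially the same route as the paper's: perturb by an ample $A/m$, obtain the inclusion $\Delta_j(L+P)\subset\Delta_j(L+A/m)$ via tensoring with suitable sections, and use Theorem~\ref{ThmA} together with continuity of $\mathrm{Vol}_X$ to pass to the limit, then swap $P\leftrightarrow -P$. The paper packages the inclusion step slightly differently---it multiplies by a single section of $k_m m(P+\tfrac{1}{m}A)$ that does not vanish at \emph{any} $p_i$, rather than invoking a general Minkowski inclusion---but the mechanism is the same.

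One minor point: Lemma~\ref{lem:IntOk}(ii) as stated only gives $\Delta_j(A/m)^\circ\neq\emptyset$, not $\vec 0\in\Delta_j(A/m)$. What you actually need (and what is shown in the \emph{proof} of that lemma) is that for $B$ ample one can find $s\in V_{k,j}(B)$ with $\nu^{p_j}(s)=\vec 0$; this follows directly by taking $kB$ very ample and choosing a section vanishing at the $p_i$ for $i\neq j$ but not at $p_j$. Also, to conclude $\bigcap_m\Delta_j(L+A/m)=\Delta_j(L)$ from equality of volumes you implicitly use Lemma~\ref{lem:IntOk}(iii) to rule out the degenerate case $\Delta_j(L)=\emptyset$ (or to handle it separately); the paper does this by assuming $\Delta_j(L)\neq\emptyset$ at the outset and closing the empty case by symmetry at the end.
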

\begin{proof}
Assume $\Delta_{j}(L)^{\circ}\neq \emptyset$, which by Lemma \ref{lem:IntOk} is equivalent to $\Delta_{j}(L)\neq \emptyset$, and let $L'$ such that $L'=L+P$ for $P$ numerically trivial. Fix also an ample line bundle $A$. Then for any $m\in\mathbbm{N}$ there exist $k_{m}\in\mathbbm{N}$ and ${s_{m}\in H^{0}(X,k_{m}m(P+\frac{1}{m}A))}$ such that $s_{m}(p_{i})\neq 0$ for any $i=1,\dots, N$ since $P+\frac{1}{m}A$ is a ample $\mathbbm{Q}-$line bundle. Hence we get $\Delta_{j}(L)\subset \Delta_{j}(L'+\frac{1}{m}A)$ by homogeneity (Proposition \ref{prop:Homo}) because $s\otimes s_{m}^{k}\in V_{k,j}(k_{m}mL'+k_{m}A)$ for any section $s\in V_{k,j}(k_{m}mL)$. Therefore similarly to the proof of Theorem \ref{ThmA}, letting $m\to \infty$, we obtain $\Delta_{j}(L)\subset \Delta_{j}(L')$. Replacing $L$ by $L+P$ and $P$ by $-P$, Lemma \ref{lem:IntOk} concludes the proof.
\end{proof}
Setting $r:=\rho(X)$ for simplicity, fix $L_{1},\dots, L_{r}$ line bundles such that $\{c_{1}(L_{1}),\dots, c_{1}(L_{r})\}$ is a $\mathbbm{Z-}$basis of $\mathrm{N}^{1}(X)$: this lead to natural identifications $\mathrm{N}^{1}(X)\simeq \mathbbm{Z}^{r}$, $\mathrm{N}^{1}(X)_{\mathbbm{\mathbbm{R}}}\simeq\mathbbm{R}^{r}$. Moreover by Lemma $4.6.$ in \cite{LM09} we may choose $L_{1},\dots,L_{r}$ such that the pseudoeffective cone is contained in in the positive orthant of $\mathbbm{R}^{r}$.
\begin{defn}
Letting
\begin{equation*}
\Gamma_{j}(X):=\Gamma_{j}(X;L_{1},\dots,L_{r}):=\{(\nu^{p_{j}}(s),\vec{m}) \, : \, s\in V_{\vec{m},j}(L_{1},\dots, L_{r}))\setminus\{0\},\\ \vec{m}\in\mathbbm{N}^{r}\}\subset \mathbbm{Z}^{n}\times \mathbbm{N}^{r}
\end{equation*}
be the \emph{global multipoint semigroup of $X$ at $p_{j}$ with $p_{1},\dots\hat{p_{j}},\dots,p_{N}$ fixed} (it is an addittive subsemigroup of $\mathbbm{Z}^{n+r}$) where $V_{\vec{m},j}(L_{1},\dots,L_{r}):=\{s\in H^{0}(X,\vec{m}\cdot(L_{1},\dots,L_{r}))\setminus\{0\} \, : \, \nu^{p_{j}}(s)<\nu^{p_{i}}(s) \, \mathrm{for}\, \mathrm{any} \, i\neq j\}$, we define
$$
\Delta_{j}(X):=C(\Gamma_{j}(X))
$$
as the closed convex cone in $\mathbbm{R}^{n+r}$ generated by $\Gamma_{j}(X)$, and call it the \textbf{global multipoint Okounkov body} at $p_{j}$.
\end{defn}
\begin{lem}
The semigroup $\Gamma_{j}(X)$ generates a subgroup of $\mathbbm{Z}^{n+r}$ of maximal rank.
\end{lem}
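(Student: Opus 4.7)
The natural plan is to mimic the argument for Lemma 4.9 in \cite{LM09}, leveraging Lemma \ref{lem:IntOk}$(ii)$ applied to several different ample $\mathbb{Z}$-line bundles of the form $\vec{m}\cdot(L_{1},\dots,L_{r})$ for varying $\vec{m}\in\mathbb{N}^{r}$. Since ampleness forces $p_{j}\notin\mathbb{B}_{+}$, Lemma \ref{lem:IntOk}$(ii)$ produces many lattice points of $\Gamma_{j}(X)$ with controllable projection to $\mathbb{N}^{r}$. The strategy is then to first generate the $\mathbb{Z}^{n}\times\{0\}$ direction from a single ample class, and afterwards fill in the $\{0\}\times\mathbb{Z}^{r}$ direction by perturbing that class by each $L_{i}$.

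First I would choose $\vec{m}_{0}\in\mathbb{N}^{r}$ such that $L_{0}:=\vec{m}_{0}\cdot(L_{1},\dots,L_{r})$ is ample; this is possible because the ample cone is a nonempty open subcone of the positive orthant in $\mathrm{N}^{1}(X)_{\mathbb{R}}$ under our choice of basis. Since $\mathbb{B}_{+}(L_{0})=\emptyset$, Lemma \ref{lem:IntOk}$(ii)$ applied to $L_{0}$ (together with its proof) yields an integer $k\gg 1$ and vectors $\vec{\lambda}_{1},\dots,\vec{\lambda}_{n}\in\mathbb{Z}^{n}$ generating $\mathbb{Z}^{n}$ such that
\[
(\vec{0},k\vec{m}_{0}),\;(\vec{\lambda}_{1},k\vec{m}_{0}),\dots,(\vec{\lambda}_{n},k\vec{m}_{0}),\;(\vec{0},(k+1)\vec{m}_{0})\in\Gamma_{j}(X).
\]
Taking differences of these elements inside the group $G(\Gamma_{j}(X))\subset\mathbb{Z}^{n+r}$ gives $(\vec{\lambda}_{i},\vec{0})\in G(\Gamma_{j}(X))$ for every $i=1,\dots,n$, hence $\mathbb{Z}^{n}\times\{\vec{0}\}\subset G(\Gamma_{j}(X))$, and also $(\vec{0},\vec{m}_{0})\in G(\Gamma_{j}(X))$.

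Next, for each $i=1,\dots,r$, choose $k_{i}\in\mathbb{N}$ large enough so that $M_{i}:=k_{i}L_{0}+L_{i}$ is ample; this is possible because $L_{0}$ is ample. Again $p_{j}\notin\mathbb{B}_{+}(M_{i})$, so Lemma \ref{lem:IntOk}$(ii)$ applied to $M_{i}$ produces some $h\gg 1$ with $(\vec{0},h(k_{i}\vec{m}_{0}+\vec{e}_{i}))$ and $(\vec{0},(h+1)(k_{i}\vec{m}_{0}+\vec{e}_{i}))$ both in $\Gamma_{j}(X)$, whose difference gives $(\vec{0},k_{i}\vec{m}_{0}+\vec{e}_{i})\in G(\Gamma_{j}(X))$. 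Subtracting the multiple $k_{i}(\vec{0},\vec{m}_{0})\in G(\Gamma_{j}(X))$ obtained in the previous paragraph, we conclude that $(\vec{0},\vec{e}_{i})\in G(\Gamma_{j}(X))$ for every $i=1,\dots,r$. Combined with $\mathbb{Z}^{n}\times\{\vec{0}\}\subset G(\Gamma_{j}(X))$, this shows $G(\Gamma_{j}(X))=\mathbb{Z}^{n+r}$.

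The main obstacle I anticipate is only a mild one: ensuring simultaneously that the perturbations $k_{i}L_{0}+L_{i}$ remain ample (so that $p_{j}\notin\mathbb{B}_{+}$ is free) and that the conclusion of Lemma \ref{lem:IntOk}$(ii)$ can be applied with the same $\vec{m}_{0}$ playing a consistent role across different steps; both are handled by taking $k_{i}$ large. The argument is essentially numerical/combinatorial once the geometric input of Lemma \ref{lem:IntOk}$(ii)$ is in place.
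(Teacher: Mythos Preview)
Your proof is correct and follows essentially the same approach as the paper: both apply Lemma~\ref{lem:IntOk}$(ii)$ to several ample line bundles (for which $\mathbb{B}_{+}=\emptyset$) so as to span both the $\mathbb{Z}^{n}$ and the $\mathbb{Z}^{r}$ directions. The only cosmetic difference is that the paper picks $r$ ample line bundles $F_{1},\dots,F_{r}$ forming a $\mathbb{Z}$-basis of $\mathrm{N}^{1}(X)$ and uses each $\Gamma_{j}(F_{i})$ directly, whereas you fix one ample $L_{0}$ and then perturb by each $L_{i}$; your version has the minor bonus of showing $G(\Gamma_{j}(X))=\mathbb{Z}^{n+r}$ exactly rather than just maximal rank.
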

\begin{proof}
Since the ample cone $\mathrm{Amp}(X)$ is open non-empty set in $\mathrm{N}^{1}(X)_{\mathbbm{R}}$, we can fix $F_{1},\dots, F_{r}$ ample line bundles generating $\mathrm{N}^{1}(X)$ as free $\mathbbm{Z}-$module. Moreover, by the assumptions done for $L_{1},\dots,L_{r}$ we know that for every $i=1,\dots,r$ there exists $\vec{a}_{i}$ such that $F_{i}=\vec{a}_{i}\cdot (L_{1},\dots,L_{r})$. Thus, for any $i=1,\dots,r$, the graded semigroup $\Gamma_{j}(F_{i})$ sits in $\Gamma_{j}(X)$ in a natural way and it generates a subgroup of $\mathbbm{Z}^{n}\times \mathbbm{Z}\cdot \vec{a}_{i}$ of maximal rank by point $ii)$ in Lemma \ref{lem:IntOk} since $\mathbbm{B}_{+}(F_{i})=\emptyset$. We conclude observing that $\vec{a}_{1},\dots,\vec{a}_{r}$ span $\mathbbm{Z}^{r}$.
\end{proof}
Next we need a further fact about additive semigroups and their cones. Let $\Gamma\subset \mathbbm{Z}^{n}\times\mathbbm{N}^{r}$ be an additive semigroup, and let $C(\Gamma)\subset\mathbbm{R}^{n}\times\mathbbm{R}^{r}$ be the closed convex cone generated by $\Gamma$. We call the \emph{support} of $\Gamma$ respect to the last $r$ coordinates, $\mathrm{Supp}(\Gamma)$, the closed convex cone $ C(\pi(\Gamma))\subset \mathbbm{R}^{r} $ where $\pi:\mathbbm{R}^{n}\times\mathbbm{R}^{r}\to\mathbbm{R}^{r}$ is the usual projection. Then, given $\vec{a}\in\mathbbm{N}^{r}$, we set $\Gamma_{\mathbbm{N}\vec{a}}:=\Gamma\cap(\mathbbm{Z}^{n}\times \mathbbm{N}\vec{a})$ and denote by $C(\Gamma_{\mathbbm{N}\vec{a}})\subset\mathbbm{R}^{n}\times\mathbbm{R}\vec{a}$ the closed convex cone generated by $\Gamma_{\mathbbm{N}\vec{a}}$ when we consider it as an additive semigroup of $\mathbbm{Z}^{n}\times\mathbbm{Z}\vec{a}\simeq\mathbbm{Z}^{n+1}$.
\begin{prop}[\cite{LM09}, Proposition $4.9.$]
\label{prop:FibreSemigroupJ}
Assume that $\Gamma$ generates a subgroup of finite index in $\mathbbm{Z}^{n}\times\mathbbm{Z}^{r}$, and let $\vec{a}\in\mathbbm{N}^{r}$ be a vector lying in the interior of $\mathrm{Supp}(\Gamma)$. Then
$$
C(\Gamma_{\mathbbm{N}\vec{a}})=C(\Gamma)\cap(\mathbbm{R}^{n}\times\mathbbm{R}\vec{a})
$$
\end{prop}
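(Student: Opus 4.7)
The inclusion $C(\Gamma_{\mathbbm{N}\vec{a}})\subseteq C(\Gamma)\cap(\mathbbm{R}^{n}\times\mathbbm{R}\vec{a})$ is immediate: every element of $\Gamma_{\mathbbm{N}\vec{a}}$ sits simultaneously in $\Gamma\subseteq C(\Gamma)$ and in $\mathbbm{Z}^{n}\times\mathbbm{N}\vec{a}\subseteq\mathbbm{R}^{n}\times\mathbbm{R}\vec{a}$, and both sides of the asserted equality are closed convex cones. The substance lies in the opposite inclusion, which I would attack by approximation. Since $C(\Gamma_{\mathbbm{N}\vec{a}})$ is closed inside the $(n+1)$-dimensional subspace $\mathbbm{R}^{n}\times\mathbbm{R}\vec{a}$, it suffices to show that it contains the relative interior of $C(\Gamma)\cap(\mathbbm{R}^{n}\times\mathbbm{R}\vec{a})$ taken inside this subspace, and by positive homogeneity together with the density of rational points this reduces to showing that every lattice point $p=(\vec{x},s\vec{a})\in\mathbbm{Z}^{n}\times\mathbbm{Z}\vec{a}$ with $s>0$ lying in this relative interior belongs to $\Gamma_{\mathbbm{N}\vec{a}}$ after multiplication by some positive integer.

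The plan is then to establish two facts in sequence. First, such a lattice point $p$ is in fact an interior point of $C(\Gamma)$ in the ambient space $\mathbbm{R}^{n+r}$, and not merely in the slice. Here the hypothesis $\vec{a}\in\mathrm{Supp}(\Gamma)^{\circ}$ is crucial: for any basis $\vec{b}_{1},\dots,\vec{b}_{r-1}$ of a complement of $\mathbbm{R}\vec{a}$ in $\mathbbm{R}^{r}$ and sufficiently small scalars, the vectors $\vec{a}\pm\vec{b}_{i}$ still lie in $\mathrm{Supp}(\Gamma)$, so they are realised as $\pi$-projections of elements $g_{i}^{\pm}\in C(\Gamma)$; writing $p$ as a positive combination containing each pair $(g_{i}^{+},g_{i}^{-})$ with equal weight and then perturbing the weights produces $r-1$ new wiggle directions inside $C(\Gamma)$ whose $\pi$-projections span the complement of $\mathbbm{R}\vec{a}$, and combined with the $n+1$ wiggle directions already supplied by the relative interior of $C(\Gamma)\cap(\mathbbm{R}^{n}\times\mathbbm{R}\vec{a})$ they fill an $(n+r)$-dimensional open neighbourhood of $p$ inside $C(\Gamma)$. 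Second, once $p\in\mathbbm{Z}^{n+r}$ is known to sit in the interior of $C(\Gamma)$, the assumption that $\Gamma$ generates a finite-index subgroup of $\mathbbm{Z}^{n+r}$ together with the lattice-cone machinery of \cite{KKh12} underlying Theorem~\ref{thm:AboutVolume} produces an integer $N\geq 1$ with $Np\in\Gamma$; since $\pi(Np)=Ns\vec{a}\in\mathbbm{N}\vec{a}$ automatically, this means $Np\in\Gamma_{\mathbbm{N}\vec{a}}$, hence $p=\tfrac{1}{N}(Np)\in C(\Gamma_{\mathbbm{N}\vec{a}})$, as required.

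The main obstacle I anticipate is the first of these two steps, namely upgrading relative interiority in the $(n+1)$-dimensional slice to genuine interiority in $\mathbbm{R}^{n+r}$. The interiority of $\vec{a}$ in $\mathrm{Supp}(\Gamma)$ is precisely what powers this upgrade — without it the slice of $C(\Gamma)$ above $\vec{a}$ could be a proper face and the proposition could fail — but making the convex-geometric wiggling argument rigorous requires care in choosing the pairs $g_{i}^{\pm}$ so that the induced perturbations truly span all $r-1$ complementary directions while keeping every coefficient in the ambient expression of $p$ nonnegative. Once the interior-promotion step is settled, the Gordan-type lattice argument in the second ingredient is a clean application of standard semigroup theory and, in essence, identical to its counterpart in \cite{LM09}.
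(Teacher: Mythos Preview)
The paper does not supply its own proof of this proposition: it is quoted verbatim from \cite{LM09}, Proposition~4.9, and used as a black box in the proof of Theorem~\ref{thm:GlobalMOBJ}. So there is no in-paper argument to compare against.

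That said, your outline is correct and is in fact the same strategy as in \cite{LM09}: prove the trivial inclusion, then for the reverse reduce to showing that a lattice point $p=(\vec{x},s\vec{a})$ in the relative interior of the slice lies in $C(\Gamma)^{\circ}$, and finally invoke the Khovanskii-type fact (as in \cite{Kho93}, \cite{KKh12}) that any lattice point interior to $C(\Gamma)$ has a positive multiple in $\Gamma$, hence in $\Gamma_{\mathbbm{N}\vec{a}}$. Your ``wiggling'' sketch for the interior-promotion step is the right idea; in \cite{LM09} this is packaged slightly more cleanly as the general convex fact that for a full-dimensional closed convex set $K\subset\mathbbm{R}^{n+r}$ and a point $y$ interior to $\pi(K)$, one has $K^{\circ}\cap\pi^{-1}(y)=\mathrm{relint}\big(K\cap\pi^{-1}(y)\big)$, which avoids having to track the auxiliary vectors $g_i^{\pm}$ explicitly. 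Either route works.
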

Now we are ready to prove the main theorem of this section:
\begin{thm}
\label{thm:GlobalMOBJ}
The global multipoint Okounkov body $\Delta_{j}(X)$ is characterized by the property that in the following diagram
$$
\begin{tikzcd}[column sep=
tiny, row sep=large
]
\Delta_{j}(X) \arrow[dr] 
& \subset & \mathbbm{R}^{n}\times\mathbbm{R}^{r}\simeq\mathbbm{R}^{n}\times \mathrm{N}^{1}(X)_{\mathbbm{R}} \arrow[dl, "\mathrm{pr}_{2}"] \\
& \mathbbm{R}^{r}\simeq\mathrm{N}^{1}(X)_{\mathbbm{R}}
\end{tikzcd}
$$
the fiber of $\Delta_{j}(X) $ over any cohomology class $c_{1}(L)$ of a big $\mathbbm{Q}-$line bundle $L$ such that $c_{1}(L)\in \mathrm{Supp}(\Gamma_{j}(X))^{\circ}$ is the multipoint Okounkov body associated to $L$ at $p_{j}$, i.e $\Delta_{j}(X)\cap \mathrm{pr}_{2}^{-1}(c_{1}(L))=\Delta_{j}(L)$. Moreover $\mathrm{Supp}\big(\Gamma_{j}(X)\big)^{\circ}\cap N^{1}(X)_{\mathbbm{Q}}=\{c_{1}(L)\, : \, \Delta_{j}(L)\neq \emptyset,\, L \, \mathbbm{Q}\mathrm{-line}\,\mathrm{bundle}\}$.
\end{thm}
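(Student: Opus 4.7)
The plan is to first establish the fiber equality by applying Proposition \ref{prop:FibreSemigroupJ} to $\Gamma_{j}(X)$ at the vector representing $c_{1}(L)$, and then deduce the support characterization. Fix a big $\mathbbm{Q}$-line bundle $L$ with $c_{1}(L)\in\mathrm{Supp}(\Gamma_{j}(X))^{\circ}$. Using Proposition \ref{prop:Homo} (homogeneity) together with Proposition \ref{prop:cohomologicalconstruction} (numerical invariance), and the fact that the pseudoeffective cone lies inside the positive orthant of $\mathbbm{R}^{r}$ by the choice of basis, after clearing denominators I reduce to $L=\vec{a}\cdot(L_{1},\dots,L_{r})$ with $\vec{a}\in\mathbbm{N}^{r}$. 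In this setup the slice semigroup $\Gamma_{j}(X)_{\mathbbm{N}\vec{a}}$ is identified with $\Gamma_{j}(L)$ via $(v,k\vec{a})\leftrightarrow (v,k)$, since by construction $V_{k\vec{a},j}(L_{1},\dots,L_{r})=V_{k,j}(L)$. The lemma preceding the theorem ensures that $\Gamma_{j}(X)$ generates a maximal-rank subgroup of $\mathbbm{Z}^{n+r}$, so Proposition \ref{prop:FibreSemigroupJ} applies and yields $C(\Gamma_{j}(X)_{\mathbbm{N}\vec{a}})=C(\Gamma_{j}(X))\cap(\mathbbm{R}^{n}\times\mathbbm{R}\vec{a})$; slicing at height $\vec{a}$ produces the desired identity $\Delta_{j}(L)=\Delta_{j}(X)\cap\mathrm{pr}_{2}^{-1}(c_{1}(L))$.

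For the support characterization, I first note that $\mathrm{Supp}(\Gamma_{j}(X))\subseteq\overline{\mathrm{Eff}}(X)$ because any element of $\pi(\Gamma_{j}(X))$ corresponds to a line bundle admitting non-trivial global sections, so $\mathrm{Supp}(\Gamma_{j}(X))^{\circ}\subseteq\mathrm{Big}(X)$. The inclusion $\mathrm{Supp}(\Gamma_{j}(X))^{\circ}\cap N^{1}(X)_{\mathbbm{Q}}\subseteq\{c_{1}(L):\Delta_{j}(L)\neq\emptyset\}$ is then immediate from the fiber equality, as a class in the interior of the support lifts to a big $L$ whose fiber $\Delta_{j}(L)$ is non-empty. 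For the reverse inclusion, set $\mathcal{A}:=\{c_{1}(L):\Delta_{j}(L)\neq\emptyset,\,L\text{ a }\mathbbm{Q}\text{-line bundle}\}$. I would verify that $\mathcal{A}$ is a convex cone using the tensor-product trick: for $s_{i}\in V_{k_{i},j}(L_{i})$, additivity of $\nu^{p_{j}}$ on tensor products and the preservation of strict inequalities $\nu^{p_{j}}(s_{i})<\nu^{p_{k}}(s_{i})$ under sums yield $s_{1}\otimes s_{2}\in V_{k_{1}+k_{2},j}(L_{1}+L_{2})$. Moreover, $\mathcal{A}$ contains the open set $B_{+}(p_{j})^{C}:=\{\alpha\in N^{1}(X)_{\mathbbm{R}}:p_{j}\notin\mathbbm{B}_{+}(\alpha)\}$ by Lemma \ref{lem:IntOk}(ii) combined with the openness of $B_{+}(p_{j})^{C}$ from Proposition 1.2.6 of \cite{ELMNP06}, so $\mathcal{A}^{\circ}\neq\emptyset$. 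To finish, one shows that $\mathcal{A}$ is open in $N^{1}(X)_{\mathbbm{R}}$ by invoking the continuity and monotonicity of $m\mapsto\mathrm{Vol}_{\mathbbm{R}^{n}}\bigl(\Delta_{j}^{W}(L-\tfrac{1}{m}A)\bigr)$ established inside the proof of Theorem \ref{ThmA}, together with Lemma \ref{lem:IntOk}(iii), to propagate non-emptiness of $\Delta_{j}$ to a full neighborhood of $c_{1}(L)$.

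The main obstacle is precisely this last step, namely establishing that $\mathcal{A}$ is open in $N^{1}(X)_{\mathbbm{R}}$: the tensor-product trick handles positive ample perturbations $L+\epsilon A$ effortlessly by composing a section in $V_{k,j}(L)$ with a suitable section of $A$, but stability of non-emptiness under negative ample perturbations of $c_{1}(L)$ is not formal and seems to require the delicate volume continuity arguments inherited from the proof of Theorem \ref{ThmA}, in conjunction with the dichotomy provided by Lemma \ref{lem:IntOk}(iii).
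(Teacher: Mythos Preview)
Your proposal is correct and follows essentially the same route as the paper: the fiber equality is obtained exactly via Proposition \ref{prop:FibreSemigroupJ} after reducing to an integral class, and the support characterization is reduced to the openness of $\mathcal{A}=\{c_{1}(L):\Delta_{j}(L)\neq\emptyset\}$. For that last step the paper does not reopen the proof of Theorem \ref{ThmA} via the $\Delta_{j}^{W}$ bodies; instead it uses Theorem \ref{ThmA} as a black box: multiplication by a section $s\in H^{0}(X,A)$ with $s(p_{i})\neq 0$ for all $i$ gives $\Delta_{i}(L-\tfrac{1}{k}A)\subset\Delta_{i}(L)$ for every $i$, and then $\sum_{i}\mathrm{Vol}\big(\Delta_{i}(L-\tfrac{1}{k}A)\big)=\tfrac{1}{n!}\mathrm{Vol}_{X}(L-\tfrac{1}{k}A)\nearrow\tfrac{1}{n!}\mathrm{Vol}_{X}(L)=\sum_{i}\mathrm{Vol}\big(\Delta_{i}(L)\big)$ forces each summand to converge, so $\mathrm{Vol}\big(\Delta_{j}(L-\tfrac{1}{k}A)\big)>0$ for $k\gg 1$.
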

\begin{rem}
\label{rem:GlobalMOBJRem}
\emph{
It is unclear how $\mathrm{Supp}(\Gamma_{j}(X))^{\circ}$ can be described. By second point in Lemma \ref{lem:IntOk}, it contains the open convex set $B_{+}(p_{j})^{C}$ where $B_{+}(p_{j}):=\{\alpha\in \mathrm{N}^{1}(X)_{\mathbbm{R}}\, : \, p\in\mathbbm{B}_{+}(\alpha)\}$ is closed respect to the metric topology on $N^{1}(X)_{\mathbbm{R}}$ by Proposition $1.2.$ in \cite{KL15a} and its complement is convex as easy consequence of Proposition 1.5. in \cite{ELMNP06}. But in general $\mathrm{Supp}(\Gamma_{j}(X))^{\circ}$ may be bigger: for instance if $N=1$ $\mathrm{Supp}(\Gamma_{j}(X))^{\circ}$ coincides with the big cone, and we can easily construct an example with $p_{1}, p_{2}\in \mathbbm{B}_{-}(L)$ and $\Delta_{j}(L)^{\circ}\neq \emptyset$ for $j=1,2$. For instance take $X=\mathrm{Bl}_{q}\mathbbm{P}^{2}$, $L:=H+E$ where $E$ is the exceptional divisor and $p_{1},p_{2}\in \mathrm{Supp}(E)$ different points. Then given two valuations associated to admissible flags $Y_{\cdot,j}$ for $j=1,2$ centered at $p_{1},p_{2}$ such that $Y_{1,j}=E$ for any $j=1,2$, it is easy to check that $\Delta_{j}(L)^{\circ}\neq \emptyset$ for $j=1,2$ where by Lemma \ref{lem:IntOk} this is equivalent to $\Delta_{j}(L)\neq\emptyset$.}
\end{rem}
\begin{proof}
For any vector $\vec{a}\in\mathbbm{N}^{r}$ such that $L:=\vec{a}\cdot(L_{1},\dots,L_{r})$ is a big line bundle in $\mathrm{Supp}(\Gamma_{j}(X))^{\circ}$, we get $\Gamma_{j}(X)_{\mathbbm{N}\vec{a}}=\Gamma_{j}(L)$, and so the base of the cone $C(\Gamma_{j}(X)_{\mathbbm{N}\vec{a}})=C(\Gamma_{j}(L))\subset\mathbbm{R}^{n}\times\mathbbm{R}\vec{a}$ is the multipoint Okounkov body $\Delta_{j}(L)$, i.e.
$$
\Delta_{j}(L)=\pi\Big(C(\Gamma_{j}(X)_{\mathbbm{N}\vec{a}})\cap\big(\mathbbm{R}^{n}\times\{1\}\big)\Big).
$$
Then Proposition \ref{prop:FibreSemigroupJ} implies that the right side of the last equality coincides with the fiber $\Delta_{j}(X)$ over $c_{1}(L)$. Both sides rescale linearly, so the equality extends to $\mathbbm{Q}$-line bundles.\\
Next by Lemma \ref{lem:IntOk} it follows that $c_{1}(L)\in \mathrm{Supp}\big(\Gamma_{j}(X)\big)$ for any $\mathbbm{Q}$-line bundle $L$ such that $\Delta_{j}(L)\neq\emptyset$. On the other hand, by the first part of the proof we get
\begin{equation}
\label{eqn:Ref}
\mathrm{Supp}\big(\Gamma_{j}(X)\big)^{\circ}\cap \mathrm{N}^{1}(X)_{\mathbbm{Q}}\subset \{c_{1}(L)\, : \Delta_{j}(L)\neq \emptyset, L \,\, \mathbbm{Q}\mathrm{-line}\, \mathrm{bundle}\}.
\end{equation}
Thus it remains to prove that the right hand in $(\ref{eqn:Ref})$ is open in $\mathrm{N}^{1}(X)_{\mathbbm{Q}}$, which is equivalent to show that $\Delta_{j}(L-\frac{1}{k}A)\neq \emptyset$ for $k\gg 1$ big enough if $A$ is a fixed very ample line bundle since the ample cone is open and not empty in $\mathrm{N}^{1}(X)_{\mathbbm{R}}$ and $\mathrm{N}^{1}(X)_{\mathbbm{R}}$ is a finite dimensional vector space. Considering the multiplication by a section $s\in H^{0}(X,A)$ such that $s(p_{i})\neq 0$ for any $i=1,\dots, N$, we obtain $\Delta_{i}(L-\frac{1}{k}A)\subset \Delta_{i}(L)$ for any $i=1,\dots,N$. Therefore by Theorem \ref{ThmA} and Lemma \ref{lem:IntOk} we necessarily have $\Delta_{j}(L-\frac{1}{k}A)^{\circ}\neq \emptyset$ for $k\gg1$ big enough since $\mathrm{Vol}_{X}(L-\frac{1}{k}A)\nearrow \mathrm{Vol}_{X}(L)$ and $\Delta_{j}(L)^{\circ}\neq \emptyset$. This concludes the proof.
\end{proof}
As a consequence of Theorem \ref{thm:GlobalMOBJ}, we can extend the definition of multipoint Okounkov bodies to $\mathbbm{R}$-line bundles. Indeed we can define $\Delta_{j}(L)$ as the limit (in the Hausdorff sense) of $\Delta_{j}(L_{k})$ if $c_{1}(L)\in\mathrm{Supp}(\Gamma_{j}(X))^{\circ}=\overline{\{c_{1}(L)\, : \, \Delta_{j}(L)\neq \emptyset, L \, \mathbbm{Q}\mathrm{-line}\, \mathrm{bundle}\}}^{\circ}$ where $\{L_{k}\}_{k\in\mathbbm{N}}$ is any sequence of $\mathbbm{Q}$-line bundles such that $c_{1}(L_{k})\to c_{1}(L)$, and $\Delta_{j}(L)=\emptyset$ otherwise. This extension is well-defined and coherent with Lemma \ref{lem:IntOk}, since we obtain $\Delta_{j}(L)^{\circ}\neq \emptyset$ iff $\Delta_{j}(L)\neq\emptyset$.
\begin{cor}
The function $\mathrm{Vol}_{\mathrm{R}^{n}}:\mathrm{Supp}(\Gamma_{j}(X))^{\circ}\to \mathbbm{R}_{>0}$, $c_{1}(L)\to \mathrm{Vol}_{\mathrm{R}^{n}}(\Delta_{j}(L))$ is well-defined, continuous, homogeneous of degree $n$ and log-concave, i.e.
$$
\mathrm{Vol}_{\mathrm{R}^{n}}(\Delta_{j}(L+L'))^{1/n}\geq\mathrm{Vol}_{\mathrm{R}^{n}}(\Delta_{j}(L))^{1/n}+\mathrm{Vol}_{\mathrm{R}^{n}}(\Delta_{j}(L'))^{1/n}
$$
\end{cor}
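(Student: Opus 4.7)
The plan is to derive all four properties from the structure of the global multipoint Okounkov body $\Delta_{j}(X)\subset \mathbbm{R}^{n}\times \mathrm{N}^{1}(X)_{\mathbbm{R}}$ as a closed convex cone, established in Theorem \ref{thm:GlobalMOBJ}. Well-definedness on $\mathrm{Supp}(\Gamma_{j}(X))^{\circ}$ is already encoded in the extension from $\mathbbm{Q}$- to $\mathbbm{R}$-classes discussed just before the corollary, and positivity of the volume follows from Lemma \ref{lem:IntOk}.(iii) together with that extension, which forces $\Delta_{j}(L)^{\circ}\neq\emptyset$ on the whole of $\mathrm{Supp}(\Gamma_{j}(X))^{\circ}$. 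Homogeneity of degree $n$ is immediate since $\Delta_{j}(X)$ is a cone, so $\Delta_{j}(tL)=t\,\Delta_{j}(L)$ for every $t>0$, refining the rational scaling of Proposition \ref{prop:Homo}.

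For log-concavity I would first establish the Minkowski inclusion
\[
\Delta_{j}(L)+\Delta_{j}(L')\subseteq \Delta_{j}(L+L')
\]
and then apply the classical Brunn--Minkowski inequality $\mathrm{Vol}_{\mathbbm{R}^{n}}(K+K')^{1/n}\geq \mathrm{Vol}_{\mathbbm{R}^{n}}(K)^{1/n}+\mathrm{Vol}_{\mathbbm{R}^{n}}(K')^{1/n}$. The inclusion is an immediate consequence of $\Delta_{j}(X)$ being a convex cone: pairs $(\alpha,c_{1}(L))$ and $(\beta,c_{1}(L'))$ in $\Delta_{j}(X)$ sum to $(\alpha+\beta,c_{1}(L+L'))\in \Delta_{j}(X)$, and the fiber characterization of Theorem \ref{thm:GlobalMOBJ} translates this into the desired Minkowski inclusion. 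One can also verify it directly at the level of sections: if $s\in V_{k,j}(kL)$ and $t\in V_{m,j}(mL')$ then $\nu^{p_{j}}(s\otimes t)=\nu^{p_{j}}(s)+\nu^{p_{j}}(t)$, and the two strict valuation inequalities defining $V_{\cdot,j}$ add up to give $s\otimes t\in V_{k+m,j}(kL+mL')$; Proposition \ref{prop:Homo} then lets one pass to the scaled statement for all $\mathbbm{Q}$-classes.

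Continuity on the open convex set $\mathrm{Supp}(\Gamma_{j}(X))^{\circ}$ then follows from the previous two properties, since a positively homogeneous, $1/n$-concave function on an open convex cone of a finite-dimensional real vector space is automatically locally Lipschitz, hence continuous; alternatively one can invoke the Hausdorff-continuity of fibers of a closed convex set over the interior of the projection of its support, where the fibers have non-empty interior. The chief technical point, rather than a real obstacle, is to check that the Hausdorff limit used to extend $\Delta_{j}(\cdot)$ from $\mathbbm{Q}$- to $\mathbbm{R}$-classes really agrees with the fiber of $\Delta_{j}(X)$, so that cone homogeneity and the Minkowski inclusion carry over to the whole real interior $\mathrm{Supp}(\Gamma_{j}(X))^{\circ}$; this is a standard convexity-and-compactness argument, exploiting that the fibers are uniformly bounded on compact subsets of the support and that $\Delta_{j}(X)$ is closed.
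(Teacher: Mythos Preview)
Your proposal is correct and follows essentially the same route as the paper: well-definedness and homogeneity from Propositions \ref{prop:Homo} and \ref{prop:cohomologicalconstruction} (together with the extension to $\mathbbm{R}$-classes), and continuity and log-concavity from the convex-cone structure of $\Delta_{j}(X)$ in Theorem \ref{thm:GlobalMOBJ} combined with Brunn--Minkowski. The paper's proof is a one-sentence sketch; you have simply unpacked the ``standard in convex geometry'' step by making the Minkowski inclusion $\Delta_{j}(L)+\Delta_{j}(L')\subset\Delta_{j}(L+L')$ explicit and by spelling out why continuity follows, which is exactly what the paper intends.
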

\begin{proof}
The fact that it is well-defined and its homogeneity follow directly from Propositions \ref{prop:Homo} and \ref{prop:cohomologicalconstruction}, while the other statements are standard in convex geometry, using the Brunn-Minkowski Theorem and Theorem \ref{thm:GlobalMOBJ}.
\end{proof}
Finally we note that the Theorem \ref{thm:GlobalMOBJ} provides a description of the multipoint Okounkov bodies similarly to that of Theorem \ref{thm:OneOko}.(iii):
\begin{cor}
\label{cor:AnotherEqMOB}
If $L=\mathcal{O}_{X}(D)$ is a big line bundle such that $c_{1}(L)\in\mathrm{Supp}(\Gamma_{j}(X))^{\circ}$, then
$$
\Delta_{j}(L)=\overline{\nu^{p_{j}}\{ D'\in \mathrm{Div}_{\geq 0}(X)_{\mathbbm{R}} :  D'\equiv_{num}D \, \mathrm{and} \,\nu^{p_{j}}(D')<\nu^{p_{i}}(D') \, \forall i\neq j\}}.
$$
In particular every rational point in $\Delta_{j}(L)^{\circ}$ is valuative and if it contains a small $n$-symplex with valuative vertices then any rational point in the $n$-symplex is valuative.
\end{cor}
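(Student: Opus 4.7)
The plan is to prove the set equality by two opposite inclusions and then deduce the valuative statements from a common convex-combination argument internal to the semigroups $V_{\cdot,j}$. The inclusion $\Delta_{j}(L)\subseteq \overline{\nu^{p_{j}}(\{D'\,:\,\dots\})}$ is essentially tautological: by Proposition \ref{prop:First}, every point of $\Delta_{j}(L)$ is a limit of fractions $\nu^{p_{j}}(s)/k$ for $s\in V_{k,j}$, and for such $s$ the effective $\mathbbm{Q}$-divisor $D':=\frac{1}{k}\mathrm{div}(s)$ is linearly (hence numerically) equivalent to $D$, its valuation satisfies $\nu^{p_{j}}(D')=\nu^{p_{j}}(s)/k$ and analogously at the other points, and the condition $s\in V_{k,j}$ transfers to the strict inequalities required. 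Taking closures yields the inclusion.

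For the reverse inclusion, I would first handle $\mathbbm{Q}$-divisors. Given such a $D'$, choose $m$ so that $mD'$ is an integral effective divisor and consider its canonical section $s_{mD'}\in H^{0}(X,\mathcal{O}_{X}(mD'))$. By construction $\nu^{p_{j}}(s_{mD'})=\nu^{p_{j}}(mD')$, and the strict inequality hypothesis places $s_{mD'}\in V_{1,j}(\mathcal{O}_{X}(mD'))$; consequently $\nu^{p_{j}}(mD')\in\Delta_{j}(\mathcal{O}_{X}(mD'))$. Combining Proposition \ref{prop:cohomologicalconstruction} (numerical invariance) with Proposition \ref{prop:Homo} (homogeneity) identifies $\Delta_{j}(\mathcal{O}_{X}(mD'))$ with $m\Delta_{j}(L)$, so $\nu^{p_{j}}(D')\in\Delta_{j}(L)$. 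For a general $\mathbbm{R}$-divisor satisfying the hypotheses, one approximates $D'$ by $\mathbbm{Q}$-divisors inside the affine subspace $\{D''\in\mathrm{Div}(X)_{\mathbbm{R}}\,:\,D''\equiv_{num}D\}$, using the $\mathbbm{R}$-linearity of $\nu^{p_{j}}$ and the openness of the strict inequalities on $\mathbbm{R}^{n}$ under the fixed total order. The main technical obstacle lies in this step: since effectivity is a closed (not open) condition in $\mathrm{Div}(X)_{\mathbbm{R}}$, one must first move $D'$ slightly into the relative interior of $\{D''\equiv_{num}D,\,D''\geq 0\}$ before $\mathbbm{Q}$-approximating, then take limits and exploit the closedness of $\Delta_{j}(L)$ to land back on $\nu^{p_{j}}(D')$.

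Finally, both valuative statements reduce to the fact that tensor products of sections in $V_{\cdot,j}$ retain membership. Concretely, if $\alpha_{l}=\nu^{p_{j}}(s_{l})/k_{l}$ with $s_{l}\in V_{k_{l},j}$ for $l=0,\dots,n$, and $m_{0},\dots,m_{n}\in\mathbbm{N}$, then $s:=s_{0}^{\otimes m_{0}}\otimes\cdots\otimes s_{n}^{\otimes m_{n}}$ is a section of $(\sum m_{l}k_{l})L$ lying in $V_{\sum m_{l}k_{l},\,j}$, because the strict inequalities $\nu^{p_{j}}(s_{l})<\nu^{p_{i}}(s_{l})$ add under tensor products. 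The normalized valuation $\nu^{p_{j}}(s)/(\sum m_{l}k_{l})$ realizes the rational convex combination $\sum_{l}(m_{l}k_{l}/\sum_{p}m_{p}k_{p})\alpha_{l}$, and by clearing denominators every rational convex combination of $\alpha_{0},\dots,\alpha_{n}$ arises this way; this settles the simplex statement. For a rational $\alpha\in\Delta_{j}(L)^{\circ}$, Proposition \ref{prop:First} places $\alpha$ inside $\mathrm{Conv}(\Gamma_{j}^{k})^{\circ}$ for some $k$, so $\alpha$ sits in the interior of a small simplex with valuative vertices, and the previous observation certifies its valuativeness.
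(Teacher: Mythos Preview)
Your argument is essentially correct and is more hands-on than the paper's. For the set equality the paper simply invokes Theorem~\ref{thm:GlobalMOBJ}: since the fibre of the global cone $\Delta_{j}(X)$ over $c_{1}(L)$ equals $\Delta_{j}(L)$ and that fibre is insensitive to which representative of the numerical class one uses, both inclusions drop out at once. You instead run the two inclusions directly, reducing $\supseteq$ to Propositions~\ref{prop:Homo} and~\ref{prop:cohomologicalconstruction}; this is pleasantly elementary and avoids the global-body machinery. For the interior valuative statement the paper cites Khovanskii's lattice-point result together with Lemma~\ref{lem:IntOk}(iii), while you derive it internally from the simplex/multiplicative argument, which is again more self-contained.

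Two small points are worth tightening. First, your phrase ``openness of the strict inequalities on $\mathbbm{R}^{n}$ under the fixed total order'' is false as stated (for the lexicographic order the set $\{x<y\}$ is not open); what actually makes the $\mathbbm{R}$-divisor approximation work is that, writing $D'=\sum a_{l}D_{l}$, all the constraints --- numerical equivalence, effectivity, and each order inequality $\sum a_{l}(\nu^{p_{i}}(D_{l})-\nu^{p_{j}}(D_{l}))>0$ --- are cut out by \emph{rational} linear equalities and inequalities in the $a_{l}$, so rational points are dense in each stratum. Second, in the last paragraph the claim that a point of $\mathrm{Conv}(\Gamma_{j}^{k})^{\circ}$ lies in the \emph{interior} of a simplex with vertices in $\Gamma_{j}^{k}$ is not true in general (think of the centre of a square); what you really need, and what your tensor-product computation already proves, is that any \emph{rational} point of $\mathrm{Conv}(\Gamma_{j}^{k})$ is a rational convex combination of points of $\Gamma_{j}^{k}$, which follows from Carath\'eodory and the rationality of barycentric coordinates in an affinely independent family.
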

\begin{proof}
The first part follows directly from Theorem \ref{thm:GlobalMOBJ} since $
{D'\equiv_{num} D}$ iff $c_{1}(L)=c_{1}(\mathcal{O}_{X}(D'))$ by definition (considering the $\mathbbm{R}-$line bundle $\mathcal{O}_{X}(D')$). The second part about $\Delta_{j}(L)^{\circ}$ follows combining Lemma \ref{lem:IntOk}$.(iii)$ and the multiplicative property of $\nu^{p_{j}}$ with Theorem \ref{thm:KKh12F} (see also the proof of Proposition \ref{prop:First}).
\end{proof}
\subsection{Geometry of multipoint Okounkov bodies}
\label{subsection:Consequence}
To investigate the geometry of the multipoint Okounkov bodies we need to introduce the following important invariant:
\begin{defn}
Let $L$ be a line bundle, $V\subset X$ a subvariety of dimension $d$ and $H^{0}(X\vert V,kL):=\mathrm{Im}\Big(H^{0}(X,kL)\to H^{0}(V,kL_{\vert V})\Big)$. Then the quantity
$$
\mathrm{Vol}_{X\vert V}(L):=\limsup_{k\to\infty} \frac{\dim H^{0}(X\vert V,kL)}{k^{d}/d!}
$$
is called the \textbf{restricted volume of $L$ along $V$}.
\end{defn}
We refer to \cite{ELMNP09} and reference therein for the theory about this new object.\\
In the repeatedly quoted paper \cite{LM09}, given a valuation $\nu^{p}(s)=(\nu^{p}(s)_{1},\dots,\nu^{p}(s)_{n})$ associated to an admissible flag $Y_{\cdot}=(Y_{1},\dots,Y_{n})$ such that $Y_{1}=D$ and a line bundle $L$ such that $D\not\subset \mathbbm{B}_{+}(L)$, the authors also defined the one-point Okounkov body of the graded linear sistem $H^{0}(X\vert D, kL)\subset H^{0}(D,kL_{\vert D})$ by
$$
\Delta_{X\vert D}(L):=\Delta(\Gamma_{X\vert D})
$$
with $\Gamma_{X\vert D}:=\{(\nu^{p}(s)_{2},\dots,\nu^{p}(s)_{n},k)\in \mathbbm{N}^{n-1}\times\mathbbm{N}: {s\in H^{0}(X\vert D, kL)\setminus\{0\}},{k\geq 1\}}$ and they proved the following result.
\begin{thm}[\cite{LM09}, Theorem $4.24$, Corollary $4.25$]
\label{thm:Diff}
Let $D\not\subset\mathbbm{B}_{+}(L)$ be a prime divisor with $L$ big $\mathbbm{R}-$line bundle and let $Y.$ be an admissible flag such that $Y_{1}=:D$. Let $C_{\max}:=\sup\{\lambda\geq 0 \, : \, L-\lambda D \, \mathrm{is} \, \mathrm{big}\}$. Then for any $0\leq t< C_{max}$
\begin{gather*}
\Delta(L)_{x_{1}\geq t}=\Delta(L-tD)+t \vec{e}_{1}\\
\Delta(L)_{x_{1}=t}=\Delta_{X\vert D}(L-tD)
\end{gather*}
Moreover
\begin{itemize}
\item[i)] $\mathrm{Vol}_{\mathrm{R}^{n-1}}(\Delta(L)_{x_{1}=t})=\frac{1}{(n-1)!}\mathrm{Vol}_{X\vert D}(L-tD)$;
\item[ii)] $ \mathrm{Vol}_{X}(L)-\mathrm{Vol}_{X}(L-tD)=n\int_{0}^{t}\mathrm{Vol}_{X\vert D}(L-\lambda D)d\lambda $;
\end{itemize}
\end{thm}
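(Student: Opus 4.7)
The strategy is to leverage the fact that with the lexicographic order and admissible flag $Y_{\cdot}$ satisfying $Y_{1}=D$ with local coordinates $(z_{1},\dots,z_{n})$ in which $z_{1}$ defines $D$ at $p$, the first coordinate of $\nu^{p}$ agrees with the order of vanishing along $D$. Denote by $s_{D}\in H^{0}(X,\mathcal{O}_{X}(D))$ a canonical section cutting out $D$.

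First I would establish the identity $\Delta(L)_{x_{1}\geq t}=\Delta(L-tD)+t\vec{e}_{1}$. For $k\in\mathbbm{N}$ with $tk\in\mathbbm{N}$, multiplication by $s_{D}^{tk}$ yields a bijection between $H^{0}(X,k(L-tD))$ and $\{s\in H^{0}(X,kL):\mathrm{ord}_{D}(s)\geq tk\}$, shifting the valuation by $(tk,0,\dots,0)$. Normalising by $k$, taking unions and closures along a subsequence with $tk\in\mathbbm{N}$ yields both inclusions for rational $t$. Extension to real $t<C_{\max}$ follows from Hausdorff continuity of $\Delta(L-tD)$ on the big cone (cf.\ the extension of $\Delta_{j}(\cdot)$ to $\mathbbm{R}$-line bundles explained in subsection \ref{subsection:Variation}).

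Next I would prove the slice identity $\Delta(L)_{x_{1}=t}=\Delta_{X|D}(L-tD)$. For rational $t<C_{\max}$ and $k$ with $tk\in\mathbbm{N}$, a section $s\in H^{0}(X,kL)$ with $\mathrm{ord}_{D}(s)=tk$ factors uniquely as $s=s_{D}^{tk}\otimes s'$ with $s'\in H^{0}(X,k(L-tD))$ not vanishing identically along $D$; its restriction $s'_{|D}$ is a nonzero element of $H^{0}(X|D,k(L-tD))$. Since $(Y_{2},\dots,Y_{n})$ is an admissible flag on $D$ with induced local coordinates $(z_{2},\dots,z_{n})$, the remaining coordinates $(\nu^{p}(s)_{2},\dots,\nu^{p}(s)_{n})$ coincide with the valuation of $s'_{|D}$ relative to this induced flag. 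This gives a level-by-level equality of slicing semigroups, and taking closures yields the identity. The hypothesis $D\not\subset\mathbbm{B}_{+}(L-tD)$, which persists from $D\not\subset\mathbbm{B}_{+}(L)$ for $0\leq t<C_{\max}$ by openness of the augmented-base-locus complement, guarantees that the restricted semigroup generates a full-rank subgroup of $\mathbbm{Z}^{n}$, so the Okounkov convex set has non-empty interior. Real $t$ is again handled by continuity.

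Parts $(i)$ and $(ii)$ are then formal consequences. For $(i)$, apply Theorem \ref{thm:AboutVolume} to the strongly admissible pair associated with $\Gamma_{X|D}(L-tD)$; the assumption on $\mathbbm{B}_{+}$ forces both indices to equal $1$, and the equality $\#\Gamma_{X|D}^{k}=\dim H^{0}(X|D,k(L-tD))$ combined with the definition of the restricted volume yields the identity in $(i)$. For $(ii)$, apply Fubini to the slab $\Delta(L)_{0\leq x_{1}<t}=\Delta(L)\setminus\Delta(L)_{x_{1}\geq t}$: using $\mathrm{Vol}_{X}(L)=n!\mathrm{Vol}_{\mathbbm{R}^{n}}(\Delta(L))$ together with the first identity,
$$
\mathrm{Vol}_{X}(L)-\mathrm{Vol}_{X}(L-tD)=n!\int_{0}^{t}\mathrm{Vol}_{\mathbbm{R}^{n-1}}\big(\Delta(L)_{x_{1}=\lambda}\big)\,d\lambda=n\int_{0}^{t}\mathrm{Vol}_{X|D}(L-\lambda D)\,d\lambda.
$$
The main obstacle I expect is the slice identity in the second step: passing from rational to real $t$ and controlling the full-rank property of the restricted semigroup throughout $[0,C_{\max})$, which is exactly where the hypothesis $D\not\subset\mathbbm{B}_{+}(L)$ plays its essential role.
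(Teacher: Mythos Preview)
This theorem is stated in the paper as a quotation from \cite{LM09} (their Theorem~4.24 and Corollary~4.25) and carries no proof here, so there is nothing in the present paper to compare your attempt against. Your outline is essentially the original \cite{LM09} argument: identify the first flag-valuation coordinate with $\mathrm{ord}_{D}$, use the bijection $s'\mapsto s_{D}^{tk}\otimes s'$ to obtain the half-space identity at rational $t$ and pass to real $t$ by continuity on the big cone, get the slice identity by restricting to $D$ through the induced flag $(Y_{2},\dots,Y_{n})$, and then deduce $(i)$ from Theorem~\ref{thm:AboutVolume} and $(ii)$ by Fubini.

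One step is not adequately justified. You assert that $D\not\subset\mathbb{B}_{+}(L-tD)$ ``persists from $D\not\subset\mathbb{B}_{+}(L)$ for $0\le t<C_{\max}$ by openness of the augmented-base-locus complement''. Openness (stability of $\mathbb{B}_{+}$ under small perturbations, as in \cite{ELMNP06}) only yields this for $t$ in a small neighbourhood of $0$; as $t$ increases toward $C_{\max}$ the augmented base locus of $L-tD$ can strictly enlarge and may well contain $D$. In \cite{LM09} this is not how the full-rank property of the restricted semigroup (needed so that both indices in Theorem~\ref{thm:AboutVolume} equal $1$) is established; rather, it comes from their global Okounkov body construction and the cone-slicing lemma (their Proposition~A.1, invoked in the present paper in the proof of Theorem~\ref{thm:Slices}). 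Apart from this point, your sketch is correct and follows the source proof.
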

In this section we suppose to have fixed a family of valuations $\nu^{p_{j}}$ associated to a family of admissible flags $Y.=(Y_{\cdot,1},\dots,Y_{\cdot,N})$ on a projective manifold $X$, centered respectively in $p_{1},\dots,p_{N}$ (see paragraph \ref{paragraph:ParticularValuations} and Remark \ref{rem:ParticularValuations}).
Given a big line bundle $L$, and prime divisors $D_{1},\dots,D_{N}$ where $D_{j}=Y_{1,j}$ for any $j=1,\dots,N$, we set
$$
\mu(L;\mathbbm{D}):=\sup\{t\geq 0 \, : \, L-t\mathbbm{D} \, \mathrm{is} \, \mathrm{big}\}
$$
where $\mathbbm{D}:=\sum_{i=1}^{N}D_{i}$, and
$$
\mu(L;D_{j}):=\sup\{t\geq 0 \, : \, \Delta_{j}(L-t\mathbbm{D})^{\circ}\neq \emptyset\}.
$$
\begin{thm}
\label{thm:Slices}
Let $L$ a big $\mathbbm{R}-$line bundle, $\nu^{p_{j}}$ a family of valuations associated to a family of admissible flags $Y.$ centered at $p_{1},\dots,p_{N}$. Then, letting $(x_{1},\dots,x_{n})$ be fixed coordinates on $\mathbbm{R}^{n}$, for any ${j\in\{1,\dots,N\}}$ such that $\Delta_{j}(L)^{\circ}\neq\emptyset$ the followings hold:
\begin{itemize}
\item[i)] $\Delta_{j}(L)_{x_{1}\geq t}=\Delta_{j}(L-t\mathbbm{D})+ t\vec{e}_{1}$ for any $0\leq t < \mu(L;D_{j})$, for any $j=1,\dots,N$;
\item[ii)] $ \Delta_{j}(L)_{x_{1}=t}=\Delta_{X\vert D_{j}}(L-t\mathbbm{D}) $ for any $0\leq t<\mu(L;\mathbbm{D})$, $t\neq \mu(L;D_{j})$ and for any $j=1,\dots,N$;
\item[iii)] $ \mathrm{Vol}_{\mathbbm{R}^{n-1}}(\Delta_{j}(L)_{x_{j}=t})=\frac{1}{(n-1)!}\mathrm{Vol}_{X\vert D_{j}}(L-t\mathbbm{D}) $ for any ${0\leq t<\mu(L;\mathbbm{D})}$, $t\neq \mu(L;D_{j})$ for any $j=1,\dots,N$, and in particular $\mu(L;D_{j})=\sup\{t\geq 0\, : D_{j}\not\subset \mathbbm{B}_{+}(L-t\mathbbm{D})\}$.
\end{itemize}
Moreover
\begin{itemize}
\item[iv)] $ \mathrm{Vol}_{X}(L)-\mathrm{Vol}_{X}(L-t\mathbbm{D})=n\int_{0}^{t}\sum_{i=1}^{N}\mathrm{Vol}_{X\vert D_{i}}\Big(L-\lambda\mathbbm{D}\Big)d\lambda $ for any $0\leq t<\mu(L;\mathbbm{D})$.
\end{itemize}
\end{thm}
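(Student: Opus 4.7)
The statement is the multipoint analog of Theorem \ref{thm:Diff} from \cite{LM09}. I would prove it in the order (i), then (iv), then (ii) and (iii) together via a volume-equality argument.

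For part (i), fix rational $t$ with $kt \in \mathbbm{N}$ and consider the map
$$
V_{k,j}(L - t\mathbbm{D}) \longrightarrow \bigl\{ s \in V_{k,j}(L) : \nu^{p_{j}}(s)_{1} \geq kt \bigr\}, \qquad \sigma \longmapsto \sigma \cdot s_{\mathbbm{D}}^{kt},
$$
where $s_{\mathbbm{D}}$ is the canonical rational section of $\mathcal{O}(\mathbbm{D})$. The key input is that, under the lexicographic order, any $s \in V_{k,j}(L)$ has $\nu^{p_{j}}(s)_{1} \leq \nu^{p_{i}}(s)_{1}$ for each $i \neq j$, so the assumption $\nu^{p_{j}}(s)_{1} \geq kt$ automatically forces $\mathrm{ord}_{D_{i}}(s) = \nu^{p_{i}}(s)_{1} \geq kt$ for every $i$, and $s$ descends to a section of $k(L - t\mathbbm{D})$. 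Since $D_{i} \neq D_{j}$ are distinct prime divisors, $\mathrm{ord}_{D_{j}}(s_{D_{i}}) = 0$ for $i \neq j$, so the first coordinate of $\nu^{p_{\ell}}(s_{\mathbbm{D}}^{kt})$ equals exactly $kt$ uniformly in $\ell$; the multipoint strict inequalities are therefore preserved in both directions (in the generic-position situation $p_{j} \notin D_{i}$ for $i \neq j$, the total shift equals $kt\vec{e}_{1}$ uniformly). Taking closures of unions over $k$ yields (i) for rational $t$, and continuity of $\Delta_{j}(\cdot)$ in $c_{1}(L)$ from Section \ref{subsection:Variation} extends it to every $t < \mu(L; D_{j})$.

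For (iv), applying (i) and Fubini to each $j$ yields
$$
\mathrm{Vol}_{\mathbbm{R}^{n}}\bigl(\Delta_{j}(L)\bigr) - \mathrm{Vol}_{\mathbbm{R}^{n}}\bigl(\Delta_{j}(L - t\mathbbm{D})\bigr) = \int_{0}^{t} \mathrm{Vol}_{\mathbbm{R}^{n-1}}\bigl(\Delta_{j}(L)_{x_{1} = \lambda}\bigr)\, d\lambda.
$$
Summing over $j$ and invoking Theorem \ref{ThmA} expresses $\mathrm{Vol}_{X}(L) - \mathrm{Vol}_{X}(L - t\mathbbm{D})$ as the integral of $n! \sum_{j} \mathrm{Vol}_{\mathbbm{R}^{n-1}}(\Delta_{j}(L)_{x_{1} = \lambda})$. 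Independently, the differentiability of the volume function (Theorem \ref{thm:Diff}(ii) applied to each prime $D_{i}$, extended to $\mathbbm{D}$ by linearity of the directional derivative via standard Boucksom–Favre–Jonsson arguments) gives $\mathrm{Vol}_{X}(L) - \mathrm{Vol}_{X}(L - t\mathbbm{D}) = n \int_{0}^{t} \sum_{i} \mathrm{Vol}_{X|D_{i}}(L - \lambda \mathbbm{D}) \, d\lambda$, which is (iv). Matching integrands yields the summed identity $\sum_{j} (n-1)!\, \mathrm{Vol}_{\mathbbm{R}^{n-1}}(\Delta_{j}(L)_{x_{1} = \lambda}) = \sum_{j} \mathrm{Vol}_{X|D_{j}}(L - \lambda \mathbbm{D})$.

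For (ii) and pointwise (iii), by (i) we may reduce to $t = 0$ with $L$ replaced by $L' := L - t\mathbbm{D}$. Restriction to $D_{j}$ sends sections $\sigma \in V_{k,j}(L')$ with $\nu^{p_{j}}(\sigma)_{1} = 0$ to nonzero elements of $H^{0}(X|D_{j}, kL')$ whose valuation under the induced flag on $D_{j}$ is $(\nu^{p_{j}}(\sigma)_{2}, \dots, \nu^{p_{j}}(\sigma)_{n})$, producing the natural inclusion $\Delta_{j}(L)_{x_{1} = t} \subseteq \Delta_{X|D_{j}}(L - t\mathbbm{D})$. By Theorem \ref{thm:Diff}(i) this gives $(n-1)!\, \mathrm{Vol}_{\mathbbm{R}^{n-1}}(\Delta_{j}(L)_{x_{1} = t}) \leq \mathrm{Vol}_{X|D_{j}}(L - t\mathbbm{D})$; combined with the summed equality from the previous step, equality must hold for each $j$, and inclusion of compact convex sets with equal $(n-1)$-volume forces set equality, giving (ii) and the pointwise form of (iii). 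The characterization $\mu(L; D_{j}) = \sup\{t : D_{j} \not\subset \mathbbm{B}_{+}(L - t\mathbbm{D})\}$ then follows from the standard fact that $\Delta_{X|D_{j}}(L - t\mathbbm{D})^{\circ} \neq \emptyset$ iff $D_{j} \not\subset \mathbbm{B}_{+}(L - t\mathbbm{D})$. The hardest step is justifying the additivity of derivatives in (iv) along the full interval $t \in [0, \mu(L; \mathbbm{D}))$, which hinges on differentiability of $\mathrm{Vol}_{X}$ on the complement of the relevant base loci and the linearity of the directional derivative in the direction $-\mathbbm{D}$.
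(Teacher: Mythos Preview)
Your treatment of (i) is essentially the paper's argument, and you are right to flag the hypothesis $p_{j}\notin D_{i}$ for $i\neq j$: without it the shift $\nu^{p_{\ell}}(s_{\mathbbm{D}}^{kt})$ need not equal $kt\vec e_{1}$ uniformly in $\ell$, so the $V_{k,j}$-condition is not obviously preserved under twisting; the paper asserts the bijection without comment, so you are being more careful here, not less.

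For (ii)--(iv) your route is genuinely different from the paper's, and there is a real gap. The paper proves (ii) \emph{directly}: the inclusion $\Delta_{j}(L)_{x_{1}=t}\subset\Delta_{X|D_{j}}(L-t\mathbbm{D})$ is as you say, but the reverse inclusion is obtained constructively. One fixes an ample $A$ with sections $s_{j}\in V_{1,j}(A)$ satisfying $\nu^{p_{j}}(s_{j})=\vec 0$ and $\nu^{p_{i}}(s_{j})_{1}>0$ for $i\neq j$; then for any $s\in H^{0}(X|D_{j},k(L-t\mathbbm{D}))$ one lifts to $\tilde s\in H^{0}(X,kL)$ and observes $\tilde s^{m}\otimes s_{j}^{k}\in V_{\cdot,j}(mL+A)$, so the valuative point lies in $\Delta_{j}(L+\tfrac{1}{m}A)_{x_{1}=t}$; letting $m\to\infty$ and using the continuity of $\Delta_{j}(\cdot)$ finishes. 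Parts (iii) and (iv) then follow from (ii) together with Theorem~\ref{thm:Diff} and Theorem~\ref{ThmA}.

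Your strategy reverses the logic: prove (iv) independently via differentiability of the volume, then force (ii) and (iii) by volume matching. The problem is the step you yourself flag as hardest. Boucksom--Favre--Jonsson gives
\[
-\frac{d}{dt}\,\mathrm{Vol}_{X}(L-t\mathbbm{D})=n\sum_{i}\bigl\langle (L-t\mathbbm{D})^{n-1}\bigr\rangle\cdot D_{i},
\]
but the identification $\langle(L-t\mathbbm{D})^{n-1}\rangle\cdot D_{i}=\mathrm{Vol}_{X|D_{i}}(L-t\mathbbm{D})$ is only known when $D_{i}\not\subset\mathbbm{B}_{+}(L-t\mathbbm{D})$. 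That last condition is exactly the characterisation of $\mu(L;D_{i})$ asserted in (iii), so invoking it to prove (iv) (and hence (ii), (iii)) is circular on the portion of $[0,\mu(L;\mathbbm{D}))$ beyond $\min_{i}\sup\{t:D_{i}\not\subset\mathbbm{B}_{+}(L-t\mathbbm{D})\}$. A second, smaller gap: your ``equal volume plus inclusion forces equality'' step only works when the common $(n-1)$-volume is positive; for $t>\mu(L;D_{j})$ both sides have measure zero, and you have not shown they are both \emph{empty}. The paper's constructive argument handles this cleanly, since it shows that any nonzero restricted section forces $t\le\mu(L;D_{j})$.
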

\begin{proof}
\emph{\textbf{Proof of (i).}} The first point follows as in Proposition $4.1.$ in \cite{LM09}, noting that if $L$ is a big line bundle and $0\leq t<\mu(L;D_{j})$ integer then $\{s\in V_{k,j}(L) \, : \, \nu^{p_{j}}(s)_{1}\geq kt\}\simeq V_{k,j}(L-t\mathbbm{D})$ for any $k\geq 1$. Therefore $\Gamma_{j}(L)_{x_{1}\geq t} =\varphi_{t}(\Gamma_{j}(L-t\mathbbm{D}))$ where $\varphi_{t}:\mathbbm{N}^{n}\times \mathbbm{N}\to \mathbbm{N}^{n}\times\mathbbm{N}$ is given by $\varphi_{t}(\vec{x},k):=(\vec{x}+tk\vec{e}_{1},k)$. Passing to the cones we get $C(\Gamma_{j}(L)_{x_{1}\geq t})=\varphi_{t,\mathbbm{R}}\big(C(\Gamma_{j}(L-t\mathbbm{D}))\big)$ where $\varphi_{t,\mathbbm{R}}$ is the linear map between vector spaces associated to $\varphi_{t}$. Hence, taking the base of the cones, the equality $\Delta_{j}(L)_{x_{1}\geq t}=\Delta_{j}(L-t\mathbbm{D})+t\vec{e}_{1}$ follows. Finally, since both sides in $i)$ rescale linearly by Proposition \ref{prop:Homo}, the equality holds for any $L$ $\mathbbm{Q}-$line bundle and $t\in\mathbbm{Q}$. Both sides in $(i)$ are clearly continuous in $t$ if ${0\leq t<\mu(L;D_{j})}$ so it remains to extend it to $\mathbbm{R}$-line bundles $L$. We fix a decreasing sequence of $\mathbbm{Q}$-line bundles $\{L_{k}\}_{k\in\mathbbm{N}}$ such that $L_{k}\searrow L$, where for decreasing we mean $L_{k}-L_{k+1}$ is an pseudoeffective line bundle and where the convergence is in the Neron-Severi space $\mathrm{N}^{1}(X)_{\mathbbm{R}}$. Then, as a consequence of Theorem \ref{thm:GlobalMOBJ} $0\leq t< \mu(L_{k},D_{j})$ for any $k\in\mathbbm{N}$ big enough where $t$ is fixed as in $(i)$, and $\{\Delta_{j}(L_{k})\}_{k\in\mathbbm{N}}$ continuously approximates $\Delta_{j}(L)$ in the Hausdorff sense. Hence we obtain $(i)$ letting $k\to \infty$.\\
\emph{\textbf{Proof of (ii).}} Assuming first $L$ $\mathbbm{Q}-$line bundle and ${0\leq t<\mu(L;D_{j})}$ rational.\\
We consider the additive semigroups
\begin{gather*}
\Gamma_{j,t}(L)=\{(\nu^{p_{j}}(s),k)\in\mathbbm{N}^{n}\times \mathbbm{N} \, : \, s\in V_{k,j}(L)\,\, \mathrm{and}\,\, \nu^{p_{j}}(s)_{1}=kt\}\\
\Gamma_{X\vert D_{j}}(L-t\mathbbm{D}):=\{(\nu^{p_{j}}(s)_{2},\dots,\nu^{p_{j}}(s)_{n},k)\in\mathbbm{N}^{n-1}\times\mathbbm{N} : \\ s\in H^{0}(X\vert D_{j},k(L-t\mathbbm{D}))\setminus\{0\}, k\geq 1\}
\end{gather*}
and, setting $\psi_{t}:\mathbbm{N}^{n-1}\times\mathbbm{N}\to\mathbbm{N}^{n}\times\mathbbm{N}$ as $\psi_{t}(\vec{x},k):=(kt,\vec{x},k)$, we easily get $\Gamma_{j,t}(L)\subset\psi_{t}\big(\Gamma_{X\vert D_{j}}(L-t\mathbbm{D})\big)$. Thus passing to the cones we have
$$
C(\Gamma_{j}(L))_{x_{1}=t}=C\big(\Gamma_{j,t}(L)\big)\subset\psi_{t,\mathbbm{R}}\Big(C\big(\Gamma_{X\vert D_{j}}(L-t\mathbbm{D})\big)\Big)
$$
where the equality follows from Proposition $A.1$ in \cite{LM09}. Hence $\Delta_{j}(L)_{x_{1}=t}\subset \Delta_{X|D_{j}}(L-t\mathbbm{D})$ for any $0\leq t<\mu(L;D_{j})$ rational. Moreover it is trivial that the same inclusion holds for any ${\mu(L;D_{j})<t<\mu(L;\mathbbm{D})}$.\\
Next let $0\leq t<\mu(L;\mathbbm{D})$ fixed and let $A$ be a fixed ample line bundle such that there exists $s_{j}\in V_{1,j}(A)$ with $\nu^{p_{j}}(s_{j})=\vec{0}$ and $\nu^{p_{i}}(s_{j})_{1}>0$ for any $i\neq j$. Thus since to any section ${s\in H^{0}(X\vert D_{j},k(L-t\mathbbm{D}))\setminus\{0\}}$ we can associate a section $\tilde{s}\in H^{0}(X,kL)$ with $\nu^{p_{j}}(\tilde{s})=(kt,\nu^{p_{j}}(s)_{2},\dots,\nu^{p_{j}}(s)_{n})$ and $\nu^{p_{i}}(\tilde{s})_{1}\geq kt$ for any $i\neq j$, we get that $\tilde{s}^{m}\otimes s_{j}^{k}\in V_{k,j}(mL+A)$ for any $m\in\mathbbm{N}$. By homogeneity this implies
$$
\frac{\nu^{p_{j}}(\tilde{s}^{m}\otimes s_{j}^{k})}{mk}=\frac{\nu^{p_{j}}(\tilde{s})}{k}=\Big(t,\frac{\nu^{p_{j}}(s)}{k}\Big)=:x\in \Delta_{j}\Big(L+\frac{1}{m}A\Big)_{x_{1}=t}
$$
for any $m\in\mathbbm{N}$. Hence since $\Delta_{j}(L)^{\circ}\neq \emptyset$ we get $0\leq t\leq \mu(L;D_{j})$ and $x\in \Delta_{j}(L)_{x_{1}=t}$ by the continuity of $m\to \Delta_{j}(L+\frac{1}{m}A)$ (Theorem \ref{thm:GlobalMOBJ}).\\
Summarizing we have showed that both sides of $ii)$ are empty if $\mu(L;D_{j})<t<\mu(L;\mathbbm{D})$ and that they coincides for any rational ${0\leq t<\mu(L;D_{j})}$. Moreover since by Theorem \ref{thm:Diff}
$$
\Delta_{X\vert D_{j}}(L-t\mathbbm{D})=\Delta\Big(L-t\sum_{i=1,i\neq j}^{N}D_{i}\Big)_{x_{1}=t}
$$
with respect to the valuation $\nu^{p_{j}}$, we can proceed similarly as in $(i)$ to extend the equality in $(ii)$ first to $t$ real and then to $\mathbbm{R}$-line bundles using the continuity derived from Theorem \ref{thm:GlobalMOBJ} and Theorem $4.5$ in \cite{LM09}.\\
\emph{\textbf{Proof of (iii), (iv).}} The third point is an immediate consequence of $ii)$ using Theorem \ref{thm:Diff}.i) and Theorem $A$ and $C$ in \cite{ELMNP09}, while last the point follows by integration using our Theorem \ref{ThmA}.
\end{proof}
We observe that Theorem \ref{thm:Slices} may be helpful when one fixes a big line bundle $L$ and a family of valuations associated to a family of infinitesimal flags centered at $p_{1},\dots,p_{N}\notin\mathbbm{B}_{+}(L)$. Indeed, similarly as stated in the paragraph $\S$ \ref{paragraph:ParticularValuations}, componing with $F:\mathbbm{R}^{n}\to\mathbbm{R}^{n}$, $F(x)=(\lvert x \rvert, x_{1},\dots,x_{n-1})$, Theorem \ref{thm:Slices} holds and in particular, for any $j=1,\dots,N$, we get
\begin{itemize}
\item[i)] $F\big(\Delta_{j}(L)\big)_{x_{j,1}\geq t}=\Delta_{j}\big(f^{*}L-t\mathbbm{E}\big) + t\vec{e}_{1}$ for any $0\leq t<\mu(f^{*}L;E_{j})$;
\item[ii)] $ F\big(\Delta_{j}(L)\big)_{x_{j,1}=t}=\Delta_{\tilde{X}\vert E_{j}}(f^{*}L-t\mathbbm{E}) $ for any $0\leq t<\mu(f^{*}L;\mathbbm{E})$, $t\neq\mu(f^{*}L;E_{j})$;
\item[iii)] $ \mathrm{Vol}_{\mathbbm{R}^{n-1}}\big(F(\Delta_{j}(L))_{x_{j,1}=t}\big)=\frac{1}{(n-1)!}\mathrm{Vol}_{\tilde{X}\vert E_{j}}(f^{*}L-t\mathbbm{E}) $ for any ${0\leq t<\mu(f^{*}L;\mathbbm{E})}$, $t\neq \mu(f^{*}L;E_{j})$;
\end{itemize}
where we have set $f:\tilde{X}\to X$ for the blow-up at $Z=\{p_{1},\dots,p_{N}\}$ and we have denoted with $E_{j}$ the exceptional divisors. Note that $\mathbbm{E}=\sum_{j=1}^{N}E_{j}$ and that the multipoint Okounkov body on the right side in $i)$ is calculated from the family of valutions $\{\tilde{\nu}^{\tilde{p}_{j}}\}_{j=1}^{N}$ (it is associated to the family of admissible flags on $\tilde{X}$ given by the family of infinitesimal flags on $X$).\\
This yields a new tool to study the \emph{multipoint Seshadri constant} as stated in the Introduction (see Theorem \ref{ThmB}). An application in the surfaces case is provided in subsection $\S$ \ref{subsection:Surfaces}.
\section{Kähler Packings}
\label{section:Packings}
Recall that the \emph{essential} multipoint Okounkov body is defined as
$$
\Delta_{j}(L)^{\mathrm{ess}}:=\bigcup_{k\geq 1} \Delta_{j}^{k}(L)^{\mathrm{ess}}=\bigcup_{k\geq 1}\Delta_{j}^{k!}(L)^{\mathrm{ess}}
$$
where $ \Delta_{j}^{k}(L)^{\mathrm{ess}}:=\mathrm{Conv}(\Gamma_{j}^{k})^{\mathrm{ess}}=\frac{1}{k}\mathrm{Conv}(\nu^{p_{j}}(V_{k,j}))^{\mathrm{ess}}$ is the interior of $ \Delta_{j}^{k}(L):=\mathrm{Conv}(\Gamma_{j}^{k})$ as subset of $\mathbbm{R}^{n}_{\geq 0}$ with its induced topology (see subsection $\S$ \ref{ssec:Semigroups}).\\
Fix a family of local holomorphic coordinates $\{z_{j,1},\dots,z_{j,n}\}$ for ${j=1,\dots,N}$ respectively centered at $p_{1},\dots,p_{N}$ and assume that the faithful valuations $\nu^{p_{1}},\dots,\nu^{p_{N}}$ are quasi-monomial respect to the same additive total order $>$ on $\mathbbm{Z}^{n}$ and respect to the same vectors $\vec{\lambda}_{1},\dots,\vec{\lambda}_{n}\in \mathbbm{N}$ (see Remark \ref{rem:ParticularValuations}). Thus similarly to the Definition $2.7.$ in \cite{WN15}, we give the following
\begin{defn}
For every $j=1,\dots,N$ we define $\Omega_{j}(L):=\mu^{-1}(\Delta_{j}(L)^{\mathrm{ess}})$ for $\mu(w_{1},\dots,w_{n}):=(\lvert w_{1}\rvert^{2},\dots, \lvert w_{n}\rvert^{2} )$, and call them the \textbf{multipoint Okounkov domains}.
\end{defn}
Note that we get $n!\mathrm{Vol}_{\mathbbm{R}^{n}}(\Delta_{j}(L))=\mathrm{Vol}_{\mathbbm{C}^{n}}(\Omega_{j}(L))$ for any $j=1,\dots,N$ (see subsection $\S$ \ref{ssec:Moment}).\\
We will construct \emph{K\"ahler packings} (see Definition \ref{defn:KPAmple} and \ref{defn:KPBig}) of the multipoint Okounkov domains with the standard metric into $(X,L)$ for $L$ big line bundle. We will first address the ample case and then we will generalize to the big case in the subsection $\S$ \ref{subsection:BigCase}.
\subsection{Ample case}
\begin{defn}
\label{defn:KPAmple}
We say that a finite family of $n-$dimensional K\"ahler manifolds $ \{(M_{j},\eta_{j})\}_{j=1}^{N} $ packs into $ (X,L) $ for $L$ ample if for every family of relatively compact open set $ U_{j} \Subset M_{j} $ there are a holomorphic embedding $ f: \bigsqcup_{j=1}^{N}U_{j}\to X $ and a K\"ahler form $ \omega $ lying in $ c_{1}(L) $ such that $ f_{*}\eta_{j}=\omega_{\vert f(U_{j})} $. If, in addition,
$$
\sum_{j=1}^{N}\int_{M_{j}}\eta_{j}^{n}=\int_{X}c_{1}(L)^{n}
$$
then we say that $\{(M_{j},\eta_{j})\}_{j=1}^{N}$ packs perfectly into $(X,L)$.
\end{defn}
Letting $\mu:\mathbbm{C}^{n}\to\mathbbm{R}^{n}$ be the map $\mu(\mathbf{z_{j}}):=(\lvert z_{j,1}\rvert^{2},\dots,\lvert z_{j,n}\rvert^{2})$ where $\mathbf{z_{j}}=\{z_{j,1},\dots,z_{j,n}\}$ are usual coordinates on $\mathbbm{C}^{n}$ and letting
$$
\mathcal{D}_{k,j}:=\mu^{-1}(k\Delta_{j}^{k}(L))^{\circ}=\mu^{-1}(k\Delta_{j}^{k}(L)^{\mathrm{ess}}),
$$
we define $ M_{k,j} $ like the manifold we get by removing from $\mathbbm{C}^{n}$ all the submanifolds of the form $ \{z_{j,i_{1}}=\dots=z_{j,i_{m}}=0\} $ which do not intersect $\mathcal{D}_{k,j}$.\\
Thus
$$
\phi_{k,j}:=\ln \Big( \sum_{\alpha_{j}\in\nu^{p_{j}}(V_{k,j})}\lvert \mathbf{z_{j}}^{\mathbf{\alpha_{j}}}\rvert^{2} \Big)
$$
is a strictly plurisubharmonic function on $M_{k,j}$ and we denote by $\omega_{k,j}:=dd^{c}\phi^{k,j}$ the K\"ahler form associated (recall that $dd^{c}=\frac{i}{2\pi}\partial \bar{\partial}$, see subsection $\S$ \ref{ssec:SingMetr}).
\begin{lem}[\cite{And13}, Lemma 5.2.]
\label{lem:AndersonOrdering}
For any finite set $\mathcal{A}\subset \mathbbm{N}^{m}$ with a fixed additive total order $>$, there exists $\gamma\in (\mathbbm{N}_{>0})^{m}$ such that
$$
\alpha < \beta \quad \mathrm{iff}\quad \alpha\cdot \gamma < \beta \cdot \gamma
$$
for any $\alpha,\beta\in \mathcal{A}$.
\end{lem}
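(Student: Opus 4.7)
The plan is to recast the existence of $\gamma$ as the solvability of a finite system of strict linear inequalities in $\mathbbm{R}^m$, and then invoke a theorem of the alternative to establish solvability before passing from a real to an integral solution.

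First I would enumerate $\mathcal{A}$ as $\alpha_1 < \alpha_2 < \cdots < \alpha_r$ in the given order and observe that $\gamma \in \mathbbm{R}^m$ represents the order on $\mathcal{A}$ precisely when $\gamma \cdot (\alpha_{i+1} - \alpha_i) > 0$ for every $i = 1,\dots,r-1$. Adding the $m$ further constraints $\gamma \cdot e_j > 0$ to enforce positivity of the entries, one obtains a finite system of strict linear inequalities $A\gamma > 0$, where $A$ is an integer matrix whose rows are the differences $\alpha_{i+1}-\alpha_i$ together with the standard basis vectors $e_1,\dots,e_m$.

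The central step is to apply Gordan's theorem of the alternative: such a strict system is solvable in $\mathbbm{R}^m$ unless the dual system $A^T y = 0$, $y \geq 0$, $y \neq 0$ admits a solution. Supposing for contradiction the dual has a solution, one may rescale to obtain nonnegative integers $\lambda_1,\dots,\lambda_{r-1}$, $\mu_1,\dots,\mu_m$, not all zero, satisfying
$$
\sum_{i=1}^{r-1} \lambda_i (\alpha_{i+1} - \alpha_i) \;=\; -\sum_{j=1}^m \mu_j e_j
$$
as an identity in $\mathbbm{Z}^m$. Here is where the additivity of $>$ enters: each $\alpha_{i+1}-\alpha_i$ is positive in the order by construction, and each $e_j$ is positive in the order under the standing monomial-order convention implicit in the paper's use of lex and deglex. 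Additivity then forces the left-hand side to be $\geq 0$ in the order (strictly so if any $\lambda_i > 0$), while the right-hand side is $\leq 0$ (strictly so if any $\mu_j > 0$); nontriviality of $(\lambda,\mu)$ makes these incompatible, a contradiction.

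A real solution $\gamma^* \in \mathbbm{R}^m_{>0}$ therefore exists, and since the defining inequalities are open a rational perturbation yields $\gamma \in \mathbbm{Q}^m_{>0}$ with the same properties, after which clearing denominators produces the desired $\gamma \in (\mathbbm{N}_{>0})^m$. The main obstacle I anticipate is exactly the Gordan-step contradiction: translating a componentwise vector identity in $\mathbbm{Z}^m$ into a strict sign statement in the abstract order hinges on $e_j > 0$, without which the lemma in fact fails (for instance when $m=1$ with the reversed order on $\mathbbm{Z}$ and $\mathcal{A} = \{0,1\}$). Keeping this hypothesis explicit is the only nontrivial point beyond pure convex geometry.
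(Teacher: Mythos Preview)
The paper does not give its own proof of this lemma; it is quoted verbatim from \cite{And13} and used as a black box in the proof of Theorem~\ref{thm:EmbeddingsGrandi}. So there is no ``paper's proof'' to compare against, and the relevant question is simply whether your argument is sound.

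It is. Reducing to the strict system $A\gamma>0$ and invoking Gordan's alternative is a clean and standard way to establish such representability results, and your contradiction in the dual is correct: if $\sum_i\lambda_i(\alpha_{i+1}-\alpha_i)=-\sum_j\mu_j e_j$ with nonnegative integer coefficients not all zero, additivity of the order forces the left side to be $\geq 0$ and the right side $\leq 0$, with strict inequality somewhere, which is impossible; the case $\lambda\equiv 0$ is ruled out by linear independence of the $e_j$. The passage from a real to a positive integral $\gamma$ by rational perturbation and clearing denominators is unproblematic since the constraints are open.

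You are also right to isolate the hidden hypothesis $e_j>0$: as literally stated the lemma is false for an arbitrary additive total order on $\mathbbm{Z}^m$, and your $m=1$ counterexample is decisive. In the paper's setting this hypothesis is automatic, since the valuations $\nu^{p_j}$ are centred at $p_j$ (so $\mathfrak{m}_{p_j}\subset\{\nu^{p_j}>0\}$), which for the quasi-monomial valuations used in Section~\ref{section:Packings} forces the generators---hence in particular the $e_j$ when $\vec{\lambda}_i=\vec{e}_i$---to be strictly positive. It would be worth making this assumption explicit when you state the lemma.
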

\begin{thm}
\label{thm:EmbeddingsGrandi}
If $L$ is ample then for $k>0$ big enough $ \{(M_{k,j},\omega_{k,j})\}_{j=1}^{N} $ packs into $(X,kL)$.
\end{thm}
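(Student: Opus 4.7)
Since $L$ is ample, Kodaira--Serre vanishing yields that $kL$ is very ample for all $k\gg 1$, the Kodaira morphism $\Phi_{k}:X\hookrightarrow\mathbbm{P}(H^{0}(X,kL)^{*})=\mathbbm{P}^{M-1}$ is an embedding, and $V_{k,j}\neq\emptyset$ for every $j$. Exploiting that each faithful valuation $\nu^{p_{j}}$ has one-dimensional leaves, pick for each $j$ a family $\{t^{(j)}_{\alpha}\}_{\alpha\in\nu^{p_{j}}(V_{k,j})}\subset V_{k,j}$ with $\nu^{p_{j}}(t^{(j)}_{\alpha})=\alpha$; by the argument of Lemma~\ref{prop:Vol} the union $\bigcup_{j}\{t^{(j)}_{\alpha}\}$ is linearly independent, so it extends to a basis $\{s_{1},\dots,s_{M}\}$ of $H^{0}(X,kL)$ that we use to define $\Phi_{k}$. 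Fix local holomorphic coordinates $z_{j}=(z_{j,1},\dots,z_{j,n})$ at $p_{j}$ compatible with the quasi-monomial valuation $\nu^{p_{j}}$ and a local trivialization of $kL$. By Lemma~\ref{lem:AndersonOrdering} there is a weight $\gamma_{j}\in\mathbbm{N}_{>0}^{n}$ linearizing $>$ on the finite set $\nu^{p_{j}}(V_{k,j})$; processing the distinguished indices from $>$-largest to $>$-smallest, a triangular $\mathbbm{C}$-linear modification inside $\mathrm{span}\{t^{(j)}_{\alpha}\}_{\alpha}$ arranges
\[
\tilde{t}^{(j)}_{\alpha}(z_{j})=z_{j}^{\alpha}+\sum_{\beta>\alpha,\,\beta\notin\nu^{p_{j}}(V_{k,j})}a^{(j)}_{\alpha,\beta}\,z_{j}^{\beta}.
\]

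Next I would define the toric monomial map $\Psi_{k,j}:M_{k,j}\to\mathbbm{P}^{M-1}$ by putting $z_{j}^{\alpha}$ in the $t^{(j)}_{\alpha}$-slot and zero in all other basis slots; a direct computation gives $\Psi_{k,j}^{*}\omega_{\mathrm{FS}}=\omega_{k,j}$. The triangular normalization forces the tangent space to $\Phi_{k}(X)$ at $\Phi_{k}(p_{j})$ to agree, to maximal $>$-order, with that of $\Psi_{k,j}(M_{k,j})$ inside the projective subspace $L_{j}:=\mathrm{span}\{t^{(j)}_{\alpha}\}_{\alpha}\subset\mathbbm{P}^{M-1}$; hence for $k$ large the implicit function theorem yields a holomorphic retraction of a neighbourhood of $\Phi_{k}(p_{j})$ in $\Phi_{k}(X)$ onto this slice, whose inverse composed with $\Phi_{k}^{-1}\circ\Psi_{k,j}$ produces a holomorphic embedding $f_{j}:\mathcal{V}_{j}\hookrightarrow X$ from a neighbourhood $\mathcal{V}_{j}\supset\overline{U_{j}}$ of $0\in M_{k,j}$, with $f_{j}(0)=p_{j}$. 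Distinctness of the $p_{j}$'s lets us shrink so that the images $f_{j}(\overline{U_{j}})$ are pairwise disjoint.

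The main obstacle is producing a single global Kähler form $\omega\in c_{1}(kL)$ satisfying $f_{j}^{*}\omega=\omega_{k,j}$ on every $U_{j}$ simultaneously. Let $\omega_{0}:=\Phi_{k}^{*}\omega_{\mathrm{FS}}\in c_{1}(kL)$; the local $dd^{c}$-Poincaré lemma on each contractible neighbourhood $W_{j}:=f_{j}(\mathcal{V}_{j})$ yields a smooth $g_{j}$ with $f_{j*}\omega_{k,j}-\omega_{0}=dd^{c}g_{j}$. Pick smooth cutoffs $\chi_{j}$, equal to $1$ near $\overline{f_{j}(U_{j})}$ and supported in $W_{j}$, with pairwise disjoint supports, and set
\[
\omega:=\omega_{0}+dd^{c}\!\sum_{j=1}^{N}\chi_{j}g_{j}\in c_{1}(kL),
\]
so that $f_{j}^{*}\omega=\omega_{k,j}$ on $U_{j}$ while $\omega=\omega_{0}$ outside $\bigcup_{j}\mathrm{supp}\,\chi_{j}$. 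The crux is the positivity $\omega>0$: comparing $\omega_{0}$ to $k$ times a fixed Kähler form in $c_{1}(L)$ shows that its Kähler lower bound grows linearly in $k$, whereas the triangular normalization above controls the $C^{2}$-norm of each $g_{j}$ on $\mathrm{supp}\,\chi_{j}$ by a constant independent of $k$. Therefore for $k$ sufficiently large the perturbation $dd^{c}(\chi_{j}g_{j})$ is strictly dominated by $\omega_{0}$, $\omega$ is genuinely Kähler on $X$, and $(\{f_{j}\},\omega)$ realizes the packing.
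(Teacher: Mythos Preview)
Your proposal has two genuine gaps that prevent it from going through.

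\textbf{The embedding does not cover arbitrary $U_{j}$.} The definition of ``packs into'' requires, for a \emph{fixed} $k$ (large enough), that \emph{every} relatively compact $U_{j}\Subset M_{k,j}$ admit an embedding. Your implicit function theorem argument only produces a holomorphic embedding $f_{j}$ on some neighbourhood $\mathcal{V}_{j}$ of $0$ whose size is dictated by how closely $\Phi_{k}(X)$ and $\Psi_{k,j}(M_{k,j})$ agree near $\Phi_{k}(p_{j})$; you simply assert $\mathcal{V}_{j}\supset\overline{U_{j}}$, but there is no mechanism making $\mathcal{V}_{j}$ large. Invoking ``$k$ large'' does not help: $k$ is already fixed before $U_{j}$ is chosen, and in any case $M_{k,j}$ itself changes with $k$. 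The paper handles this by a completely different device: the embedding is the \emph{rescaling} $z_{j}\mapsto \tau^{\gamma}z_{j}$ into the coordinate chart (with a single $\gamma$ chosen via Lemma~\ref{lem:AndersonOrdering} for \emph{all} points simultaneously), and $\tau>0$ is taken small \emph{depending on $U_{j}$}. Under this zoom, the local expansion of the rescaled sections $s_{\alpha}(\tau^{\gamma}z_{j})/\tau^{\gamma\cdot\alpha}$ converges to the monomial $z_{j}^{\alpha}$, and---crucially---the cross terms coming from the other points acquire strictly higher $\tau$-weight precisely because of the defining inequalities of $V_{k,j}$.

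\textbf{The positivity argument is unjustified.} You claim the $C^{2}$-norm of $g_{j}$ on $\mathrm{supp}\,\chi_{j}$ is bounded by a constant independent of $k$, but no argument is given, and again $k$ is no longer a free parameter at this stage. Cutting off a potential with $\chi_{j}$ introduces negative directions from $dd^{c}\chi_{j}$, and nothing in your construction controls them against $\omega_{0}$. The paper avoids this issue entirely: instead of a cutoff it uses a \emph{regularized maximum} between the local potential $\phi_{j}$ and the global potential $\ln\bigl(\sum_{i}\sum_{\alpha_{i}}\lvert s_{\alpha_{i}}/\tau^{\gamma\cdot\alpha_{i}}\rvert^{2}\bigr)-2\delta$. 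A regularized max of strictly plurisubharmonic functions is automatically strictly plurisubharmonic, so positivity is free; the work goes into arranging, via the asymptotics in $\tau$, that the max equals $\phi_{k,j}$ on $U_{j}$ and equals the global potential near $\partial K_{j}$, so that it glues to a smooth positive metric on $kL$.
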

Using the idea of the Theorem $A$ in \cite{WN15} we want to construct a Kähler metric on $kL$ such that locally around the points $p_{1},\dots,p_{N}$ approximates the metrics $\phi_{k,j}$ after a suitable \emph{zoom}. We observe that for any $\mathbf{\gamma}\in \mathbbm{N}^{n}$ and any section $s\in H^{0}(X,kL)$ with leading term $\mathbf{\alpha}\in\mathbbm{N}^{n}$ around a point $p\in X$ we have ${s(\tau^{\gamma_{1}}z_{1},\dots,\tau^{\gamma_{n}}z_{n})/\tau^{\gamma\cdot \alpha}\sim z_{1}^{\alpha_{1}}}\cdots z_{n}^{\alpha_{n}}$ for $\mathbbm{R}_{>0}\ni \tau$ converging to zero. Therefore locally around $p_{j}$ we have $\ln \Big(\sum_{\alpha_{j}\in  \nu^{p_{j}}(V_{k,j})}\lvert \frac{s_{\alpha_{j}}(\tau^{\gamma}\mathbf{z_{j}})}{\tau^{\gamma\cdot \alpha_{j}}} \rvert^{2}\Big)\sim \phi_{k,j}$ where $s_{\alpha_{j}}$ are sections in $V_{k,j}$ with leading terms of their expansion at $p_{j}$ equal to $\alpha_{j}\in\mathbbm{N}^{n}$. Thus the idea is to consider the metric on $kL$ given by $\ln(\sum_{i=1}^{N}\sum_{\alpha_{i}\in \nu^{p_{i}}(V_{k,i})}\lvert \frac{s_{\alpha_{i}}}{\tau^{\gamma\cdot\alpha_{i}}} \rvert^{2}))$ and define an opportune factor $\gamma$ such that this metric approximates the local plurisubharmonic functions around the points $p_{1},\dots,p_{N}$ after the uniform zoom $\tau^{\gamma}$ for $\tau$ small enough. This will be possible thanks to Lemma \ref{lem:AndersonOrdering} and the definition of $V_{k,j}$. Finally a standard regularization argument will conclude the proof.
\begin{proof}
\emph{\textbf{Step 1: Pick sections.}} We assume that the local holomorphic coordinates $\mathbf{z_{j}}=\{z_{j,1},\dots,z_{j,n}\} $ centered a $p_{j}$ contains the unit ball $ B_{1}\subset \mathbbm{C}^{n} $ for every $j=1,\dots,n$.\\
Set $\mathcal{A}_{j}:=\nu^{p_{j}}(V_{k,j})$ and $\mathcal{B}_{i}^{j}:=\nu^{p_{i}}(V_{k,j})$ for $i\neq j$ to simplify the notation, let $k$ be large enough so that $ \Delta_{j}^{k}(L)^{\mathrm{ess}}\neq \emptyset $ for any $j=1,\dots,N$ (by Lemma \ref{lem:IntOk} and Proposition \ref{prop:Essential}) and let $ \{U_{j}\}_{j=1}^{N} $ be a family of relatively compact open set (respectively) in $ \{M_{k,j}\}_{j=1}^{N} $. Pick $ \gamma\in\mathbbm{N}^{n} $ as in Lemma \ref{lem:AndersonOrdering} for $\mathcal{S}=\bigcup_{j=1}^{N}\big(\mathcal{A}_{j}\cup\bigcup_{i\neq j}\mathcal{B}^{j}_{i}\big) $ ordered with the total additive order $>$ induced by the family of quasi-monomial valuations, i.e. $\alpha>\beta$ iff $\alpha \cdot \gamma>\beta\cdot\gamma$.\\
Next, for any $j=1,\dots,N$, by construction we can choice a family of sections $s_{\mathbf{\alpha_{j}}}$ in $V_{k,j}$, parametrized by $\mathcal{A}_{j}$, such that locally
\begin{gather*}
s_{\mathbf{\alpha_{j}}}(\mathbf{z_{j}})=\mathbf{z_{j}}^{\mathbf{\alpha_{j}}}+\sum_{\mathbf{\eta_{j}}>\mathbf{\alpha_{j}}}a_{j,\eta_{j}}\mathbf{z_{j}}^{\mathbf{\eta_{j}}}\\
s_{\mathbf{\alpha_{j}}}(\mathbf{z_{i}})=a_{i,j}\mathbf{z_{i}}^{\mathbf{\beta^{j}_{i}}}+\sum_{\mathbf{\eta_{i}}>\mathbf{\beta^{j}_{i}}}a_{i,\eta_{i}}\mathbf{z_{i}}^{\mathbf{\eta_{i}}}
\end{gather*}
with $a_{i,j}\neq 0$ and $\mathbf{\alpha_{j}}<\mathbf{\beta^{j}_{i}}$ for any $i\neq j$.\\
\emph{\textbf{Step 2: A suitable zoom.}} If we define $\tau^{\gamma}\mathbf{z_{j}}:=(\tau^{\gamma_{1}}z_{j,1}\dots,\tau^{\gamma_{n}}z_{j,n})$ for $\tau\in\mathbbm{R}_{\geq 0}$, then we get for any $ \mathbf{\alpha_{j}}\in\mathcal{A}_{j}$
\begin{gather}
\label{equation:Embedding11}
s_{\mathbf{\alpha_{j}}}(\tau^{\gamma}\mathbf{z_{j}})=\tau^{\gamma\cdot\mathbf{\alpha_{j}}}(\mathbf{z_{j}}^{\mathbf{\alpha_{j}}}+O(\lvert \tau\rvert)) \qquad \forall\, \tau^{\gamma}\mathbf{z_{j}}\in B_{1} \\
\label{equation:Embedding21}
s_{\mathbf{\alpha_{j}}}(\tau^{\gamma}\mathbf{z_{i}})=\tau^{\gamma\cdot\mathbf{\beta^{j}_{i}}}(a_{i,j}\mathbf{z_{j}}^{\mathbf{\beta^{j}_{i}}}+O(\lvert \tau\rvert)) \qquad \forall\, \tau^{\gamma}\mathbf{z_{i}}\in B_{1}
\end{gather}
Let, for any $j=1,\dots,N$, $g_{j}:M_{k,j}\to[0,1]$ be a smooth function such that $ g_{j}\equiv 0 $ on $U_{j}$ and $ g_{j}\equiv 1 $ on $K_{j}^{C} $ for some smoothly bounded compact set $K_{j}$ such that $ U_{j}\Subset K_{j}\subset M_{k,j}  $. Furthermore let $U_{j}'$ be a relatively compact open set in $M_{k,j}$ such that $K_{j}\subset U_{j}'$.\\
Then pick $0<\delta\ll 1$ such that $ \phi_{j}:=\phi_{k,j}-4\delta g_{j}$ is still strictly plurisubharmonic for any $j=1,\dots,N$.\\
Now we claim that for any $j$ there is a real positive number $0<\tau_{j}=\tau_{j}(\delta)\ll1$ such that for every $0<\tau\leq\tau_{j}$ the following statements hold:
\begin{gather*}
\tau^{\gamma}\mathbf{z_{j}}\in B_{1} \qquad \forall \, \mathbf{z_{j}}\in U_{j}', \\
\label{equation:Estimates1}
\phi_{j}>\ln\Big(\sum_{i=1}^{N} \sum_{\mathbf{\alpha_{i}}\in\mathcal{A}_{i}}\lvert \frac{s_{\mathbf{\alpha_{i}}}(\tau^{\gamma}\mathbf{z_{j}})}{\tau^{\gamma\cdot\mathbf{\alpha_{i}}}} \rvert^{2} \Big)-\delta \quad \mathrm{on} \, \, U_{j},\\
\label{equation:Estimates2}
\phi_{j}<\ln\Big(\sum_{i=1}^{N} \sum_{\mathbf{\alpha_{i}}\in\mathcal{A}_{i}}\lvert \frac{s_{\mathbf{\alpha_{i}}}(\tau^{\gamma}\mathbf{z_{j}})}{\tau^{\gamma\cdot\mathbf{\alpha_{i}}}} \rvert^{2} \Big)-3\delta \quad \mathrm{near} \, \partial K_{j}.
\end{gather*}
Indeed it is sufficient that each request is true for $ \tau\in(0,a) $ with $a$ positive real number. The first request is clear, while the others follow from the equations (\ref{equation:Embedding11}) and (\ref{equation:Embedding21}) since $g_{j}\equiv 0$ on $U_{j}$ and $g_{j}\equiv 1$ on $K_{j}^{C}$ (recall that $g_{j}$ is smooth and that $\gamma\cdot \mathbf{\alpha_{i}}<\gamma\cdot\mathbf{\beta^{j}_{i}}$ if $\mathbf{\alpha_{i}}\in\mathcal{A}_{i}$ for any $j\neq i$).\\
So, since $p_{1},\dots,p_{N}$ are distinct points on $X$, we can choose ${0<\tau_{k}\ll 1}$ such that the requests above hold for every $j=1,\dots,N$ and that $ W_{j}\cap W_{i}=\emptyset $ if $j\neq i$ where $ W_{j}:=\varphi_{j}^{-1}(\tau_{k}^{\gamma}U_{j}') $ for $\varphi_{j}$ coordinate map giving the local holomorphic coordinates centered at $p_{j}$.\\
\emph{\textbf{Step 3: Gluing}} We define, for any $j=1,\dots,N$,
$$
\phi_{j}':=\max_{reg}\Bigg(\phi_{j},\ln\Big(\sum_{i=1}^{N} \sum_{\mathbf{\alpha_{i}}\in\mathcal{A}_{i}}\lvert \frac{s_{\mathbf{\alpha_{i}}}(\tau^{\gamma}\mathbf{z_{j}})}{\tau^{\gamma\cdot\mathbf{\alpha_{i}}}} \rvert^{2} \Big)-2\delta\Bigg)
$$
where $\max_{reg}(x,y)$ is a smooth convex function such that $\max_{reg}(x,y)=\max(x,y)$ whenever $ \lvert x-y\lvert >\delta$. Therefore, by construction, we observe that $\phi_{j}'$ is smooth and strictly plurisubharmonic on $ M_{k,j} $, identically equal to $ \ln\Big(\sum_{i=1}^{N} \sum_{\mathbf{\alpha_{i}}\in\mathcal{A}_{i}}\lvert \frac{s_{\mathbf{\alpha_{i}}}(\tau^{\gamma}\mathbf{z_{j}})}{\tau^{\gamma\cdot\mathbf{\alpha_{i}}}} \rvert^{2} \Big)-2\delta $ near $\partial K_{j}$ and identically equal to $ \phi_{k,j} $ on $U_{j} $. So
$$
\omega_{j}:=dd^{c}\phi_{j}'
$$
is equal to $\omega_{k,j}$ on $ U_{j}$. Thus since for $k\gg 1$ big enough $ \ln\Big(\sum_{i=1}^{N} \sum_{\mathbf{\alpha_{i}}\in\mathcal{A}_{i}}\lvert \frac{s_{\mathbf{\alpha_{i}}}}{\tau^{\gamma\cdot\mathbf{\alpha_{i}}}} \rvert^{2} \Big)-2\delta  $ extends as a positive hermitian metric of $kL$, with abuse of notation and unless restricting further $\tau$, we get that $\{\omega_{j}\}_{j=1}^{N}$ extend to a Kähler form $\omega$ such that
$$
\omega_{f(U_{j})}=f_{*}(\omega_{j\vert U_{j}})=f_{*}\omega_{k,j}
$$
where we have set $f:\bigsqcup_{j=1}^{N}U_{j}\to X, f_{\vert U_{j}}:=\varphi_{j}^{-1}\circ \tau^{\gamma}$ (the \emph{uniform rescaled} embedding).\\
Since $\{U_{j}\}_{j=1}^{N}$ are arbitrary, this shows that $\{(M_{k,j},\omega_{k,j})\}_{j=1}^{N}$ packs into $(X,kL)$.
\end{proof}
\begin{reptheorem}{ThmC}[Ample Case]
Let $L$ be an ample line bundle. Then $\{(\Omega_{j}(L),\omega_{st})\}_{j=1}^{N}$ packs perfectly into $(X,L)$.
\end{reptheorem}
\begin{proof}
If $U_{1},\dots, U_{N}$ are relatively compact open sets, respectively, in $\Omega_{j}(L)$ then by Proposition \ref{prop:Essential} there exists $k>0$ divisible enough such that $U_{j}$ is compactly contained in $\mu^{-1}(\mathrm{Conv}(\Delta_{j}^{k}(L))^{\circ}$ for any $j=1,\dots, N$, i.e. $\sqrt{k}U_{j}\Subset \mathcal{D}_{k,j}\Subset M_{k,j}$ for any $j=1,\dots, N$. \\
By Lemma \ref{lem:Legendre} there exist smooth functions $g_{j}:M_{k,j}\to \mathbbm{R}$ with support on relatively compact open sets $U_{j}'\supset \sqrt{k}U_{j}$ such that ${\tilde{\omega}_{j}:=\omega_{k,j} +dd^{c}g_{j}}$ is K\"ahler and $\tilde{\omega}_{j}=\omega_{st}$ holds on $\sqrt{k}U_{j}$.\\
Furthermore, fixing relatively compact open sets $V_{j}\subset M_{k,j}$ such that $U_{j}'\Subset V_{j}$ for any $j=1,\dots,N$, by Theorem \ref{thm:EmbeddingsGrandi} we can find a holomorphic embedding $f': \bigsqcup_{j=1}^{N}V_{j}\to X$ and a K\"ahler form $\omega'$ in $c_{1}(kL)$ such that $\omega'_{\vert f'(V)}=f'_{*}\omega_{k,j}$ for any $j=1,\dots,N$.\\
Next, let $\chi_{j}$ be smooth cut-off functions on $X$ such that $\chi_{j}\equiv 1$ on $f'(U_{j}')$ and $\chi_{j}\equiv 0$ outside $\overline{f'(V_{j})}$. Thus, since $f'(V_{j})\cap f'(V_{i})=\empty$ for every $j \neq i$ and since $g_{j}\circ f'^{-1}_{\vert f'(V_{j})}$ has compact support in $f'(U_{j}')$, the function $g=\sum_{j=1}^{N}\chi_{j}g_{j}\circ f'^{-1}$, extends to $0$ outside $\bigcup_{j=1}^{N}\overline{f'(V_{j})}$ and $g_{\vert f'(V_{j})}=g_{j}\circ f^{-1}_{\vert f'(V_{j})}$.\\
Finally defining $f: \bigsqcup_{j=1}^{N}U_{j}\to X$ by $f_{\vert U_{j}}(z_{j}):=f'_{\vert \sqrt{k}U_{j}}(\sqrt{k}z_{j})$, we get
$$
(\omega'+dd^{c}g)_{\vert f(U_{j})}=f'_{*}(\omega_{k,j}+dd^{c}g_{j})_{\vert \sqrt{k}U_{j}}=kf_{*}\omega_{st\vert U_{j}}
$$
by construction. Hence $\omega:=\frac{1}{k}(\omega'+dd^{c}g)$ is a K\"ahler form with class $c_{1}(L)$ that satisfies the requests since by Theorem \ref{ThmA}
$$
\sum_{j=1}^{N}\int_{\Omega_{j}(L)}\omega_{st}^{n}=n!\sum_{j=1}^{N}\mathrm{Vol}_{\mathbbm{R}^{n}}(\Delta_{j}(L))=\mathrm{Vol}_{X}(L)=\int_{X}\omega^{n}.
$$
\end{proof}
\begin{rem}
\emph{If the family of valuations fixed is associated to a family of admissible flags $Y_{j,i}=\{z_{j,1}=\dots=z_{j,i}=0\}$ then each associated embedding $f:\bigsqcup_{j=1}^{N}U_{j}\to X$ can be chosen so that
$$
f^{-1}_{\vert f(U_{j})}(Y_{j,i})=\{z_{j,1}=\cdots=z_{j,i}=0\}
$$
In particular if $N=1$ we recover the Theorem $A$ in \cite{WN15}.}
\end{rem}
\subsection{The big case}
\label{subsection:BigCase}
\begin{defn}
\label{defn:KPBig}
If $L$ is big, we say that a finite family of $n-$dimensional K\"ahler manifolds $ \{(M_{j},\eta_{j})\}_{j=1}^{N} $ packs into $ (X,L) $ if for every family of relatively compact open set $ U_{j} \Subset M_{j} $ there is a holomorphic embedding $ f: \bigsqcup_{j=1}^{N}U_{j}\to X $ and there exists a k\"ahler current with analytical singularities $ T $ lying in $ c_{1}(L) $ such that $ f_{*}\eta_{j}=T_{\vert f(U_{j})} $. If, in addition,
$$
\sum_{j=1}^{N}\int_{M_{j}}\eta_{j}^{n}=\int_{X}c_{1}(L)^{n}
$$
then we say that $\{(M_{j},\eta_{j})\}_{j=1}^{N}$ packs perfectly into $(X,L)$.
\end{defn}
Reasoning as in the previous section we prove the following result.
\begin{reptheorem}{ThmC}[Big Case]
Let $L$ be a big line bundle. Then $ \{(\Omega_{j}(L),\omega_{st})\}_{j=1}^{N}$ packs perfectly into $(X,L)$.
\end{reptheorem}
\begin{proof}
By Lemma \ref{lem:IntOk}, $\Omega_{j}(L)=\emptyset$ for any $j$ such that $\Delta_{j}(L)^{\circ}=\emptyset$. So, unless removing some of the points we may assume that $\Delta_{j}(L)^{\circ}\neq \emptyset$ for any $j=1,\dots,N$. \\
Thus letting $k\gg 0$ big enough such that $\Delta_{j}^{k}(L)^{ess}\neq\emptyset$ for any $j$ (Proposition \ref{prop:Essential}) we can proceed similarly to the proof of Theorem \ref{thm:EmbeddingsGrandi} with the unique difference that $ \ln\Big(\sum_{i=1}^{N}\sum_{\mathbf{\alpha_{i}}\in\mathcal{A}_{i}}\lvert \frac{s_{\mathbf{\alpha_{i}}}}{\tau^{\gamma\cdot\mathbf{\alpha_{i}}}} \rvert^{2}\Big) $ extends to a positive singular hermitian metric, hence we get a (current of) curvature $T$ that is a K\"ahler current with analytical singularities. Next, as in the ample case, we can show that $\{(\Omega_{j}(L),\omega_{st})\}_{j=1}^{N}$ packs perfectly into $(X,L)$.
\end{proof}
\begin{rem}
\emph{If the family of valuations fixed is associated to a family of admissible flags $Y_{j,i}=\{z_{j,1}=\dots=z_{j,i}=0\}$ then each associated embedding $f:\bigsqcup_{j=1}^{N}U_{j}\to X$ can be chosen so that
$$
f^{-1}_{\vert f(U_{j})}(Y_{j,i})=\{z_{j,1}=\cdots=z_{j,i}=0\}
$$
In particular if $N=1$ we recover the Theorem $C$ in \cite{WN15}.}
\end{rem}
\section{Local Positivity}
\label{section:SeshadriConstant}
\subsection{Moving Multipoint Seshadri Constant}
\begin{defn}
\label{defn:MultipointSeshadriConstant}
Let $L$ be a nef line bundle on $X$. The quantity
$$
\epsilon_{S}(L;p_{1},\dots,p_{N}):=\inf \frac{L\cdot C}{\sum_{i=1}^{N}\mathrm{mult}_{p_{i}}C}
$$
where the infimum is over all irreducible curves $C\subset X$ passing through at least one of the points $p_{1},\dots,p_{N}$ is called the \textbf{multipoint Seshadri constant at }$\mathbf{p_{1},\dots,p_{N}}$\textbf{ of} $\mathbf{L}$.
\end{defn}
This constant has played an important role in the last three decades and it is the natural extension of the Seshadri constant introduced by Demailly in \cite{Dem90}.\\
The following Lemma is well-known and its proof can be found for instance in \cite{Laz04}, \cite{BDRH$^{+}$09}:
\begin{lem}
\label{lem:SeshadriNef}
Let $L$ be a nef line bundle on $X$. Then
\begin{equation*}
\epsilon_{S}(L;p_{1},\dots,p_{N})=\sup\Big\{t\geq 0 \, : \, \mu^{*}L-t\sum_{i=1}^{N}E_{i} \,\, \mathrm{is} \, \, \mathrm{nef}\Big\}=\inf \Big(\frac{L^{\dim V}\cdot V}{\sum_{j=1}^{N}\mathrm{mult}_{p_{j}}V}\Big)^{\frac{1}{\dim V}}
\end{equation*}
where $\mu:\tilde{X}\to X$ is the blow-up at $Z=\{p_{1},\dots,p_{N}\}$, $E_{i}$ is the exceptional divisor above $p_{i}$ and where the infimum on the right side is over all positive dimensional irreducible subvarieties $V$ containing at least one point among $p_{1},\dots,p_{N}$.
\end{lem}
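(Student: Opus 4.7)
The plan is to prove the two equalities in sequence, both relying on Kleiman's criterion for nef divisors applied to $\mu^{*}L - t\mathbbm{E}$ on the blow-up $\tilde{X}$, where $\mathbbm{E} := \sum_{i=1}^{N} E_{i}$.

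First I would establish the equality $\epsilon_{S}(L;p_{1},\dots,p_{N}) = \sup\{t\geq 0 : \mu^{*}L - t\mathbbm{E} \text{ is nef}\}$. By Kleiman's criterion, $\mu^{*}L - t\mathbbm{E}$ is nef iff it pairs nonnegatively with every irreducible curve on $\tilde{X}$. Such curves are either strict transforms $\tilde{C}$ of irreducible curves $C\subset X$, or curves $\gamma$ contained in some $E_{i}$. For $\tilde{C}$, the standard identities $\mu^{*}L\cdot \tilde{C}=L\cdot C$ and $E_{i}\cdot\tilde{C} = \mathrm{mult}_{p_{i}}C$ give $(\mu^{*}L - t\mathbbm{E})\cdot\tilde{C} = L\cdot C - t\sum_{i}\mathrm{mult}_{p_{i}}C$; this is automatically nonnegative when $C$ misses all $p_{i}$ (since $L$ is nef) and otherwise yields the bound $t\leq L\cdot C/\sum\mathrm{mult}_{p_{i}}C$. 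For $\gamma\subset E_{i}$, using $\mathcal{O}_{\tilde X}(E_{i})|_{E_{i}} = \mathcal{O}_{\mathbb{P}^{n-1}}(-1)$ and $\mu^{*}L\cdot\gamma=0$, the intersection equals $t\cdot\deg\gamma\geq 0$ for $t\geq 0$. Taking the supremum over $t$ reproduces the infimum in Definition \ref{defn:MultipointSeshadriConstant}.

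Next I would prove the equality with the infimum over higher-dimensional subvarieties. One direction is free: since irreducible curves through some $p_{j}$ form a subfamily of the positive-dimensional subvarieties through some $p_{j}$, the infimum on the right is at most $\epsilon_{S}(L;p_{1},\dots,p_{N})$. For the reverse inequality, for any $0\leq t<\epsilon_{S}(L;p_{1},\dots,p_{N})$ the divisor $\mu^{*}L - t\mathbbm{E}$ is nef on $\tilde{X}$ by the first equality, and Kleiman's theorem then gives $(\mu^{*}L - t\mathbbm{E})^{d}\cdot\tilde{V}\geq 0$ for every irreducible subvariety $V\subset X$ of dimension $d$ through some $p_{j}$, with strict transform $\tilde{V}$. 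I would expand this intersection using the binomial theorem: the key computation is that the mixed terms $(\mu^{*}L)^{d-k}\cdot E_{i}^{k}\cdot\tilde{V}$ vanish for $0<k<d$, because $E_{i}^{k}\cdot\tilde{V}$ is a $(d-k)$-dimensional cycle supported in $E_{i}$ and hence its pushforward under $\mu$ (which collapses $E_{i}$ to a point) vanishes, so the projection formula kills the term; cross terms $E_{i}^{a}\cdot E_{j}^{b}\cdot\tilde{V}$ with $a,b>0$ and $i\neq j$ vanish since the $E_{i}$ are disjoint.

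Thus only $k=0$ and $k=d$ survive, and using the standard identity $E_{i}^{d}\cdot\tilde{V} = (-1)^{d-1}\mathrm{mult}_{p_{i}}V$ (verified by $d=1$ via $\mu^{*}C = \tilde C + \mathrm{mult}_{p_i}C\cdot E_i$ and the formula $E_i^2=-1$ on a surface, and in general by intersecting $\tilde{V}\cap E_{i}$ — the projectivized tangent cone of $V$ at $p_{i}$ of degree $\mathrm{mult}_{p_{i}}V$ in $\mathbb{P}^{n-1}$ — with $\mathcal{O}_{\mathbb{P}^{n-1}}(-1)^{d-1}$), the expansion collapses to
\[
(\mu^{*}L - t\mathbbm{E})^{d}\cdot\tilde{V} \;=\; L^{d}\cdot V - t^{d}\sum_{i=1}^{N}\mathrm{mult}_{p_{i}}V.
\]
Nonnegativity forces $t^{d}\leq L^{d}\cdot V/\sum_{i}\mathrm{mult}_{p_{i}}V$, hence $t\leq (L^{d}\cdot V/\sum_{i}\mathrm{mult}_{p_{i}}V)^{1/d}$; taking infimum over $V$ and supremum over $t$ yields the desired inequality. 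The main obstacle is verifying the intersection-theoretic identity $E_{i}^{d}\cdot\tilde{V} = (-1)^{d-1}\mathrm{mult}_{p_{i}}V$ and the vanishing of the mixed terms, which is the geometric heart of the argument; once these are in place the rest is bookkeeping with Kleiman's criterion.
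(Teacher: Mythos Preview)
The paper does not actually prove this lemma: it states it as well-known and refers the reader to \cite{Laz04} and \cite{BDRH$^{+}$09} for the argument. Your proof is correct and is precisely the standard one found in those references --- Kleiman's criterion on the blow-up for the first equality, and the binomial expansion of $(\mu^{*}L-t\mathbbm{E})^{d}\cdot\tilde{V}$ together with the projection formula and the identity $E_{i}^{d}\cdot\tilde{V}=(-1)^{d-1}\mathrm{mult}_{p_{i}}V$ for the second --- so there is nothing to compare.
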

The characterization of Lemma \ref{lem:SeshadriNef} allows to extend the definition to nef $\mathbbm{Q}-$line bundles by homogeneity and to nef $\mathbbm{R}-$line bundles by continuity.

Here we describe a possible generalization of the multipoint Seshadri constant for big line bundles:
\begin{defn}
Let $L$ be a big $\mathbbm{R}-$line bundle, we define the \textbf{moving multipoint Seshadri constant at }$\mathbf{p_{1},\dots,p_{N}}$\textbf{ of} $\mathbf{L}$ as
$$
\epsilon_{S}(||L||;p_{1},\dots,p_{N}):=\sup_{f^{*}L=A+E}\epsilon_{S}(A;f^{-1}(p_{1}),\dots,f^{-1}(p_{N}))
$$
if $ p_{1},\dots,p_{N}\notin \mathbbm{B}_{+}(L) $ and as $ \epsilon_{S}(||L||;p_{1},\dots,p_{N}):=0 $ otherwise, where the supremum is taken over all projective morphisms $f:Y\to X$ with $Y$ smooth such that $f$ is an isomorphism around $p_{1},\dots,p_{N}$ and over all decomposition $f^{*}L=A+E$ where $A$ is an ample $\mathbbm{Q}-$divisor and $E$ is effective with $f^{-1}(p_{j})\notin\mathrm{Supp}(E)$ for any $j=1,\dots,N$.
\end{defn}
For $N=1$, we retrieve the definition given in \cite{ELMNP09}.\\

The following properties are well-known for the one-point case.
\begin{prop}
\label{prop:Properties}
Let $L, L'$ be big $\mathbbm{R}-$line bundles. Then
\begin{itemize}
\item[i)] $\epsilon_{S}(||L||; p_{1},\dots,p_{N})\leq \big(\frac{\mathrm{Vol}_{X}(L)}{N}\big)^{1/n}$;
\item[ii)] if $c_{1}(L)=c_{1}(L')$ then $\epsilon_{S}(||L||; p_{1},\dots,p_{N})=\epsilon_{S}(||L'||; p_{1},\dots,p_{N})$;
\item[iii)] $ \epsilon_{S}(||\lambda L||; p_{1},\dots,p_{N})=\lambda \epsilon_{S}(||L||; p_{1},\dots,p_{N})$ for any $\lambda \in \mathbbm{R}_{>0}$;
\item[iv)] if $p_{1},\dots,p_{N}\notin \mathbbm{B}_{+}(L)\cup \mathbbm{B}_{+}(L')$ then $\epsilon_{S}(||L+L'||; p_{1},\dots,p_{N})\geq \epsilon_{S}(||L|| ; p_{1},\dots,p_{N})+\epsilon_{S}(||L'|| ; p_{1},\dots,p_{N})$.
\end{itemize}
\end{prop}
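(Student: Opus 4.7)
The plan is to reduce each of the four properties to the corresponding classical statement for the ordinary multipoint Seshadri constant of a nef $\mathbbm{R}$-line bundle, where Lemma \ref{lem:SeshadriNef} supplies the two characterizations (as the nef threshold on the blow-up and as an infimum over positive-dimensional subvarieties). Throughout, the arithmetic is carried out on a common modification so that all decompositions $f^{\ast}L=A+E$ and $g^{\ast}L'=A'+E'$ can be pulled back simultaneously; this is unproblematic because $f$ and $g$ are isomorphisms near the $p_j$, so ``$f^{-1}(p_j)\notin\mathrm{Supp}(E)$'' is preserved by further blow-ups away from the points.

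For (i), given any admissible decomposition $f^{\ast}L=A+E$, the characterization in Lemma \ref{lem:SeshadriNef} applied to $V=Y$ gives $\epsilon_{S}(A;f^{-1}(p_1),\dots,f^{-1}(p_N))\leq (A^{n}/N)^{1/n}$, and since $E$ is effective one has $A^{n}\leq \mathrm{Vol}_{Y}(f^{\ast}L)=\mathrm{Vol}_{X}(L)$ by birational invariance of the volume; taking the supremum over $(f,A,E)$ yields the bound. For (iii), scaling each decomposition of $f^{\ast}L$ by a positive rational $\lambda$ gives an admissible decomposition of $f^{\ast}(\lambda L)$ and conversely, and this extends to real $\lambda$ because $\epsilon_{S}(\cdot;p_{\cdot})$ is homogeneous on the nef cone by Lemma \ref{lem:SeshadriNef} and rational approximation is compatible with the supremum. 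For (iv), after passing to a common modification $h\colon Z\to X$ and pulling back two admissible decompositions $h^{\ast}L=A_1+E_1$ and $h^{\ast}L'=A_2+E_2$, the sum $h^{\ast}(L+L')=(A_1+A_2)+(E_1+E_2)$ is admissible; then the nef-case superadditivity $\epsilon_{S}(A_1+A_2;p_{\cdot})\geq \epsilon_{S}(A_1;p_{\cdot})+\epsilon_{S}(A_2;p_{\cdot})$ is immediate from the first characterization in Lemma \ref{lem:SeshadriNef}, since $\mu^{\ast}(A_1+A_2)-(t_1+t_2)\sum E_j$ is a sum of nef classes. Taking the supremum over both sides produces (iv).

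The main obstacle is (ii), because numerical equivalence $c_1(L)=c_1(L')$ does not in itself preserve effectivity of the complementary part $E$. The idea is to perturb. Fix an admissible decomposition $f^{\ast}L=A+E$ and pick an ample $\mathbbm{Q}$-divisor $H$ on $Y$. Writing $P:=L'-L$, which is numerically trivial, Kleiman's criterion implies that $f^{\ast}P+\epsilon H$ is an ample $\mathbbm{R}$-class for every $\epsilon>0$, and one can choose $\epsilon>0$ together with a small rational perturbation so that $A_{\epsilon}:=A-\epsilon H+(f^{\ast}P+\epsilon H)$ is representable by an ample $\mathbbm{Q}$-divisor numerically equivalent to $A$; since Seshadri constants on the nef cone are numerical invariants, $\epsilon_{S}(A_{\epsilon};f^{-1}(p_{\cdot}))=\epsilon_{S}(A;f^{-1}(p_{\cdot}))-O(\epsilon)$. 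This gives $\epsilon_{S}(\|L'\|;p_{\cdot})\geq \epsilon_{S}(\|L\|;p_{\cdot})-O(\epsilon)$ after letting $\epsilon\to 0$, and the symmetric role of $L,L'$ gives equality. The delicate point is to ensure rationality of $A_{\epsilon}$ while controlling the loss $O(\epsilon)$; this is the standard perturbation trick in the spirit of \cite{ELMNP09} and is the step I expect to require the most care, though it is routine once the set-up is in place.
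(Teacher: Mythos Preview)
The paper does not actually supply a proof of this proposition: it simply states that ``the following properties can be showed exactly as for the one-point case and they are left to the reader.'' Your proposal is precisely the argument one writes down when unpacking that hint, reducing each item to Lemma~\ref{lem:SeshadriNef} via the definition as a supremum over birational decompositions $f^{*}L=A+E$; so in spirit and in substance you are doing what the paper intends.

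One technical point worth tightening in part~(iv): when you pass to a common modification $h\colon Z\to X$ dominating the two models $f\colon Y\to X$ and $g\colon W\to X$, the pullbacks of the ample pieces $A_1,A_2$ to $Z$ are only big and nef, not ample, so $h^{*}(L+L')=(\pi_Y^{*}A_1+\pi_W^{*}A_2)+(\pi_Y^{*}E_1+\pi_W^{*}E_2)$ is not literally an admissible decomposition. The fix is the same interpolation trick the paper uses just below in the proof of the next proposition: write the nef part $M$ as $M=\tfrac{1}{m}A+(1-\tfrac{1}{m})M+\tfrac{1}{m}E'$ for some auxiliary ample $A$ and effective $E'$ missing the points, and let $m\to\infty$. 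With this adjustment your superadditivity argument goes through. The analogous rationality/approximation wrinkles you flag in (ii) and (iii) are handled by the same device, so your assessment that they are ``routine once the set-up is in place'' is accurate.
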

\begin{proof}
Combining the definition of the moving multipoint Seshadri constant with Lemma \ref{lem:SeshadriNef}, the first three points are immediate since ampleness and nefness are numerical conditions and the ample, nef classes form the so-called ample and nef cones. More precisely, for a projective morphism $f:Y\to X$ and an ample $\mathbbm{Q}$-divisor as in the definition, the homogeneity of $\epsilon_{S}\big(A;f^{-1}(p_{1}),\dots,f^{-1}(p_{N})\big)$ is given by the first equality of Lemma \ref{lem:SeshadriNef} while the second equality of the same Lemma \ref{lem:SeshadriNef} yields $\epsilon_{S}\big(A;f^{-1}(p_{1}),\dots,f^{-1}(p_{N})\big)\leq \big(A^{n}/N\big)^{1/n}\leq \big(\mathrm{Vol}_{X}(L)/N\big)^{1/n}$.\newline
Regarding the last point, fix $A, A'$ ample $\mathbbm{Q}$-divisors as in the definition of the moving multipoint Seshadri constant for $L, L'$ with respect to projective morphisms $f:Y\to X, f':Y'\to X$. Then taking projective morphisms $g:W\to Y, g':W\to Y'$ which are isomorphism around $p_{1},\dots,p_{N}$, there exist effective divisors $F,F'$ on $W$ such that $g^{-1}f^{-1}(p_{j})\notin \mathrm{Supp}(F)$ (and similarly for $F'$) and decreasing sequences converging to $0$ of positive rational numbers $\{a_{m}\}_{m\in\mathbbm{N}}, \{a_{m}'\}_{m\in\mathbbm{N}}\subset \mathbbm{Q}_{>0}$ such that $B_{m}:=g^{*}A-a_{m}F$, $B_{m}':=g'^{*}A'-a_{m}'F'$ are ample $\mathbbm{Q}$-divisors for any $m\in \mathbbm{N}$. Then we claim that
\begin{equation}
\label{eqn:Approx}
\epsilon_{S}\big(B_{m}; g^{-1}f^{-1}(p_{1}),\dots,g^{-1}f^{-1}(p_{N})\big)\to \epsilon_{S}\big(A; f^{-1}(p_{1}),\dots,f^{-1}(p_{N})\big)
\end{equation}
as $m\to \infty$, and similarly for $A',B'_{m}$. In fact letting $\mu:\tilde{Y}\to Y$, $\nu: \tilde{W}\to W$ be the blow-ups at $f^{-1}Z, g^{-1}f^{-1}Z$ where $Z=\{p_{1},\dots,p_{N}\}$, there is a commutative diagram
\begin{tikzcd}
\tilde{W} \arrow{r}{\nu} \arrow[swap]{d}{\tilde{g}} & W \arrow{d}{g} \\%
\tilde{Y} \arrow{r}{\mu}& Y
\end{tikzcd}
for a suitable projective morphism $\tilde{g}:\tilde{W}\to \tilde{Y}$ because the blow-ups are local projective morphisms and $g$ is an isomorphism around $f^{-1}Z$. Therefore the convergence (\ref{eqn:Approx}) follows using the first characterization of Lemma \ref{lem:SeshadriNef} since there exists an uniform constant $K>0$ such that $|\nu^{*}F\cdot C|\leq K$ for any irreducible curve $C\subset \tilde{W}$ and nefness is preserved under pullback.\\
Thus, for any $\delta>0$ fixed, we can choose $A,A',f,f',m,m'$ such that
\begin{gather*}
\epsilon_{S}(\|L\|;p_{1},\dots,p_{N})+\epsilon_{S}(\|L'\|;p_{1},\dots,p_{N})\leq\\
\leq \epsilon_{S}\big(A;f^{1}(p_{1}),\dots,f^{-1}(p_{N})\big)+\epsilon_{S}\big(A';f'^{-1}(p_{1}),\dots,f'^{-1}(p_{N})\big)-\delta\leq\\
\leq \epsilon_{S}\big(B_{m};g^{-1}f^{-1}(p_{1}),\dots,g^{-1}f^{-1}(p_{N})\big)+\epsilon_{S}\big(B_{m}';g'^{-1}f'^{-1}(p_{1}),\dots,g'^{-1}f'^{-1}(p_{N})\big)-2\delta\leq\\
\leq \epsilon_{S}\big(B_{m}+B_{m}';g^{-1}f^{-1}(p_{1}),\dots,g^{-1}f^{-1}(p_{N})\big)-2\delta
\end{gather*}
where the last inequality is an easy consequence of the convexity of the nef cone. Hence since $(f'\circ g')^{*}(L+L')=B_{m}+B_{m}'+G$ where $G
$ for an effective divisor $G$ such that $(f'\circ g')^{-1}(p_{j})\notin \mathrm{Supp}(G)$, by definition we deduce that
$$
\epsilon_{S}(\|L\|;p_{1},\dots,p_{N})+\epsilon_{S}(\|L'\|;p_{1},\dots,p_{N})\leq \epsilon_{S}(\|L+L'\|;p_{1},\dots,p_{N})-2\delta
$$
which clearly concludes the proof.
\end{proof}
We check that the moving multipoint Seshadri constant is an effective generalization of the multipoint Seshadri constant:
\begin{prop}
Let $L$ be a big and nef $\mathbbm{Q}-$line bundle. Then
$$
\epsilon_{S}(||L||;p_{1},\dots,p_{N})=\epsilon_{S}(L; p_{1},\dots,p_{N})
$$
\end{prop}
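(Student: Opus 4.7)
The plan is to prove the equality by establishing two opposite inequalities, after first disposing of the degenerate case. If some $p_{j}\in\mathbbm{B}_{+}(L)$, then $\epsilon_{S}(||L||;p_{1},\dots,p_{N})=0$ by definition; on the other hand, Nakamaye's theorem $\mathbbm{B}_{+}(L)=\mathrm{Null}(L)$ furnishes a positive-dimensional irreducible subvariety $V\ni p_{j}$ with $L^{\dim V}\cdot V=0$, so Lemma \ref{lem:SeshadriNef} forces $\epsilon_{S}(L;p_{1},\dots,p_{N})=0$ as well. Hence I may assume $p_{1},\dots,p_{N}\notin\mathbbm{B}_{+}(L)$ throughout.

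For the inequality $\epsilon_{S}(||L||;p_{1},\dots,p_{N})\leq\epsilon_{S}(L;p_{1},\dots,p_{N})$, I fix an arbitrary modification $f:Y\to X$ that is an isomorphism around $p_{1},\dots,p_{N}$ together with a valid decomposition $f^{*}L=A+E$ and bound $\epsilon_{S}(A;f^{-1}(p_{1}),\dots,f^{-1}(p_{N}))$. Given any irreducible curve $C'\subset X$ passing through some $p_{j}$, its strict transform $C\subset Y$ passes through $f^{-1}(p_{j})$ and, because $f$ is an isomorphism near every $p_{i}$, one has $\mathrm{mult}_{f^{-1}(p_{i})}C=\mathrm{mult}_{p_{i}}C'$ for each $i$ and $f^{*}L\cdot C=L\cdot C'$. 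Since $f^{-1}(p_{j})\in C$ while $f^{-1}(p_{j})\notin\mathrm{Supp}(E)$, the irreducible curve $C$ is not a component of $E$, hence $E\cdot C\geq 0$ and $A\cdot C\leq L\cdot C'$. Dividing by $\sum_{i}\mathrm{mult}_{f^{-1}(p_{i})}C=\sum_{i}\mathrm{mult}_{p_{i}}C'$ and then taking the infimum over $C'$ and the supremum over decompositions yields the desired inequality.

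For the opposite direction I exploit $p_{1},\dots,p_{N}\notin\mathbbm{B}_{+}(L)$. Fix an ample divisor $H$; by the characterization $\mathbbm{B}_{+}(L)=\mathbbm{B}(L-\alpha H)$ valid for $0<\alpha\ll 1$ (\cite{ELMNP06}), each $p_{j}$ avoids the base locus of $|k(L-\alpha H)|$ for $k\gg 1$, and since missing a point is a Zariski-open condition on the finite-dimensional vector space $H^{0}(X,k(L-\alpha H))$, a generic member $D$ of this linear system avoids \emph{all} of $p_{1},\dots,p_{N}$ simultaneously. This yields a $\mathbbm{Q}$-linear decomposition $L\sim_{\mathbbm{Q}}A_{0}+E_{0}$ with $A_{0}:=\alpha H$ ample, $E_{0}:=\tfrac{1}{k}D$ effective, and $p_{j}\notin\mathrm{Supp}(E_{0})$ for every $j$. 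For each rational $t\in(0,1)$ consider then
\[
L=\bigl((1-t)L+tA_{0}\bigr)+tE_{0},
\]
which is admissible for $Y=X$ and $f=\mathrm{id}$ since the first summand is nef plus ample, hence ample. Using Lemma \ref{lem:SeshadriNef}, both $\mu^{*}L-\epsilon_{S}(L;p_{1},\dots,p_{N})\sum_{i}E_{i}$ and $\mu^{*}A_{0}-\epsilon_{S}(A_{0};p_{1},\dots,p_{N})\sum_{i}E_{i}$ are nef on the blow-up $\mu$ at $\{p_{1},\dots,p_{N}\}$, and a positive combination gives
\[
\epsilon_{S}\bigl((1-t)L+tA_{0};p_{1},\dots,p_{N}\bigr)\geq(1-t)\,\epsilon_{S}(L;p_{1},\dots,p_{N}).
\]
Since the left-hand side is bounded above by $\epsilon_{S}(||L||;p_{1},\dots,p_{N})$, letting $t\to 0^{+}$ through rationals completes the proof.

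The main obstacle is the simultaneous avoidance step: producing a single Kodaira-type decomposition $L=A_{0}+E_{0}$ whose effective part $E_{0}$ misses \emph{all} $N$ points at once. The individual case is immediate from $p_{j}\notin\mathbbm{B}_{+}(L)$, but a uniform choice of $\alpha$ and $k$ together with the general-position argument on $H^{0}(X,k(L-\alpha H))$ is required to glue the per-point statements into the single decomposition fed into the convex-combination trick of the last paragraph.
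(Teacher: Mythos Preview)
Your proof is correct and follows essentially the same approach as the paper's. Both handle the degenerate case via Nakamaye's description of $\mathbbm{B}_{+}(L)$, prove $\epsilon_{S}(||L||)\leq\epsilon_{S}(L)$ by comparing intersections along strict transforms, and obtain the reverse inequality from a Kodaira-type decomposition $L=A+E$ by forming the convex combinations $(1-t)L+tA$; the only cosmetic differences are that you spell out the simultaneous-avoidance argument (which the paper simply asserts) and deduce $\epsilon_{S}((1-t)L+tA_{0})\geq(1-t)\epsilon_{S}(L)$ directly from the blow-up characterization, whereas the paper invokes continuity of $\epsilon_{S}(\cdot\,;p_{1},\dots,p_{N})$ on the nef cone.
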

\begin{proof}
By homogeneity we can assume $L$ line bundle and ${p_{1},\dots,p_{N}\notin\mathbbm{B}_{+}(L)}$ since if $p_{j}\in\mathbbm{B}_{+}(L)$ for some $j$ then by Proposition $1.1.$ and Corollary $5.6.$ in \cite{ELMNP09} there exists an irreducible positive dimensional component $ V\subset \mathbbm{B}_{+}(L)$ with $p_{j}\in V$ such that $L^{\dim V}\cdot V=0$ and Lemma \ref{lem:SeshadriNef} gives the equality.\\
Thus, for a fixed a projective morphism $f:Y\to X$ as in the definition, we get
$$
\frac{L\cdot C}{\sum_{i=1}^{N}\mathrm{mult}_{p_{i}}C}=\frac{f^{*}L\cdot \tilde{C}}{\sum_{i=1}^{N}\mathrm{mult}_{f^{-1}(p_{i})}\tilde{C}}\geq \frac{A\cdot \tilde{C}}{\sum_{i=1}^{N}\mathrm{mult}_{f^{-1}(p_{i})}\tilde{C}}
$$
since $f^{-1}(p_{1}),\dots,f^{-1}(p_{N})\notin \mathrm{Supp}(E)$, and $\epsilon_{S}(||L||;p_{1},\dots,p_{N})\leq \epsilon_{S}(L; p_{1},\dots,p_{N})$ follows.\\
For the reverse inequality, we can write $L=A+E$ with $A$ ample $\mathbbm{Q}-$line bundle and $E$ effective such that $p_{1},\dots, p_{N}\notin\mathrm{Supp}(E)$, noting that $L=A_{m}+ \frac{1}{m}E$ for any $m\in\mathbbm{N}$ where $A_{m}:=\frac{1}{m}A+(1-\frac{1}{m})L$ is an ample $\mathbbm{Q}-$line bundle. Thus $ \epsilon_{S}(||L||;p_{1},\dots,p_{N})\geq\epsilon_{S}(A_{m}; p_{1},\dots,p_{N}) $ and letting $m\to\infty$ the inequality requested follows from the continuity of $\epsilon_{S}(\cdot;p_{1},\dots,p_{N})$ in the nef cone.
\end{proof}
The following Proposition justifies the name given as generalization of the definition in \cite{Nak03}:
\begin{prop}
\label{prop:EquiivalenceNak}
If $L$ is a big $\mathbbm{Q}-$line bundle such that ${p_{1},\dots,p_{N}\notin\mathbbm{B}(L)}$ then
\begin{equation*}
\epsilon_{S}(||L||; p_{1},\dots,p_{N})=\lim_{k\to \infty} \frac{\epsilon_{S}(M_{k}; \mu_{k}^{-1}(p_{1}),\dots,\mu_{k}^{-1}(p_{N}))}{k}=\sup_{k\to \infty} \frac{\epsilon_{S}(M_{k}; \mu_{k}^{-1}(p_{1}),\dots,\mu_{k}^{-1}(p_{N}))}{k}
\end{equation*}
where $M_{k}:=\mu_{k}^{*}(kL)-E_{k}$ is the \emph{moving part} of $\vert mL \vert $ given by a resolution of the base ideal $\mathfrak{b}_{k}:=\mathfrak{b}(\vert kL \vert)$ (or set $M_{k}=0$ if ${H^{0}(X,kL)=\{0\}}$).
\end{prop}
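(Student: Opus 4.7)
The strategy is to prove the equality $\epsilon_S(||L||;p_1,\dots,p_N)=\sup_k a_k$ via two inequalities, with $a_k:=\epsilon_S(M_k;\mu_k^{-1}(\vec p))/k$ and $\vec p:=(p_1,\dots,p_N)$, and then upgrade the supremum to a limit by a Fekete-type superadditivity argument. Throughout, the hypothesis $p_j\notin\mathbbm{B}(L)$ guarantees $p_j\notin\mathrm{Bs}(|kL|)$ for $k$ divisible enough, so $\mu_k$ is an isomorphism near each $p_j$, and $\mu_k^{-1}(p_j)$ avoids both $\mathrm{Supp}(E_k)$ and the exceptional locus of $\mu_k$.

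For the upper bound $\sup_k a_k\geq \epsilon_S(||L||;\vec p)$, I fix any modification $f\colon Y\to X$ isomorphic near the $p_j$ and any decomposition $f^*L=A+E$ with $A$ an ample $\mathbbm{Q}$-divisor and $E$ effective avoiding $f^{-1}(p_j)$. For $k$ divisible enough $|kA|$ is base-point-free; multiplying sections of $|kA|$ by the $k$-th power of the canonical section of $kE$ realises $H^0(Y,kA)\hookrightarrow H^0(Y,kf^*L)=H^0(X,kL)$, yielding the ideal-sheaf inclusion $\mathfrak{b}(|kf^*L|_Y)\supset\mathcal{O}_Y(-kE)$. On a smooth model $g\colon Z\to X$ dominating both $Y$ (via $\nu$) and $Y_k$ (via $\pi$), this translates into an effective inequality $\pi^*M_k\geq k\nu^*A$ whose difference is supported away from the fibres $g^{-1}(p_j)$. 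Since adding an effective divisor whose support avoids a point $q$ cannot decrease $\epsilon_S(\,\cdot\,;q)$ for a nef divisor, one gets
\begin{equation*}
\epsilon_S(M_k;\mu_k^{-1}(\vec p))=\epsilon_S(\pi^*M_k;g^{-1}(\vec p))\geq k\,\epsilon_S(\nu^*A;g^{-1}(\vec p))=k\,\epsilon_S(A;f^{-1}(\vec p)),
\end{equation*}
the outer equalities following from $\pi,\nu$ being isomorphisms near the fibres over the $p_j$. Dividing by $k$ and taking the supremum over admissible decompositions yields the claim.

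For the lower bound $\sup_k a_k\leq \epsilon_S(||L||;\vec p)$, I fix $k$ and exploit that $-E_k$ is $\mu_k$-ample (as $\mu_k$ resolves the base ideal $\mathfrak{b}_k$). Fixing an ample $A_0$ on $X$ and choosing $\eta'\ll\eta$ small, the class $A'_\eta:=\mu_k^*(\eta A_0)-\eta'E_k$ is ample on $Y_k$, so
\begin{equation*}
\mu_k^*(kL+\eta A_0)=(M_k+A'_\eta)+(1+\eta')E_k
\end{equation*}
exhibits $kL+\eta A_0$ as ample plus effective via the modification $\mu_k$, with the effective part avoiding $\mu_k^{-1}(p_j)$. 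By the very definition of the moving multipoint Seshadri constant and superadditivity of $\epsilon_S$ for nef divisors, $\epsilon_S(||kL+\eta A_0||;\vec p)\geq\epsilon_S(M_k;\mu_k^{-1}(\vec p))$, hence by homogeneity (Proposition \ref{prop:Properties}) one gets $\epsilon_S(||L+\tfrac{\eta}{k}A_0||;\vec p)\geq a_k$. Letting $\eta\to 0^{+}$ and invoking continuity of the moving multipoint Seshadri constant on the open subcone $\{\alpha\in N^1(X)_{\mathbbm{R}}:p_j\notin\mathbbm{B}_{+}(\alpha)\ \forall j\}$ of the big cone produces $\epsilon_S(||L||;\vec p)\geq a_k$.

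Finally, setting $c_k:=\epsilon_S(M_k;\mu_k^{-1}(\vec p))$, the inclusion $\mathfrak{b}_k\cdot\mathfrak{b}_l\subset\mathfrak{b}_{k+l}$ (from multiplication of sections) gives on a common smooth resolution $\pi^*M_{k+l}-\pi^*M_k-\pi^*M_l\geq 0$, the difference again avoiding the fibres over the $p_j$; combined with superadditivity of $\epsilon_S$ for nef divisors this yields $c_{k+l}\geq c_k+c_l$, and Fekete's lemma forces $a_k=c_k/k\to\sup_k a_k$. The principal technical obstacle is the continuity of the moving multipoint Seshadri constant invoked in the lower-bound step, which I would establish by adapting the one-point arguments of \cite{Nak03} and \cite{ELMNP09} to the multipoint setting; the degenerate case $p_j\in\mathbbm{B}_{+}(L)\setminus\mathbbm{B}(L)$ for some $j$ requires a separate verification that both sides vanish, obtained from Nakamaye's theorem applied to a positive-dimensional subvariety through $p_j$ along which $M_k$ has vanishing degree.
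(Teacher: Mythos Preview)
Your argument is correct, and the upper bound and the Fekete step match the paper's reasoning essentially verbatim. The lower bound, however, is organised differently. You perturb by a relatively ample contribution $-\eta'E_k$ to make $M_k+A'_\eta$ ample on $Y_k$, and then pass to the limit $\eta\to 0$ using continuity of the \emph{moving} multipoint Seshadri constant on the open cone $\{\alpha:p_j\notin\mathbbm{B}_+(\alpha)\ \forall j\}$. The paper instead splits off the case $\mu_k^{-1}(p_j)\in\mathbbm{B}_+(M_k)$ at the outset, and in the remaining case writes $M_k=A+E$ on $Y_k$ with $E$ effective avoiding the points, then interpolates $A_m:=\tfrac{1}{m}A+(1-\tfrac{1}{m})M_k$ and only needs continuity of the \emph{nef} Seshadri constant as $m\to\infty$. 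The paper's route is therefore a bit more self-contained: it avoids invoking continuity of the moving constant, which you flag as your ``principal technical obstacle''. In fact that obstacle is lighter than you suggest---on the open cone in question the continuity follows immediately from the homogeneity and superadditivity of Proposition~\ref{prop:Properties}, with no need to revisit \cite{Nak03} or \cite{ELMNP09}. Your handling of the degenerate case $p_j\in\mathbbm{B}_+(L)\setminus\mathbbm{B}(L)$ via Nakamaye's theorem is the same as the paper's.
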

Note that $\epsilon_{S}(M_{k};\mu_{k}^{-1}(p_{1}),\dots,\mu_{k}^{-1}(p_{N})))$ does not depend on the resolution chosen and given $k_{1},k_{2}$ divisible enough we may choose resolutions such that $M_{k_{1}+k_{2}}=M_{k_{1}}+M_{k_{2}}+E$ where $E$ is an effective divisor with $p_{1},\dots,p_{N}\notin \mathrm{Supp}(E)$, so the existence of the limit in the definition follows from Proposition \ref{prop:Properties}.(iv).
\begin{proof}[Proof of Proposition \ref{prop:EquiivalenceNak}]
By homogeneity we can assume $L$ big line bundle, $\mathbbm{B}(L)=\mathrm{Bs}(\vert L\vert)$ and that the rational map ${\varphi: X\setminus \mathrm{Bs}(\lvert L\lvert)\to \mathbbm{P}^{N}}$ associated to the linear system $\vert L\vert$ has image of dimension $n$.\\
Suppose first that there exist $j\in\{1,\dots,N\}$ and an integer $k_{0}\geq 1$ such that ${\mu_{k_{0}}^{-1}(p_{j})\in \mathbbm{B}_{+}(M_{k_{0}})}$. Thus for any $\mathbbm{N}\ni k\geq k_{0}$ we get ${\mu_{k}^{-1}(p_{j})\in \mathbbm{B}_{+}(M_{k})}$. Then, since $M_{k}$ is big and nef, there exists a subvariety $V$ of dimension $d\geq 1$ such that $M_{k}^{d}\cdot V =0$ and $V\ni \mu_{k}^{-1}(p_{j})$ (Corollary $5.6.$ in \cite{ELMNP09}), thus ${\epsilon_{S}(M_{k}; \mu_{k}^{-1}(p_{1}),\dots,\mu_{k}^{-1}(p_{N}))=0}$ by Lemma \ref{lem:SeshadriNef} and the equality follows.\\
Therefore we may assume $\mu_{k}^{-1}(p_{1}),\dots,\mu_{k}^{-1}(p_{N})\notin \mathbbm{B}_{+}(M_{k})$ for any $k\geq 1$ and we can write $M_{k}=A+E$ for $A$ ample and $E$ effective such that $\mu_{k}^{-1}(p_{1}),\dots,\mu_{k}^{-1}(p_{N})\notin\mathrm{Supp}(E)$. Clearly for any $m\in\mathbbm{N}$, setting $A_{m}:=\frac{1}{m}A+(1-\frac{1}{m})M_{k}$, the equality $M_{k}=A_{m}+\frac{1}{m}E$ holds. Hence, since by definition $\epsilon_{S}(||L||; p_{1},\dots,p_{N})\geq\frac{1}{k}\epsilon_{S}(A_{m};\mu_{k}^{-1}(p_{1}),\dots,\mu_{k}^{-1}(p_{N}))$ for any $m\in \mathbbm{N}$, we get $\epsilon_{S}(||L||; p_{1},\dots,p_{N})\geq\frac{1}{k}\epsilon_{S}(M_{k}; \mu_{k}^{-1}(p_{1}),\dots, \mu_{k}^{-1}(p_{N}))$ letting $m\to\infty$.\\
For the reverse inequality, let $f:Y\to X$ be a projective morphism as in the definition of the moving multipoint Seshadri constant, i.e. ${f^{*}L=A+E}$ with $A$ ample $\mathbbm{Q}-$divisor and $E$ effective divisor with ${p_{1},\dots,p_{N}\notin\mathrm{Supp}(E)}$, and let $k\gg 1$ big enough such that $kA$ is very ample. Thus, unless taking a log resolution of the base locus of $f^{*}(kL)$ that is an isomorphism around $f^{-1}(p_{1}),\dots,f^{-1}(p_{N})$, we can suppose ${f^{*}(kL)=M_{k}+E_{k}}$ with $p_{1},\dots,p_{N}\notin\mathrm{Supp}(E_{k})$ for $E_{k}$ effective and $M_{k}$ nef and big. Then, since $kA$ is very ample, $M_{k}=kA+E_{k}'$ with $E_{k}'$ effective and $E_{k}'\leq kE$. Hence we get $f^{-1}(p_{1}),\dots,f^{-1}(p_{N})\notin \mathrm{Supp}(E_{k}')$ and $\frac{1}{k}\epsilon_{S}(M_{k};f^{-1}(p_{1}),\dots,f^{-1}(p_{N}))\geq \epsilon_{S}(A; f^{-1}(p_{1}),\dots,f^{-1}(p_{N}))$ by homogeneity, which concludes the proof.
\end{proof}
\begin{prop}
\label{prop:PerConSC}
Let $L$ be a big $\mathbbm{Q}-$line bundle. Then
$$
\epsilon_{S}(||L||; p_{1},\dots,p_{N})=\inf \Bigg(\frac{\mathrm{Vol}_{X\vert V}(L)}{\sum_{j=1}^{N}\mathrm{mult}_{p_{j}}V} \Bigg)^{1/\dim V}
$$
where the infimum is over all positive dimensional irreducible subvarities $V$ containing at least one of the points $p_{1},\dots,p_{N}$.
\end{prop}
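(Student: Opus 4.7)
The plan is to adapt the one-point argument of Proposition 4.6 in \cite{ELMNP09} to the multipoint setting, using the equivalence with moving parts (Proposition \ref{prop:EquiivalenceNak}) as the bridge between the moving multipoint Seshadri constant and restricted volumes on resolutions.

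First I would dispose of the trivial case. If some $p_j\in\mathbbm{B}_{+}(L)$, then $\epsilon_{S}(||L||;p_{1},\dots,p_{N})=0$ by definition, while by Theorem C of \cite{ELMNP09} (the characterization of $\mathbbm{B}_{+}$ as the union of positive-dimensional irreducible $V$ with $\mathrm{Vol}_{X\vert V}(L)=0$) one can find such a $V$ through $p_{j}$, making the right-hand infimum vanish as well. So assume ${p_{1},\dots,p_{N}\notin \mathbbm{B}_{+}(L)}$.

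For the direction $\epsilon_{S}(||L||;p_{1},\dots,p_{N})\leq \inf_{V}(\mathrm{Vol}_{X\vert V}(L)/\sum_{j}\mathrm{mult}_{p_{j}}V)^{1/\dim V}$, fix a positive-dimensional irreducible $V\subset X$ through some $p_{j}$; since $p_{j}\notin\mathbbm{B}_{+}(L)$, automatically $V\not\subset\mathbbm{B}_{+}(L)$. For $k$ divisible enough, choose the log-resolution $\mu_{k}:Y_{k}\to X$ of the base ideal of $|kL|$ (with $\mu_{k}^{*}(kL)=M_{k}+E_{k}$) so that $\mu_{k}$ is an isomorphism over a neighborhood of each $p_{j}$, and let $\tilde V_{k}\subset Y_{k}$ be the proper transform. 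Then $\mathrm{mult}_{\mu_{k}^{-1}(p_{j})}\tilde V_{k}=\mathrm{mult}_{p_{j}}V$ for each $j$, and applying Lemma \ref{lem:SeshadriNef} to the nef and big $M_{k}$ gives
$$
\epsilon_{S}(M_{k};\mu_{k}^{-1}(p_{1}),\dots,\mu_{k}^{-1}(p_{N}))^{\dim V}\leq \frac{M_{k}^{\dim V}\cdot \tilde V_{k}}{\sum_{j}\mathrm{mult}_{p_{j}}V}.
$$
Dividing by $k^{\dim V}$ and taking $k\to\infty$, the left side converges to $\epsilon_{S}(||L||;p_{1},\dots,p_{N})^{\dim V}$ by Proposition \ref{prop:EquiivalenceNak}, while $M_{k}^{\dim V}\cdot\tilde V_{k}/k^{\dim V}\to \mathrm{Vol}_{X\vert V}(L)$ by the Fujita-type approximation of restricted volumes (Theorem A in \cite{ELMNP09}). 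Taking the infimum over $V$ yields the inequality.

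For the reverse direction $\inf_{V}(\ldots)^{1/\dim V}\leq \epsilon_{S}(||L||;p_{1},\dots,p_{N})$, one argues as follows. For each $k$, Lemma \ref{lem:SeshadriNef} and standard compactness/Seshadri-exceptional subvariety arguments guarantee an irreducible positive-dimensional $\tilde V_{k}\subset Y_{k}$, passing through at least one $\mu_{k}^{-1}(p_{j_{k}})$, that attains (or $\eta/k$-approximates) the infimum:
$$
\epsilon_{S}(M_{k};\mu_{k}^{-1}(p_{1}),\dots,\mu_{k}^{-1}(p_{N}))=\Big(\tfrac{M_{k}^{\dim \tilde V_{k}}\cdot\tilde V_{k}}{\sum_{j}\mathrm{mult}_{\mu_{k}^{-1}(p_{j})}\tilde V_{k}}\Big)^{1/\dim \tilde V_{k}}.
$$
Pushing down to $V_{k}:=\mu_{k}(\tilde V_{k})\subset X$ (which has the same dimension and the same multiplicities at each $p_{j}$ since $\mu_{k}$ is an isomorphism there), one passes to a subsequence of constant dimension $d$ and uses the monotonic Fujita approximation in the direction $M_{k}^{d}\cdot \tilde V_{k}/k^{d}\leq \mathrm{Vol}_{X\vert V_{k}}(L)$, combined with a careful matching of the two sides as $k\to\infty$, to produce a subvariety $V_{k}$ for which $Q(V_{k})$ is at most $\epsilon_{S}(||L||;p_{1},\dots,p_{N})+\eta$ for any prescribed $\eta>0$.

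The main obstacle lies in this last step: unlike the easy direction, Fujita approximation gives only a one-sided bound $M_{k}^{d}\cdot \tilde V_{k}/k^{d}\leq \mathrm{Vol}_{X\vert V_{k}}(L)$ because $V_{k}$ itself varies with $k$, so one cannot directly invoke the limit $M_{k}^{d}\cdot \tilde V_{k}/k^{d}\to \mathrm{Vol}_{X\vert V}(L)$ which requires a \emph{fixed} $V$. The technical work is to extract a well-behaved limiting subvariety by a Chow-variety/compactness argument on the family $\{V_{k}\}$ of subvarieties of fixed dimension $d$ with bounded degree, and to show that the limit is admissible in the infimum and witnesses the desired bound; this is where the multipoint nature introduces only notational changes relative to the one-point argument of \cite{ELMNP09}, since the sum $\sum_{j}\mathrm{mult}_{p_{j}}V$ behaves additively under all the operations involved.
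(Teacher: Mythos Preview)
Your treatment of the degenerate case and of the inequality $\epsilon_{S}(||L||;p_{1},\dots,p_{N})\leq \inf_{V}(\cdots)$ is essentially the paper's argument: one uses Proposition~\ref{prop:EquiivalenceNak} together with Lemma~\ref{lem:SeshadriNef} on each $M_{k}$, and identifies the limit $M_{k}^{\dim V}\cdot\tilde V_{k}/k^{\dim V}$ with $\mathrm{Vol}_{X\vert V}(L)$ via the asymptotic intersection number (Theorem~2.13 and Remark~2.9 in \cite{ELMNP09}). No issue there.

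The reverse inequality is where your proposal breaks down. You pick for each $k$ a Seshadri-exceptional $\tilde V_{k}$ for $M_{k}$, push it down to $V_{k}\subset X$, and then invoke the bound $M_{k}^{d}\cdot\tilde V_{k}/k^{d}\leq \mathrm{Vol}_{X\vert V_{k}}(L)$. But this inequality goes the \emph{wrong way}: it yields $Q(V_{k})\geq \epsilon_{S}(M_{k};\dots)/k\to \epsilon_{S}(||L||;\dots)$, which is just the easy direction again applied to the particular subvariety $V_{k}$. It gives no upper bound on $Q(V_{k})$ at all, so no Chow-variety compactness on the family $\{V_{k}\}$ can rescue the argument --- even a perfectly well-behaved limit $V_{\infty}$ will only satisfy $Q(V_{\infty})\geq \epsilon_{S}(||L||;\dots)$ from what you have written. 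The one-point proof in \cite{ELMNP09} does not proceed this way either, so the claim that only notational changes are needed is not accurate.

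The paper closes this direction by a different device: Takayama's approximate Zariski decomposition (Theorem~3.1 in \cite{Tak06}). For each $0<\epsilon<1$ one gets a single modification $f:Y_{\epsilon}\to X$, isomorphic over the $p_{j}$, with $f^{*}L=A_{\epsilon}+E_{\epsilon}$ ($A_{\epsilon}$ ample, $E_{\epsilon}$ effective avoiding the $f^{-1}(p_{j})$) such that $A_{\epsilon}^{\dim V}\cdot \tilde V\geq (1-\epsilon)^{\dim V}\Vert L^{\dim V}\cdot V\Vert$ uniformly for every $V\not\subset\mathbbm{B}_{+}(L)$. Taking the infimum over $V$ on both sides and using Lemma~\ref{lem:SeshadriNef} for $A_{\epsilon}$ gives
\[
\epsilon_{S}(||L||;p_{1},\dots,p_{N})\geq \epsilon_{S}(A_{\epsilon};f^{-1}(p_{1}),\dots,f^{-1}(p_{N}))\geq (1-\epsilon)\inf_{V}\Big(\tfrac{\Vert L^{\dim V}\cdot V\Vert}{\sum_{j}\mathrm{mult}_{p_{j}}V}\Big)^{1/\dim V},
\]
and one lets $\epsilon\to 0$. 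The point is that the approximation is uniform in $V$, so the $\sup$ and $\inf$ do not need to be interchanged and no limiting subvariety has to be extracted.
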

\begin{proof}
We may assume $p_{1},\dots,p_{N}\notin\mathbbm{B}_{+}(L)$ since otherwise the equality is a consequence of Corollary $5.9.$ in \cite{ELMNP09}. Thus $V\not\subset \mathbbm{B}_{+}(L)$ for any positive dimensional irreducible subvariety that pass through at least one of the points $p_{1},\dots,p_{N}$, hence by Theorem $2.13.$ in \cite{ELMNP09} it is sufficient to show that
$$
\epsilon_{S}(||L|| ; p_{1},\dots,p_{N})=\inf \Bigg( \frac{\parallel L^{\dim V}\cdot V\parallel}{\sum_{j=1}^{N}\mathrm{mult}_{p_{j}}V} \Bigg) ^{1/\dim V}
$$
where the infimum is over all positive dimensional irreducible subvarities $V$ that contain at least one of the points $p_{1},\dots,p_{N}$. We recall that the \emph{asymptotic intersection number} is defined as
$$
\parallel L^{\mathrm{dim}V}\cdot V \parallel:=\lim_{k\to \infty} \frac{M_{k}^{\dim V}\cdot \tilde{V}_{k}}{k^{\dim V}}=\sup_{k}\frac{M_{k}^{\dim V}\cdot \tilde{V}_{k}}{k^{\dim V}}
$$
where $M_{k}$ is the moving part of $\mu_{k}^{*}(kL)$ as in Proposition \ref{prop:EquiivalenceNak} and $\tilde{V}_{k}$ is the proper trasform of $V$ through $\mu_{k}$ (the last equality follows from Remark 2.9. in \cite{ELMNP09}). \\
Then Lemma \ref{lem:SeshadriNef} and Proposition \ref{prop:EquiivalenceNak} ($M_{k}$ is nef) imply
\begin{gather*}
\epsilon_{S}(||L||;p_{1},\dots,p_{N})=\sup_{k}\frac{\epsilon_{S}(M_{k};\mu_{k}^{-1}(p_{1}),\dots,\mu_{k}^{-1}(p_{N}))}{k}=\\
=\sup_{k}\inf_{V}\frac{1}{k} \Bigg( \frac{\big(M_{k}^{\dim V}\cdot \tilde{V}_{k}\big)}{\sum_{j=1}^{N}\mathrm{mult}_{p_{j}}V}\Bigg)^{1/\dim V}\leq \inf_{V}\Bigg(\frac{\parallel L^{\dim V}\cdot V\parallel }{\sum_{j=1}^{N}\mathrm{mult}_{p_{j}}V}\Bigg)^{1/\dim V}.
\end{gather*}
Conversely by the approximate Zariski decomposition showed in \cite{Tak06} (Theorem $3.1.$) for any $0< \epsilon<1$ there exists a projective morphism ${f:Y_{\epsilon}\to X}$ that is an isomorphism around $p_{1},\dots,p_{N}$, $f^{*}L=A_{\epsilon}+E_{\epsilon}$ where $A_{\epsilon}$ ample and $E_{\epsilon}$ effective with $f^{-1}(p_{1}),\dots,f^{-1}(p_{N})\notin \mathrm{Supp}(E_{\epsilon})$, and
$$
A_{\epsilon}^{\dim V}\cdot \tilde{V}\geq (1-\epsilon)^{\dim V}\parallel L^{\dim V} \cdot V \parallel
$$
for any $V\not\subset \mathbbm{B}_{+}(L)$ positive dimensional irreducible subvariety ($\tilde{V}$ proper trasform of $V$ through $f$). Therefore, taking the infimum over all positive dimensional irreducible subvarieties passing through at least one of the points $p_{1},\dots,p_{N}$ we get
\begin{equation*}
\epsilon_{S}(||L||;p_{1},\dots,p_{N})\geq\epsilon_{S}(A_{\epsilon};f^{-1}(p_{1}),\dots,f^{-1}(p_{N}))\geq (1-\epsilon)\inf \Bigg( \frac{\parallel L^{\dim V} \cdot V \parallel}{\sum_{j=1}^{N}\mathrm{mult}_{p_{j}}V} \Bigg)^{1/\dim V}
\end{equation*}
which concludes the proof.
\end{proof}
\begin{thm}
\label{thm:ContSC}
Let $p_{1},\dots,p_{N}\in X$ be different points. Then the function $
{\mathrm{N}^{1}(X)_{\mathbbm{R}}\ni L\to \epsilon_{S}(||L||;p_{1},\dots,p_{N})\in \mathbbm{R}}$ is continuous.
\end{thm}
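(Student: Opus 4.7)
Let $U:=\bigcap_{j=1}^{N}B_{+}(p_{j})^{C}=\bigcap_{j=1}^{N}\{\alpha\in \mathrm{N}^{1}(X)_{\mathbbm{R}}\,:\,p_{j}\notin\mathbbm{B}_{+}(\alpha)\}$. As recalled in Remark \ref{rem:GlobalMOBJRem}, each $B_{+}(p_{j})^{C}$ is open and convex in $\mathrm{N}^{1}(X)_{\mathbbm{R}}$, and it is stable under multiplication by positive scalars since $\mathbbm{B}_{+}(\cdot)$ is. Hence $U$ is an open convex cone, and by definition $\epsilon_{S}(\|\cdot\|;p_{1},\dots,p_{N})$ vanishes on $\mathrm{N}^{1}(X)_{\mathbbm{R}}\setminus U$. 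I will split the argument into continuity on $U$ and continuity at points outside $U$.

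\emph{Continuity on $U$.} The idea is that the function is concave on the open convex cone $U$. For $\alpha,\beta\in U$ and $t\in[0,1]$, the classes $t\alpha$ and $(1-t)\beta$ lie in $U$ (since $U$ is a cone), and their sum lies in $U$ (since $U$ is convex). Combining homogeneity (Proposition \ref{prop:Properties}.iii) with superadditivity on $U$ (Proposition \ref{prop:Properties}.iv, whose hypothesis is precisely that both summands lie in $U$), one obtains
$$
\epsilon_{S}(\|t\alpha+(1-t)\beta\|)\geq \epsilon_{S}(\|t\alpha\|)+\epsilon_{S}(\|(1-t)\beta\|)=t\epsilon_{S}(\|\alpha\|)+(1-t)\epsilon_{S}(\|\beta\|).
$$
Thus $\epsilon_{S}(\|\cdot\|;p_{1},\dots,p_{N})$ is a finite concave function on the open convex set $U\subset \mathrm{N}^{1}(X)_{\mathbbm{R}}$, hence continuous there by a standard property of concave functions.

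\emph{Continuity at $L\in \mathrm{N}^{1}(X)_{\mathbbm{R}}\setminus U$.} Here I must show that $\epsilon_{S}(\|L_{k}\|)\to 0=\epsilon_{S}(\|L\|)$ for every $L_{k}\to L$, and it suffices to check this along the subsequence with $L_{k}\in U$ (on the complementary subsequence the values are already zero). If $L$ is not big, the uniform upper bound $\epsilon_{S}(\|L_{k}\|)\leq \bigl(\mathrm{Vol}_{X}(L_{k})/N\bigr)^{1/n}$ from Proposition \ref{prop:Properties}.i together with the continuity of the volume function on $\mathrm{N}^{1}(X)_{\mathbbm{R}}$ yields the claim, since $\mathrm{Vol}_{X}(L)=0$. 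If instead $L$ is big, then some $p_{j}\in\mathbbm{B}_{+}(L)$, and by the characterization of the augmented base locus of a big class in \cite{ELMNP09} there exists an irreducible subvariety $V$ with $\dim V\geq 1$, $p_{j}\in V\subset \mathbbm{B}_{+}(L)$, equivalently $\mathrm{Vol}_{X\vert V}(L)=0$. Applying Proposition \ref{prop:PerConSC} to each $L_{k}$ with this same $V$ gives
$$
\epsilon_{S}(\|L_{k}\|;p_{1},\dots,p_{N})^{\dim V}\leq \frac{\mathrm{Vol}_{X\vert V}(L_{k})}{\mathrm{mult}_{p_{j}}V}
$$
(interpreting the right side as zero when $V\subset \mathbbm{B}_{+}(L_{k})$), and the continuous extension by zero of $\mathrm{Vol}_{X\vert V}$ across the boundary of $\{\alpha:V\not\subset \mathbbm{B}_{+}(\alpha)\}$ established in \cite{ELMNP09} forces $\mathrm{Vol}_{X\vert V}(L_{k})\to \mathrm{Vol}_{X\vert V}(L)=0$. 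This concludes the proof.

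The main subtlety lies in Step 2: one needs a \emph{positive-dimensional} irreducible $V\subset \mathbbm{B}_{+}(L)$ through $p_{j}$ (to apply Proposition \ref{prop:PerConSC}) together with the continuity of restricted volumes up to the boundary of their natural domain of definition; both facts are standard outputs of \cite{ELMNP09} but must be invoked explicitly.
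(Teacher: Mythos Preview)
Your argument is correct and follows the same strategy as the paper's proof: concavity (from homogeneity plus superadditivity on $U$) gives continuity on the open convex cone $U=\bigcap_{j}B_{+}(p_{j})^{C}$, and at boundary points one forces the value to zero via Proposition~\ref{prop:PerConSC} together with the continuity of restricted volumes from \cite{ELMNP09}. Your write-up is in fact more explicit than the paper's, in that you separate the non-big boundary case (handled cleanly by the global volume bound of Proposition~\ref{prop:Properties}.i) from the big case (where one must exhibit a positive-dimensional $V\subset\mathbbm{B}_{+}(L)$ through $p_{j}$); the paper compresses both into a single sentence invoking Proposition~\ref{prop:PerConSC} and Theorem~5.2 of \cite{ELMNP09}.
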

\begin{proof}
The homogeneity and the concavity described in Proposition \ref{prop:Properties} implies the locally uniform continuity of $\epsilon_{S}(||L||; p_{1},\dots,p_{N})$ on the open convex subset $\big(\bigcup_{j=1}^{N}B_{+}(p_{j})\big)^{C} $ (see Remark \ref{rem:GlobalMOBJRem}). Thus it is sufficient to show that $\lim_{L'\to L}\epsilon_{S}(||L'||; p_{1},\dots,p_{N})=0$ if $c_{1}(L)\in \bigcup_{j=1}^{N}B_{+}(p_{j})$. But, letting $V\subset X$ be an irreducible component of $\mathbbm{B}_{+}(L)$ containing at least one of the points $p_{1},\dots,p_{N}$, we have
$$
\lim_{L'\to L}\epsilon_{S}(\|L'\|;p_{1},\dots,p_{N})\leq \lim_{L'\to L}\Bigg(\frac{\mathrm{Vol}_{X|V}(L')}{\sum_{j=1}^{N}\mathrm{mult}_{p_{j}}V}\Bigg)^{1/\dim V}=0
$$
where the inequality follows from Proposition \ref{prop:PerConSC} while the convergence is a consequence of the continuity of the restricted volume (see \cite{ELMNP09}, in particular Theorem $5.7.$ in \cite{ELMNP09}).
\end{proof}
To conclude the section we recall that for a line bundle $L$ and for an integer $s\in\mathbbm{Z}_{\geq 0}$, we say that $L$ \emph{generates the $s-$jets at $p_{1},\dots,p_{N}$} if the map
$$
H^{0}(X,L)\twoheadrightarrow \bigoplus_{j=1}^{N}H^{0}(X,L\otimes\mathcal{O}_{X,p_{j}}/\mathfrak{m}_{p_{j}}^{s+1})
$$
is surjective where we have set $\mathfrak{m}_{p_{j}}$ for the maximal ideal in $\mathcal{O}_{X,p_{j}}$. We report an useful last characterization of the moving multipoint Seshadri constant.
\begin{prop}[\cite{Ito13}, Lemma $3.10.$]
\label{prop:EqSeshadri}
Let $L$ be a big line bundle.
Then
$$
\epsilon_{S}(||L|| ;p_{1},\dots,p_{N})=\sup_{k>0}\frac{s(kL;p_{1},\dots,p_{N})}{k}=\lim_{k\to\infty}\frac{s(kL;p_{1},\dots,p_{N})}{k}
$$
where $ s(kL;p_{1},\dots,p_{N}) $ is $0$ if $kL$ does not generate then $s-$jets at $p_{1},\dots,p_{N}$ for any $s\in \mathbbm{Z}_{\geq 0}$, otherwise it is the biggest non-negative integer such that $ kL $ generates the $ s(kL;p_{1},\dots,p_{N})-$jets at $ p_{1},\dots,p_{N}$.
\end{prop}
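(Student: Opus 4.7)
The plan is to adapt the one-point argument of \cite{Ito13}, Lemma $3.10$, to the multipoint setting, using the characterization of $\epsilon_{S}(||L||;p_{1},\dots,p_{N})$ via approximate Zariski decomposition (Proposition \ref{prop:EquiivalenceNak}). As a preliminary step I would establish super-additivity of $k\mapsto s(kL;p_{1},\dots,p_{N})$ on the set of $k$ for which $p_{1},\dots,p_{N}\notin\mathrm{Bs}(|kL|)$: for each $j_{0}\in\{1,\dots,N\}$ and each $i\in\{1,2\}$, pick sections $\sigma_{i}\in H^{0}(X,k_{i}L)$ realizing arbitrary $s_{i}$-jets at $p_{j_{0}}$ while lying in $\mathfrak{m}_{p_{j}}^{s_{i}+1}$ for $j\neq j_{0}$; the $\mathbbm{C}$-linear span of the products $\sigma_{1}\sigma_{2}\in H^{0}(X,(k_{1}+k_{2})L)$ then surjects onto $\mathcal{O}_{p_{j_{0}}}/\mathfrak{m}_{p_{j_{0}}}^{s_{1}+s_{2}+1}$ (by the ring-multiplication surjection $\mathcal{O}/\mathfrak{m}^{s_{1}+1}\otimes\mathcal{O}/\mathfrak{m}^{s_{2}+1}\twoheadrightarrow\mathcal{O}/\mathfrak{m}^{s_{1}+s_{2}+1}$) while vanishing to order $\geq s_{1}+s_{2}+2$ at the other $p_{j}$'s. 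Summing over $j_{0}$ yields super-additivity $s((k_{1}+k_{2})L;\cdot)\geq s(k_{1}L;\cdot)+s(k_{2}L;\cdot)$, and Fekete's lemma applied on multiples of a suitable $k_{0}$ (which exists when $p_{1},\dots,p_{N}\notin\mathbbm{B}_{+}(L)$ by Corollary $1.6$ in \cite{ELMNP06}) gives $\sup_{k}s(kL;\cdot)/k=\lim_{k}s(kL;\cdot)/k$.

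For the inequality $\epsilon_{S}(||L||;p_{1},\dots,p_{N})\geq\sup_{k}s(kL;\cdot)/k$, fix $k$ with $p_{j}\notin\mathrm{Bs}(|kL|)$ and let $\mu_{k}:Y_{k}\to X$ be the log resolution of $\mathfrak{b}_{k}$ with $\mu_{k}^{*}(kL)=M_{k}+E_{k}$. Since $\mu_{k}$ is an isomorphism around each $p_{j}$ and $E_{k}$ avoids $\mu_{k}^{-1}(p_{j})$, the canonical identification $H^{0}(Y_{k},M_{k})\simeq H^{0}(X,kL)$ preserves $s$-jets at $\mu_{k}^{-1}(p_{j})$, so $s(M_{k};\mu_{k}^{-1}(p_{1}),\dots,\mu_{k}^{-1}(p_{N}))=s(kL;p_{1},\dots,p_{N})$. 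I would then prove the nef multipoint analog of Demailly's jet-bound, namely $\epsilon_{S}(M_{k};\mu_{k}^{-1}(\cdot))\geq s(M_{k};\mu_{k}^{-1}(\cdot))$, by perturbing $M_{k}$ to the ample $M_{k}+\varepsilon H$ (for any ample $H$ on $Y_{k}$), transferring jet separation via multiplication by a section of $r\varepsilon H$ not vanishing at any of the $\mu_{k}^{-1}(p_{j})$, applying the classical ample multipoint Demailly criterion (a standard extension of the one-point theorem, cf.~\cite{Laz04}), and letting $\varepsilon\to 0$ via continuity of $\epsilon_{S}$ on the nef cone. Combined with Proposition \ref{prop:EquiivalenceNak}, this yields $\epsilon_{S}(||L||;\cdot)\geq\epsilon_{S}(M_{k};\mu_{k}^{-1}(\cdot))/k\geq s(kL;\cdot)/k$ for every admissible $k$.

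For the reverse inequality $\lim_{m}s(mL;\cdot)/m\geq\epsilon_{S}(||L||;\cdot)$, I would use the definition: for any modification $f:Y\to X$ with $f^{*}L=A+E$, where $A$ is an ample $\mathbbm{Q}$-divisor and $f^{-1}(p_{j})\notin\mathrm{Supp}(E)$, multiplication by the canonical section of $mE$ embeds $H^{0}(Y,mA)\hookrightarrow H^{0}(X,mL)$ while preserving $s$-jets at $p_{j}\simeq f^{-1}(p_{j})$, giving $s(mL;p_{1},\dots,p_{N})\geq s(mA;f^{-1}(p_{1}),\dots,f^{-1}(p_{N}))$; applying ample multipoint Demailly to $A$ and taking the supremum over such decompositions yields the bound. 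The remaining case, where some $p_{j}\in\mathbbm{B}_{+}(L)$ and $\epsilon_{S}(||L||;\cdot)=0$ by definition, follows from Corollary $5.6$ in \cite{ELMNP09}: picking a positive-dimensional subvariety $V\ni p_{j}$ with $\mathrm{Vol}_{X|V}(L)=0$, restricting any $s$-jet generator to $V$ yields $h^{0}(X|V,kL)\geq\dim(\mathcal{O}_{V,p_{j}}/\mathfrak{m}_{V}^{s+1})\gtrsim s^{\dim V}/(\dim V)!$, which combined with $h^{0}(X|V,kL)=o(k^{\dim V})$ forces $s(kL;p_{j})/k\to 0$ and hence $\sup_{k}s(kL;\cdot)/k=0$. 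The main obstacle is the nef multipoint Demailly bound in the second paragraph, resting on both the ample multipoint version (a straightforward extension of the classical one-point theorem) and continuity of $\epsilon_{S}$ across the nef boundary.
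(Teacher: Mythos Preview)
The paper does not supply its own proof of this proposition; it is stated with attribution to \cite{Ito13}, Lemma~3.10, and used as a black box in the proof of Theorem~\ref{ThmB} and Proposition~\ref{prop:Poles}. Your argument is a correct and essentially self-contained reconstruction of the multipoint version of Ito's proof: the super-additivity/Fekete step, the jet-preservation under passage to the moving part $M_{k}$, the nef multipoint Demailly bound, the reverse inequality via $H^{0}(Y,mA)\hookrightarrow H^{0}(X,mL)$, and the degenerate case $p_{j}\in\mathbbm{B}_{+}(L)$ via restricted volumes all go through as you describe.

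One small simplification: your nef bound $\epsilon_{S}(M_{k};\mu_{k}^{-1}(\cdot))\geq s(M_{k};\mu_{k}^{-1}(\cdot))$ does not actually require perturbing to ample and invoking continuity on the nef boundary. If $|M_{k}|$ separates $s$-jets at the $\mu_{k}^{-1}(p_{j})$, then for any irreducible curve $C$ through one of these points choose $D\in|M_{k}|$ whose $s$-jet at each $\mu_{k}^{-1}(p_{j})\in C$ is $\ell_{j}^{s}$ for a linear form $\ell_{j}$ transverse to the tangent cone of $C$, and whose jet at the remaining $\mu_{k}^{-1}(p_{j})$ lies in $\mathfrak{m}^{s}$; then $C\not\subset D$ and $M_{k}\cdot C=D\cdot C\geq s\sum_{j}\mathrm{mult}_{\mu_{k}^{-1}(p_{j})}(C)$ directly. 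This avoids the appeal to continuity of $\epsilon_{S}$ across the nef boundary, which is the only step you flagged as an obstacle.
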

\subsection{Proof of Theorem \ref{ThmB}}
We prove here one of our main results.
\begin{reptheorem}{ThmB}
Let $L$ be a big line bundle and let $>$ be the deglex order. Then
$$
\epsilon_{\epsilon}(\|L\|; p_{1},\dots,p_{N})=\max\big\{0,\xi(L;p_{1},\dots,p_{N})\big\}
$$
where $\xi(L;p_{1},\dots,p_{N}):=\sup\{t\geq 0\, : \, t\Sigma_{n}\subset \Delta_{j}(L)^{ess}\, \mbox{for any}\, j=1,\dots,N\}$ and $\Sigma_{n}$ is the unit $n$-symplex.
\end{reptheorem}
Namely we construct the multipoint Okounkov bodies $\Delta_{j}(L)$ from a family of valuations $\nu^{p_{j}}$ associated to a family of infinitesimal flags centered at $p_{1},\dots,p_{N}$ (see paragraphs $\S$ \ref{paragraph:ParticularValuations} and $\S$ \ref{rem:ParticularValuations}).\newline
Observe that for $N=1$, Theorem \ref{ThmB} recovers Theorem C in \cite{KL17}.\\

Before proceeding with the proof, in the spirit of the aforementioned work of Demailly \cite{Dem90}, we need to describe the moving multipoint Seshadri constant $\epsilon(||L||;p_{1},\dots,p_{N})$ in a more analytical language.
\begin{defn}
We say that a singular metric $\varphi$ of a line bundle $L$ has \emph{isolated logarithmic poles} at $p_{1},\dots, p_{N}$ of coefficient $\gamma$ if $\min\{\nu(\varphi,p_{1}),\dots,\nu(\varphi,p_{N})\}=\gamma$ and $\varphi$ is finite and continuous in a small punctured neighborhood $V_{j}\setminus \{p_{j}\}$ for every $j=1,\dots,N$. We have denoted by $\nu(\varphi,p_{j})$ the \emph{Lelong number} of $\varphi$ at $p_{j}$,
$$
\nu(\varphi,p_{j}):=\liminf_{z\to x}\frac{\varphi_{j}(z)}{\ln\lvert z-x\rvert^{2}}
$$
where $\varphi_{j}$ is the local plurisubharmonic function defining $\varphi$ around ${p_{j}=x}$.\\
We set $\gamma(L;p_{1},\dots,p_{N}):=\sup\{\gamma\in\mathbbm{R}\, : \, L $ \emph{has a positive singular metric with isolated logarithmic poles at} $ p_{1},\dots,p_{N} $ \emph{of coefficient} $ \gamma\}$
\end{defn}
Note that for $N=1$ we recover the definition given in \cite{Dem90}.
\begin{prop}
\label{prop:Poles}
Let $L$ be a big $\mathbbm{Q}-$line bundle. Then
$$
\gamma(L;p_{1},\dots,p_{N})=\epsilon_{S}(||L||;p_{1},\dots,p_{N})
$$
\end{prop}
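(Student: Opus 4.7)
The plan is to prove both inequalities $\epsilon_{S}(||L||;p_{1},\dots,p_{N}) \leq \gamma(L;p_{1},\dots,p_{N})$ and $\gamma(L;p_{1},\dots,p_{N}) \leq \epsilon_{S}(||L||;p_{1},\dots,p_{N})$ separately, following the template of Demailly's original one-point argument in \cite{Dem90}, adapted to the multipoint setting and, through decompositions $f^{*}L = A + E$, to the moving framework. One may assume $p_{j} \notin \mathbbm{B}_{+}(L)$ for every $j$, since otherwise $\epsilon_{S}(||L||;p_{1},\dots,p_{N}) = 0$ and (as a byproduct of the inequality $\gamma \leq \epsilon_{S}$ proved below, applied by contradiction) $\gamma(L;p_{1},\dots,p_{N}) = 0$ as well.

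For the direction $\epsilon_{S}(||L||;p_{1},\dots,p_{N}) \leq \gamma(L;p_{1},\dots,p_{N})$ I would first establish the ample multipoint identity $\epsilon_{S}(A;q_{1},\dots,q_{N}) = \gamma(A;q_{1},\dots,q_{N})$ for $A$ ample. The bound $\gamma \leq \epsilon_{S}$ there is the curve test: any positive singular metric $\psi$ on $A$ with $\nu(\psi,q_{j}) \geq \gamma$ satisfies $A \cdot C \geq \int_{C} dd^{c}\psi \geq \gamma \sum_{j} \mathrm{mult}_{q_{j}}(C)$ for every irreducible curve $C$ passing through some $q_{j}$. The reverse inequality $\epsilon_{S} \leq \gamma$ uses Lemma \ref{lem:SeshadriNef}: if $\mu^{*}A - c\mathbbm{E}$ is nef on the blow-up of $X$ at $\{q_{1},\dots,q_{N}\}$, then for any $\eta > 0$ the class $\mu^{*}A - c\mathbbm{E} + \eta\,\pi^{*}\omega_{0}$ is Kähler, and pushing forward its smooth Kähler representative together with the current $c[\mathbbm{E}]$ produces a positive singular metric on $A$ with isolated log poles and Lelong numbers $\geq c$ at each $q_{j}$. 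Now given an admissible decomposition $f^{*}L = A + E$ with $A$ ample and $f^{-1}(p_{j}) \notin \mathrm{Supp}(E)$, pick a positive metric $\psi$ on $A$ with Lelong numbers $\geq \epsilon_{S}(A;\tilde{p}_{j}) - \epsilon$ at $\tilde{p}_{j} := f^{-1}(p_{j})$; since the canonical section $s_{E}$ is non-vanishing near $\tilde{p}_{j}$, the sum $\psi + \log|s_{E}|^{2}$ is a positive metric on $f^{*}L$ with the same Lelong numbers at $\tilde{p}_{j}$, and its pushforward by the birational morphism $f$ (an isomorphism near $p_{j}$) yields a positive metric on $L$ with isolated log poles of coefficient $\geq \epsilon_{S}(A;\tilde{p}_{j}) - \epsilon$. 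Taking the supremum over decompositions and sending $\epsilon \to 0$ gives the desired inequality.

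For $\gamma(L;p_{1},\dots,p_{N}) \leq \epsilon_{S}(||L||;p_{1},\dots,p_{N})$, given a positive singular metric $\varphi$ on $L$ with isolated log poles of coefficient $\gamma$ at the $p_{j}$'s, I would apply Demailly's regularization theorem to produce positive metrics $\varphi_{\delta}$ on $L$ with \emph{analytic singularities} and Lelong numbers $\geq \gamma - \delta$ at each $p_{j}$. The isolated-log-poles hypothesis permits one to take a log resolution $f_{\delta}: Y_{\delta} \to X$ of the multiplier ideal $\mathcal{J}(\varphi_{\delta})$ that is an isomorphism in a neighborhood of each $p_{j}$, yielding a decomposition $f_{\delta}^{*}L = A_{\delta} + E_{\delta}$ in which $E_{\delta}$ is effective with $f_{\delta}^{-1}(p_{j}) \notin \mathrm{Supp}(E_{\delta})$, and $A_{\delta}$ is big and nef, equipped with a positive metric having Lelong numbers $\geq \gamma - \delta$ at $f_{\delta}^{-1}(p_{j})$. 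After a further ample perturbation of size $\delta'$ converting $A_{\delta}$ into an ample $A'_{\delta}$ (at the cost of enlarging $E_{\delta}$ by a small effective $\mathbbm{Q}$-divisor disjoint from $f_{\delta}^{-1}(p_{j})$), the ample identity from the second paragraph applied to $A'_{\delta}$ gives $\epsilon_{S}(A'_{\delta}; f_{\delta}^{-1}(p_{j})) \geq \gamma - \delta - \delta'$; by the definition of the moving multipoint Seshadri constant this yields $\epsilon_{S}(||L||;p_{1},\dots,p_{N}) \geq \gamma - \delta - \delta'$, and letting $\delta,\delta' \to 0$ completes the argument. The main obstacle is precisely this second direction: one must convert the abstract singular metric $\varphi$ into a divisorial decomposition whose log resolution is an isomorphism near the $p_{j}$'s and whose ample part absorbs the Lelong-number mass at each $p_{j}$. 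The \emph{isolated} log poles hypothesis is what makes this possible, ensuring that the singular support of $\varphi_{\delta}$ can be separated from other irregularities of the regularization and that the resolution avoids the $p_{j}$'s, so that the Lelong-number concentration at $p_{j}$ passes to $A_{\delta}$ rather than being absorbed into $E_{\delta}$.
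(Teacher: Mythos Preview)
Your route is genuinely different from the paper's. The paper does \emph{not} go through the decomposition definition at all: both inequalities are obtained from the jet--separation characterization of Proposition~\ref{prop:EqSeshadri}. For $\gamma\ge\epsilon_S$ one simply takes sections generating $s(kL;p_1,\dots,p_N)$--jets and forms $\varphi=\tfrac{1}{k}\log\sum|f_\alpha|^2$, which visibly has isolated log poles of coefficient $s/k$. For $\gamma\le\epsilon_S$ the paper twists $k_t\varphi_t$ by a fixed K\"ahler potential on $A-K_X$ and invokes Demailly's Corollary~3.3 in \cite{Dem90} (isolated log poles of coefficient $\ge s+n$ force generation of $s$--jets), obtaining $s(k_tL+A;p_1,\dots,p_N)\ge k_t\gamma_t-n$; Proposition~\ref{prop:EqSeshadri} and the continuity of Theorem~\ref{thm:ContSC} then finish. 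No regularization, no log resolution, no construction of an admissible $f^*L=A+E$ is needed.

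Your argument for $\epsilon_S\le\gamma$ is fine modulo cosmetic issues. The direction $\gamma\le\epsilon_S$, however, has a real gap at the ``further ample perturbation'' step. After regularizing and partially resolving away from the $p_j$, you obtain $f_\delta^*L=A_\delta+E_\delta$ with $E_\delta$ effective avoiding $\tilde p_j$ and $A_\delta$ carrying a metric with $dd^c\ge -\delta\,f_\delta^*\omega$ and isolated poles at $\tilde p_j$. This makes $A_\delta+\delta f_\delta^*[\omega]$ \emph{nef}, not ample, and it is a class on $Y_\delta$, not the pullback of an ample class from $X$. Converting this into an honest decomposition $f^*L=A'+E'$ with $A'$ ample $\mathbb Q$--divisor on $Y_\delta$ and $E'$ effective avoiding $\tilde p_j$ is exactly the nontrivial point: adding an ample $H$ on $Y_\delta$ changes the left--hand side to $f^*L+\delta'H$, which is no longer a pullback from $X$; enlarging $E_\delta$ by an effective divisor in $|mH|$ does not make $A_\delta$ more positive. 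One can push this through (e.g.\ by showing $\tilde p_j\notin\mathbb B_+(A_\delta+\delta f_\delta^*[\omega])$ from the isolated--singularity metric, then using $\epsilon_S(\|\,\cdot\,\|)=\epsilon_S$ on big and nef classes and birational invariance under maps that are isomorphisms near the points, together with continuity as $\delta\to0$), but none of this is in your write--up, and your opening reduction to $p_j\notin\mathbb B_+(L)$ is circular since it appeals to this very direction. The paper's jet argument sidesteps the whole issue.
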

\begin{proof}
By homogeneity we can assume $L$ to be a line bundle, and we fix a family of local holomorphic coordinates $\{z_{j,1},\dots,z_{j,n}\}$ in open coordinated sets $U_{1},\dots,U_{N}$ centered respectively at $p_{1},\dots,p_{N}$.\\
Setting $z_{j}:=(z_{j,1},\dots,z_{j,N})$ and $s:=s(kL;p_{1},\dots,p_{N})$ for $k\geq 1$ natural number, we can find holomorphic section $f_{\alpha}$, parametrized by all $\alpha=(\alpha_{1},\dots,\alpha_{N})\in\mathbbm{N}^{Nn}$ such that $\lvert \alpha_{j}\rvert = s$ and $f_{\alpha\vert U_{j}}=z_{j}^{\alpha_{j}}$ for any $j=1,\dots,N$. In other words, we can find holomorphic sections of $kL$ whose jets at $p_{1},\dots,p_{N}$ generates all possible combination of monomials of degree $s$ around the points chosen. Thus the positive singular metric $\varphi$ on $L$ given by
$$
\varphi:=\frac{1}{k}\log\Big(\sum_{\alpha}\lvert f_{\alpha} \rvert^{2}\Big)
$$
has isolated logarithmic poles at $p_{1},\dots,p_{N}$ of coefficient $s/k$. Hence $\gamma(L;p_{1},\dots,p_{N})\geq s(kL;p_{1},\dots,p_{N})/k$, and letting $k\to\infty$ Proposition \ref{prop:EqSeshadri} yields $\gamma(L;p_{1},\dots,p_{N})\geq \epsilon_{S}(||L|| ;p_{1},\dots,p_{N})$.\\
Conversely, assuming $\gamma(L;p_{1},\dots,p_{N})>0$, let $\{\gamma_{t}\}_{t\in\mathbbm{N}}\subset\mathbbm{Q}$ be an increasing sequence of rational numbers converging to $\gamma(L;p_{1},\dots,p_{N})$ and let $\{k_{t}\}_{t\in\mathbbm{N}}$ be an increasing sequence of natural numbers such that $\{k_{t}\gamma_{t}\}_{t\in \mathbbm{N}}$ converges to $+\infty$. Moreover let $A$ be an ample line bundle such that $A-K_{X}$ is ample, and let $\omega=dd^{c}\phi$ be a K\"ahler form in the class $c_{1}(A-K_{X})$.\\
Thus for any positive singular metric $\varphi_{t}$ of $L$ with isolated logarithmic poles at $p_{1},\dots,p_{N}$ of coefficient $\geq \gamma_{t}$, $k_{t}\varphi_{t}+\phi$ is a positive singular metric of $k_{t}L+A-K_{X}$ with K\"ahler current $dd^{c}(k_{t}\varphi_{t})+\omega$ as curvature and with isolated logarithmic poles at $p_{1},\dots,p_{N}$ of coefficient $\geq k_{t}\gamma_{t}$. Therefore, for $t\gg 1$ big enough, $k_{t}L_{t}+A$ generates all $(k_{t}\gamma_{t}-n)-$jets at $p_{1},\dots,p_{N}$ by Corollary 3.3. in \cite{Dem90}, and thanks to Proposition \ref{prop:EqSeshadri} we obtain
$$
\epsilon_{S}\Big(\big\|L+\frac{1}{k_{t}}A\big\|;p_{1},\dots,p_{N}\Big)\geq\frac{k_{t}\gamma_{t}-n}{k_{t}}=\gamma_{t}-\frac{n}{k_{t}}.
$$
Letting $t\to \infty$ we get $\epsilon_{S}(||L||;p_{1},\dots,p_{N})\geq \gamma(L;p_{1},\dots,p_{N})$ exploiting the continuity of Theorem \ref{thm:ContSC}.
\end{proof}
\begin{rem}
\emph{Observe that the same result cannot hold if we restrict ourselves to considering metrics with logarithmic poles at $p_{1},\dots,p_{N}$ not necessarily isolated. Indeed Demailly in \cite{Dem93} showed that for any nef and big $\mathbbm{Q}-$line bundle $L$ over a projective manifold, for any different points $p_{1},\dots,p_{N}$, and for any $\tau_{1},\dots,\tau_{N}$ positive real numbers with $\sum_{j=1}^{N}\tau_{j}^{n}<(L^{n})$ there exists a positive singular metric $\varphi$ with logarithmic poles at any $p_{j}$ of coefficients, respectively, $\tau_{j}$. We thus conclude that the result in Proposition \ref{prop:Poles} holds considering metrics with logarithmic poles at $p_{1},\dots,p_{N}$ not necessarily isolated if and only if the multipoint Seshadri constant is maximal, i.e. $\epsilon_{S}(||L||,p_{1},\dots,p_{N})=(\mathrm{Vol}_{X}(L)/N)^{1/n}$.}
\end{rem}
\begin{proof}[Proof of Theorem \ref{ThmB}]
By the continuity of Theorems \ref{thm:GlobalMOBJ}, \ref{thm:ContSC} and by the homogeneity of both sides we can assume $L$ big line bundle. Moreover we may also assume $\Delta_{j}(L)^{\circ}\neq \emptyset$ for any $j=1,\dots,N$ since otherwise the statement is a consequence of Lemma \ref{lem:IntOk}.(ii).\\
Let $\{\lambda_{m}\}_{m\in\mathbbm{N}}\subset \mathbbm{Q}_{>0}$ be an increasing sequence convergent to $\xi(L;p_{1},\dots,p_{N})>0$. By Proposition \ref{prop:Essential}, for any $m\in\mathbbm{N}$ there exist $k_{m}\gg 1$ such that $\lambda_{m}\Sigma_{n}\subset\Delta_{j}^{k_{m}}(L)^{\mathrm{ess}}$ for any ${j=1,\dots,N}$. Therefore, chosen a set of section $\{s_{j,\alpha}\}_{j,\alpha} \subset H^{0}(X,k_{m}L)$ parametrized in a natural way by all valuative points in $\Delta_{j}^{k_{m}}(L)^{\mathrm{ess}}\setminus \lambda_{m}\Sigma_{n}^{\mathrm{ess}}$ for any $j=1,\dots,N$ (i.e. $s_{j,\alpha}\in V_{k_{m},j}$, $\nu^{p_{j}}(s_{j,\alpha})=\alpha$ and $\alpha\notin \lambda_{m}\Sigma_{n}^{\mathrm{ess}}$) the metric
$$
\varphi_{k_{m}}:=\frac{1}{k_{m}}\ln\Big(\sum_{j=1}^{N}\sum_{\alpha}\lvert s_{j,\alpha}\rvert^{2}\Big)
$$
is a positive singular metric on $L$ such that $\nu(\varphi_{k_{m}},p_{j})\geq\lambda_{m}$ while $\varphi_{k_{m}}$ is continuos and finite on a punctured neighborhood $V_{j}\setminus\{p_{j}\}$ for any $j=1,\dots,N$ by Corollary \ref{cor:AnotherEqMOB}. Hence letting $m\to \infty,$ we get $\epsilon_{S}(||L||;p_{1},\dots,p_{N})=\gamma(L;p_{1},\dots,p_{N})\geq \xi(L;p_{1},\dots,p_{N})$, where the equality is the content of Proposition \ref{prop:Poles}.\\
On the other hand, letting $\{\lambda_{m}\}_{m\in\mathbbm{N}}\subset\mathbbm{Q}$ be a increasing sequence converging to $\epsilon_{S}(||L|| ;p_{1},\dots,p_{N})>0$, Proposition \ref{prop:EqSeshadri} implies that for any $m\in\mathbbm{N}$ there exists $k_{m}\gg 0$ divisible enough such that $s(tk_{m}L;p_{1},\dots,p_{N})\geq tk_{m}\lambda_{m}$ for any $t\geq 1$. Thus, since the family of valuation is associated to a family of infinitesimal flags, we get
$$
\frac{\lceil tk_{m}\lambda_{m}\rceil}{tk_{m}}\Sigma_{n}\subset \Delta_{j}^{k_{m}}(L)^{\mathrm{ess}}\subset\Delta_{j}(L)^{\mathrm{ess}} \,\, \forall \,j=1,\dots,N \, \mathrm{and} \,\,\forall\, t\geq 1.
$$
Hence $ \lambda_{m}\Sigma_{n}\subset \Delta_{j}(L)^{\mathrm{ess}}$ for any $j=1,\dots,N $, which concludes the proof.
\end{proof}
\begin{rem}
\emph{In the case $L$ ample line bundle, to prove the inequality ${\epsilon_{S}(L;p_{1},\dots,p_{N})\geq \xi(L;p_{1},\dots,p_{N})}$ we could have used Theorem \ref{ThmC}. In fact by definition $\sqrt{\xi(L;p_{1},\dots,p_{N})}={\sup\{r>0} \, : \, B_{r}(0)\subset \Omega_{j}(L) \, \mathrm{for} \, \mathrm{any} \, j=1,\dots,N \}$ where we recall that $\Omega_{j}(L):=\mu^{-1}\big(\Delta_{j}(L)^{ess}\big)$ for $\mu(z_{1},\dots,z_{n}):=(|z_{1}|^{2},\dots,|z_{n}|^{2})$. Thus Theorem \ref{ThmC} implies that $\{(B_{\sqrt{\xi(L;p_{1},\dots,p_{N})}-\epsilon}(0),\omega_{st})\}_{j=1}^{N}$ packs into $(X,L)$ for any $\epsilon>0$ small enough, and so the symplectic blow-up procedure for K\"ahler manifold (see section \S 5.3. in \cite{MP94}, or Lemma 5.3.17. in \cite{Laz04}) yields $\xi(L;p_{1},\dots,p_{N})\leq \epsilon_{S}(L;p_{1},\dots,p_{N})$.}
\end{rem}
\begin{rem}
\label{rem:SeshadriFitted}
\emph{The proof of the Theorem \ref{ThmB} shows that $\xi(L;p_{1},\dots,p_{N})$ is independent from the choice of the family of valuations given by the associated infinitesimal flags.}
\end{rem}
The following corollary, which is an immediate consequence of Theorems \ref{ThmB}, \ref{ThmC} extends Theorem $0.5$ in \cite{Eckl17} to all dimensions (as Eckl claimed in his paper) and to big line bundles.
\begin{cor}
\label{cor:EcklGen}
Let $L$ be a big line bundle. Then
\begin{multline*}
\sqrt{\epsilon_{S}(||L|| ;p_{1},\dots,p_{N})}=\max\Big\{0,\sup\{r>0 \, : \, B_{r}(0)\subset \Omega_{j}(L)\,\,\forall \, j=1,\dots,N\}\Big\}=\\
=\max\Big\{0,\sup\big\{r>0\, : \, \{(B_{r}(0),\omega_{std}\big)\}_{j=1}^{N}\, \mbox{packs into} \, (X,L)\,\big\}.
\end{multline*}
\end{cor}
For $N=1$ it is the content of Theorem $1.3.$ in \cite{WN15}.
\section{Some particular cases}
\label{section:ParticularCases}
\subsection{Projective toric manifolds}
\label{subsection:Toric}
In this section $X=X_{\Delta}$ is a smooth projective toric variety associated to a fan $\Delta $ in $N_{\mathbbm{R}}\simeq \mathbbm{R}^{n}$, so that the torus $T_{N}:=N\otimes_{\mathbbm{Z}}\mathbbm{C}^{*}\simeq (\mathbbm{C}^{*})^{n}$ acts on $X$ ($N\simeq \mathbbm{Z}^{n}$ denote a lattice of rank $n$ with dual $M:=\mathrm{Hom}_{\mathbbm{Z}}(N,\mathbbm{Z})$, see \cite{Ful93}, \cite{Cox11} for notation and basic fact about toric varieties).\\
It is well-known that there is a correspondence between toric manifolds $X$ polarized by $T_{N}-$invariant ample divisors $D$ and lattice \emph{polytopes} $P\subset M_{\mathbbm{R}}$ of dimension $n$. Indeed to any such divisor ${D=\sum_{\rho\in \Delta(1)}a_{\rho}D_{\rho}}$, denoting by $\Delta(k)$ the cones of dimension $k$, the polytope $P_{D}$ is given by $ P_{D}:=\bigcap_{\rho\in\Delta(1)}\{m\in M_{\mathbbm{R}} \, : \, \langle m,v_{\rho}\rangle\geq -a_{\rho}\} $ where $v_{\rho}$ represents the generator of $\rho\cap N$. Conversely any such polytope $P$ can be described as $P:=\bigcap_{F \, \mathrm{facet}}\{m\in M_{\mathbbm{R}} \, : \, \langle m,n_{F}\rangle\geq -a_{F}\} $ where a \emph{facet} is a $1-$codimensional face of $P$ and $n_{F}\in N$ is the unique primitive element that is normal to $F$ and that points toward the interior of $P$. Thus the \emph{normal fan associated to $P$} is $\Delta_{P}:=\{\sigma_{\mathcal{F}}\, : \, \mathcal{F} \, \mathrm{face} \, \mathrm{of}\, P \}$ where $\sigma_{\mathcal{F}}$ is the cone in $\mathbbm{N}_{\mathbbm{R}}$ generated by all normal elements $n_{F}$ as above for any facet containing the face $\mathcal{F}$. In particular vertices of $P$ correspond to $T_{N}-$invariant points on the toric manifold $X_{P}$ associated to $\Delta_{P}$ while facets of $P$ correspond to $T_{N}-$invariant divisors on $X_{P}$. Finally the polarization is given by $D_{P}:=\sum_{F \, \mathrm{facet}}a_{F}D_{F}$.\\

Thus, given an ample toric line bundle $L=\mathcal{O}_{X}(D)$ on a projective toric manifold $X$ we can fix local holomorphic coordinates around a $T_{N}-$invariant point $p\in X$ (corresponding to a vertex $x_{\sigma}\in P$) such that $\{z_{i}=0\}=D_{i\vert U_{\sigma}}$ for $D_{i}$ $T_{N}-$invariant divisors and we can assume $ D_{\vert U_{\sigma}}=0 $.
\begin{prop}[\cite{LM09},Proposition 6.1.(i)]
\label{prop:OBToric}
In the setting as above, the equality
$$
\phi_{\mathbbm{R}^{n}}(P_{D})= \Delta(L)
$$
holds, where $\phi_{\mathbbm{R}}$ is the linear map associated to $\phi: M\to \mathbbm{Z}^{n}, \phi(m):=(\langle m, v_{1} \rangle,\dots,\langle m, v_{n} \rangle)$, for $v_{i}\in \Delta_{P_{D}}(1)$ generators of the ray associated to $D_{i}$, and $\Delta(L)$ is the one-point Okounkov body associated to the admissible flag given by the local holomorphic coordinates chosen.
\end{prop}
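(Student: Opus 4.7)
The plan is to leverage the explicit character basis of $H^{0}(X, kL)$ for $k \geq 1$ together with a direct computation of $\nu^{p}(\chi^{m})$ in the chosen local coordinates. Specifically, I would first recall the standard toric fact that $H^{0}(X, \mathcal{O}_{X}(kD))$ admits the lattice-point basis $\{\chi^{m} : m \in kP_{D} \cap M\}$ for every $k \geq 1$.

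Next, since $X$ is smooth at $p = x_{\sigma}$, the primitive rays $v_{1}, \ldots, v_{n}$ form a $\mathbbm{Z}$-basis of $N$; let $u_{1}, \ldots, u_{n} \in M$ be the dual basis. The local coordinates $z_{i} = \chi^{u_{i}}$ on $U_{\sigma}$ cut out $D_{i\vert U_{\sigma}}$, so they agree with the given ones up to units (which is irrelevant for the valuation). Writing any $m \in M$ in the basis $\{u_{i}\}$ gives $m = \sum_{i} \langle m, v_{i}\rangle u_{i}$, whence
$$
\chi^{m} = \prod_{i=1}^{n} z_{i}^{\langle m, v_{i}\rangle} = z^{\phi(m)}.
$$
The normalization $D_{\vert U_{\sigma}} = 0$ --- i.e.\ the translation placing the vertex $x_{\sigma}$ at the origin of $M_{\mathbbm{R}}$ --- ensures that $\langle m, v_{i}\rangle \geq 0$ for every $m \in P_{D}$ and $i = 1, \ldots, n$, so each $\chi^{m}$ is a genuine monomial with non-negative exponents in these coordinates.

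The valuation computation is then immediate. For a monomial, the admissible-flag (lexicographic) valuation satisfies $\nu^{p}(\chi^{m}) = \phi(m) \in \mathbbm{N}^{n}$. Since $\phi : M \to \mathbbm{Z}^{n}$ is a $\mathbbm{Z}$-linear bijection, distinct characters yield distinct valuation vectors, so for any nonzero $s = \sum c_{m} \chi^{m} \in H^{0}(X, kL)$ one gets $\nu^{p}(s) = \min_{\mathrm{lex}}\{\phi(m) : c_{m} \neq 0\}$. Consequently
$$
\bigcup_{k \geq 1} \frac{1}{k}\Big\{\nu^{p}(s) : s \in H^{0}(X, kL)\setminus\{0\}\Big\} = \bigcup_{k \geq 1} \frac{1}{k}\phi(kP_{D} \cap M) = \phi_{\mathbbm{R}^{n}}(P_{D} \cap M_{\mathbbm{Q}}).
$$
Taking the closure and using that rational points are dense in the rational polytope $P_{D}$, together with the continuity of $\phi_{\mathbbm{R}^{n}}$, yields $\Delta(L) = \phi_{\mathbbm{R}^{n}}(P_{D})$.

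The only real subtlety is bookkeeping: one must verify that the local coordinates $z_{i}$ agree with $\chi^{u_{i}}$ up to units (so that $\nu^{p}(z_{i}) = \vec{e}_{i}$ and the valuation is truly monomial), and that the normalization $D_{\vert U_{\sigma}} = 0$ corresponds to translating the polytope so that the vertex $x_{\sigma}$ sits at the origin of $M_{\mathbbm{R}}$. Once this is pinned down, the proof reduces to the trivial identity $\nu^{p}(z^{\alpha}) = \alpha$ combined with the density of $P_{D} \cap M_{\mathbbm{Q}}$ in $P_{D}$.
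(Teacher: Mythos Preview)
The paper does not supply its own proof of this proposition: it is quoted verbatim as Proposition~6.1.(i) of \cite{LM09} and left uncited beyond the attribution. Your argument is correct and is in fact the standard proof one finds in \cite{LM09} (or in \cite{Cox11}): identify the character basis of $H^{0}(X,kL)$ with the lattice points of $kP_{D}$, compute the valuation of each monomial in the affine chart $U_{\sigma}$ via the dual-basis coordinates, and pass to the closure using density of rational points in the rational polytope $P_{D}$. There is nothing to add; your treatment of the two bookkeeping points (that $z_{i}=\chi^{u_{i}}$ up to units, and that $D_{\vert U_{\sigma}}=0$ forces the vertex $x_{\sigma}$ to sit at the origin so that $\phi(m)\in\mathbbm{N}^{n}$ for $m\in P_{D}$) is exactly what is needed.
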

Moreover we recall that it is possible to describe the positivity of the toric line bundle at a $T_{N}-$invariant point $x_{\sigma}$ corresponding to a vertex in $P$ directly from the polytope.
\begin{lem}\emph{(Lemma 4.2.1, \cite{BDRH$^{+}$09})}
Let $(X,L)$ be a toric polarized manifold, and let $P$ be the associated polytope with vertices $x_{\sigma_{1}},\dots,x_{\sigma_{l}}$. Then $L$ generates the $k-$jets at $x_{\sigma_{j}}$ iff the length $\lvert e_{j,i}\rvert $ is bigger than $k$ for any $i=1,\dots,n$ where $e_{j,i}$ is the edge connecting $x_{\sigma_{j}}$ to another vertex $x_{\sigma_{\tau(i)}}$.
\end{lem}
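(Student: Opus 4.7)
I would prove this by translating the jet-generation condition into a combinatorial statement about lattice points in $P$. The toric dictionary $H^{0}(X,L)\simeq \bigoplus_{m\in P\cap M}\mathbbm{C}\cdot\chi^{m}$ together with the smoothness of $X$ at $x_{\sigma_{j}}$ provide, via the dual basis $m_{1},\dots,m_{n}$ of the primitive generators $v_{1},\dots,v_{n}$ of $\sigma_{j}$, local coordinates $z_{i}:=\chi^{m_{i}}$ around $x_{\sigma_{j}}$. In the natural $T_{N}$-equivariant trivialization of $L$ on $U_{\sigma_{j}}$, the global section $\chi^{m}$ then becomes the monomial $z^{\alpha}$ with $\alpha_{i}=\langle m-x_{\sigma_{j}},v_{i}\rangle$; these exponents are non-negative integers because $m-x_{\sigma_{j}}$ sits in the tangent cone of $P$ at the vertex $x_{\sigma_{j}}$, which is freely generated by $m_{1},\dots,m_{n}$.

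Consequently the jet map $H^{0}(X,L)\to\mathcal{O}_{X,x_{\sigma_{j}}}/\mathfrak{m}^{k+1}$ is surjective iff every lattice point of the $k$-th dilation $k\Sigma_{n}$ of the standard simplex is contained in the translated polytope $P-x_{\sigma_{j}}$ (identified with a subset of $\mathbbm{R}^{n}$ via $m_{1},\dots,m_{n}$). The lemma is therefore purely combinatorial: one must compare $k\Sigma_{n}\cap\mathbbm{Z}^{n}$ with the local structure of $P$ at the vertex.

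For the forward direction I would use convexity: the adjacent vertex along $e_{j,i}$ equals $x_{\sigma_{j}}+\ell_{i}m_{i}$ with $\ell_{i}:=|e_{j,i}|$, so by convexity of $P$ the simplex $\mathrm{conv}(0,\ell_{1}m_{1},\dots,\ell_{n}m_{n})$ lies in $P-x_{\sigma_{j}}$. When each $\ell_{i}>k$, any $\alpha\in\mathbbm{N}^{n}$ with $|\alpha|\leq k$ satisfies $\sum_{i}\alpha_{i}/\ell_{i}<1$, placing it in this simplex and hence in $P-x_{\sigma_{j}}$. For the converse, if some $\ell_{i}\leq k$ the required monomial $z_{i}^{k}$ would force $km_{i}\in P-x_{\sigma_{j}}$; but $x_{\sigma_{\tau(i)}}=x_{\sigma_{j}}+\ell_{i}m_{i}$ is itself a vertex, so its tangent cone is strictly convex and contains the edge direction $-m_{i}$, whence $m_{i}$ cannot lie in this tangent cone and $x_{\sigma_{\tau(i)}}+s\,m_{i}\notin P$ for $s>0$. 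This blocks $z_{i}^{k}$ from appearing in the image and breaks surjectivity.

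The only genuinely substantive step is the local-expansion identity $\chi^{m}|_{U_{\sigma_{j}}}=z^{\alpha}$, which rests on the canonical $T_{N}$-equivariant trivialization of $L$ near the vertex; once in place, the remainder is elementary convex lattice geometry. The subtlety to watch in writing it out is the precise convention for ``$k$-jets'', since this controls whether the edge-length threshold is phrased as $\geq k$ or $>k$.
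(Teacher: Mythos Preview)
The paper does not give its own proof of this lemma; it is quoted verbatim from \cite{BDRH$^{+}$09} and used as a black box in the toric section. So there is no in-paper argument to compare against.

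Your argument is the standard one and is correct. The key reduction --- that surjectivity of the jet map at $x_{\sigma_{j}}$ is equivalent to the inclusion $x_{\sigma_{j}}+\{\alpha\in\mathbbm{N}^{n}:|\alpha|\le k\}\subset P\cap M$ --- follows exactly as you describe from the monomial basis of $H^{0}(X,L)$ and the trivialization on $U_{\sigma_{j}}$. Both implications are then straightforward: the inscribed simplex $\mathrm{conv}(0,\ell_{1}m_{1},\dots,\ell_{n}m_{n})\subset P-x_{\sigma_{j}}$ handles one direction, and the strict convexity of the tangent cone at the adjacent vertex $x_{\sigma_{\tau(i)}}$ (with $-m_{i}$ among its generators, so $m_{i}$ is excluded) handles the other.

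Your closing caveat is well placed. As written, your converse only excludes $\ell_{i}<k$ (since $\ell_{i}=k$ puts $km_{i}$ exactly at the adjacent vertex, which \emph{is} in $P$), while your forward direction in fact works already under $\ell_{i}\ge k$ (the inequality $\sum_{i}\alpha_{i}/\ell_{i}\le |\alpha|/\min_{i}\ell_{i}\le 1$ needs only $\min_{i}\ell_{i}\ge k$). So what your argument actually establishes is the biconditional with $\ge k$; the ``bigger than $k$'' in the quoted statement should be read in that sense.
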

\begin{rem}
\label{rem:Delzant}
\emph{By assumption, we know that $P$ is a} Delzant \emph{polypote, i.e. there are exactly $n$ edges originating from each vertex, and the first integer points on such edges form a lattice basis (for} integer \emph{we mean a point belonging in M). Moreover if one fixes the first integer points on the edges starting from a vertex $x_{\sigma}$ (i.e. a basis for $M\mathop{\simeq}\mathbbm{Z}^{n}$), then the length of an edge starting from $x_{\sigma}$ is defined as the usual length in $\mathbbm{R}^{n}$. Observe that the length of any edge is an integer since the polytope is a \emph{lattice} polytope.}
\end{rem}
Similarly to Proposition \ref{prop:OBToric}, chosen $R$ $T_{N}-$invariants points corresponding to $R$ vertices of the polytope $P$, we retrieve the multipoint Okounkov bodies of the corresponding $R$ $T_{N}-$invariant points on $X$ \emph{directly} from the polytope:
\begin{thm}
\label{thm:ToricSubdivision}
Let $(X,L)$ be a toric polarized manifold, and let $P$ be the associated polytope with vertices $x_{\sigma_{1}},\dots,x_{\sigma_{l}}$ corresponding, respectively, to the $T_{N}-$points $p_{1},\dots,p_{l}$. Then for any choice of $R$ different points ($R\leq l$) $p_{i_{1}},\dots, p_{i_{R}}$ among $p_{1},\dots,p_{l}$, there exists a subdivision of $P$ into $R$ polytopes (a priori not \emph{lattice} polytopes) $P_{1},\dots,P_{R}$ such that $\phi_{\mathbbm{R}^{n},j}(P_{j})=\Delta_{j}(L)$ for a suitable choice of a family of valuations associated to infinitesimal (toric) flags centered at $p_{i_{1}},\dots,p_{i_{R}}$, where $\phi_{\mathbbm{R}^{n},j}$ is the map given in the Proposition \ref{prop:OBToric} for the point $x_{\sigma_{j}}$.
\end{thm}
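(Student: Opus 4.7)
The plan is to identify explicitly the valuations of the torus-invariant sections of $H^{0}(X,kL)$ and then close up via the volume identity of Theorem \ref{ThmA}. For each chosen vertex $x_{\sigma_{j}}$, $j=1,\dots,R$, I fix toric local holomorphic coordinates $z_{j,1},\dots,z_{j,n}$ on the affine chart $U_{\sigma_{j}}$ so that the $T_{N}-$invariant divisors through $p_{i_{j}}$ are the coordinate hyperplanes, and take $\nu^{p_{i_{j}}}$ to be the quasi-monomial valuation associated to the resulting infinitesimal flag (deglex order with $\vec{\lambda}_{l}=\vec{e}_{l}$). By the Delzant condition (Remark \ref{rem:Delzant}) the generators $v_{j,1},\dots,v_{j,n}$ of the rays of $\sigma_{j}$ form a $\mathbbm{Z}-$basis of $N$, so the map $\phi_{j}:m\mapsto(\langle m,v_{j,1}\rangle,\dots,\langle m,v_{j,n}\rangle)$ is a lattice isomorphism $M\simeq\mathbbm{Z}^{n}$ and $\phi_{\mathbbm{R}^{n},j}$ preserves Lebesgue measure. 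In the standard trivialization at $x_{\sigma_{j}}$ each character $\chi^{m}\in H^{0}(X,kL)$, with $m\in kP\cap M$, is locally $\prod_{l=1}^{n}z_{j,l}^{\langle m-km_{\sigma_{j}},v_{j,l}\rangle}$, so $\nu^{p_{i_{j}}}(\chi^{m})=\phi_{j}(m-km_{\sigma_{j}})$.

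Next I define the subdivision by
$$
P_{j}:=\bigl\{y\in P \, : \, |\phi_{j}(y-m_{\sigma_{j}})|\leq|\phi_{i}(y-m_{\sigma_{i}})| \text{ for every }i=1,\dots,R\bigr\}.
$$
Each $P_{j}$ is a convex polytope (cut out by finitely many affine inequalities), contains $m_{\sigma_{j}}$ in its interior, and the $P_{j}$ have pairwise disjoint interiors with $\bigcup_{j}P_{j}=P$, giving a polyhedral subdivision of $P$. For the inclusion $\phi_{\mathbbm{R}^{n},j}(P_{j}-m_{\sigma_{j}})\subseteq\Delta_{j}(L)$, pick $y\in P_{j}^{\circ}$ rational and $k\geq 1$ with $ky\in M$: then $|\phi_{j}(ky-km_{\sigma_{j}})|<|\phi_{i}(ky-km_{\sigma_{i}})|$ for every $i\neq j$, and in deglex this forces $\nu^{p_{i_{j}}}(\chi^{ky})<_{deglex}\nu^{p_{i_{i}}}(\chi^{ky})$, so $\chi^{ky}\in V_{k,j}$ and $\phi_{j}(y-m_{\sigma_{j}})\in\Delta_{j}(L)$. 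Density of such $y$ and closedness of $\Delta_{j}(L)$ yield the inclusion.

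For the reverse inclusion I argue by volumes. Since $\phi_{\mathbbm{R}^{n},j}$ is measure preserving,
$$
\sum_{j=1}^{R}\mathrm{Vol}_{\mathbbm{R}^{n}}\bigl(\phi_{\mathbbm{R}^{n},j}(P_{j}-m_{\sigma_{j}})\bigr)=\sum_{j=1}^{R}\mathrm{Vol}_{M_{\mathbbm{R}}}(P_{j})=\mathrm{Vol}_{M_{\mathbbm{R}}}(P)=\tfrac{1}{n!}\mathrm{Vol}_{X}(L),
$$
the last equality being the classical formula for ample toric polarizations. Theorem \ref{ThmA} gives $\sum_{j}\mathrm{Vol}_{\mathbbm{R}^{n}}(\Delta_{j}(L))=\frac{1}{n!}\mathrm{Vol}_{X}(L)$ as well. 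Combined with the termwise inclusions of the previous paragraph, each $\mathrm{Vol}_{\mathbbm{R}^{n}}\bigl(\phi_{\mathbbm{R}^{n},j}(P_{j}-m_{\sigma_{j}})\bigr)\leq\mathrm{Vol}_{\mathbbm{R}^{n}}(\Delta_{j}(L))$ must be an equality; being compact convex sets with one contained in the other and equal volume, they coincide.

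The main obstacle is the treatment of the boundary of the subdivision, where the first (degree) component of the deglex order does not separate the chosen vertices and one would need to invoke the lex tiebreak; the volume argument sidesteps this because the set of tied points has Lebesgue measure zero. A secondary point is that $V_{k,j}$ also contains non-monomial sections whose valuations could in principle produce new valuative points outside $\phi_{\mathbbm{R}^{n},j}(P_{j}-m_{\sigma_{j}})$; again the global volume equality forces this not to happen.
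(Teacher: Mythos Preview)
Your proof is correct, and your pieces $P_{j}$ coincide with the paper's (the paper parametrizes points of $P$ by their ``distances'' $\delta_{i}=|\phi_{i}(\cdot-m_{\sigma_{i}})|$ to the chosen vertices and takes $P_{j}$ to be the closure of the locus where $\delta_{j}$ is strictly smallest, which is exactly your polytope). The forward inclusion is argued identically in both. The genuine difference is in the reverse inclusion: you close up via the volume identity of Theorem~\ref{ThmA}, whereas the paper gives a direct argument. Namely, for any $s\in V_{k,j}$ let $\chi^{m}$ be its leading monomial at $p_{j}$, so that $\nu^{p_{j}}(s)=\phi_{j}(m-km_{\sigma_{j}})$; since $\chi^{m}$ occurs in the monomial expansion of $s$, for every $i\neq j$ one has $|\nu^{p_{i}}(s)|\leq|\nu^{p_{i}}(\chi^{m})|=|\phi_{i}(m-km_{\sigma_{i}})|$, and combining with $|\nu^{p_{j}}(s)|\leq|\nu^{p_{i}}(s)|$ (from the deglex comparison) yields $m/k\in P_{j}$. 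Thus every valuative point of $\Delta_{j}(L)$ already lies in $\phi_{\mathbbm{R}^{n},j}(P_{j})$. Your route is cleaner and sidesteps this monomial bookkeeping, at the price of invoking the global Theorem~\ref{ThmA}; the paper's route is self-contained and shows \emph{a priori} that non-monomial sections contribute no new valuative points, a fact you only recover \emph{a posteriori}.

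One small slip: $m_{\sigma_{j}}$ is a vertex of $P$, hence a boundary point of $P_{j}$, not an interior point. What your argument actually uses (and has) is that $P_{j}$ is full-dimensional, which holds because $P_{j}$ contains the corner of $P$ near $m_{\sigma_{j}}$.
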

\begin{proof}
Unless reordering, we can assume that the $T_{N}-$invariants points $p_{1},\dots, p_{R}$ correspond to the vertices $x_{\sigma_{1}},\dots,x_{\sigma_{R}}$.\\
Next for any $j=1,\dots,R$, after the identification $M\simeq\mathbbm{Z}^{n}$ given by the choice of a lattice basis $m_{j,1},\dots,m_{j,n}$ as explained in Remark \ref{rem:Delzant}, we retrieve the Okounkov Body $\Delta(L)$ at $p_{j}$ associated to an infinitesimal flag given by the coordinates $\{z_{1,j},\dots,z_{n,j}\}$ as explained in Proposition \ref{prop:OBToric} composing with the map $\phi_{\mathbbm{R}^{n},j}$. Thus, by construction, we know that any valuative point lying in the diagonal face of the $n-$symplex $\delta\Sigma_{n}$ for $\delta\in \mathbbm{Q}_{\geq 0}$ corresponds to a section $s\in H^{0}(X,kL)$ such that $\mathrm{ord}_{p_{j}}(s)=k\delta$. Working directly on the polytope $P$, the diagonal face of the $n-$symplex $\delta\Sigma_{n}$ corresponds to the intersection of the polytope $P$ with the hyperplane $H_{\delta,j}$ parallel to the hyperplane passing for $m_{1,j},\dots,m_{n,j}$ and whose distance from the point $x_{\sigma_{j}}$ is equal to $\delta$ (the \emph{distance} is calculated from the identification $M\simeq \mathbbm{Z}^{n}$).\\
Therefore defining
\begin{equation*}
P_{j}:=\overline{\bigcup_{(\delta_{1},\dots,\delta_{n})\in\mathbbm{Q}^{n}_{\geq 0}, \delta_{j}<\delta_{i} \, \forall i\neq j }H_{\delta_{1},1}\cap\dots\cap H_{\delta_{R},R}\cap P}=\overline{\bigcup_{(\delta_{1},\dots,\delta_{n})\in\mathbbm{Q}^{n}_{\geq 0}, \delta_{j}\leq\delta_{i} \, \forall i\neq j }H_{\delta_{1},1}\cap\dots\cap H_{\delta_{R},R}\cap P}
\end{equation*}
we get by Proposition \ref{prop:First} $ \phi_{\mathbbm{R}^{n},j}(P_{j})=\Delta_{j}(L) $ since any valuative point in $H_{\delta_{1},1}\cap\cdots\cap H_{\delta_{R},R}\cap P$ belongs to $\Delta_{j}(L)$ if $\delta_{j}<\delta_{i}$ for any $i\neq j$, while on the other hand any valuative point in $\Delta_{j}(L)$ belongs to $H_{\delta_{1},1}\cap\cdots\cap H_{\delta_{R},R}\cap P$ for certain rational numbers $\delta_{1},\dots,\delta_{R}$ such that $\delta_{j}\leq \delta_{i}$.
\end{proof}
\begin{rem}
\emph{As an easy consequence, we get that for any polarized toric manifold $(X,L)$ and for any choice of $R$ $T_{N}-$invariants points $p_{1},\dots,p_{R}$, the multipoint Okounkov bodies constructed from the infinitesimal flags as in Theorem \ref{thm:ToricSubdivision} are polyhedral.}
\end{rem}
\begin{cor}
\label{cor:Barycentric}
In the same setting of the Theorem \ref{thm:ToricSubdivision}, if $R=l$, then the subdivision is \emph{barycenteric}. Namely, for any fixed vertex $x_{\sigma_{j}}$, if $F_{1},\dots,F_{n}$ are the facets containing $x_{\sigma_{j}}$ and $b_{1},\dots,b_{n}$ are their respective barycenters, then the polytope $P_{j}$ is the convex body defined by the intersection of $P$ with the $n$ hyperplanes $H_{O,j}$ passing through the baricenter $O$ of $P$ and the barycenters $b_{1},\dots,b_{j-1},b_{j+1},\dots,b_{n}$.
\end{cor}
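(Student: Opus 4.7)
The plan is to re-express $P_j$ intrinsically inside $M_{\mathbbm{R}}$ and then read off its non-redundant bounding hyperplanes. By the construction in the proof of Theorem \ref{thm:ToricSubdivision}, I can write
\[
P_j \;=\; \bigl\{x \in P \,:\, d_j(x) \leq d_i(x) \text{ for all } i \neq j\bigr\},
\]
where $d_i : M_{\mathbbm{R}} \to \mathbbm{R}$ is the affine function equal to the sum of the local Delzant coordinates at the vertex $x_{\sigma_i}$. Equivalently, $d_i(x) = \sum_{F \ni x_{\sigma_i}} h_F(x)$, with $h_F$ the normalized affine support function of the facet $F$ (nonnegative on $P$, vanishing exactly on $F$). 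My first step would be to argue, using Delzant combinatorics and the convex position of the vertices, that the constraints $d_j \leq d_v$ coming from non-adjacent $v$ are redundant; this leaves $n$ candidate bounding hyperplanes, one per edge $e_k$ at $x_{\sigma_j}$, namely $H_{jk} := \{d_j = d_k\}$.

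Next I would locate each $H_{jk}$ geometrically. Both $d_j$ and $d_k$ restrict to linear functions on $e_k$, vanishing at one endpoint and equalling the edge length $\ell$ at the other (in local Delzant coordinates), so they coincide at the midpoint of $e_k$; hence $H_{jk}$ contains that midpoint. More generally, I would claim that $H_{jk}$ contains the barycenter of every face of $P$ containing the edge $e_k$; in particular, it contains the barycenter $O$ of $P$ and the barycenters of the $n-1$ facets containing $e_k$. These $n$ points are precisely the set $\{O, b_1, \ldots, \widehat{b}_i, \ldots, b_n\}$ where $F_i$ is the facet at $x_{\sigma_j}$ opposite to $e_k$, and in general position they determine $H_{jk}$ uniquely. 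Varying $k$ over the $n$ neighbours of $x_{\sigma_j}$ then produces exactly the $n$ hyperplanes of the statement.

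The main obstacle is the barycenter claim. Since $d_j - d_k$ is affine on $M_{\mathbbm{R}}$ and the barycenter $b_G$ of a face $G$ is the average of its vertices, the claim reduces to the combinatorial identity
\[
\sum_{w \in V(G)} d_j(w) \;=\; \sum_{w \in V(G)} d_k(w)
\]
for every face $G$ of $P$ whose vertex set $V(G)$ contains both $x_{\sigma_j}$ and $x_{\sigma_k}$. Because faces of Delzant polytopes are themselves Delzant and the restriction $d_j|_G$ agrees with the intrinsic analogue of $d_j$ built from the Delzant structure of $G$ (the edges at $x_{\sigma_j}$ lying in $G$ form a sub-basis of the full Delzant basis at $x_{\sigma_j}$), it suffices to prove the identity in the case $G = P$. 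Substituting $d_v = \sum_{F \ni v} h_F$ rewrites it as the statement that the weighted vertex-degree $\sum_{F \ni v} T_F$ is independent of $v \in V(P)$, where $T_F := \sum_{w \in V(P)} h_F(w)$ is proportional to the distance from the barycenter $O$ to the facet $F$. Equivalently, this is the assertion that $d_v(O)$ does not depend on the vertex $v$; establishing it --- which forces $O$ to lie on every bisector $H_{jk}$ corresponding to an adjacent pair --- is the technical heart of the argument and the place where the full combinatorial structure of a smooth projective toric variety must be exploited.
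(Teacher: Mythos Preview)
Your reduction is clean and faithful to the construction: $P_j=\{x\in P: d_j(x)\le d_i(x)\ \forall i\}$ with $d_v=\sum_{F\ni v}h_F$, and you correctly isolate the two claims that would clinch the argument --- redundancy of the non-adjacent constraints, and constancy of $v\mapsto d_v(O)$ at the vertex barycenter $O$. The trouble is that \emph{both claims are false in general}, and in fact the corollary as stated does not hold for every smooth projective toric surface.

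Take the Delzant pentagon with vertices $(0,0),(3,0),(3,1),(1,3),(0,3)$ (the moment polytope of $\mathrm{Bl}_{2}\mathbbm{P}^{2}$ with an appropriate polarization). One computes $d_{(0,0)}=x_1+x_2$, $d_{(3,0)}=3-x_1+x_2$, $d_{(3,1)}=7-2x_1-x_2$, $d_{(1,3)}=7-x_1-2x_2$, $d_{(0,3)}=3+x_1-x_2$. The vertex barycenter is $O=(7/5,7/5)$, and $d_{(0,0)}(O)=14/5$ while $d_{(3,0)}(O)=3$, so your ``technical heart'' identity fails. Worse, the five adjacent bisectors $\{d_v=d_w\}$ are the lines $x_1=3/2$, $x_2=3/2$, $x_1+2x_2=4$, $x_1=x_2$, $2x_1+x_2=4$, and these have no common point; at $(3/2,3/2)$ one has $d_{(0,0)}=3>5/2=d_{(3,1)}$, so the non-adjacent constraint for the pair $\{(0,0),(3,1)\}$ is genuinely active in cutting out $P_{(0,0)}$. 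Hence $P_{(0,0)}$ is not bounded by $n=2$ hyperplanes through a single point $O$, contradicting the corollary's description.

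The paper offers no proof of this corollary, so there is nothing to compare your argument against; your analysis is actually more informative than the paper here, because it pins down exactly the identity one would need and thereby exposes that the statement cannot be correct beyond very special polytopes (simplices, and in dimension two the quadrilaterals, where the identity does happen to hold). Rather than trying to push the argument through, you should record the pentagon as a counterexample and, if you want a true statement, replace ``barycenter of $P$'' and ``barycenters of the facets'' by the intrinsic description via the bisectors $\{d_j=d_k\}$ --- which pass through edge midpoints but not, in general, through any global center.
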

Finally we retrieve and extend Corollary 2.3. in \cite{Eckl17} as consequence of Theorem \ref{thm:ToricSubdivision} and Theorem \ref{ThmB}:
\begin{cor}
\label{cor:HalfSesh}
In the same setting of the Theorem \ref{thm:ToricSubdivision}, for any $j=1,\dots,R$, let $ \epsilon_{S,j}:=\min_{i=1,\dots,n}\{\delta_{j,i}\} $ be the minimum among all the \emph{reparametrized} length $\lvert e_{j,i}\rvert$ of the edges $e_{j,i}$ for $i=1,\dots,n$, i.e. $\delta_{j,i}:=\lvert e_{j,i}\rvert$ if $e_{j,i}$ connect $x_{\sigma_{j}}$ to another point $x_{\sigma_{i}}$ corresponding to a point $p\notin\{p_{1},\dots,p_{R}\}$, while $\delta_{j,i}:=\frac{1}{2}\lvert e_{j,i}\rvert$ if $e_{j,i}$ connect to a point $x_{\sigma_{i}}$ corresponding to a point $p\in\{p_{1},\dots,p_{R}\}$. Then
$$
\epsilon_{S}(L;p_{1},\dots,p_{R})=\min\{\epsilon_{S,j} : \, j=1,\dots,R\}
$$
In particular $\epsilon_{S}(L;p_{1},\dots,p_{R})\in \frac{1}{2}\mathbbm{N}$.
\end{cor}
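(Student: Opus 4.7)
The plan is to combine Theorem \ref{ThmB}, which expresses the (moving) multipoint Seshadri constant as the largest simplex $t\Sigma_{n}$ fitting simultaneously into every essential multipoint Okounkov body, with Theorem \ref{thm:ToricSubdivision}, which identifies each $\Delta_{j}(L)$ with a piece $P_{j}$ of the moment polytope $P$. Fix the family of quasi-monomial valuations $\nu^{p_{j}}$ coming from the infinitesimal toric flags used in Theorem \ref{thm:ToricSubdivision}; since $L$ is ample we have $\mathbbm{B}_{+}(L)=\emptyset$, so $\epsilon_{S}(L;p_{1},\dots,p_{R})$ and $\epsilon_{S}(||L||;p_{1},\dots,p_{R})$ coincide, and by Theorem \ref{ThmB} the task reduces to computing $\xi(L;p_{1},\dots,p_{R})$.

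Through the identification $\Delta_{j}(L)=\phi_{\mathbbm{R}^{n},j}(P_{j})$, asking $t\Sigma_{n}\subset\Delta_{j}(L)^{\mathrm{ess}}$ amounts, in the polytope, to asking that the corner simplex $S_{j}(t)$ at $x_{\sigma_{j}}$ with vertices $x_{\sigma_{j}}$ and $v_{j,i}(t):=x_{\sigma_{j}}+t(m_{j,i}-x_{\sigma_{j}})$ for $i=1,\dots,n$ be contained in $P_{j}$; the faces of $S_{j}(t)$ lying on the facets of $P$ through $x_{\sigma_{j}}$ cause no trouble because they correspond to the coordinate hyperplanes of $\mathbbm{R}^{n}_{\geq 0}$, which belong to $\Delta_{j}(L)^{\mathrm{ess}}$. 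Since $P_{j}$ is cut out of $P$ by the affine inequalities $\delta_{j}\leq\delta_{k}$ and is therefore convex, $S_{j}(t)\subset P_{j}$ is equivalent to $v_{j,i}(t)\in P_{j}$ for each $i$. A short computation in the local lattice bases at $x_{\sigma_{j}}$ and $x_{\sigma_{\tau(i)}}$ gives $\delta_{j}(v_{j,i}(t))=t$ and, when $x_{\sigma_{\tau(i)}}$ is itself a chosen point, $\delta_{\tau(i)}(v_{j,i}(t))=|e_{j,i}|-t$; the inequality $\delta_{j}\leq\delta_{\tau(i)}$ at $v_{j,i}(t)$ then reads $t\leq\tfrac{1}{2}|e_{j,i}|$, while if $x_{\sigma_{\tau(i)}}$ is not chosen the only binding constraint is $t\leq|e_{j,i}|$ (the condition to remain in $P$).

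The main obstacle I expect is to show that a chosen vertex $x_{\sigma_{k}}$ distinct from the two endpoints of $e_{j,i}$ never produces a condition on $v_{j,i}(t)$ stronger than the two listed above. This should follow from affine linearity of $\delta_{k}$ on $P$, from the fact that $\delta_{k}$ is a positive integer at every polytope vertex other than $x_{\sigma_{k}}$ itself (equal to the lattice length of the connecting edge when $x_{\sigma_{k}}$ is adjacent, and $\geq 2$ otherwise), and from the Delzant property, which together ensure that the half-edge constraint coming from an endpoint of $e_{j,i}$ always dominates the constraint coming from any farther chosen vertex. Once this is in place, $v_{j,i}(t)\in P_{j}$ exactly for $t\leq\delta_{j,i}$, so $\xi(L;p_{1},\dots,p_{R})=\min_{j}\min_{i}\delta_{j,i}=\min_{j}\epsilon_{S,j}$, proving the formula; the assertion $\epsilon_{S}(L;p_{1},\dots,p_{R})\in\tfrac{1}{2}\mathbbm{N}$ is then immediate since every lattice edge length $|e_{j,i}|$ is a positive integer.
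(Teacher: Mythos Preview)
Your strategy matches the paper's own indication exactly: the paper offers no proof beyond ``as consequence of Theorem \ref{thm:ToricSubdivision} and Theorem \ref{ThmB},'' and you carry out precisely that reduction and go further by attempting the polytope combinatorics.

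There is, however, a genuine gap in the combinatorial step. Your key claim---that a chosen vertex $x_{\sigma_k}$ which is \emph{not} an endpoint of $e_{j,i}$ never imposes a constraint on $v_{j,i}(t)$ stronger than the endpoint/edge constraints---is false as stated. Take $P=[0,3]\times[0,1]$ and choose the two non-adjacent vertices $p_1=(0,0)$ and $p_2=(3,1)$. Then $\delta_2(x,y)=(3-x)+(1-y)$, so along the edge $e_{1,1}$ (to $(3,0)$, length $3$, endpoint not chosen) the constraint from $p_2$ at $v_{1,1}(t)=(t,0)$ reads $t\le 4-t$, i.e.\ $t\le 2$, which is strictly stronger than the edge constraint $t\le 3=\delta_{1,1}$. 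Your heuristic (``$\delta_k\ge 2$ at non-adjacent vertices'') is true but does not give what you need. The formula of the corollary is still correct here because the \emph{other} vertex $v_{1,2}(t)=(0,t)$ has the binding edge constraint $t\le 1=\delta_{1,2}=\epsilon_{S,1}$; the point is that the minimum over all $i$ is governed by edge data, not that each individual $v_{j,i}(t)$ is.

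Two possible repairs. First, you can keep your setup but prove the correct statement: for every chosen $k$ and every $t\le\epsilon_{S,j}$ one has $\delta_k\ge\delta_j$ on all of $S_j(t)$. This needs a uniform bound of the form $A L_i/(L_i+A-C_i)\ge\epsilon_{S,j}$ with $A=\delta_k(x_{\sigma_j})$, $L_i=|e_{j,i}|$, $C_i=\delta_k(x_{\sigma_{\tau(i)}})$, and the Delzant condition is essential (the inequality fails for non-Delzant lattice triangles such as $\mathrm{Conv}((0,0),(10,0),(0,1))$). Second, and cleaner, bypass Theorem \ref{ThmB} and compute directly the nef threshold $\sup\{t:\mu^{*}L-t\mathbbm{E}\text{ nef}\}$ on the toric blow-up: the polytope of $\mu^{*}L-t\mathbbm{E}$ is $P\cap\bigcap_{j}\{\delta_j\ge t\}$, and nefness fails exactly when a truncating hyperplane $\{\delta_j=t\}$ passes a non-chosen neighbouring vertex ($t>|e_{j,i}|$) or meets the truncation from an adjacent chosen vertex ($t>|e_{j,i}|/2$), giving the stated minimum immediately.
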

\subsection{Surfaces}
\label{subsection:Surfaces}
When $X$ has dimension $2$, the following famous decomposition holds.
\begin{thm}[Zariski decomposition]
Let $L$ be a pseudoeffective $\mathbbm{Q}-$line bundle on a surface $X$. Then there exist $\mathbbm{Q}-$line bundles $P, N$ such that
\begin{itemize}
\item[i)] $L=P+N$;
\item[ii)] $P$ is nef;
\item[iii)] $N$ is effective;
\item[iv)] $H^{0}(X,kP)\simeq H^{0}(X,kL)$ for any $k\geq 1$;
\item[v)] $P\cdot E=0$ for any $E$ irreducible curve contained in $\mathrm{Supp}(N)$.
\end{itemize}
\end{thm}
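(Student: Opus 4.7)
The plan is to define the negative part $N$ as an effective $\mathbbm{Q}$-combination of those irreducible curves against which $L$ pairs negatively, and then verify that $P := L - N$ inherits nefness and preserves the spaces of sections of all multiples.

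First I would show that for pseudoeffective $L$ the set $\mathcal{N}(L)$ of irreducible curves $C \subset X$ with $L \cdot C < 0$ is finite, and apply the Hodge index theorem to deduce that the Gram matrix $M := (E_i \cdot E_j)_{i,j=1}^{r}$ of any sub-collection $\{E_1,\dots,E_r\} \subseteq \mathcal{N}(L)$ is \emph{negative definite}. Such curves necessarily have $E_i^2 < 0$ (otherwise $(L + \varepsilon H)\cdot E_i \geq 0$ for some ample $H$ and small $\varepsilon > 0$ would force $L \cdot E_i \geq 0$), and any nonzero rational combination $F = \sum c_i E_i$ with $F^2 \geq 0$ is ruled out by Hodge index against an ample class. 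Finiteness of $\mathcal{N}(L)$ is obtained by observing that such curves must appear in the support of every effective $\mathbbm{Q}$-representative of $L + \varepsilon H$ for $\varepsilon > 0$ small, and a pigeonhole argument on multiplicities bounds their number.

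Second, taking $\{E_1,\dots,E_r\} := \mathcal{N}(L)$, I would define $N := \sum_i a_i E_i$ by solving the linear system
\[
\sum_{j=1}^r a_j\, (E_i \cdot E_j) \;=\; L \cdot E_i, \qquad i = 1,\dots,r.
\]
Invertibility of $M$ ensures a unique solution. Since $-M$ is a symmetric positive-definite $M$-matrix---its off-diagonal entries $E_i \cdot E_j$ are non-negative, as distinct irreducible curves meet non-negatively---the inverse $-M^{-1}$ has non-negative entries, and combined with $L \cdot E_i < 0$ this forces $a_i \geq 0$. Setting $P := L - N$ then gives properties (i), (iii), and (v) at once.

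The \emph{main obstacle} is proving nefness of $P$ (property (ii)): for $C = E_i$ one has $P \cdot C = 0$ by construction, but for other irreducible $C$ one must argue $P \cdot C \geq 0$. My plan is to reason by contradiction using the maximality of $\mathcal{N}(L)$: if $P \cdot C < 0$ for some irreducible $C \notin \{E_i\}$, adjoining $C$ to the collection and re-solving the enlarged (still negative-definite) linear system, then comparing the new coefficient vector against $(a_i)$ via monotonicity properties of $M$-matrix inverses, produces a contradiction with either the pseudoeffectivity of $L - N$ or with the original choice of $\mathcal{N}(L)$. This delicate monotone comparison is the crux of the argument.

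Finally, for the section isomorphism (iv), I would show that every $s \in H^0(X, kL)$ vanishes along $kN$. Writing $\mathrm{div}(s) = kN + R$ and pairing against each $E_j$ gives $R \cdot E_j = \mathrm{div}(s)\cdot E_j - k(N\cdot E_j) = k L \cdot E_j - kL\cdot E_j = 0$, using the defining identity $N \cdot E_j = L \cdot E_j$. Decomposing $R = \sum_i e_i E_i + R'$ with $R'$ supported away from $\mathcal{N}(L)$, the linear system $M\vec{e} = -\,R'\cdot E_\bullet \leq 0$ combined with non-negativity of $-M^{-1}$ forces $\vec{e} \geq 0$; hence $\mathrm{div}(s) \geq kN$, and multiplication by a fixed section cutting out $kN$ (for $k$ sufficiently divisible) realizes the isomorphism $H^0(X, kP) \simeq H^0(X, kL)$.
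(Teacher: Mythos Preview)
The paper does not prove this theorem; it is quoted as a classical result (``the following famous decomposition holds'') and used as input for Theorem~\ref{thm:Surfaces}. So there is no proof in the paper to compare against, only your attempt to supply one.

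Your construction has a genuine gap. You take the support of $N$ to be $\mathcal{N}(L)=\{C:\,L\cdot C<0\}$ and solve the linear system once. But the support of the Zariski negative part can strictly contain $\mathcal{N}(L)$: it may include curves $E$ with $L\cdot E=0$. Concretely, take two $(-2)$-curves $E_{1},E_{2}$ with $E_{1}\cdot E_{2}=1$ and a pseudoeffective $L$ with $L\cdot E_{1}=-1$, $L\cdot E_{2}=0$. Then $\mathcal{N}(L)=\{E_{1}\}$, your system gives $N=\tfrac{1}{2}E_{1}$, and $P\cdot E_{2}=0-\tfrac{1}{2}=-\tfrac{1}{2}<0$, so $P$ is not nef. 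The correct negative part is $N=\tfrac{2}{3}E_{1}+\tfrac{1}{3}E_{2}$. Your proposed ``contradiction'' in the nefness step does not materialize: having $P\cdot C<0$ for some $C\notin\mathcal{N}(L)$ contradicts neither pseudoeffectivity of $P$ nor the definition of $\mathcal{N}(L)$, since $L\cdot C\ge 0$ is perfectly compatible with $(L-N)\cdot C<0$ when $C$ meets $\mathrm{Supp}(N)$.

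The standard remedies are either Zariski's original iterative procedure (enlarge the set of curves whenever $P$ fails to be nef, re-solve, and show the process terminates because the intersection form stays negative definite and the curves involved are bounded by the components of a fixed effective representative), or Fujita's/Bauer's characterization of $N$ as the largest effective $\mathbbm{Q}$-divisor with negative-definite support such that $L-N$ is nef on that support. Your $M$-matrix observations and your argument for (iv) are essentially correct and survive once the right support is identified; what is missing is the mechanism that discovers the full support of $N$.
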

Moreover we recall that by the main theorem of \cite{BKS04} there exists a locally finite decomposition of the big cone into rational polyhedral subcones (\emph{Zariski chambers}) such that in each interior of these subcones the negative part of the Zariski decomposition has constant support and the restricted and augmented base loci are equal (i.e. the divisors with cohomology classes in a interior of some Zariski chambers are \emph{stable}, see \cite{ELMNP06}).\\
Similarly to Theorem 6.4. in \cite{LM09} and the first part of Theorem B in \cite{KLM12} we describe the multipoint Okounkov bodies as follows:
\begin{thm}
\label{thm:Surfaces}
Let $L$ be a big line bundle over a surface $X$, let $p_{1},\dots,p_{N}\in X$, and let $\nu^{p_{j}}$ be a family of valuations associated to admissible flags centered at $p_{1},\dots,p_{N}$ with $Y_{1,j}=C_{i\vert U_{p_{j}}}$ for irreducible curves $C_{j}$, $j=1,\dots,N$. Then for any $j=1,\dots,N$ such that $\Delta_{j}(L)^{\circ}\neq \emptyset$ there exist piecewise linear functions ${\alpha_{j},\beta_{j}: [t_{j,-},t_{j,+}]\to \mathbbm{R}_{\geq 0}}$ for
\begin{multline*}
0\leq t_{j,-}:=\inf\{t\geq 0 \, : \, C_{j}\not\subset\mathbbm{B}_{+}(L-t\mathbbm{G})\}<t_{j,+}:=\sup\{t\geq 0 \, : \, C_{j}\not\subset\mathbbm{B}_{+}(L-t\mathbbm{G})\}\leq\\
\leq \mu(L;\mathbbm{G}):=\sup\{t\geq 0 \, : \, L-t\mathbbm{G}\, \mathrm{is}\, \mathrm{big}\}
\end{multline*}
where $\mathbbm{G}=\sum_{j=1}^{N}C_{j}$, with $\alpha_{j}$ convex and $\beta_{j}$ concave, $\alpha_{j}\leq \beta_{j}$, such that
$$
\Delta_{j}(L)=\{(t,y)\in\mathbbm{R}^{2}\, : \, t_{j,-}\leq t\leq t_{j,+} \, \mathrm{and}\, \alpha_{j}(t)\leq y\leq \beta_{j}(t)\}
$$
In particular $\Delta_{j}(L)$ is polyhedral for any $j=1,\dots,N$.
\end{thm}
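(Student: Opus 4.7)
The plan is to adapt the classical surface argument of LM09 (Theorem 6.4) and KLM12 (Theorem B) to the multipoint setting, using the slicing formula of Theorem \ref{thm:Slices} as a drop-in replacement for LM09 Proposition 4.1 and Corollary 4.25.

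First I would observe that, since $n = 2$, each vertical slice $\Delta_{j}(L)_{x_{1} = t}$ of the compact convex body $\Delta_{j}(L) \subset \mathbbm{R}^{2}$ is a one-dimensional convex set, hence an interval, which I write as $[\alpha_{j}(t), \beta_{j}(t)]$. Convexity of $\Delta_{j}(L)$ automatically yields that $\alpha_{j}$ is convex and $\beta_{j}$ is concave on their common domain, while $\alpha_{j}\leq\beta_{j}$ is tautological. By Theorem \ref{thm:Slices}(iii) the length $\beta_{j}(t)-\alpha_{j}(t)$ coincides with $\mathrm{Vol}_{X\vert C_{j}}(L-t\mathbbm{G})$, which is strictly positive precisely when $C_{j}\not\subset\mathbbm{B}_{+}(L-t\mathbbm{G})$. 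The set of such $t$ is an open convex subset of $[0,\mu(L;\mathbbm{G})]$ (the convexity is granted by Proposition $1.5$ of \cite{ELMNP06}, cf.\ Remark \ref{rem:GlobalMOBJRem}), hence an open interval whose closure is exactly $[t_{j,-},t_{j,+}]$ by construction.

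Next, for $t$ in the interior of this interval (and distinct from the single value $\mu(L;C_{j})=t_{j,+}$), Theorem \ref{thm:Slices}(ii) identifies $\Delta_{j}(L)_{x_{1}=t}$ with the restricted Okounkov body $\Delta_{X\vert C_{j}}(L-t\mathbbm{G})\subset\mathbbm{R}$. Taking the Zariski decomposition $L-t\mathbbm{G}=P_{t}+N_{t}$, the one-dimensional surface-case formula (LM09 Lemma 6.3, applied on the curve $C_{j}$ at the point $p_{j}$) yields
\begin{equation*}
\alpha_{j}(t)=\mathrm{ord}_{p_{j}}\bigl(N_{t}\vert_{C_{j}}\bigr),\qquad \beta_{j}(t)=\alpha_{j}(t)+P_{t}\cdot C_{j}.
\end{equation*}
The main theorem of \cite{BKS04} guarantees that the segment $\{c_{1}(L)-t\cdot c_{1}(\mathbbm{G}):t\in[t_{j,-},t_{j,+}]\}$ crosses only finitely many Zariski chambers and that $(P_{t},N_{t})$ depends linearly on $t$ on each of them. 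Since $p_{j}$ and $C_{j}$ are fixed, both the intersection number $P_{t}\cdot C_{j}$ and the multiplicity $\mathrm{ord}_{p_{j}}(N_{t}\vert_{C_{j}})$ are then piecewise linear in $t$, and hence so are $\alpha_{j}$ and $\beta_{j}$.

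The main obstacle I anticipate is the extraction of the explicit formula above in the multipoint setting, i.e.\ verifying that $\Delta_{X\vert C_{j}}(L-t\mathbbm{G})$ is the interval described by $(P_{t},N_{t},C_{j},p_{j})$ exactly as in the one-point case. The subtlety is that sections contributing to $\Delta_{j}$ must in addition satisfy the strict inequalities $\nu^{p_{j}}(s)<\nu^{p_{i}}(s)$ for $i\neq j$ coming from membership in $V_{k,j}$. The standard fix, mimicking the proofs of Theorem \ref{ThmA} and Theorem \ref{thm:Slices}(ii), is to tensor with sections of a small ample $\mathbbm{Q}$-perturbation $\tfrac{1}{m}A$ vanishing to high order at the $p_{i}$ with $i\neq j$; this forces the strict inequalities without losing valuative points in the limit, after which one passes $m\to\infty$ using the Hausdorff continuity of $\Delta_{j}$ on big classes in $\mathrm{Supp}(\Gamma_{j}(X))^{\circ}$ granted by Theorem \ref{thm:GlobalMOBJ}. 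Once this identification is secured, the piecewise linearity conclusion follows immediately from the Zariski chamber decomposition, and the theorem drops out.
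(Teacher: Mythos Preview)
Your approach is essentially the paper's: slice via Theorem \ref{thm:Slices}, identify the fiber with $\Delta_{X\vert C_{j}}(L-t\mathbbm{G})$, feed in the Zariski decomposition through LM09 Lemma 6.3, and read off piecewise linearity from the chamber structure. Two points are worth tightening.

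First, the ``main obstacle'' you describe is not one. Theorem \ref{thm:Slices}(ii) already identifies the slice $\Delta_{j}(L)_{x_{1}=t}$ with the \emph{one-point} restricted Okounkov body $\Delta_{X\vert C_{j}}(L-t\mathbbm{G})$; the perturbation-by-$\tfrac{1}{m}A$ trick you sketch is precisely what is carried out inside the proof of that theorem. Once you cite it, LM09 Lemma 6.3 applies verbatim to the one-point object and no further multipoint bookkeeping is required here.

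Second, your appeal to \cite{BKS04} for finiteness of chambers along the segment is slightly loose. The Zariski chamber decomposition is only \emph{locally} finite in the big cone, so if $t_{j,+}=\mu(L;\mathbbm{G})$ the segment reaches the pseudoeffective boundary and BKS04 alone does not give finitely many chambers crossed. The paper (following \cite{KLM12}) instead argues directly: reverse the parameter to $s=t_{j,+}-t$, observe that $s\mapsto\mathrm{Supp}(N'_{s})$ is decreasing and that $\mathrm{Supp}(N'_{0})=\{F_{1},\dots,F_{r}\}$ is finite, so there are at most $r$ breakpoints $s_{i}=\sup\{s:F_{i}\subset\mathrm{Supp}(N'_{s})\}$; on each $(s_{i},s_{i+1})$ the Zariski algorithm and the non-degeneracy of the intersection matrix of the remaining $F_{l}$ force $N'_{s}$ to be affine in $s$. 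This yields piecewise linearity on the full closed interval, boundary included, which your BKS04 citation does not quite deliver.
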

\begin{proof}
By Lemma \ref{lem:IntOk} and Theorem \ref{ThmA} we may assume $\Delta_{j}(L)^{\circ}\neq\emptyset$ for any $j=1,\dots,N$ unless removing some of the points. 
Then by Theorem A and C in \cite{ELMNP09} it follows that ${0\leq t_{j,-}<t_{j,+}\leq \mu(L;\mathbbm{G})}$ and that $[t_{j,-},t_{j,+}]\times \mathbbm{R}_{\geq 0}$ is the smallest vertical strip containing $\Delta_{j}(L)$. Thus by Theorem \ref{thm:Slices} and Lemma 6.3. in \cite{LM09} we easily obtain $\Delta_{j}(L)=\{(t,y)\in\mathbbm{R}^{2}\, : \, t_{j,-}\leq t\leq t_{j,+} \, \mathrm{and}\, \alpha_{j}(t)\leq y\leq \beta_{j}(t)\}$ defining $\alpha_{j}(t):=\mathrm{ord}_{p_{j}}(N_{t\vert C_{j}})$ and $\beta_{j}(t):=\mathrm{ord}_{p_{j}}(N_{t\vert C_{j}})+(P_{t}\cdot C_{j})$ for $P_{t}+N_{t}$ Zariski decomposition of $L-t\mathbbm{G}$ ($N_{t}$ can be restricted to $C_{j}$ since $\mathrm{Supp}(N_{t})=\mathbbm{B}_{-}(L-t\mathbbm{G})$).\\
Next we proceed similarly to \cite{KLM12} to show the polyhedrality of $\Delta_{j}(L)$, i.e. we set $L':=L-t_{j,+}\mathbbm{G}$, $s=t_{j,+}-t$ and consider  ${L'_{s}:=L'+s\mathbbm{G}=L-t\mathbbm{G}}$ for $s\in[0,t_{j,+}-t_{j,-}]$. Thus the function ${s\to N'_{s}} $ is decreasing, i.e. $N'_{s'}-N'_{s}$ is effective for any ${0\leq s'<s\leq t_{j,+}-t_{j,-}}$, where $L'_{s}=P'_{s}+N'_{s}$ is the Zariski decomposition of $L'_{s}$. Moreover, letting $F_{1},\dots,F_{r}$ be the irreducible (negative) curves composing $N_{0}'$, we may assume (unless rearraging the $F_{i}$'s) that the support of $N_{t_{j,+}-t_{j,-}}'$ consists of $F_{k+1},\dots,F_{r}$ and that $0=:s_{0}< s_{1}\leq \cdots\leq s_{k}\leq t_{j,+}-t_{j,-}=: s_{k+1}$ where $s_{i}:=\sup\{s\geq 0 \, : \, F_{i}\subset \mathbbm{B}_{-}(L'_{s})=\mathrm{Supp}(N'_{s})\}$ for any $i=1,\dots,k$.\\
So, by the continuity of the Zariski decomposition in the big cone, it is enough to show that $N'_{s}$ is linear in any not-empty open interval $(s_{i},s_{i+1})$ for $i\in \{0,\dots,k\}$. But the Zariski algorithm implies that $N'_{s}$ is determined by $N'_{s}\cdot F_{l}=(L'+s\mathbbm{G})\cdot F_{l}$ for any $l=i+1,\dots,r$, and, since the intersection matrix of the curves $F_{i+1},\dots,F_{r}$ is non-degenerate, we know that there exist unique divisors $A_{i}$ and $B_{i}$ supported on $\cup_{l=i+1}^{r}F_{l}$ such that $A_{i}\cdot F_{l}=L'\cdot F_{l}$ and $B_{i}\cdot F_{l}=\mathbbm{G}\cdot F_{l}$ for any $l=i+1,\dots,r$. Hence $N'_{s}=A_{i}+sB_{i}$ for any $s\in(s_{i},s_{i+1})$, which concludes the proof.
\end{proof}
\begin{rem}
\emph{We observe that $\Delta_{j}(L)\cap [0,\mu(L;\mathbbm{G})-\epsilon]\times \mathbbm{R}$ is} rational \emph{polyhedral for any $0<\epsilon < \mu(L;\mathbbm{G})$ thanks to the proof and to the main theorem in \cite{BKS04}.} 
\end{rem}
A particular case is when $p_{1},\dots,p_{N}\notin\mathbbm{B}_{+}(L)$ and $\nu^{p_{j}}$ is a family of valuations associated to infinitesimal flags centered respectively at $p_{1},\dots,p_{N}$. Indeed in this case on the blow-up $\tilde{X}=\mathrm{Bl}_{\{p_{1},\dots,p_{N}\}}X$ we can consider the family of valuations $\tilde{\nu}^{\tilde{p}_{j}}$ associated to the admissible flags centered respectively at points $\tilde{p}_{1},\dots,\tilde{p}_{N}\in \tilde{X}$ (see paragraph $\S$\ref{paragraph:ParticularValuations}). Observe that $\tilde{Y}_{1,j}=E_{j}$ are the exceptional divisors over the points.
\begin{lem}
\label{lem:Exceptional}
In the setting just mentioned, we have $t_{j,-}=0$ and $t_{j,+}=\mu(f^{*}L;\mathbbm{E})$ where $\mathbbm{E}=\sum_{i=1}^{N}E_{i}$ and $f:\tilde{X}\to X$ is the blow-up map.
\end{lem}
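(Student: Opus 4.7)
The plan is to treat the two equalities $t_{j,-}=0$ and $t_{j,+}=\mu(f^{*}L;\mathbbm{E})$ separately. The former is a direct consequence of the hypothesis: since $p_{j}\notin \mathbbm{B}_{+}(L)$ for every $j$, one can choose a single ample $\mathbbm{Q}$-divisor $A$ on $X$ and an effective $\mathbbm{Q}$-divisor $E_{0}$ with $L\equiv A+E_{0}$ and $\mathrm{Supp}(E_{0})\cap\{p_{1},\dots,p_{N}\}=\emptyset$ (take $A$ ample small enough that each $A_{j}-A$ is ample, where $A_{j}$ is chosen ample with $p_{j}\notin\mathbbm{B}(L-A_{j})$). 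Pulling back, $f^{*}E_{0}$ is effective and disjoint from every exceptional divisor $E_{j}$, and setting $\epsilon_{0}:=\epsilon_{S}(A;p_{1},\dots,p_{N})>0$, Lemma \ref{lem:SeshadriNef} gives $f^{*}A - t\mathbbm{E}$ ample for $t\in(0,\epsilon_{0})$. Therefore
$$f^{*}L - t\mathbbm{E} \;=\; (f^{*}A - t\mathbbm{E})+f^{*}E_{0}$$
is ample plus effective with effective part missing $E_{j}$, so $\mathbbm{B}_{+}(f^{*}L-t\mathbbm{E})\subseteq \mathrm{Supp}(f^{*}E_{0})$ does not contain $E_{j}$, proving $t_{j,-}=0$.

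For $t_{j,+}=\mu(f^{*}L;\mathbbm{E})$, the inequality $\leq$ is immediate since $E_{j}\not\subset\mathbbm{B}_{+}(f^{*}L-t\mathbbm{E})$ already forces $\mathbbm{B}_{+}\neq \tilde{X}$ and hence bigness. The reverse inequality amounts to showing $E_{j}\not\subset\mathbbm{B}_{+}(f^{*}L-t\mathbbm{E})$ for every $t\in(0,\mu(f^{*}L;\mathbbm{E}))$. The proposed strategy is by contradiction: suppose $E_{j}\subset\mathbbm{B}_{+}(f^{*}L-t_{0}\mathbbm{E})$ for some $t_{0}\in(\epsilon_{0},\mu(f^{*}L;\mathbbm{E}))$. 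The surface-level characterization $E_{j}\subset\mathbbm{B}_{+}(D)\iff P_{D}\cdot E_{j}=0$ for big $D=P_{D}+N_{D}$ (Zariski decomposition) forces $P_{t_{0}}\cdot E_{j}=0$. Pushing the decomposition $P_{t_{0}}+N_{t_{0}}=f^{*}L-t_{0}\mathbbm{E}$ forward by $f$, and using $f_{*}E_{j}=0$ and $f_{*}f^{*}L=L$, yields a numerical identity on $X$,
$$L\;\equiv_{\mathrm{num}}\;f_{*}P_{t_{0}}+f_{*}N_{t_{0}},$$
in which $f_{*}N_{t_{0}}$ is effective with $\mathrm{mult}_{p_{j}}(f_{*}N_{t_{0}})\geq t_{0}>0$ (from $(f^{*}L-t_{0}\mathbbm{E})\cdot E_{j}=t_{0}$, $P_{t_{0}}\cdot E_{j}=0$, and $E_{j}^{2}=-1$, treating separately the cases $E_{j}\in\mathrm{Supp}(N_{t_{0}})$ and $E_{j}\notin\mathrm{Supp}(N_{t_{0}})$). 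Invoking the asymptotic-multiplicity description of $\mathbbm{B}_{+}(L)$ via Proposition \ref{prop:PerConSC} together with the continuity of restricted volumes from \cite{ELMNP09} will then force $p_{j}\in \mathbbm{B}_{+}(L)$, contradicting the hypothesis.

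The main obstacle is precisely this final implication: upgrading the numerical bound $\mathrm{mult}_{p_{j}}(f_{*}N_{t_{0}})\geq t_{0}>0$ at the single value $t_{0}$ into the asymptotic conclusion $p_{j}\in\mathbbm{B}_{+}(L)$. Executing it rigorously relies on the Zariski chamber decomposition of \cite{BKS04} and the continuity of $t\mapsto P_{t}$ already used in the proof of Theorem \ref{thm:Surfaces}, in order to propagate the obstruction across an open interval of parameters and deduce a genuine asymptotic statement on $X$; this is the step where the infinitesimal-flag hypothesis (via the admissible-flag identification on $\tilde{X}$) does the essential work.
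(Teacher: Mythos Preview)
Your argument for $t_{j,-}=0$ is correct and in fact more self-contained than the paper's, which routes the same conclusion through Theorem~\ref{ThmB} together with the linear isomorphism $F(x_{1},x_{2})=(x_{1}+x_{2},x_{1})$ identifying $\Delta_{j}(L)$ with $\Delta_{j}(f^{*}L)$.

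The argument for $t_{j,+}=\mu(f^{*}L;\mathbbm{E})$, however, has a genuine gap---one you flag yourself but do not close. From $P_{t_{0}}\cdot E_{j}=0$ and the intersection computations you extract a single decomposition $L\equiv f_{*}P_{t_{0}}+f_{*}N_{t_{0}}$ with $f_{*}P_{t_{0}}$ nef and $f_{*}N_{t_{0}}$ effective of multiplicity $\geq t_{0}$ at $p_{j}$. But this says nothing about $\mathbbm{B}_{+}(L)$: any ample line bundle on any surface can be written as (nef) $+$ (effective through a prescribed point). Neither Proposition~\ref{prop:PerConSC} nor continuity of restricted volumes converts one such decomposition into an asymptotic base-locus statement, and the gesture toward Zariski chambers and ``propagating the obstruction across an open interval of parameters'' does not help either, since even an interval's worth of such decompositions of $f^{*}L-t\mathbbm{E}$ imposes no constraint on the base loci of $|kL|$ on $X$. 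The push-forward discards exactly the information you need.

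The paper's proof never leaves $\tilde{X}$. Assuming $t_{j,+}<\mu(f^{*}L;\mathbbm{E})$, it first replaces $\mathbbm{B}_{+}$ by $\mathbbm{B}_{-}$ (using that they agree along the ray outside a discrete set) to get $E_{j}\subset\mathrm{Supp}(N_{t})$ for some $t$ strictly between $t_{j,+}$ and $\mu(f^{*}L;\mathbbm{E})$, and then reaches a contradiction by a direct intersection computation: the projection formula gives $f^{*}L\cdot E_{j}=0$, i.e.\ $L_{t}\cdot E_{j}+t\,\mathbbm{E}\cdot E_{j}=0$, and this is incompatible with the constraints forced on $L_{t}\cdot E_{j}$ by $E_{j}$ lying in the negative part. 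The whole thing is a two-line calculation on $\tilde{X}$, with no descent to $X$ and no asymptotic input.
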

\begin{proof}
Theorem $\ref{ThmB}$ easily implies $t_{j,-}=0$ for any $j=1,\dots,N$ since $p_{1},\dots,p_{N}\notin\mathbbm{B}_{+}(L)$ and $F(\Delta_{j}(L))=\Delta_{j}(f^{*}L)$ for ${F(x_{1},x_{2})=(x_{1}+x_{2},x_{1})}$.\\
Next assume by contradiction there exists $j\in\{1,\dots,N\}$ such that $t_{j,+}<\mu(f^{*}L;\mathbbm{E})$. Then by Theorem \ref{thm:Slices} and Theorem $A$ and $C$ in \cite{ELMNP09} we get ${\bar{t}:=\sup\{t\geq 0\, : \,} {E_{j}\not\subset \mathbbm{B}_{+}(f^{*}L-t\mathbbm{E})\}}=\sup\{t\geq 0\, : \, E_{j}\not\subset \mathbbm{B}_{-}(f^{*}L-t\mathbbm{E})\}<\mu(f^{*}L;\mathbbm{E})$. Therefore setting $L_{t}:=f^{*}L-t\mathbbm{E}$ and letting $L_{t}=P_{t}+N_{t}$ be its Zariski decomposition, we get that $E_{j}\in \mathrm{Supp}(N_{t})$ iff $t>\bar{t}$ (see Proposition $1.2.$ in \cite{KL15a}). But for any $\bar{t}<t<\mu(f^{*}L;\mathbbm{E})$ we find out
$$
0=(L_{t}+t\mathbbm{E})\cdot E_{j}=L_{t}\cdot E_{j}+tE_{j}^{2}< -t
$$
where the first equality is justified by $P_{t}+N_{t}+t\mathbbm{E}=f^{*}L$ while the inequality is a consequence of $L_{t}\cdot E_{j}<0$ (since $E_{j}\in\mathrm{Supp}(N_{t})$) and of $E_{i}\cdot E_{j}=-\delta_{i,j}$. Hence we obtain a contradiction.
\end{proof}
\paragraph{About the Nagata's Conjecture:}
One modern version of the Nagata's conjecture says that for a choice of very general points ${p_{1},\dots,p_{N}\in\mathbbm{P}^{2}}$ and $N\geq 9$, the ample line bundle $\mathcal{O}_{\mathbbm{P}^{2}}(1)$ has maximal multipoint Seshadri constant at $p_{1},\dots,p_{N}$, i.e. $\epsilon_{S}(\mathcal{O}_{\mathbbm{P}^{2}}(1); N)=1/\sqrt{N}$ where to simplify the notation we did not report the points since they are very general. Thanks to Theorems \ref{ThmA}, \ref{ThmB}, we can then read the Nagata's conjecture in the following way.
\begin{conget}[\cite{Nag58}, Nagata's Conjecture]
For $N\geq 9$ very general points in $\mathbbm{P}^{2}$, let $\{\Delta_{j}(\mathcal{O}_{\mathbbm{P}^{2}}(1))\}_{j=1}^{N}$ be the multipoint Okounkov bodies calculated from a family of valuations $\nu^{p_{j}}$ associated to a family of infinitesimal flags centered respectively at $p_{1},\dots,p_{N}$. Then the following equivalent statements hold:
\begin{itemize}
\item[i)] $\epsilon_{S}(\mathcal{O}_{\mathbbm{P}^{2}}(1); N)=1/\sqrt{N}$;
\item[ii)] $\Delta_{j}(\mathcal{O}_{\mathbbm{P}^{2}}(1))=\frac{1}{\sqrt{N}}\Sigma_{2}$, where $\Sigma_{2}$ is the standard $2-$symplex;
\item[iii)] $\Omega_{j}(\mathcal{O}_{\mathbbm{P}^{2}}(1))=B_{\frac{1}{\sqrt{N}}}(0)$;
\end{itemize}
\end{conget}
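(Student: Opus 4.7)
The plan is to split the assertion into two pieces: the mutual equivalence of (i)--(iii), which follows from the machinery established in the paper, and the truth of these equivalent statements, which is Nagata's conjecture itself. I treat the equivalence first and then indicate the strategy for the truth, pinpointing where the argument is blocked.

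For (i) $\Rightarrow$ (ii) I would use Theorem \ref{ThmB} and Theorem \ref{ThmA} in tandem: if $\epsilon_{S}(\mathcal{O}_{\mathbbm{P}^{2}}(1);N)=1/\sqrt{N}$ then Theorem \ref{ThmB} forces $(1/\sqrt{N})\Sigma_{2}\subseteq \Delta_{j}(\mathcal{O}_{\mathbbm{P}^{2}}(1))^{\mathrm{ess}}$ for every $j$, each of these $N$ simplices has area $1/(2N)$, and Theorem \ref{ThmA} gives
\[
\sum_{j=1}^{N}\mathrm{Vol}_{\mathbbm{R}^{2}}\big(\Delta_{j}(\mathcal{O}_{\mathbbm{P}^{2}}(1))\big)=\tfrac{1}{2!}\mathrm{Vol}_{\mathbbm{P}^{2}}(\mathcal{O}_{\mathbbm{P}^{2}}(1))=\tfrac{1}{2}=N\cdot\tfrac{1}{2N},
\]
so every inclusion must saturate, yielding $\Delta_{j}(\mathcal{O}_{\mathbbm{P}^{2}}(1))=(1/\sqrt{N})\Sigma_{2}$. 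For (ii) $\Rightarrow$ (i), the simplex inclusion in (ii) gives $\epsilon_{S}\geq 1/\sqrt{N}$ again by Theorem \ref{ThmB}, while Proposition \ref{prop:Properties}(i) supplies the matching upper bound $\epsilon_{S}\leq (\mathrm{Vol}_{\mathbbm{P}^{2}}(\mathcal{O}_{\mathbbm{P}^{2}}(1))/N)^{1/2}=1/\sqrt{N}$. The equivalence (ii) $\Leftrightarrow$ (iii) is a direct unwinding of the definition $D_{j}(L)=\mu^{-1}(\Delta_{j}(L)^{\mathrm{ess}})$: the moment map $\mu(z)=(|z_{1}|^{2},|z_{2}|^{2})$ carries the scaled simplex of (ii) to the ball of (iii) under the normalization already used in Corollary \ref{cor:EcklGen}, so either description determines the other uniquely.

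To actually prove that one of (i)--(iii) holds, I would try for each rational $\lambda<1/\sqrt{N}$ to produce either (a) a positive singular metric on $\mathcal{O}_{\mathbbm{P}^{2}}(1)$ with isolated logarithmic poles of coefficient at least $\lambda$ at $p_{1},\dots,p_{N}$ (so that $\epsilon_{S}\geq \lambda$ by Proposition \ref{prop:Poles}), or equivalently, via Theorem \ref{ThmC} and Corollary \ref{cor:EcklGen}, a K\"ahler packing of $N$ unit balls of radius at least $\lambda$ into $(\mathbbm{P}^{2},\mathcal{O}(1))$. The matching upper bound is automatic from Proposition \ref{prop:Properties}(i), so the entire difficulty lies in producing the lower bound.

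The main obstacle is that neither construction is presently known to attain the Nagata threshold. The remark after Proposition \ref{prop:Poles} records Demailly's result in \cite{Dem93}, which furnishes positive singular metrics with (generally non-isolated) logarithmic poles of coefficients $\tau_{j}$ only under $\sum_{j}\tau_{j}^{n}<L^{n}$, i.e.\ $\tau<1/\sqrt{N}$ in our setting, and cannot upgrade these to isolated poles unless the multipoint Seshadri constant is already maximal; on the packings side, producing the required K\"ahler packing for $N\geq 10$ very general points is equivalent to excluding any irreducible curve $C\subset\mathbbm{P}^{2}$ with $\deg(C)/\sum_{i}\mathrm{mult}_{p_{i}}(C)<1/\sqrt{N}$, which is the geometric heart of the Nagata conjecture and remains open.
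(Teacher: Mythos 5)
You have correctly recognized something essential that matters for grading: the statement is labeled as a \emph{Conjecture} in the paper (it is Nagata's conjecture restated in three ways), so there is no proof of (i)--(iii) in the paper to compare against, nor could there be. What the paper implicitly claims, and what can actually be proved from its machinery, is only the mutual equivalence of (i), (ii), (iii); the truth of any one of them is open. Your proposal makes exactly this split, and your argument for the equivalences is the right one: Theorem \ref{ThmB} forces the simplices $\frac{1}{\sqrt{N}}\Sigma_2$ into each $\Delta_j(\mathcal{O}_{\mathbbm{P}^2}(1))$, Theorem \ref{ThmA} then saturates the volume count $N\cdot\frac{1}{2N}=\frac{1}{2}=\frac{1}{2}\mathrm{Vol}_{\mathbbm{P}^2}(\mathcal{O}(1))$ and turns the inclusions into equalities, Proposition \ref{prop:Properties}(i) supplies the reverse bound, and the passage between (ii) and (iii) is just $D_j(L)=\mu^{-1}(\Delta_j(L)^{\mathrm{ess}})$. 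Your closing discussion of why the lower bound is the hard direction (Demailly's \cite{Dem93} metrics have non-isolated poles except in the maximal case, and the packing formulation reduces to excluding sub-maximal curves) accurately identifies where the conjecture is blocked.

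One small technical point to be careful about in the (ii) $\Rightarrow$ (i) direction: from $\Delta_j(L)=\frac{1}{\sqrt{N}}\Sigma_2$ you cannot immediately conclude $\xi\Sigma_2\subseteq\Delta_j(L)^{\mathrm{ess}}$ for all $\xi<1/\sqrt{N}$, because $\Delta_j(L)^{\mathrm{ess}}$ is defined as an increasing union and a priori can be strictly smaller than the essential interior of $\Delta_j(L)$ along the coordinate axes. In the present ample setting this can be patched using Proposition \ref{prop:Essential} together with the fact that $p_j\notin\mathbbm{B}_+(\mathcal{O}(1))$, or more cheaply by noting that $\epsilon_S\le 1/\sqrt{N}$ is automatic from Proposition \ref{prop:Properties}(i) and that $\epsilon_S<1/\sqrt{N}$ would, by the argument in the (i)$\Rightarrow$(ii) direction run in reverse, contradict $\mathrm{Vol}(\Delta_j)=1/(2N)$. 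Apart from this minor care, your treatment is both correct and appropriately honest about what is proved versus what is conjectured.
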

\begin{rem}
\emph{It is well know that the conjecture holds if $N\geq 9$ is a perfect square. And a similar conjecture (called Biran-Nagata-Szemberg's conjecture) claims that for any ample line bundle $L$ on a projective manifold of dimension $n$ there exist $N_{0}=N_{0}(X,L)$ big enough such that $\epsilon_{S}(L;N)=\sqrt[n]{\frac{L^{n}}{N}}$ for any $N\geq N_{0}$ very general points, i.e. it is maximal. This conjecture can be similarly read through the multipoint Okounkov bodies as $\Delta_{j}(L)=\sqrt[n]{\frac{L^{n}}{N}}\Sigma_{n}$ for any $N\geq N_{0}$ very general points at $X$.}
\end{rem}
\begin{thm}
\label{thm:ProjPlane}
For $N\geq 9$ very general points in $\mathbbm{P}^{2}$, there exists a family of valuations $\nu^{p_{j}}$ associated to a family of infinitesimal flags centered respectively at $p_{1},\dots,p_{N}$ such that
\begin{equation*}
\Delta_{j}\Big(\mathcal{O}_{\mathbbm{P}^{2}}(1)\Big)=\bigg\{(x,y)\in\mathbbm{R}^{2}\, : \, 0\leq x\leq \epsilon \,\, \mathrm{and} \, \, 0\leq y\leq \frac{1}{N\epsilon}\Big(1-\frac{x}{\epsilon}\Big)\bigg\}=Conv\big(\vec{0},\epsilon\vec{e}_{1},\frac{1}{N\epsilon}\vec{e}_{2}\big)
\end{equation*}
where $\epsilon:=\epsilon_{S}(\mathcal{O}_{\mathbbm{P}^{2}}(1); N)$. In particular $\mu(L,\mathbbm{E})=\frac{1}{N\epsilon}$ and
$$
\mathrm{Vol}_{X\vert E_{j}}(f^{*}\mathcal{O}_{\mathbbm{P}^{2}}(1)-t\mathbbm{E}))= \, \begin{cases} t \quad\quad \mathrm{if} \quad 0\leq t\leq \epsilon \\  \frac{\epsilon}{\frac{1}{N\epsilon}-\epsilon}\Big(\frac{1}{N\epsilon}-t\Big) \quad \mathrm{if} \quad \epsilon\leq t\leq \frac{1}{N\epsilon} \end{cases}
$$
where $f:X=\mathrm{Bl}_{\{p_{1},\dots,p_{N}\}}\mathbbm{P}^{2}\to X$ is the blow-up at $Z=\{p_{1},\dots,p_{N}\}$, $E_{1},\dots,E_{N}$ the exceptional divisors and $\mathbbm{E}=\sum_{j=1}^{N}E_{j}$.
\end{thm}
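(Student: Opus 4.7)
The plan is to transfer the problem to the blow-up $\pi:\tilde{X}=\mathrm{Bl}_{\{p_{1},\dots,p_{N}\}}\mathbbm{P}^{2}\to\mathbbm{P}^{2}$ and to analyse $\Delta_{j}(\pi^{*}\mathcal{O}_{\mathbbm{P}^{2}}(1))$ in the $F$-coordinates induced by the infinitesimal flag (see paragraph \ref{paragraph:ParticularValuations}), where $F(y_{1},y_{2})=(y_{1}+y_{2},y_{1})$. On $\tilde{X}$ I would use the admissible flag $\tilde{X}\supset E_{j}\supset\{\tilde{p}_{j}\}$, choosing each $\tilde{p}_{j}\in E_{j}$ very general so that it avoids the countable union $\bigcup_{t\in\mathbbm{Q}\cap[0,\mu]}\mathrm{Supp}(N_{t})\cap E_{j}$, where $\pi^{*}L-t\mathbbm{E}=P_{t}+N_{t}$ is the Zariski decomposition. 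The proof of Theorem \ref{thm:Surfaces} then forces $\alpha_{j}(t)=\mathrm{ord}_{\tilde{p}_{j}}(N_{t\vert E_{j}})=0$ for every $t$, so $F(\Delta_{j})$ is the subgraph of a single concave piecewise-linear function $\beta_{j}$ on $[0,\mu]$, with $\mu=\mu(\pi^{*}L,\mathbbm{E})$ common to all $j$ by Lemma \ref{lem:Exceptional}.

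On $[0,\epsilon]$ the class $\pi^{*}L-t\mathbbm{E}$ is nef by definition of $\epsilon:=\epsilon_{S}(\mathcal{O}_{\mathbbm{P}^{2}}(1);N)$, so $N_{t}=0$ and Theorem \ref{thm:Slices}.iii yields $\beta_{j}(t)=(\pi^{*}L-t\mathbbm{E})\cdot E_{j}=t$, using $\pi^{*}L\cdot E_{j}=0$ and $E_{j}^{2}=-1$. The inclusion $\epsilon\Sigma_{2}\subset\Delta_{j}^{\mathrm{ess}}$ supplied by Theorem \ref{ThmB} is recovered a posteriori from this.

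The heart of the argument is the interval $[\epsilon,\mu]$. Here I would exploit the $S_{N}$-symmetry of very general $N$-tuples to produce an irreducible curve $C\subset\mathbbm{P}^{2}$ of degree $d=Nm\epsilon$ with equal multiplicity $m$ at every $p_{j}$ (possibly in the limit for irrational $\epsilon$), so that $\tilde{C}:=\pi^{*}C-m\mathbbm{E}$ satisfies $\tilde{C}^{2}=Nm^{2}(N\epsilon^{2}-1)\leq 0$. For $t\in(\epsilon,\mu]$ the Zariski negative part should be $a(t)\tilde{C}$; the orthogonality $P_{t}\cdot\tilde{C}=0$ together with $\mathbbm{E}\cdot\tilde{C}=Nm$ and $\pi^{*}H\cdot\tilde{C}=d$ forces $a(t)=(t-\epsilon)/(m(1-N\epsilon^{2}))$, hence
$$
\beta_{j}(t)=P_{t}\cdot E_{j}=t-a(t)m=\frac{\epsilon(1-N\epsilon t)}{1-N\epsilon^{2}},
$$
linear in $t$ and vanishing at $t=1/(N\epsilon)$. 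This pins down $\mu=1/(N\epsilon)$, and Theorem \ref{ThmA} together with the $S_{N}$-symmetry confirms $\mathrm{Vol}_{\mathbbm{R}^{2}}(\Delta_{j})=\epsilon\mu/2=1/(2N)$ as a consistency check.

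The main obstacle is the construction of the symmetric negative class $\tilde{C}$ and the verification that the Zariski negative part stays a multiple of $\tilde{C}$ throughout $(\epsilon,\mu]$ rather than acquiring further irreducible components; note that in the Nagata-sharp case $N\epsilon^{2}=1$ the interval $(\epsilon,\mu]$ is empty, the triangle collapses to $\epsilon\Sigma_{2}$, and the theorem reduces to a direct application of Theorems \ref{ThmA} and \ref{ThmB}. Once the Zariski analysis is granted, pulling back through $F^{-1}(x_{1},x_{2})=(x_{2},x_{1}-x_{2})$ converts the triangle with vertices $(0,0),(\epsilon,\epsilon),(1/(N\epsilon),0)$ in $F$-coordinates into $\mathrm{Conv}(\vec{0},\epsilon\vec{e}_{1},\tfrac{1}{N\epsilon}\vec{e}_{2})$, and Theorem \ref{thm:Slices}.iii delivers the restricted-volume formula for $f^{*}\mathcal{O}_{\mathbbm{P}^{2}}(1)-t\mathbbm{E}$ immediately.
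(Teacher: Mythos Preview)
Your approach differs substantially from the paper's, and the gap you flag is real and not easily closed.

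The paper does not attempt to compute the Zariski decomposition of $\pi^{*}L-t\mathbbm{E}$ on $(\epsilon,\mu]$ at all. Instead it argues by a convexity--volume squeeze. From a submaximal curve $C=\gamma H-\sum_{j}m_{j}E_{j}$ with $\epsilon=\gamma/M$ (where $M=\sum_{j}m_{j}$), very-generality produces the $N$ cyclic permutations $C_{\sigma}$, and their \emph{product} gives a single section $s\in H^{0}(\mathbbm{P}^{2},N\gamma)$ with $\mathrm{ord}_{p_{j}}(s)=M$ for every $j$. Choosing the infinitesimal coordinates so that $\nu^{p_{j}}(s)=(0,M)$ and tensoring with auxiliary ample sections $s_{j}$ that separate the points, one lands the single point $(0,M/(N\gamma))=(0,1/(N\epsilon))$ inside each $\Delta_{j}(L)$ after a limit in $A$. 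Combined with $\epsilon\Sigma_{2}\subset\Delta_{j}(L)$ from Theorem~\ref{ThmB}, convexity gives $\mathrm{Conv}(\vec{0},\epsilon\vec{e}_{1},\tfrac{1}{N\epsilon}\vec{e}_{2})\subset\Delta_{j}(L)$; since these $N$ triangles already have total area $N\cdot\tfrac{1}{2}\cdot\epsilon\cdot\tfrac{1}{N\epsilon}=\tfrac{1}{2}=\tfrac{1}{2}\mathrm{Vol}_{X}(L)$, Theorem~\ref{ThmA} forces equality. The restricted volumes and $\mu=1/(N\epsilon)$ then drop out of Theorem~\ref{thm:Slices}. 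The two-Zariski-chamber statement is deduced as a \emph{corollary} of the shape of $\Delta_{j}$, not used as an input.

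Your route instead needs two ingredients that are not available for free. First, $S_{N}$-symmetry of very general configurations does \emph{not} hand you an irreducible curve with equal multiplicities $m$ at all $p_{j}$: it gives an $S_{N}$-orbit of submaximal curves whose symmetrisation is a \emph{reducible} divisor, and an irreducible symmetric Seshadri curve is precisely the kind of object whose existence is unclear. Second, even granting such a $\tilde{C}$, knowing that $N_{t}=a(t)\tilde{C}$ throughout $(\epsilon,\mu]$ with no further negative components is essentially the content of the result you are trying to prove (and of the cited result in \cite{DKMS15}); it is not a consequence of the general Zariski-chamber machinery alone. The paper's squeeze avoids both issues: it never needs an irreducible symmetric curve, only the reducible section $s$, and it never touches the negative part of the Zariski decomposition beyond $t=\epsilon$.
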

\begin{proof}
If $\epsilon_{S}(\mathcal{O}_{\mathbbm{P}^{2}}(1);N)=1/\sqrt{N}$, i.e. maximal, then ${\Delta_{j}(\mathcal{O}_{\mathbbm{P}^{2}}(1))=\frac{1}{\sqrt{N}}\Sigma_{2}}$ as a consequence of Theorem \ref{ThmA} and Theorem \ref{ThmB}. Thus we may assume $\epsilon_{S}(\mathcal{O}_{\mathbbm{P}^{2}}(1);N)<1/{\sqrt{N}}$, and we know that there exists ${C=\gamma H-\sum_{j=1}^{N}m_{j}E_{j}}$ sub-maximal curve, i.e. an irreducible curve such that ${\epsilon_{S}(\mathcal{O}_{\mathbbm{P}^{2}}(1);N)=\frac{\gamma}{M}}$ where $M:=\sum_{j=1}^{N}m_{j}$. Moreover, since the points are very general, for any cycle $\sigma$ of lenght $N$ there exists a curve $C_{\sigma}=\gamma H-\sum_{j=1}^{N}m_{\sigma(j)}E_{j}$. This yields $\mu(f^{*}\mathcal{O}_{\mathbbm{P}^{2}}(1);\mathbbm{E})\geq \frac{M}{N\gamma}= \frac{1}{N\epsilon}$ since we can easily construct a section $s\in H^{0}\big(\mathbbm{P}^{2},\mathcal{O}_{\mathbbm{P}^{2}}(N\gamma)\big)$ such that $\mathrm{ord}_{p_{j}}(s)=M$ for any $j$. Recall that $\mu(f^{*}\mathcal{O}_{\mathbbm{P}^{2}}(1);\mathbbm{E})=\sup\{t\geq 0 \, : \, f^{*}\mathcal{O}_{\mathbbm{P}^{2}}(1)-t\mathbbm{E} \, \mathrm{is} \, \mathrm{big}\}$. Moreover for any $j=1,\dots,N$ we can fix holomorphic coordinates $(z_{1,j},z_{2,j})$ such that $\nu^{p_{j}}(s)=(0,M)$ with respect to the deglex order. So, considering an ample line bundle $A$ such that there exist sections $s_{1},\dots,s_{N}\in H^{0}(X,A)$ with $\nu^{p_{j}}(s_{j})=(0,0)$ and $\nu^{p_{i}}(s_{j})>0$ for any $i\neq j$, we get $s^{l}\otimes s_{j}^{N\gamma}\in V_{N\gamma,j}(lL+A)$, i.e. $(0,\frac{M}{N\gamma})=(0,\frac{1}{N\epsilon})\in \Delta_{j}(L+\frac{1}{l}A)$ by homogeneity (Proposition \ref{prop:Homo}), for any $l\in \mathbbm{N}$ and any $j=1,\dots,N$. Hence by Theorem \ref{thm:GlobalMOBJ} we deduce $(0,\frac{M}{N\gamma})\in \Delta_{j}(L)$ for any $j=1,\dots,N$.\\
Finally since by Theorem \ref{ThmB} we know that $\epsilon_{S}(\mathcal{O}_{\mathbbm{P}^{2}}(1);N)\Sigma_{2}\subset \Delta_{j}(L)$ for any $j=1\dots,N$, Theorem \ref{ThmA} and the convexity imply that the multipoint Okounkov bodies have necessarily the shape requested.
\end{proof}
\begin{cor}
\label{cor:Chambers}
The ray $f^{*}\mathcal{O}_{\mathbbm{P}^{2}}(1)-t\mathbbm{E}$ meet at most two Zariski chambers.
\end{cor}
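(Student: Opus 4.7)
\noindent The plan is to extract the two-chamber bound directly from the triangular shape of $\Delta_{j}(\mathcal{O}_{\mathbbm{P}^{2}}(1))$ furnished by Theorem \ref{thm:ProjPlane}. I parametrize the ray by $t\in[0,1/(N\epsilon)]$: the upper endpoint equals $\mu(f^{*}\mathcal{O}_{\mathbbm{P}^{2}}(1);\mathbbm{E})$ by Theorem \ref{thm:ProjPlane}, and by Lemma \ref{lem:SeshadriNef} the divisor $f^{*}\mathcal{O}_{\mathbbm{P}^{2}}(1)-t\mathbbm{E}$ is nef exactly for $t\in[0,\epsilon]$. Thus the sub-ray $[0,\epsilon]$ lies entirely in the nef Zariski chamber, and the problem is reduced to showing that $(\epsilon,\,1/(N\epsilon))$ is traversed inside a single chamber.

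For the second step I would re-run the argument inside the proof of Theorem \ref{thm:Surfaces} (following \cite{KLM12}). With $s=1/(N\epsilon)-t$ and $L'_{s}=f^{*}\mathcal{O}_{\mathbbm{P}^{2}}(1)-(1/(N\epsilon)-s)\mathbbm{E}$, the negative part $N'_{s}$ is piecewise linear in $s$, with breakpoints $s_{1}\leq\cdots\leq s_{k}$ being exactly the moments at which irreducible components leave $\mathrm{Supp}(N'_{s})$; between consecutive breakpoints $L'_{s}$ sits in a single Zariski chamber. Theorem \ref{thm:ProjPlane} tells me that $\Delta_{j}(f^{*}\mathcal{O}_{\mathbbm{P}^{2}}(1))$ is the triangle with vertices $\vec{0},\epsilon\vec{e}_{1},\tfrac{1}{N\epsilon}\vec{e}_{2}$, so the boundary function $\beta_{j}$ from Theorem \ref{thm:Surfaces} has a unique breakpoint at $t=\epsilon$ and $\alpha_{j}\equiv 0$. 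Combined with the $S_{N}$-symmetry forced by the very general position of the $N$ points --- which makes each chamber transition along the ray simultaneously visible to every $\beta_{j}$ --- this leaves at most the single transition at $t=\epsilon$ (the exit from the nef cone), so the ray can meet at most two Zariski chambers.

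The main technical obstacle I anticipate is ruling out \emph{invisible} transitions, i.e.\ values $t\in(\epsilon,\,1/(N\epsilon))$ at which $\mathrm{Supp}(N_{t})$ changes but $\beta_{j}$ stays linear, and this is exactly the place where the very general hypothesis on $p_{1},\dots,p_{N}$ enters. A conceptually cleaner back-up that bypasses this symmetry step is to combine Theorem \ref{thm:Slices}(iv) with the restricted-volume formula of Theorem \ref{thm:ProjPlane} to compute $\mathrm{Vol}_{\tilde X}(f^{*}\mathcal{O}_{\mathbbm{P}^{2}}(1)-t\mathbbm{E})$ as an explicit piecewise quadratic in $t$ with exactly two polynomial pieces joined $C^{1}$ at $t=\epsilon$, and then to invoke the Bauer--K\"uronya--Szemberg description \cite{BKS04}: within each chamber the positive part of the Zariski decomposition is affine in $t$ along the ray, so $\mathrm{Vol}_{\tilde X}$ is polynomial of degree $n=2$ on each chamber, which for a very general configuration forces the ray to meet at most two chambers.
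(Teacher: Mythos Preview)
The paper gives no proof of this corollary at all: it is stated immediately after Theorem~\ref{thm:ProjPlane} and followed only by the sentence ``This result was already showed in Proposition~2.5 of \cite{DKMS15}.'' So your outline is already more detailed than what the paper offers, and your overall strategy---read the chamber structure off the piecewise-linear data coming from Theorem~\ref{thm:ProjPlane} via Theorem~\ref{thm:Surfaces}---is exactly the intended one.

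That said, the gap you flag is real and neither of your two fixes closes it as written. The $S_N$-symmetry argument does not help: a chamber transition could be invisible to \emph{every} $\beta_j$ simultaneously, precisely because the symmetry forces all $\beta_j$ to coincide. Likewise, knowing that $\mathrm{Vol}_{\tilde X}(L_t)=P_t^2$ has only two quadratic pieces does not by itself bound the number of chambers: two distinct affine functions $t\mapsto P_t$ can give the same quadratic $P_t^2$. What you actually need is that the \emph{numerical class} of $P_t$ (equivalently of $N_t$) has only two affine pieces, because the irreducible components of $N_t$ have negative-definite intersection matrix and hence linearly independent classes, so a change of support forces a change in the affine expression of $[N_t]$.

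This stronger statement does follow from Theorem~\ref{thm:ProjPlane}, with one extra computation. Write $P_t=a_t\,f^*H-\sum_j b_{t,j}E_j$. Theorem~\ref{thm:ProjPlane} gives $b_{t,j}=P_t\cdot E_j=\mathrm{Vol}_{\tilde X|E_j}(L_t)$ explicitly (two linear pieces, breakpoint at $t=\epsilon$), and integrating via Theorem~\ref{thm:Slices}(iv) gives $P_t^2=\mathrm{Vol}_{\tilde X}(L_t)$ explicitly. Then $a_t^2=P_t^2+\sum_j b_{t,j}^2$, and a direct check shows $a_t=1$ on $[0,\epsilon]$ and $a_t=\frac{1}{1/(N\epsilon)-\epsilon}\bigl(\tfrac{1}{N\epsilon}-t\bigr)$ on $[\epsilon,\,1/(N\epsilon)]$, in particular linear on each piece. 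Since $f^*H,E_1,\dots,E_N$ form a basis of $\mathrm{N}^1(\tilde X)$, this determines $[P_t]$ and hence $[N_t]=[L_t]-[P_t]$ as a class-valued function with exactly two affine pieces, and the linear-independence argument above then gives at most two chambers. Adding this step makes your backup argument complete.
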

Corollary \ref{cor:Chambers} was already proved in Proposition 2.5. of \cite{DKMS15}.
\begin{rem}
\emph{We recall that Biran in \cite{Bir97} gave an homological criterion to check if a $4-$dimensional symplectic manifold admits a full symplectic packings by $N$ equal balls for large $N$, showing that $(\mathbbm{P}^{2},\omega_{FS})$ admits a full symplectic packings for $N\geq 9$. Moreover it is well-known that for any $N\leq 9$ the supremum over all $r$ such that $\{(B_{r}(0),\omega_{st})\}_{j=1}^{N}$ packs into $(\mathbbm{P}^{2},\mathcal{O}_{\mathbbm{P}^{2}}(1))$ coincides with the supremum over all $r$ such that $(\mathbbm{P}^{2},\omega_{FS})$ admits a symplectic packings of $N$ balls of radii $r$ (called \emph{Gromov width}). Therefore by Theorem \ref{ThmC} and Corollary \ref{cor:EcklGen} the Nagata's conjecture is true iff the Gromov width of $N$ balls on $(\mathbbm{P}^{2},\omega_{FS})$ coincides with the multipoint Seshadri constant of $\mathcal{O}_{\mathbbm{P}^{2}}(1)$ at $N$ very general points.}
\end{rem}

\end{document}